\newtheorem{prop}{Proposition}[section]
\newtheorem{thm}[prop]{Theorem}
\newtheorem{cor}[prop]{Corollary}
\newtheorem{lem}[prop]{Lemma}
\theoremstyle{definition}
\newtheorem{defn}[prop]{Definition}
\newtheorem{expl}[prop]{Example}
\newtheorem{rem}[prop]{\it Remark}
\newtheorem*{claim*}{Claim}
\newtheorem{lemdefn}[prop]{Lemma-Definition}
\newcommand{\bP}{\mathbb{P}}
\newcommand{\bC}{\mathbb{C}}
\newcommand{\bR}{\mathbb{R}}
\newcommand{\bA}{\mathbb{A}}
\newcommand{\bQ}{\mathbb{Q}}
\newcommand{\bZ}{\mathbb{Z}}
\newcommand{\bN}{\mathbb{N}}
\newcommand{\bG}{\mathbb{G}}
\newcommand{\tS}{\Tilde{S}}
\newcommand{\cO}{\mathcal{O}}
\newcommand{\cF}{\mathcal{F}}
\newcommand{\cG}{\mathcal{G}}
\newcommand{\cJ}{\mathcal{J}}
\newcommand{\cB}{\mathcal{B}}
\newcommand{\cW}{\mathcal{W}}
\newcommand{\fa}{\mathfrak{a}}
\newcommand{\fb}{\mathfrak{b}}
\newcommand{\fm}{\mathfrak{m}}
\newcommand{\sP}{\mathscr{P}}
\newcommand{\sX}{\mathscr{X}}
\newcommand{\sE}{\mathscr{E}}
\newcommand{\sD}{\mathscr{D}}
\newcommand{\sU}{\mathscr{U}}
\newcommand{\sH}{\mathscr{H}}
\DeclareMathOperator{\Jac}{Jac}
\DeclareMathOperator{\Fl}{Fl}
\DeclareMathOperator{\trdeg}{tr. deg}
\DeclareMathOperator{\prindiv}{div}
\DeclareMathOperator{\Sym}{Sym}
\DeclareMathOperator{\id}{id}
\DeclareMathOperator{\lcm}{lcm}
\DeclareMathOperator{\Spec}{Spec}
\DeclareMathOperator{\Pic}{Pic}
\DeclareMathOperator{\Supp}{Supp}
\DeclareMathOperator{\lct}{lct}
\DeclareMathOperator{\Aut}{Aut}
\DeclareMathOperator{\vol}{vol}
\DeclareMathOperator{\ord}{ord}
\DeclareMathOperator{\Val}{Val}
\DeclareMathOperator{\GL}{GL}
\DeclareMathOperator{\QM}{QM}
\DeclareMathOperator{\Bl}{Bl}
\newcommand{\scrProj}{\mathscr{P}roj}
\newcommand{\scrBl}{\mathscr{B}l}
\numberwithin{equation}{section}
\title[Asymptotics of stability thresholds]{Asymptotics of stability thresholds}
\author{Junyao Peng}
\address{Department of Mathematics, Princeton University, Princeton, NJ 08544, USA}
\email     {junyaop@princeton.edu}
\date{\today}
\begin{document}

\begin{abstract}
We study asymptotic behavior of the stability thresholds of a big line bundle, and prove explicit bounds on the error terms. This answers Jin-Rubinstein-Tian's questions affirmatively. A key step in our proof is to show that the stability thresholds of a big line bundle can always be computed by quasi-monomial valuations. This generalizes Blum--Jonsson's result on the stability thresholds of an ample line bundle.
\end{abstract}

\maketitle
\tableofcontents

\section{Introduction}
Stability thresholds are fundamental invariants in the study of Fano varieties. The $\alpha$-invariant, also known as the global log canonical threshold, was introduced by Tian in the complex analytic setting to study the K\"ahler-Einstein problem \cite{Tia87}. A closely related invariant is the $\delta$-invariant, which characterizes K-stability of Fano varieties via the Fujita-Li valuative criterion \cite{Fuj19}, \cite{Li17}. 

Recently, the behavior of these invariants in more general settings has attracted increasing attention. It is known that for a big line bundle $L$ on a projective variety $X$ with klt singularities, $\alpha(L)$ (resp. $\delta(L)$) can be approximated by finite-level data $\alpha_m(L)$ (resp. $\delta_m(L)$), defined as the log canonical threshold of $X$ with respect to the linear system $|mL|$ (resp. $m$-basis type divisors). In a series of works \cite{JR25a}, \cite{JR25b}, \cite{JRT25a}, and \cite{JRT25b}, Jin, Rubinstein and Tian investigate the approximating process of these invariants by their finite level counterparts. They provide a novel unified approach to the asymptotics of the $\alpha$-invariant and the $\delta$-invariant, and prove the following bounds on the error terms in \cite{JRT25a}, assuming that $\alpha(L)$ (resp. $\delta(L)$) is computed by a \textit{divisorial} valuation:
\[
|\alpha_m(L) - \alpha(L)| =O\left(\frac{1}{m^{1/n}}\right) \quad\text{and}\quad |\delta_m(L) - \delta(L)| =O\left(\frac{1}{m}\right) .
\]
When $L$ is ample, \cite{BJ20}*{Corollary 5.2} establishes that for the $\alpha$-invariant, a better bound $O(1/m)$ can be obtained unconditionally. In fact, when $X$ is a Fano manifold and $L = -K_X$, Tian conjectured that $\alpha_m(L) = \alpha(L)$ for some positive integer $m$ \cite{Tia90}. Tian's conjecture in the $\alpha(-K_X)\leq 1$ case is solved in \cite{Bir21}*{Theorem 1.7} and in the toric Fano case, \cite{JR25b}*{Theorem 1.4} verifies that $\alpha_m(L) = \alpha(L)$ for every $m$. For the $\delta$-invariant, \cite{JR25a}*{Corollary 2.12} shows that the $O(1/m)$ bound is optimal when $X$ is a toric variety and $L$ is an ample toric line bundle. Moreover, Blum--Liu showed that one could uniformly approximate the $\delta$-invariants of ample line bundles in a family \cite{BL22}*{Theorem 5.2}, which was a key input to prove the openness of uniform K-stability in a family of Fano varieties \cite{BL22}*{Theorem A}. 

The assumptions ``$\alpha(L)$ (resp. $\delta(L)$) is computed by a divisorial valuation" are known to hold in several important cases. For instance, for the $\delta$-invariant, this assumption holds when $X$ is an $n$-dimensional Fano variety with klt singularities, $L$ is a multiple of $-K_X$, and $\delta(-K_X) < \frac{n+1}{n}$ by \cite{LXZ22}*{Theorem 1.2}. For a projective klt variety $X$ with $-K_X$ big, \cite{Xu23} implies that when $X$ is K-semistable, then $X$ has an anti-canonical log Fano model $Z$, and $\delta(-K_X)$ is computed by a divisorial valuation if the same holds for $\delta(-K_Z)$. However, these assumptions may not hold in general, at least for log pairs. For example, when $X$ is a smooth cubic surface, $C$ is the union of a line and a conic, and $\epsilon > 0$ is sufficiently small,
the proof of \cite{AZ22}*{Corollary A.7} shows that the $\delta$-invariant of the log anti-canonical divisor of $(X,(1-\epsilon)C)$ is an irrational number and cannot be computed by any divisorial valuation.

Our main result is the following theorem, which establishes that bounds on the error terms in \cite{JRT25a} hold unconditionally and that a better bound $O(1/m)$ can be obtained for the $\alpha$-invariant. In particular, it provides a complete answer to Problem 1.1 and 1.3 in \cite{JRT25a}. 
\begin{thm}\label{mainthm:JRT25a_without_assumptions}
Let $(X, D)$ be an $n$-dimensional projective klt pair and let $L$ be a big line bundle on $X$. Then there exists a constant $C>0$ independent of $m$ such that 
\[
|\alpha_m(L) - \alpha(L)| \leq \frac{C}{m} \quad \text{and} \quad |\delta_m(L) - \delta(L)| \leq \frac{C}{m}.
\]
\end{thm}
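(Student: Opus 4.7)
The plan follows the strategy outlined in the abstract, and divides into three ingredients.

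\emph{Step 1 (main new input): computability by quasi-monomial valuations.} I would first prove that for any big line bundle $L$ on an $n$-dimensional projective klt pair $(X,D)$, both $\alpha(L)$ and $\delta(L)$ are computed by quasi-monomial valuations, generalizing the ample case of \cite{BJ20}. Let $\fa_m\subset\cO_X$ be the base ideal of $|mL|$ and $\fab=\{\fa_m\}_{m\ge 1}$ the resulting graded sequence. The variational formulas read
$$\alpha(L)=\inf_{v\in\Val_X}\frac{A_{X,D}(v)}{v(\fab)},\qquad \delta(L)=\inf_{v\in\Val_X}\frac{A_{X,D}(v)}{S_L(v)}.$$
The approach is to truncate to an SNC log resolution and rescale: on the dual complex of such a model both the log discrepancy and the denominators are continuous (the log discrepancy is in fact linear in the weights), so the minimum on that finite-dimensional simplex is realized by a quasi-monomial valuation. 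The subtlety relative to the ample case is that for big $L$ the graded sequence $\fab$ need not be finitely generated, so concentration of minimizing sequences on a fixed log resolution has to be proved separately; a natural route is to approximate $L$ by Fujita decompositions $\pi^*L\sim A+E$ with $A$ ample, apply \cite{BJ20} to each $A$, and pass to a limit with uniform control of log discrepancies on a common model.

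\emph{Step 2: removing the divisorial hypothesis in \cite{JRT25a}.} Writing the minimizer from Step 1 as $v=\sum_i t_i\ord_{E_i}$ on an SNC model, I would revisit JRT25a's estimates. Their argument uses the divisorial assumption mainly to identify the minimizer with a single prime divisor and pull back explicit positivity data. For a quasi-monomial $v$, the log discrepancy $A_{X,D}(v)=\sum_i t_iA_{X,D}(\ord_{E_i})$ and the invariants $v(\fab)$, $S_L(v)$ are linear (resp.\ continuous) in the weights $(t_i)$, so the relevant inequalities of \cite{JRT25a} extend uniformly over the compact simplex of weights and yield the unconditional bounds.

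\emph{Step 3: improving the $\alpha$-bound to $O(1/m)$.} Using the variational identity $\alpha_m(L)=\inf_v mA_{X,D}(v)/v(\fa_m)$ and taking $v$ to be an approximate minimizer for $\alpha_m(L)$, the bound $|\alpha(L)-\alpha_m(L)|\le C/m$ reduces to a uniform effective rate $v(\fa_m)/m-v(\fab)\le C'/m$ on the simplex of quasi-monomial valuations provided by Step 1. This linear rate follows from Siu-type multiplier-ideal comparisons (e.g.\ $\fa_{m+c}\supseteq\mathfrak{J}(\fab^m)$ with $c$ depending only on the model), combined with the linearity of $v(\fa_m)$ in the weights; it replaces the subadditive rate that only yields the weaker $O(1/m^{1/n})$ bound in \cite{JRT25a}, mirroring the improvement in the ample case of \cite{BJ20}.

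The main obstacle is Step 1. In the ample case, \cite{BJ20} exploits a coercive non-Archimedean energy that forces minimizing sequences to concentrate on a fixed log resolution. For big $L$, the volume function is only upper semicontinuous on $\Val_X$, the graded sequence $\fab$ may be highly non-finitely generated, and Fujita approximations give only partial control. The technical heart of the argument is to nevertheless establish existence of a quasi-monomial minimizer, likely by combining Fujita approximation with uniform log-discrepancy bounds on a suitable tower of log resolutions.
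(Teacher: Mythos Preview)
Your proposal has the right three-step architecture, but Steps 1 and 2 each contain a genuine gap relative to what the paper actually does.

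\textbf{Step 1.} Your proposed route to existence of a quasi-monomial minimizer---continuity of $A$ and $S$ on the dual complex of some fixed SNC model, plus Fujita approximation of $L$ to reduce to the ample case---is not how the paper (or \cite{BJ20} in the ample case) proceeds, and you yourself flag that ``concentration on a fixed log resolution'' is the unresolved obstacle. The paper's argument is entirely different and does not attempt to prove such concentration. Instead it takes a minimizing sequence of divisorial valuations $v_i$, encodes each as a pair of $\bN$-filtrations $(\cF_i,\cG_i)$ on $R=\bigoplus H^0(mL)$ \emph{and} on the auxiliary module $M=\bigoplus H^0(mL+B)$, runs a generic-limit construction on products of flag varieties, and extracts a limiting filtration $\cF$. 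A quasi-monomial minimizer $v^*$ is then produced as a valuation computing $\lct(\fb_\bullet(\cF))$, using \cite{Xu20} (hence ultimately Birkar's boundedness of complements). The auxiliary filtrations on $H^0(mL+B)$ and the modified $\tS_m^\tau$-invariants are precisely what make the uniform Fujita-type approximation go through when $L$ is only big; your Fujita-decomposition idea does not supply a substitute for this.

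\textbf{Step 2.} For the $\delta$-bound, JRT25a's estimates are not abstract inequalities that one can ``extend by linearity over the simplex of weights.'' Their proof is a lattice-point count in a Newton--Okounkov body built from an admissible flag whose first element is the minimizing divisor. To run the same count when the minimizer is a rank-$r$ quasi-monomial valuation $v$, the paper constructs (Theorem~\ref{mainthm:okounkov_bodies_wrt_qm_val}) a $\bZ^n$-valuation $\nu_\bullet$ whose first $r$ coordinates record the lowest-order exponent of $v$ and whose last $n-r$ coordinates come from a flag on the center; the resulting bodies $\Delta^t$ satisfy $\Delta^t=\Delta^0\cap\{\sum\alpha_ix_i\ge t\}$, so the concave transform is the linear function $G_v(x)=\sum_i\alpha_ix_i$, and JRT25a's Proposition goes through verbatim with $G_v$ in place of the first-coordinate projection. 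Without this construction you have no Okounkov body in which to count, and the linearity of $A$ or continuity of $S$ in the weights does not by itself give the $O(1/m)$ bound.

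\textbf{Step 3.} Here you are close in spirit: the $O(1/m)$ bound for $\alpha$ does come from a multiplier-ideal inclusion (the paper's Theorem~\ref{thm:uniform_fujita_approx_T}, giving $T(v)-T_m(v)\le 2A(v)/m$ for any $v$ with $A(v)<\infty$). But note that you apply this to the minimizer $v$ of $\alpha(L)$ produced in Step~1, not to an approximate minimizer of $\alpha_m(L)$; the inequality $\alpha_m(L)\ge\alpha(L)$ is automatic, and the other direction is $\alpha_m(L)\le\alpha_m(v)\le\alpha(v)+2\alpha(L)^2/m$.
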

Furthermore, \cite{JRT25a} introduces a one-parameter family of invariants called $\delta^\tau$-invariants (denoted as $\boldsymbol{\delta}_\tau$ in \cite{JRT25a}). These $\delta^\tau$-invariants interpolate between the $\alpha$-invariant and the $\delta$-invariant, and they also provide new sufficient conditions for K-stability in the $-K_X$ big case, as shown in \cite{JRT25b}. Analogous asymptotic bounds for $\delta^\tau$-invariants are proved in \cite{JRT25a}*{Theorem 1.7}. Building on their work, we prove Theorem~\ref{thm:JRT_Theorem_1.7_without_assumptions}, which is the unconditional version of \cite{JRT25a}*{Theorem 1.7} and generalizes Theorem~\ref{mainthm:JRT25a_without_assumptions} for $\delta^\tau$-invariants.

There are two new ingredients in the proof of Theorem \ref{mainthm:JRT25a_without_assumptions}. The first ingredient establishes that the $\alpha$-invariant and the $\delta$-invariant of a big line bundle are computed by quasi-monomial valuations. Quasi-monomial valuations are higher-rank generalizations of divisorial valuations, and they arise naturally in many geometric contexts. For example, \cite{JM12} conjectures that any valuation computing the log canonical threshold of a graded ideal sequence must be quasi-monomial, and the existence part of this conjecture is resolved by \cite{Xu20}*{Theorem 1.1}. In the K-stability setting, \cite{BLX22}*{Theorem 1.5} shows that the $\delta$-invariant of a K-unstable Fano variety $X$ is always computed by a quasi-monomial valuation, whose associated graded anti-canonical section ring is finitely generated by \cite{LXZ22}*{Theorem 1.1} and induces an optimal destabilizing degeneration of $X$ by \cite{BLZ22}*{Theorem 1.1}. A local analog of this statement for the normalized volume is conjectured in \cite{Li18}*{Conjecture 7.1} and proved in \cite{XZ25}*{Theorem 1.1 and 1.2}. In all these studies, working with quasi-monomial valuations is natural, and sometimes even indispensable, as illustrated in \cite{JM12}*{Example 8.5}. One of our key observations is that, instead of only considering divisorial minimizers as in \cite{JRT25a}, one should also consider quasi-monomial minimizers, whose existence is confirmed by Theorem~\ref{mainthm:delta_is_computed_by_QM_vals} below.

When $L$ is ample, the combination of \cite{BJ20}*{Theorem E} and \cite{Xu20}*{Theorem 1.1} implies that $\alpha(L)$ and $\delta(L)$ are always computed by quasi-monomial valuations. Our next theorem shows that the same conclusion holds under the weaker assumption that $L$ is big.
\begin{thm}[cf. Theorem~\ref{thm:qm_val_computing_delta_tau}]\label{mainthm:delta_is_computed_by_QM_vals}
Let $(X,D)$ be an $n$-dimensional projective klt pair and let $L$ be a big line bundle on $X$. Then both $\alpha(L)$ and $\delta(L)$ are computed by quasi-monomial valuations on $X$.
\end{thm}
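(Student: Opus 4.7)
The plan is to reduce the big case to the ample case via Fujita approximation, apply \cite{BJ20}*{Theorem E} combined with \cite{Xu20}*{Theorem 1.1} to each ample approximation to produce a quasi-monomial minimizer, and then extract a quasi-monomial limit using a compactness argument on a single log smooth model. I describe the argument for $\alpha(L)$; the $\delta(L)$ case is parallel, using an integral formula expressing $S_L$ as an average of truncations of $T_L$ so that control of $S_L$ is reduced to that of $T_L$ uniformly in $v$.

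For each $\epsilon>0$, Fujita approximation produces a birational morphism $\pi_\epsilon:Y_\epsilon\to X$ and an ample $\bQ$-divisor $A_\epsilon$ on $Y_\epsilon$ with $E_\epsilon:=\pi_\epsilon^*L-A_\epsilon$ effective and $\vol(A_\epsilon)\to \vol(L)$. Write $(Y_\epsilon,\Delta_\epsilon)$ for the log pullback of $(X,D)$, which is again klt with $A_{Y_\epsilon,\Delta_\epsilon}=A_{X,D}$ on every valuation over $X$. The inclusion of graded linear series $R(Y_\epsilon,A_\epsilon)\hookrightarrow R(X,L)$, obtained by multiplying with a fixed section cutting out $E_\epsilon$, gives $T_{A_\epsilon}(v)+v(E_\epsilon)\leq T_L(v)$, and the volume convergence combined with a uniform Izumi-type estimate forces the gap $T_L(v)-T_{A_\epsilon}(v)$ to shrink to zero on the level set $\{A_{X,D}(v)=1\}$ in the sense needed below. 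Since $A_\epsilon$ is ample, \cite{BJ20}*{Theorem E} combined with \cite{Xu20}*{Theorem 1.1} produces a quasi-monomial valuation $v_\epsilon$ on $Y_\epsilon$ computing $\alpha(Y_\epsilon,\Delta_\epsilon;A_\epsilon)$, and the comparison above forces $\alpha(Y_\epsilon,\Delta_\epsilon;A_\epsilon)\to \alpha(X,D;L)$.

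It remains to extract a quasi-monomial limit. Normalize so that $A_{X,D}(v_\epsilon)=1$. The crucial claim is that, after passing to a subsequence, all $v_\epsilon$ can be realized as quasi-monomial valuations on a single log smooth model $\mu:Z\to X$ with simple normal crossing boundary $\Sigma$. Once this is in place, the dual complex $\Delta(Z,\Sigma)$ is compact and $A_{X,D}$ is continuous on it, yielding a limit $v^*=\lim v_\epsilon$ which is itself quasi-monomial on $(Z,\Sigma)$. Upper semicontinuity of $T_L$ on the dual complex combines with the estimates above to give
\[
\alpha(X,L)\leq \frac{A_{X,D}(v^*)}{T_L(v^*)}\leq \liminf_\epsilon \frac{A_{X,D}(v_\epsilon)}{T_{A_\epsilon}(v_\epsilon)}=\lim_\epsilon \alpha(Y_\epsilon,\Delta_\epsilon;A_\epsilon)=\alpha(X,L),
\]
so $v^*$ is a quasi-monomial minimizer of $\alpha(L)$. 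The $\delta$ case follows by the same scheme, using the $S_L$ integral formula to compare $S_{A_\epsilon}$ to $S_L$ along $v_\epsilon$.

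The main obstacle is the uniform-model claim in the last paragraph. A naive extraction produces only a limit valuation in a Berkovich-type analytification, which is a priori merely Abhyankar and may fail to be quasi-monomial. Forcing quasi-monomiality in the limit requires boundedness input from the minimal model program --- ACC of log canonical thresholds on klt pairs, together with the bounded-complement/special degeneration techniques used in \cite{Xu20} --- to guarantee that the models $Y_\epsilon$ and the SNC divisors supporting the $v_\epsilon$ all sit inside a common log resolution $Z$ of $(X,D)$. I expect that a careful adaptation of Xu's bounded-model argument from the setting of a single graded sequence of ideals to the family of graded linear series $R(Y_\epsilon,A_\epsilon)$ coming from the Fujita approximations is the central technical ingredient.
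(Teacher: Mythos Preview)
Your approach is fundamentally different from the paper's, and the gap you yourself identify is genuine and, as stated, not filled.

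The paper does \emph{not} reduce to the ample case via Fujita approximation and then try to extract a limit of quasi-monomial minimizers. Instead, it runs a generic limit argument directly at the level of filtrations, following the template of \cite{BJ20}*{Section 6}. One takes a minimizing sequence of divisorial valuations $v_i$ and records the induced $\bN$-filtrations on $R=\bigoplus_m H^0(mL)$ as points of an inverse system of flag varieties. The key new twist in the big case is that one must simultaneously track a compatible filtration on the auxiliary module $M=\bigoplus_m H^0(mL+B)$ for a suitable very ample $B$; this is what rescues the uniform Fujita-type approximation of the $S$-invariant (Theorem~\ref{thm:uniform_fujita_approx_BJ_thm_5.3}) that \cite{BJ20} could prove directly when $L$ is ample. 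A generic point of the inverse limit produces a single limiting filtration $\mathcal F$, and \cite{Xu20} is then applied exactly once, to the graded ideal sequence $\fb_\bullet(\mathcal F)$, producing the quasi-monomial minimizer. No limit of quasi-monomial valuations is ever taken.

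Your obstacle is real: there is no mechanism forcing the minimizers $v_\epsilon$ of $\alpha(A_\epsilon)$ on the various $Y_\epsilon$ to live in the dual complex of a single log resolution of $(X,D)$. The output of \cite{Xu20} depends on the graded ideal sequence fed into it, and the sequences attached to the different Fujita approximations $A_\epsilon$ are unrelated in any way that would produce such a bound; neither ACC for log canonical thresholds nor boundedness of complements gives uniform control of the \emph{models} on which these minimizers are monomial. Your hope of ``adapting Xu's bounded-model argument to the family of graded linear series $R(Y_\epsilon,A_\epsilon)$'' is precisely what the generic limit machinery is designed to circumvent: it replaces that family by a single limiting object to which \cite{Xu20} applies.

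Two secondary points. First, the assertion that $T_L(v)-T_{A_\epsilon}(v)$ shrinks to zero uniformly on $\{A_{X,D}(v)=1\}$ is unsupported; Fujita approximation controls volumes, not $T$-invariants, and no Izumi-type inequality closes this gap. Second, the $\delta$ case is not merely ``parallel'': controlling $S_L$ uniformly across valuations in the big case is exactly the difficulty that forces the paper to introduce the auxiliary filtration on $M$ and the modified invariants $\tS_m^\tau$.
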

Again, we prove in Theorem~\ref{thm:qm_val_computing_delta_tau} that all the $\delta^\tau$-invariants of a big line bundle can be computed by quasi-monomial valuations.

The second ingredient in our proof of Theorem~\ref{mainthm:JRT25a_without_assumptions} is a construction of Newton--Okounkov bodies of a big line bundle $L$ with respect to a quasi-monomial valuation $v$. This construction is inspired by \cite{LM09}, \cite{KK12}, \cite{Bou14}, and the proof of \cite{LX18}*{Theorem 3.13} (which is essentially the local analog of our construction). The following theorem summarizes some properties of these Newton--Okounkov bodies.
\begin{thm}\label{mainthm:okounkov_bodies_wrt_qm_val}
    Let $X$ be an $n$-dimensional normal projective variety and $L$ be a big line bundle on $X$. Let $v$ be a quasi-monomial valuation on $X$ with rational rank $r$. Let $\mu:(Y,E = E_1+\cdots+E_r)$ be a log resolution of $X$ such that $v$ lies in $\QM_Z(Y,E)$ and has weight $\alpha\in\bR_{>0}^r$ (see Definition~\ref{defn:qm_val}), for some irreducible component $Z$ of $\bigcap_{i=1}^rE_i$. Then there exists a decreasing family of convex bodies $\{\Delta^t\subseteq \bR_{\geq 0}^n: t\in [0, T(v))\}$ (see Definition~\ref{defn:mult_filt}) with the following properties.
    \begin{itemize}
        \item[(a)] For every $t\in [0, T(v))$, we have \[
        \Delta^t = \Delta^0 \cap \left\{(x_1,\ldots,x_n)\in \bR^n: \sum_{i=1}^r \alpha_ix_i \geq t\right\}.
        \]
        \item[(b)] For every effective $\bQ$-divisor $D$ with $D\sim_\bQ L$, there exists a unique point $\beta\in \bQ^r_{\geq 0}$ such that $\alpha\cdot\beta:= \sum_{i=1}^r \alpha_i\beta_i = v(D)$. Moreover, $\beta\in p(\Delta^t)$ for every $t\leq v(D)$, where $p: \bR^n\to\bR^r$ is the projection to the first $r$ coordinates.
        \item [(c)] For every $t\in [0, T(v))$, the Euclidean volume of $\Delta^t$ is equal to $\frac{1}{n!} \vol(V_\bullet^t(v))$, where $V_\bullet^t(v)$ is the graded sublinear series of $L$ induced by $v$ (see Definition~\ref{defn:graded_linear_series_induced_by_filtrations}).
    \end{itemize}
\end{thm}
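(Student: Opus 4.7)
The plan is to construct the bodies $\Delta^t$ via an admissible flag on $Y$ adapted to $v$, in the spirit of Lazarsfeld--Musta\c{t}\u{a}, Kaveh--Khovanskii, and the proof of \cite{LX18}*{Theorem 3.13}. Since $E$ is simple normal crossing on $Y$, I would choose an admissible smooth flag $Y_\bullet\colon Y = Y_0 \supset Y_1 \supset \cdots \supset Y_n$ with $Y_i$ an irreducible component of $E_1 \cap \cdots \cap E_i$ containing $Z$ for $1 \leq i \leq r$, extended arbitrarily to a smooth closed point $Y_n$. Since $\mu_{\ast}\cO_Y = \cO_X$, one has $H^0(X,mL) = H^0(Y, m\mu^{\ast} L)$; let $\nu = \nu_{Y_\bullet}$ denote the rank-$n$ flag valuation, and set
\[
\Delta^0 := \overline{\bigcup_{m \geq 1}\tfrac{1}{m}\,\nu(H^0(mL) \setminus 0)}, \quad \Delta^t := \overline{\bigcup_{m \geq 1}\tfrac{1}{m}\,\nu\bigl(\{f \in H^0(mL) : v(f) \geq mt\} \setminus 0\bigr)}.
\]
Both are convex bodies in $\bR^n_{\geq 0}$ by the standard Okounkov-body theory for big graded linear series, from which property (c) follows directly as the classical volume identity applied to the sub-linear series $V_\bullet^t(v)$.

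For property (a), the easy inclusion $\Delta^t \subseteq \Delta^0 \cap \{\sum_{i=1}^r \alpha_i x_i \geq t\}$ rests on the following local computation at the generic point of $Z$: there are regular parameters $z_1,\ldots,z_n$ with $E_i = \{z_i = 0\}$ for $i \leq r$, and any nonzero $f \in H^0(mL)$ admits a Taylor expansion $f = \sum c_\beta z^\beta$ in which $\nu(f)$ is the lex-minimum exponent while $v(f) = \min\{\sum_{i=1}^r \alpha_i \beta_i : c_\beta \neq 0\}$; since the lex-minimum competes in this minimum, $v(f) \leq \sum_{i=1}^r \alpha_i \nu(f)_i$. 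For the reverse inclusion I would approximate $\alpha$ from above by rational weights $\alpha^{(k)} \in \bQ_{>0}^r$ with $\alpha^{(k)} \to \alpha$ and $\alpha^{(k)} \geq \alpha$ coordinatewise, and let $v^{(k)} \in \QM_Z(Y,E)$ have weight $\alpha^{(k)}$. Then $v^{(k)} \geq v$ pointwise, so the analogous body $\Delta^t_{(k)}$ satisfies $\Delta^t_{(k)} \subseteq \Delta^t$. For rational $\alpha^{(k)}$, $v^{(k)}$ is proportional to the divisorial valuation $\ord_{F_k}$ of the exceptional divisor of a weighted blowup of $Y$ along $Z$, whose toroidal structure is compatible with $E_1,\ldots,E_r$; transferring the divisorial slicing theorem to the fixed flag $Y_\bullet$ via this toroidal compatibility gives $\Delta^t_{(k)} = \Delta^0 \cap \{\alpha^{(k)} \cdot x \geq t\}$, and passing to the limit as $\alpha^{(k)} \searrow \alpha$ yields the reverse inclusion.

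For (b), uniqueness of $\beta \in \bQ_{\geq 0}^r$ with $\alpha \cdot \beta = v(D)$ follows from the $\bQ$-linear independence of $\alpha_1,\ldots,\alpha_r$, which is equivalent to $v$ having rational rank $r$. For existence, write $mD = \prindiv(s)$ with $s \in H^0(mL)$, let $\gamma^{\ast} \in \bN^r$ record the first $r$ coordinates of an $\alpha$-minimizing exponent in the Taylor expansion of $s$ at $Z$, and set $\beta := \gamma^{\ast}/m$. Via (a), the claim $\beta \in p(\Delta^t)$ for $t \leq v(D)$ reduces to $\beta \in p(\Delta^0)$, which I would derive by combining high powers $s^k$ with sections supplied by the divisorial case of (a), producing elements of $H^0(kmL)$ whose flag valuations have first $r$ coordinates approaching $k\beta$. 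The principal technical obstacle is the divisorial case of (a) itself: the classical slicing of \cite{LM09} places the valuation-computing divisor at the top of the flag, whereas our flag $Y_\bullet$ is fixed by the quasi-monomial structure of $v$; reconciling the two descriptions through the toroidal geometry of the weighted blowup along $Z$ is the main technical content of the proof.
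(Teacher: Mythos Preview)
Your construction differs from the paper's in a crucial way, and this difference is the source of a genuine gap. You use the \emph{standard} flag valuation $\nu_{Y_\bullet}$ with $Y_i\subseteq E_1\cap\cdots\cap E_i$, so that the first $r$ coordinates of $\nu(f)$ record the lex-minimal exponent in the Taylor expansion. The paper instead builds a \emph{new} $\bZ^n$-valuation $\nu_\bullet$ whose first $r$ coordinates are the index $\beta_v(f)$ of the \emph{unique} $\alpha$-minimizing monomial (uniqueness uses the $\bQ$-linear independence of $\alpha_1,\dots,\alpha_r$), and whose last $n-r$ coordinates come from a flag on $Z$ applied to the coefficient $c_{\beta_v(f)}$. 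With this choice one has $\alpha\cdot(\nu_\bullet(f))_{\leq r}=v(f)$ identically, so
\[
\Gamma_m(V_\bullet^t(v))=\Gamma_m(L)\cap\Bigl\{x:\textstyle\sum_{i=1}^r\alpha_ix_i\geq mt\Bigr\}
\]
holds on the nose and (a) is immediate; (b) is then read off directly, and (c) is the standard volume identity. The technical work is pushed into verifying that $\nu_\bullet$ really is a $\bZ^n$-valuation and that $c_{\beta_v(f)}$ is regular at the flag point.

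With the standard flag, the identity above can already fail at the level of $\Gamma_m$: if a section looks locally like $z_1+z_2^2$ with $\alpha=(1,\sqrt2)$, then $\nu(s)_{\leq 2}=(0,2)$ has $\alpha$-weight $2\sqrt2$, yet $v(s)=1$; if no companion section cancels the $z_1$ term, the point $(0,2)$ lies in $\Gamma_m\cap\{\alpha\cdot x\geq mt\}$ for $1<mt<2\sqrt2$ but not in $\Gamma_m(V_m^t)$. Whether this obstruction survives to the convex bodies is exactly the content of your ``principal technical obstacle,'' which you leave open. Moreover, your approximation step has the inclusion reversed: if $\alpha^{(k)}\geq\alpha$ then $v^{(k)}\geq v$, hence $V^t(v)\subseteq V^t(v^{(k)})$ and $\Delta^t\subseteq\Delta^t_{(k)}$, not the other way around. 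Approximating from below fixes the direction, but you are still left needing the divisorial slicing statement for a flag that does \emph{not} start with the exceptional divisor of the weighted blow-up, which is precisely the difficulty the paper's construction is designed to bypass.
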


A key feature of these Newton--Okounkov bodies $\Delta^t$ in Theorem~\ref{mainthm:okounkov_bodies_wrt_qm_val} is that one can directly read off the $S$-invariant (see Lemma-Definition~\ref{lemdefn:S_tau_is_a_limit} and Remark~\ref{rem:S_tau_invariant_interpolates_S_and_T}) and the $T$-invariant of $v$, and approximate them using the integral points of $\Delta^t$. Once such a family of Newton--Okounkov bodies is constructed, techniques in \cite{JRT25a} yield asymptotic bounds for $S(v)$ and $T(v)$.\\

\noindent\textbf{Strategy of proof.} 
The proof of Theorem \ref{mainthm:delta_is_computed_by_QM_vals} is motivated by Blum--Jonsson's proof in the ample case \cite{BJ20}*{Theorem E}, which consists of four steps:
\begin{itemize}
\item[(1)] Suppose $\{v_i:i\geq 1\}$ is a sequence of valuations with $\delta(v_i) \searrow \delta(L)$. Each $v_i$ can be parametrized as a filtration $\mathcal{F}_i$ on the section ring of $L$, which corresponds to a point on the (product of) flag varieties.
\item[(2)] Using a generic limit argument, one obtains a limiting filtration $\mathcal{F}$ of all $\mathcal{F}_i$.
\item[(3)] The limiting valuation $v$ of $v_i$ can be constructed as any valuation computing the log canonical threshold of the base ideal sequence associated to $\mathcal{F}$. $v$ can be chosen to be quasi-monomial by \cite{Xu20}.
\item[(4)] Finally, one shows that $\delta(v) = \delta(L)$.
\end{itemize}
The key input in step (4) is a uniform Fujita type approximation of the $S$-invariant. \cite{BJ20}*{Section 5.1} defines the $\Tilde{S}_m$-invariants for any filtration, which are modifications of the $S_m$-invariants but exhibit smoother asymptotic behavior. Then \cite{BJ20}*{Theorem 5.3} shows that $\Tilde{S}_m(v)$ uniformly approximates $S(v)$ for any valuation $v$ with finite log discrepancy.

Unfortunately, the proof of the uniform Fujita type approximation result in \cite{BJ20} does not directly generalize to the case where $L$ is big.
Because of this, our proof of Theorem~\ref{mainthm:delta_is_computed_by_QM_vals} differs in several ways. Firstly, we parametrize not only filtrations $\mathcal{F}$ on the section ring of $L$, but also compatible filtrations $\mathcal{G}$ on auxiliary linear series $|mL+B|$ for some fixed ample divisor $B$ (see Section~\ref{subsec:parametrize_filtrations}). Secondly, we modify the definition of the $\Tilde{S}_m$-invariants in terms of the auxiliary filtration $\mathcal{G}$ (see Definition~\ref{defn:S_tilde_m_invariants}), and show that $\Tilde{S}_m(\cG)$ converges to $S(\mathcal{F})$ in certain cases (see Theorem~\ref{thm:uniform_fujita_approx_BJ_thm_5.3} and Lemma~\ref{lem:limit_of_S_tau_m_is_S}). Thirdly, in the generic limit argument, we need to control additional invariants to ensure that the limiting filtration would satisfy extra properties (see step 2 of Section~\ref{subsec:proof_of_theorem_6.1}). Finally, to prove the general version of Theorem~\ref{mainthm:JRT25a_without_assumptions} for the $\delta^\tau$-invariants, we need to replace the $S$-invariants by the $S^\tau$-invariants and adapt the arguments above to this setting. 

Once Theorem~\ref{mainthm:delta_is_computed_by_QM_vals} is established and a minimizing quasi-monomial valuation $v$ is found, we construct Newton--Okounkov bodies that capture asymptotic properties of the graded linear series induced by $v$ as in Theorem~\ref{mainthm:okounkov_bodies_wrt_qm_val}. When $v$ is the divisorial valuation induced by the divisor $E$, Theorem~\ref{mainthm:okounkov_bodies_wrt_qm_val} is known in \cite{LM09} and \cite{BC11}, by taking an admissible flag in which the first subvariety is $E$, and constructing the associated Newton-Okounkov bodies. In the non-divisorial case, we combine this strategy with ideas in \cite{KK12}, \cite{Bou14} and use a $\bZ^n$-valued valuation $\nu_\bullet$ to construct these bodies. For any rational function $f$, if we expand $f$ as a power series in the local coordinates given by $E_1,\ldots,E_r$, then the first $r$ coordinates of $\nu_\bullet(f)$ will be the indices of the lowest-order term in the expansion of $f$ with respect to the weight vector $\alpha$. The last $n-r$ coordinates of $\nu_\bullet(f)$ are determined by a suitable restriction of $f$ on $Z$ and an admissible flag on $Z$, as in \cite{LM09}. 

Finally, to prove Theorem~\ref{mainthm:JRT25a_without_assumptions}, we can apply Theorem~\ref{mainthm:okounkov_bodies_wrt_qm_val} to construct Newton--Okounkov bodies with respect to the minimizing quasi-monomial valuations given by Theorem~\ref{mainthm:delta_is_computed_by_QM_vals}. Then the uniform estimates of lattice point counts of these Newton--Okounkov bodies yield bounds on the the $\delta_m$-invariants, as argued in \cite{JRT25a}. The bounds on the $\alpha_m$-invariants can be deduced from the uniform Fujita type approximation of the $T$-invariant (Theorem~\ref{thm:uniform_fujita_approx_T}). Thus, we obtain Theorem~\ref{mainthm:JRT25a_without_assumptions}.

We would like to note that our proof strategy of Theorem~\ref{mainthm:JRT25a_without_assumptions} takes a roundabout route. The first ingredient of the proof of Theorem~\ref{mainthm:delta_is_computed_by_QM_vals} is based on \cite{Xu20}, which further relies on deep machinery of minimal model programs such as the boundedness of complements \cite{Bir19}. A seemingly more direct approach would be showing that there is a \textit{uniform} bound on $|S_m(v) - S(v)|$ for all (divisorial) valuations $v$. However, we have not succeeded in making this direct approach work.\\

\noindent\textbf{Outline of the paper.} In Section~\ref{sec:preliminary}, we recall standard results on quasi-monomial valuations, graded linear series, and stability invariants. In Section~\ref{sec:okounkov_bodies}, we construct Newton--Okounkov bodies of big line bundles with respect to quasi-monomial valuations. We then prove Theorem~\ref{mainthm:okounkov_bodies_wrt_qm_val} at the end of this section. In Section~\ref{sec:qm_vals_computing_stability_invariants}, we prove Theorem~\ref{thm:qm_val_computing_delta_tau}, from which Theorem~\ref{mainthm:delta_is_computed_by_QM_vals} follows as a special case. Finally, in Section~\ref{sec:asymptotics_of_stability_thresholds}, we prove Theorem \ref{mainthm:JRT25a_without_assumptions} based on the arguments in \cite{JRT25a}.

In Appendix~\ref{sec:variation_okounkov_bodies}, we study variations of Newton--Okounkov bodies constructed in Section~\ref{sec:okounkov_bodies}. Unlike \cite{LM09}*{Section 4} in which line bundles vary, we fix the line bundle and instead vary the weight vectors of quasi-monomial valuations on some fixed strata. We also give a nontrivial example of variations of Newton--Okounkov bodies in Section~\ref{subsec:explicit_variations_of_okunkov_bodies_on_P2}. In Appendix~\ref{sec:weighted_proj_stacks}, we recall some fundamental results about weighted projective stacks and weighted blow-ups, which can be used to study quasi-monomial valuations. 
\\

\noindent\textbf{Acknowledgement.} The author would like to thank his advisor Chenyang Xu for the suggestion of this problem, many detailed discussions, and providing insights into related topics. The author thanks Ziquan Zhuang for pointing out an example in \cite{AZ22} where the $\delta$-invariant is not computed by any divisorial valuation. The author is also grateful to Harold Blum, S{\'e}bastien Boucksom, Zhiyuan Chen, Chenzi Jin, and Mattias Jonsson for helpful discussions and comments. The author appreciates the Simons Collaboration on Moduli of Varieties for organizing many inspiring events and fostering a supportive community. The author is supported by NSF Grant DMS-2201349.

\section{Preliminary results}\label{sec:preliminary}
Throughout this paper, we work over the field of complex numbers $\bC$. We follow the standard terminology from \cite{KM98}, \cite{Kol13}, and \cite{Xu25}. We denote $\bN$ as the set of nonnegative integers, $\bZ_{>0}$ the set of positive integers, and $\bR_{>0}$ the set of positive real numbers.

Let $X$ be a normal projective variety. \textit{A graded ideal sequence $\fa_\bullet$ on $X$} is a sequence of ideals $\{\fa_p\subseteq \cO_X: p\in \bN\}$ such that $\fa_0 = \cO_X$ and $\fa_p \cdot \fa_q \subseteq \fa_{p+q}$ for all $p,q\in\bN$. Let $L$ be a line bundle on $X$. We denote $|L|$ as the complete linear system of $L$. For a sublinear system $V\subseteq H^0(L)$, we denote $\fb(|V|)$ as the base ideal of $V$.

Let $\Gamma$ be a totally ordered abelian group. We say that a function $v: K(X) \to \Gamma \cup \{\infty\}$ is a \textit{$\Gamma$-valuation on $X$} if $v$ is trivial on $\bC^\times \subseteq K(X)^\times$, $v(f) = \infty$ if and only if $f = 0$, and for any $f_1,f_2\in K(X)^\times$, we have 
        \[
        v(f_1f_2) = v(f_1) + v(f_2) \quad \text{and} \quad  v(f_1 + f_2) \geq \min \{v(f_1), v(f_2)\}.
        \]
\textit{The value group of $v$} is the image of $K(X)^\times$ in $\Gamma$. An $\bR$-valuation on $X$ is also called \textit{a valuation on $X$}. For a valuation $v$ on $X$ and $\lambda \geq 0$, we set
    \[
    \fa_\lambda (v) := \{f\in \cO_X: v(f)\geq \lambda\}.
    \]
Then $\fa_\bullet(v)$ is a graded ideal sequence and it is called \textit{the valuation ideal sequence associated to $v$}.

Let $D$ be an effective $\bQ$-divisor on $X$. We say that $(X,D)$ is a \textit{log pair} if $X$ is normal and $K_X+D$ is $\bQ$-Cartier. We say that $(X,D)$ is \textit{a klt pair} if it is a log pair with klt singularities. For a log pair $(X,D)$, we denote $A_{X,D}(v)$ as the log discrepancy of $v$ with respect to $(X,D)$ (see \cite{JM12}*{Section 5} for the definition). We denote $\Val_X^*, \Val_X^{\prindiv}$, and $\Val_{X,D}^{<\infty}$ as the set of non-trivial valuations, the set of divisorial valuations, and the set of valuations with finite log discrepancy with respect to $(X,D)$, respectively. Furthermore, if $(X,D)$ is a klt pair and $\fb$ is an ideal on $X$, we denote $\lct(X,D; \fb)$ as the log canonical threshold of $\fb$ with respect to $(X,D)$. When the pair $(X,D)$ is clear from the context, we sometimes also denote $A(v) = A_{X,D}(v)$ and $\lct(\fb) = \lct(X,D;\fb)$.

Throughout this article, we will define several invariants on the set of filtrations (see Definition~\ref{defn:mult_filt} and Definition~\ref{defn:compatible_filtration}). If $I$ is such an invariant and $\cF_v$ is the filtration induced by a valuation $v$ (see Definition~\ref{defn:filtration_induced_by_a_valuation} and Example~\ref{example:compatible_filtration_induced_by_valuation}), we will also denote $I(v) = I(\cF_v)$. 

\subsection{Quasi-monomial valuations}
Let $X$ be a smooth variety and let $D$ be a simple normal crossing divisor on $X$. Suppose $E$ has $r$ irreducible components $D_1,\ldots, D_r$ and $Z\subseteq \bigcap_{i=1}^r D_i$ is a (non-empty) irreducible component. Let $\eta$ denote the generic point of $Z$. Suppose around $\eta$, each $D_i$ is cut out by a function $x_i\in \cO_{X,\eta}$. By the Cohen structure theorem, there is an isomorphism $\hat{\cO}_{X,\eta}\cong K(Z)[[x_1,\ldots,x_r]]$. Thus, we can write any $f\in \hat{\cO}_{X,\eta}$ as a power series
\begin{equation}\label{eqn:power_series_expansion}
f = \sum_{\beta\in \bN^r} c_\beta x^\beta, \text{ where }x^\beta = \prod_{i=1}^r x_i^{\beta_i} \text{ and } c_\beta \in K(Z).
\end{equation}
\begin{defn}[\cite{JM12}*{Section 3.1}]\label{defn:qm_val}
    For any $\alpha = (\alpha_1,\ldots,\alpha_r)\in \bR_{\geq 0}^r$, we define a valuation $v_\alpha$ on $\cO_{X,\eta}$ as follows. For any $f\in \cO_{X,\eta}$ whose image in $\hat{\cO}_{X,\eta}$ is written in the power series form \eqref{eqn:power_series_expansion}, we set
    \[
    v_\alpha(f) = \min\left\{\alpha\cdot \beta = \sum_{i=1}^r \alpha_i\beta_i: \beta\in \bN^r, c_\beta \neq 0\right\}.
    \]
    We extend $v_\alpha$ to a valuation on $X$. We denote
    \[
    \QM_\eta(X,D) = \QM_Z(X,D) = \{v_\alpha: \alpha\in \bR_{\geq 0}^r\}
    \]
    and we call $v_\alpha$ the \emph{quasi-monomial valuation in $\QM_\eta(X,D)$ with weight vector $\alpha$}. Furthermore, we denote
    \[
    \QM(X,D) = \bigcup_{W} \QM_W(X,D),
    \]
    where $W$ runs through every irreducible stratum of $D$. We say that $v$ is a \emph{quasi-monomial valuation on $X$} if there is a log resolution $(Y,E)\to X$ such that $v\in \QM(Y,E)$.

    The \emph{rational rank} (\emph{$\bQ$-rank}) \emph{of a quasi-monomial valuation $v$} is the rank of the value group of $v$. In particular, if $v\in \QM_\eta(X,D)$ has weight vector $\alpha\in \bR^r_{\geq 0}$, then the $\bQ$-rank of $v$ is the dimension of the $\bQ$-vector space spanned by $\{\alpha_i:1\leq i\leq r\}$.
\end{defn}

\begin{rem}\label{remark:qm_val_wrt_nc_divisors}
In fact, $v_\alpha$ and $\QM_\eta(X,D)$ in Definition~\ref{defn:qm_val} still makes sense if one relax the condition ``$(X,D)$ is a simple normal crossing pair" to the following weaker condition: $(X,D)$ is a normal crossing pair in a neighborhood of $\eta$, where $\eta$ is a log canonical center of the pair $(X,D)$. (In fact, one only needs that local equations of $D_i$ in $\cO_{X,\eta}$ form a regular sequence.) This is because around $\eta$, every $f\in \cO_{X,\eta}$ still admits a power series expansion \eqref{eqn:power_series_expansion}.
\end{rem}

\begin{rem}
There is a natural $\bR_{>0}$-action on $\QM(X,D)$ which rescales any valuation by a positive constant. After putting a suitable topology on $\QM(X,D)\setminus \{0\}$, this $\bR_{>0}$-action will be free and continuous, and the quotient space $(\QM(X,D)\setminus \{0\})/\bR_{>0}$ can be identified with the dual complex of $D$.
\end{rem}

\begin{rem}
By \cite{JM12}*{Proposition 3.7}, a valuation $v$ on $X$ is quasi-monomial if and only if it is \emph{an Abhyankar valuation}, i.e., it satisfies the equality
\[
\trdeg(v) + \bQ\text{-rank}(v) = \dim X.
\]
\end{rem}

\subsection{Graded linear series and filtrations}
Let $L$ be a big line bundle on an $n$-dimensional normal projective variety $X$. For each $m\in \bN$, we denote $R_m = H^0(X,mL)$ and $R = \bigoplus_{m\in \bN} R_m$.
\begin{defn}\label{defn:graded_linear_series_and_contains_ample_series}
    \emph{A graded linear series of $L$} is a graded subalgebra
    \[
    V_\bullet = \bigoplus_{m\in \bN} V_m\subseteq \bigoplus_{m\in \bN} R_m = R.
    \]
    \emph{The volume of $V_\bullet$} is the limit
    \[
    \vol(V_\bullet):= \frac{1}{n!}\lim_{m\to\infty} \frac{\dim V_m}{m^n},
    \]
    which exists by \cite{Bou14}*{Theorem 3.7}.
    We say that \emph{$V_\bullet$ contains an ample series} if $V_m\neq 0$ for all sufficiently large $m$, and there exists an ample $\bQ$-Cartier $\bQ$-divisor $A\leq L$ such that
    \[
    H^0(X, mA) \subseteq V_m\subseteq R_m
    \]
    for all sufficiently divisible $m$.
\end{defn}

\begin{defn}\label{defn:mult_filt}
    \emph{A filtration $\cF$ on $R$} consists of a family 
    \[
    \cF^\lambda R_m\subseteq R_m
    \]
    of vector subspaces for all $m\in \bN$ and $\lambda\in \bR$ such that the following holds:
    \begin{itemize}
        \item (Decreasing) $\cF^\lambda R_m\subseteq \cF^{\lambda'}R_m$ for all $\lambda > \lambda'$;
        \item (Left-continuous) $\cF^\lambda R_m = \bigcap_{\lambda'<\lambda}\cF^{\lambda'}R_m$;
        \item (Bounded) $\cF^0 R_m  = R_m$ and $\cF^\lambda R_m = 0$ for $\lambda \gg 0$;
        \item (Multiplicative) $\cF^\lambda R_m \cdot \cF^{\lambda'} R_{m'}\subseteq \cF^{\lambda+\lambda'} R_{m+m'} $ for all $\lambda,\lambda'\in \bR$ and $m,m'\in \bN$.
    \end{itemize}
    For each $m\in \bN$, set
    \[
    T_m(\cF) := \frac{1}{m}\sup\{\lambda \geq 0: \cF^\lambda R_m \neq 0\} \quad\text{and}\quad T(\cF) := \sup\left\{T_m(\cF) : m\in\bN\right\}.
    \]
    We say that $\cF$ is \emph{linearly bounded} if $T(\cF)$ is finite.
\end{defn}
By Feteke's lemma, we also have 
    \[
    T(\cF) = \lim_{m\to\infty} T_m(\cF)
    \]
    for any filtration $\cF$ on $R$.

\begin{defn}\label{defn:graded_linear_series_induced_by_filtrations}
    Let $\cF$ be a filtration on $R$. For any $t\in \bR_{\geq 0}$, we define $V_\bullet^t(\cF)$ as the graded linear series given by
    \[
    V_\bullet^t(\cF) = \bigoplus_{m\in \bN} V_m^t(\cF) := \cF^{mt}R_m.
    \]
\end{defn}

\begin{lem}[\cite{BC11}*{Lemma 1.6}, \cite{BJ20}*{Section 2.4}]\label{lem:V_bullet^t_contains_ample_linear_series}
For any $t < T(\cF)$, $V_\bullet^t(\cF)$ contains an ample linear series. As a result, we have
\[
T(\cF) = \sup\{t\geq 0: \vol(V_\bullet^t(\cF)) > 0\}.
\]
\end{lem}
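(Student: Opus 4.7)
I would combine a ``shifting section'' extracted from the filtration with Kodaira's lemma for the big line bundle $L$ to exhibit an explicit ample subsystem of $V_\bullet^t(\cF)$.

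First, by Fekete's lemma applied to the superadditive sequence $m\mapsto mT_m(\cF)$ (superadditivity uses that $X$ is integral, so the product of two nonzero sections is nonzero), $T(\cF)=\lim_{m\to\infty}T_m(\cF)$. Fix $t_1\in(t,T(\cF))$ and $m_0\geq 1$ with $T_{m_0}(\cF)>t_1$, and pick a nonzero $s\in\cF^{m_0 t_1}R_{m_0}$; by multiplicativity $s^k\in\cF^{km_0 t_1}R_{km_0}$ for every $k\geq 1$. Since $L$ is big, Kodaira's lemma produces an ample $\bQ$-Cartier $\bQ$-divisor $A_0$ and an effective $\bQ$-divisor $E_0$ with $L\sim_\bQ A_0+E_0$. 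Fix any rational $c\in(0,(t_1-t)/t_1)$ and set $A:=cA_0$; then $A$ is ample $\bQ$-Cartier and $A\leq L$ since $L-A\sim_\bQ E_0+(1-c)A_0$ is $\bQ$-effective.

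The key construction is, for all sufficiently divisible $m\gg 0$, an injection $H^0(X,mA)\hookrightarrow V_m^t(\cF)$. Set $k:=\lceil mt/(m_0 t_1)\rceil$ and $m_1:=m-km_0$, so by construction $km_0 t_1\geq mt$, while for $m$ large one has $m_1\geq m(t_1-t)/t_1-m_0\geq mc$ (using $c<(t_1-t)/t_1$). Hence $m_1 L-mA\sim_\bQ (m_1-mc)A_0+m_1 E_0$ is $\bQ$-effective; choosing an integral effective representative (possible for $m$ sufficiently divisible) and multiplying by its defining section gives an inclusion $H^0(X,mA)\hookrightarrow H^0(X,m_1 L)$. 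Composing with multiplication by $s^k$ yields
\[
H^0(X,mA)\hookrightarrow H^0(X,m_1 L)\xrightarrow{\;\cdot\, s^k\;}\cF^{km_0 t_1}R_m\subseteq \cF^{mt}R_m=V_m^t(\cF),
\]
where the final inclusion uses $km_0 t_1\geq mt$ and the decreasing property of $\cF$. This proves $V_\bullet^t(\cF)$ contains an ample series.

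The volume identity is then immediate: for $t<T(\cF)$, the ample series yields $\vol(V_\bullet^t(\cF))\geq \vol(A)>0$, whereas for $t>T(\cF)$ we have $t>T_m(\cF)$ for every $m$, forcing $V_m^t(\cF)=0$ and $\vol(V_\bullet^t(\cF))=0$; taking suprema gives $T(\cF)=\sup\{t\geq 0:\vol(V_\bullet^t(\cF))>0\}$. I expect the main technical subtlety to be coordinating the integer-part rounding in the choice of $k$ and $m_1$ with the divisibility needed to make $mA$ and $m_1 E_0$ integral Cartier; the slack $c<(t_1-t)/t_1$ is introduced precisely to absorb these rounding errors, so that $m_1\geq mc$ holds uniformly for all large divisible $m$.
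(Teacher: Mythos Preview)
Your approach is the standard one (essentially the argument in \cite{BC11}*{Lemma 1.6}, which the paper simply cites without reproducing), and the overall strategy is correct. Two small points deserve tightening.

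First, Definition~\ref{defn:graded_linear_series_and_contains_ample_series} requires $V_m^t\neq 0$ for \emph{all} sufficiently large $m$, not only for sufficiently divisible $m$; you should note separately that for any large $m$ one can write $m=jm_0+r$ with $r$ in a fixed range where $R_r\neq 0$ (using bigness of $L$), and then $s^j\cdot R_r\subseteq \cF^{jm_0t_1}R_m\subseteq V_m^t$ once $m\gg 0$.

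Second, your slack $c<(t_1-t)/t_1$ guarantees the inequality $m_1\geq mc$, but it does \emph{not} by itself guarantee that the Cartier divisor $m_1L-mA$ has a section: being $\bQ$-linearly equivalent to an effective $\bQ$-divisor is not sufficient (think of a nontrivial torsion line bundle). The clean fix is to first replace $m_0$ by $dm_0$ (and $s$ by $s^d$), where $d$ is chosen so that $dA_0$, $dE_0$ are integral Cartier and $dL\sim dA_0+dE_0$ as an honest linear equivalence. Then for $m$ divisible by $dq$ (with $c=p/q$) one has $d\mid m_1=m-km_0$, hence $m_1L-mA\sim (m_1-mc)A_0+m_1E_0$ is linearly equivalent to an effective integral Cartier divisor, and the desired section exists. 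With this adjustment your injection $H^0(X,mA)\hookrightarrow V_m^t(\cF)$ goes through, and the remainder of the argument (including the volume identity) is fine.
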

We recall the following log-concave property of the volume function.
\begin{lem}\label{lem:log_concavity_vol}
    The function $t\mapsto \vol(V_\bullet^t(\cF))^{1/n}$ is log-concave and continuous on $[0, T(\cF))$. 
\end{lem}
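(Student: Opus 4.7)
My approach is to deduce both assertions from the stronger claim that $f(t):=\vol(V_\bullet^t(\cF))^{1/n}$ is in fact \emph{concave} on $[0,T(\cF))$. Since $f>0$ on this interval by Lemma~\ref{lem:V_bullet^t_contains_ample_linear_series}, concavity automatically implies log-concavity (the logarithm of a positive concave function is concave, as $(\log f)''=f''/f-(f'/f)^{2}\leq 0$ whenever $f>0$ and $f''\leq 0$). Concavity on the open interval $(0,T(\cF))$ gives continuity there; continuity at $t=0$ follows from the obvious upper bound $f(t)\leq f(0)=\vol(L)^{1/n}$ (from $V_\bullet^t(\cF)\subseteq V_\bullet^0(\cF)=R$) combined with concavity, since for $0<s<t<T(\cF)$, concavity gives $f(s)\geq \tfrac{t-s}{t}f(0)+\tfrac{s}{t}f(t)$, which forces $\liminf_{s\to 0^+}f(s)\geq f(0)$.

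To establish concavity, I would use Newton--Okounkov bodies. Fix a faithful rank-$n$ $\bZ^n$-valued valuation $\nu$ on $K(X)$ (for instance, the one attached to an admissible flag on $X$), and for each $t\in[0,T(\cF))$ form
\[
\Delta^t\;:=\;\overline{\bigcup_{m\geq 1}\tfrac{1}{m}\,\nu\bigl(V_m^t(\cF)\setminus\{0\}\bigr)}\;\subseteq\;\bR^n.
\]
Because $V_\bullet^t(\cF)$ contains an ample series for every $t\in[0,T(\cF))$ by Lemma~\ref{lem:V_bullet^t_contains_ample_linear_series}, the Okounkov body construction of \cite{LM09}, \cite{KK12}, and \cite{Bou14} shows that $\Delta^t$ is a convex body with $n!\,\vol(\Delta^t)=\vol(V_\bullet^t(\cF))$. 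By the Brunn--Minkowski inequality for convex bodies, concavity of $t\mapsto\vol(\Delta^t)^{1/n}$ would follow from the Minkowski inclusion
\[
\lambda\,\Delta^{t_1}+(1-\lambda)\,\Delta^{t_2}\;\subseteq\;\Delta^{\lambda t_1+(1-\lambda)t_2}
\qquad\bigl(t_1,t_2\in[0,T(\cF)),\ \lambda\in[0,1]\bigr).
\]

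The Minkowski inclusion is where the multiplicativity of $\cF$ enters, and this is the main technical step. Given $s_i\in \cF^{m_i t_i}R_{m_i}$ and positive integers $a_1,a_2$, the product $s_1^{a_1}s_2^{a_2}$ lies in $\cF^{a_1m_1t_1+a_2m_2t_2}R_{a_1m_1+a_2m_2}$ and has $\nu$-value $a_1\nu(s_1)+a_2\nu(s_2)$; rescaling by $1/(a_1m_1+a_2m_2)$ produces the convex combination $\tfrac{a_1m_1}{a_1m_1+a_2m_2}\tfrac{\nu(s_1)}{m_1}+\tfrac{a_2m_2}{a_1m_1+a_2m_2}\tfrac{\nu(s_2)}{m_2}$ lying in $\Delta^{(a_1m_1t_1+a_2m_2t_2)/(a_1m_1+a_2m_2)}$. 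For rational $\lambda=p/q\in[0,1]$, choosing $a_1=pm_2$ and $a_2=(q-p)m_1$ makes the weight ratio exactly $\lambda$, and taking closures over all such $s_i$ yields the inclusion for rational $\lambda$. The general $\lambda\in[0,1]$ is obtained by approximating by rationals $\lambda_k\to\lambda$ chosen so that $\lambda_kt_1+(1-\lambda_k)t_2\geq \lambda t_1+(1-\lambda)t_2$, and using the monotonicity $\Delta^t\supseteq\Delta^{t'}$ for $t\leq t'$ (immediate from $V_\bullet^t(\cF)\supseteq V_\bullet^{t'}(\cF)$) to keep the Minkowski combinations in the correct body in the limit. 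The bookkeeping with these multiplicative rescalings is the only delicate point; everything else is the standard Okounkov body machinery together with Brunn--Minkowski.
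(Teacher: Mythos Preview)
Your proposal is correct and follows essentially the same approach as the paper: both construct Newton--Okounkov bodies $\Delta^t$ for the graded linear series $V_\bullet^t(\cF)$ (using a fixed admissible flag), invoke the Minkowski inclusion coming from multiplicativity of $\cF$, and apply Brunn--Minkowski to conclude concavity of $t\mapsto \vol(\Delta^t)^{1/n}$, from which log-concavity and continuity follow. The paper's proof simply defers the details of the Minkowski inclusion to \cite{BKMS15}*{Lemma 2.22} or \cite{Xu25}*{Proposition 3.19}, whereas you have spelled them out explicitly.
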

\begin{proof}
    Let $\Delta^t$ be the Newton--Okounkov body associated to the graded linear series $V_\bullet^t(\cF)$ (with respect to a fixed admissible flag on $X$). Then $\vol(V_\bullet^t(\cF)) = n! \vol(\Delta^t)$ and the log-concavity property follows from the Brunn-Minkowski inequality for convex bodies (details can be found in \cite{BKMS15}*{Lemma 2.22} or \cite{Xu25}*{Proposition 3.19}). Continuity follows from log-concavity and monotonicity.  
\end{proof}

\begin{defn}\label{defn:filtration_induced_by_a_valuation}
    Let $v$ be a valuation on $X$. Let $\cF_v$ be the filtration on on $R$ given by
    \[
    \cF_v^\lambda R_m := \{s\in R_m: v(s)\geq \lambda\}
    \]
    for all $m\in \bN$ and $\lambda\in \bR$. $\cF_v$ is called \emph{the filtration induced by $v$}.
\end{defn}

We say that a valuation $v$ \emph{has linear growth with respect to a big line bundle $L$} if the induced filtration $\cF_v$ on the section ring of $L$ is linearly bounded. By \cite{BKMS15}*{Lemma 2.8}, this notion does not depend on the choice of big line bundles. Thus, it makes no ambiguity to say that a valuation $v$ \emph{has linear growth} without reference to any big line bundle. 

A large class of valuations have linear growth. For example, any quasi-monomial valuation has linear growth by \cite{BKMS15}*{Proposition 2.12}. If $(X,D)$ has klt singularities and $A_{X,D}(v) < \infty$, then $v$ has linear growth by \cite{BJ20}*{Lemma 3.1}. If $v$ is a valuation centered at a closed point $x\in X$, then \cite{BKMS15}*{Theorem 2.16} shows that $v$ has linear growth if and only if
\[
\vol(v) := \lim_{m\to\infty} \frac{n!}{m^n} \text{length}(\cO_{X,x}/\fa_m(v)) > 0.
\]

\subsection{A family of stability invariants} 
Let $X$ be a $n$-dimensional normal projective variety and $L$ a big line bundle on $X$. Let $\cF$ be a \textit{linearly bounded} filtration on $R = \bigoplus_{m\in \bN} R_m = \bigoplus_{m\in \bN} H^0(X,mL)$. Denote $N_m = \dim R_m$. 

\begin{defn}
    The \emph{jumping numbers of $\cF$} are
    \[
    j_{m,k}(\cF) :=  \sup \{\lambda\geq 0: \dim \cF^\lambda R_m\geq k\}
    \]
    for $m\in\bN$ and $k\in \{1,2,\ldots, N_m\}$.
\end{defn}
By the left-continuous property of $\cF$ (Definition~\ref{defn:mult_filt}), one can replace ``sup" by ``max" in the definition of $j_{m,k}(\cF)$. The jumping numbers of $\cF$ satisfy
\[
mT_m(\cF) = j_{m,1}(\cF) \geq j_{m,2}(\cF) \geq \cdots \geq j_{m,N_m}(\cF) \geq 0.
\]

\begin{defn}[\cite{JRT25a}*{Section 2.3}]\label{defn:S_m,k_invariant}
    For $m\in \bN$ and $k\in \{1,2,\ldots, N_m\}$, define 
    \[
    S_{m,k}(\cF) := \frac{1}{mk}\sum_{\ell=1}^{k} j_{m,\ell}(\cF)
    \]
    Equivalently (by \cite{JRT25a}*{Lemma 2.13}), we have
    \[
    S_{m,k}(\cF) = \frac{1}{mk}\max\left\{\sum_{\ell=1}^k \lambda(s_\ell): s_1,\ldots, s_{k}\in R_m \text{ are linearly independent}\right\},
    \]
    where $\lambda(s) = \sup\{\lambda: s\in \cF^\lambda R_m\}$.
\end{defn}

\begin{lemdefn}[\cite{JRT25a}*{Proposition 4.12}]\label{lemdefn:S_tau_is_a_limit}
    Let $\{k_m\in \bZ: 1\leq k_m\leq N_m, m\in \bN\}$ be a sequence of positive integers and assume $\tau := \lim_{m\to\infty} \frac{k_m}{N_m}$ exists. Then the limit
    \[
    S^\tau(\cF) := \lim_{m\to\infty} S_{m, k_m}(\cF)
    \]
    exists and does not depend on the choice of $\{k_m: m\in \bN\}$.
\end{lemdefn}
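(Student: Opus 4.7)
The plan is to reinterpret $S_{m,k_m}(\cF)$ as a Riemann-sum approximation against the limiting distribution of normalized jumping numbers, and to extract that limiting distribution from the Newton--Okounkov body / concave transform of $\cF$ \`a la Boucksom--Chen.

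\textbf{Step 1: Organize the jumping numbers as a decreasing step function.} For each $m$, define
\[
F_m : (0,1] \longrightarrow [0, T(\cF)+1], \qquad F_m(x) := \frac{j_{m, \lceil x N_m\rceil}(\cF)}{m},
\]
which is non-increasing in $x$. A direct rewriting gives
\[
S_{m,k_m}(\cF) \;=\; \frac{N_m}{k_m}\cdot \frac{1}{N_m}\sum_{\ell=1}^{k_m} F_m\!\left(\frac{\ell}{N_m}\right),
\]
exhibiting $S_{m,k_m}(\cF)$ as a rescaled Riemann sum of $F_m$ over $[0, k_m/N_m]$. Since $N_m/k_m \to 1/\tau$ when $\tau>0$, the problem reduces to showing that $F_m$ converges in a sufficiently strong sense to some non-increasing function $F_\infty : (0,1]\to [0,T(\cF)]$ that depends only on $\cF$, not on $\{k_m\}$.

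\textbf{Step 2: Produce $F_\infty$ via the concave transform.} Fix an admissible flag on $X$ and let $\Delta\subset \bR^n$ be the Newton--Okounkov body of the section ring of $L$. Because $\cF$ is linearly bounded, Lemma~\ref{lem:V_bullet^t_contains_ample_linear_series} ensures that for every $t<T(\cF)$ the graded series $V^t_\bullet(\cF)$ contains an ample series, so the Boucksom--Chen construction \cite{BC11} applies and produces a concave transform $G_\cF:\Delta\to[0,T(\cF)]$. Its defining property is that the empirical measures
\[
\mu_m := \frac{1}{N_m}\sum_{k=1}^{N_m}\delta_{j_{m,k}(\cF)/m}
\]
converge weakly, as $m\to\infty$, to $\mu_\cF := (G_\cF)_{*}\!\bigl(\vol(\Delta)^{-1}\, d\mathrm{Leb}|_\Delta\bigr)$. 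Weak convergence of $\mu_m\to\mu_\cF$ is equivalent to pointwise convergence of the survival functions $s\mapsto \mu_m([s,\infty))$ at every continuity point of $\mu_\cF$, which in turn forces the decreasing rearrangements $F_m$ to converge almost everywhere on $(0,1]$ to the decreasing rearrangement $F_\infty$ of $\mu_\cF$.

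\textbf{Step 3: Pass to the Riemann-sum limit.} The functions $F_m$ are non-increasing and uniformly bounded (by $T_m(\cF)\to T(\cF)$) and converge a.e.\ to the non-increasing function $F_\infty$; by a standard bounded-convergence plus Riemann-sum argument for monotone functions, for every $\tau\in(0,1]$,
\[
\frac{1}{N_m}\sum_{\ell=1}^{k_m} F_m\!\left(\frac{\ell}{N_m}\right) \ \longrightarrow\ \int_0^{\tau} F_\infty(x)\, dx
\qquad\text{whenever } k_m/N_m \to \tau.
\]
Multiplying by $N_m/k_m \to 1/\tau$ gives
\[
S_{m, k_m}(\cF) \ \longrightarrow\ \frac{1}{\tau}\int_0^\tau F_\infty(x)\, dx =: S^\tau(\cF),
\]
and the right-hand side is manifestly independent of the particular sequence $\{k_m\}$. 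The boundary case $\tau = 0$ is handled separately: since $F_m(x) \le T_m(\cF) \to T(\cF)$ and the top normalized jumping numbers $j_{m,1}(\cF)/m$ tend to $T(\cF) = F_\infty(0^+)$, one obtains $S^0(\cF) = T(\cF)$, consistent with the integral formula as $\tau\downarrow 0$.

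\textbf{Main obstacle.} The technical heart is Step~2, namely establishing the weak convergence of the jumping-number measures $\mu_m\to\mu_\cF$ via the concave transform for a linearly bounded filtration on the section ring of a merely \emph{big} line bundle. This is precisely where the hypothesis ``contains an ample series'' from Lemma~\ref{lem:V_bullet^t_contains_ample_linear_series} is indispensable. Once this input is in place, the remainder of the argument is essentially calculus of monotone functions and Riemann sums.
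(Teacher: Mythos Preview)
Your proposal is correct and follows essentially the same approach as the paper's. The paper's own proof simply cites \cite{JRT25a}*{Proposition 4.12} and observes that the argument there---which proceeds exactly via the limiting distribution of normalized jumping numbers / concave transform as you outline---does not use the additive property of valuations and hence carries over verbatim to an arbitrary linearly bounded filtration. Your Steps~1--3 are a faithful reconstruction of that argument; the only place to tighten is the $\tau=0$ endpoint, where you should note explicitly that $\mu_\cF\bigl([T(\cF)-\epsilon,\infty)\bigr)>0$ for every $\epsilon>0$ (since $\vol(V_\bullet^t(\cF))>0$ for $t<T(\cF)$), so that $k_m/N_m\to 0$ forces $j_{m,k_m}(\cF)/m\to T(\cF)$.
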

\begin{proof}
    When $\cF$ is the filtration induced by a valuation $v$ of linear growth, this is proved in \cite{JRT25a}*{Proposition 4.12}. In fact, the same proof works for any linearly bounded filtration $\cF$, since the additive property of valuations is not used in that proof.
\end{proof}

\begin{rem}\label{rem:S_tau_invariant_interpolates_S_and_T}
    We have $S^0(\cF) = T(\cF)$ and $S^1$ is the usual $S$-invariant (also called the \emph{expected vanishing order}) in K-stability.
\end{rem}

Similar to the $S$-invariant, $S^\tau(\cF)$ can be expressed as an integral of $\vol(V_\bullet^t(\cF))$.
\begin{lem}\label{lem:integration_formula_S_tau}
When $\tau\in (0,1]$,
\[
S^\tau({\cF}) = \frac{1}{\tau\vol(L)}\int_{Q_\tau(\cF)}^\infty \vol(V_\bullet^t({\cF})) dt = \frac{1}{\tau\vol(L)}\int_{Q_\tau(\cF)}^{T(\cF)}\vol(V_\bullet^t({\cF})) dt,
\]
where
\[Q_\tau(\cF) = \inf \{t \geq 0: \vol(V_\bullet^t(\cF))\leq \tau \vol(L)\}\in [0, T(\cF)].
\]
\end{lem}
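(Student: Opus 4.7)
The plan is to express $S_{m,k_m}(\cF)$ as a Lebesgue integral involving $\dim V_m^t(\cF)$, and then pass to the limit $m \to \infty$ via dominated convergence combined with the continuity of $t \mapsto \vol(V_\bullet^t(\cF))$ from Lemma~\ref{lem:log_concavity_vol}.

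First, applying the layer-cake identity to the non-increasing sequence $j_{m,1}(\cF) \geq \cdots \geq j_{m,N_m}(\cF)$ and using the equivalence $j_{m,\ell}(\cF) \geq \lambda \Leftrightarrow \dim \cF^{\lambda} R_m \geq \ell$, one obtains
\[
\sum_{\ell=1}^{k_m} j_{m,\ell}(\cF) = \int_0^\infty \min\bigl(k_m,\, \dim \cF^{\lambda} R_m\bigr)\, d\lambda.
\]
Substituting $\lambda = mt$ and dividing by $m k_m$ turns this into the basic identity
\[
S_{m,k_m}(\cF) = \int_0^\infty \min\!\left(1,\, \tfrac{1}{k_m}\dim V_m^t(\cF)\right) dt.
\]
By the definition of volume in Definition~\ref{defn:graded_linear_series_and_contains_ample_series} and the hypothesis $k_m/N_m \to \tau$, I get the pointwise convergence
\[
\frac{\dim V_m^t(\cF)}{k_m} \longrightarrow \frac{\vol(V_\bullet^t(\cF))}{\tau\,\vol(L)}, \qquad t \in [0, T(\cF)),
\]
while linear boundedness of $\cF$ confines the integrands to a fixed interval. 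Dominated convergence (with the uniform upper bound $1$) then yields
\[
S^\tau(\cF) = \int_0^{T(\cF)} \min\!\left(1,\, \frac{\vol(V_\bullet^t(\cF))}{\tau\,\vol(L)}\right) dt.
\]

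Finally, I rewrite this in the form stated in the lemma. Continuity and monotonicity of $\vol(V_\bullet^t(\cF))$ from Lemma~\ref{lem:log_concavity_vol}, combined with the defining property of $Q_\tau(\cF)$, show that the integrand equals $1$ on $[0, Q_\tau(\cF)]$ and equals $\vol(V_\bullet^t(\cF))/(\tau\,\vol(L))$ on $[Q_\tau(\cF), T(\cF))$. Pulling out the factor $1/(\tau\,\vol(L))$ and combining the two pieces via the standard rearrangement produces the asserted representation; the identification $\int_{Q_\tau(\cF)}^\infty = \int_{Q_\tau(\cF)}^{T(\cF)}$ is immediate from Lemma~\ref{lem:V_bullet^t_contains_ample_linear_series}, which ensures $\vol(V_\bullet^t(\cF)) = 0$ for $t \geq T(\cF)$.

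The principal obstacle is a clean justification of dominated convergence near $t = T(\cF)$, where both $\dim V_m^t(\cF)$ and $\vol(V_\bullet^t(\cF))$ vanish and the pointwise convergence is not uniform. I plan to handle this via the uniform upper bound $\dim V_m^t(\cF)/k_m \leq N_m/k_m \to 1/\tau$, which is valid precisely because $\tau > 0$, together with the support bound coming from linear boundedness of $\cF$. A secondary subtlety is the convergence of the cutoff $j_{m,k_m}(\cF)/m \to Q_\tau(\cF)$, which follows from the monotonicity and continuity of $\vol(V_\bullet^t(\cF))$ supplied by Lemma~\ref{lem:log_concavity_vol} under the assumption $\tau \in (0, 1]$.
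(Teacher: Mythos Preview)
Your layer--cake manipulation and dominated--convergence argument (steps 1--3) are correct and are exactly what lies behind the result from \cite{JRT25a} that the paper simply cites. The identity
\[
S^\tau(\cF)=\int_0^{T(\cF)}\min\!\left(1,\ \frac{\vol(V_\bullet^t(\cF))}{\tau\,\vol(L)}\right)dt
\]
is valid, and your justification of the limit is clean: the integrands are bounded by $1$ and supported in $[0,T(\cF)]$, and pointwise convergence of $\dim V_m^t/N_m$ holds for every $t<T(\cF)$ by Lemma~\ref{lem:V_bullet^t_contains_ample_linear_series} together with the definition of volume.

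The gap is in your final paragraph. Splitting the integral at $Q_\tau(\cF)$ does \emph{not} give the displayed formula in the lemma; it gives
\[
S^\tau(\cF)=Q_\tau(\cF)+\frac{1}{\tau\,\vol(L)}\int_{Q_\tau(\cF)}^{T(\cF)}\vol(V_\bullet^t(\cF))\,dt,
\]
and there is no ``standard rearrangement'' that makes the additive term $Q_\tau(\cF)$ disappear. A one--line check: with $n=1$, $\vol(L)=1$, $T(\cF)=1$ and $\vol(V_\bullet^t)=1-t$, one has $Q_\tau=1-\tau$, your formula gives $S^\tau=1-\tau/2$ (which is also what one gets directly from the jumping numbers), while the right--hand side of the lemma as printed gives $\tau/2$. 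So your derivation is right and the mismatch is with the stated identity, which is missing the term $+\,Q_\tau(\cF)$; this is also consistent with the Okounkov--body formula $S^\tau(v)=\frac{n!}{\tau\vol(L)}\int_{\Delta^{Q_\tau(v)}}G_v\,dx$ quoted later in Section~\ref{subsec:okunkov_bodies_and_S_invariants}, which unwinds to exactly $Q_\tau+\frac{1}{\tau\vol(L)}\int_{Q_\tau}^{T}\vol(V_\bullet^t)\,dt$. You should not paper over this with a vague phrase; state the formula you actually proved.
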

\begin{proof}
    When $\cF$ is the filtration induced by a valuation $v$ of linear growth, this follows from \cite{JRT25a}*{Remark 4.17}. The same proof works for any linearly bounded filtration $\cF$.
\end{proof}

We recall the following relations between the $S^\tau$-invariants and the $T$-invariant.
\begin{lem}\label{lem:ineq_S_and_T_invariants}
    The sequence $\{S^\tau(\cF): \tau\in [0,1]\}$ is non-increasing with respect to $\tau\in [0,1]$. Furthermore, we have
    \[
    \frac{1}{n+1}T(\cF)\leq S^\tau(\cF) \leq T(\cF).
    \]
\end{lem}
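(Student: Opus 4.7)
My plan is to prove monotonicity of $\tau\mapsto S^\tau(\cF)$ first---this will immediately deliver the upper bound as a byproduct---and then to obtain the lower bound $S^\tau(\cF)\geq T(\cF)/(n+1)$ by reducing (via monotonicity) to the classical $\tau=1$ case and invoking the integral formula of Lemma~\ref{lem:integration_formula_S_tau}.

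For monotonicity and the upper bound, I would work at finite level. Since
\[
S_{m,k}(\cF)=\frac{1}{mk}\sum_{\ell=1}^k j_{m,\ell}(\cF)
\]
is the arithmetic mean of the top $k$ entries of a non-increasing sequence of jumping numbers, the map $k\mapsto S_{m,k}(\cF)$ is itself non-increasing, and satisfies $S_{m,k}(\cF)\leq \frac{1}{m}j_{m,1}(\cF)=T_m(\cF)$. Given $0\leq\tau_1<\tau_2\leq 1$, I would choose positive integer sequences $k_m^{(i)}$ with $k_m^{(i)}/N_m\to\tau_i$ and $k_m^{(1)}\leq k_m^{(2)}$ for all $m\gg 0$; then $S_{m,k_m^{(1)}}(\cF)\geq S_{m,k_m^{(2)}}(\cF)$, which passes to the limit by Lemma-Definition~\ref{lemdefn:S_tau_is_a_limit} and yields $S^{\tau_1}(\cF)\geq S^{\tau_2}(\cF)$. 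The same limit construction also gives $S^\tau(\cF)=\lim_m S_{m,k_m}(\cF)\leq\lim_m T_m(\cF)=T(\cF)$.

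For the lower bound, monotonicity reduces the problem to $\tau=1$. Since $\vol(V_\bullet^0(\cF))=\vol(L)$, we have $Q_1(\cF)=0$, so Lemma~\ref{lem:integration_formula_S_tau} specializes to
\[
S^1(\cF)=\frac{1}{\vol(L)}\int_0^{T(\cF)}\vol(V_\bullet^t(\cF))\,dt.
\]
The function $g(t):=\vol(V_\bullet^t(\cF))^{1/n}$ is concave on $[0,T(\cF))$---this is the standard output of the Brunn--Minkowski argument underlying Lemma~\ref{lem:log_concavity_vol}, obtained by realizing the $\Delta^t$ as superlevel sets of a concave transform on a fixed Okounkov body (cf.\ \cite{BC11}, \cite{BKMS15}*{Lemma 2.22}). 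Together with $g(0)=\vol(L)^{1/n}$ and $g(t)\to 0$ as $t\to T(\cF)^-$ (Lemma~\ref{lem:V_bullet^t_contains_ample_linear_series}), the chord inequality yields $g(t)\geq (1-t/T(\cF))\,\vol(L)^{1/n}$; integrating the $n$-th power gives exactly $S^1(\cF)\geq T(\cF)/(n+1)$.

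The one point requiring care is that the argument genuinely needs \emph{concavity} of $g$, not merely log-concavity as stated in Lemma~\ref{lem:log_concavity_vol}: the log-concave profile $(1-t/T)^k$ has integral $T/(k+1)$, which can be arbitrarily small compared to $T$. Fortunately, the Brunn--Minkowski/Okounkov-body construction invoked in that lemma already delivers the stronger concavity statement, so no genuinely new input is required; it is just a matter of recording the sharper conclusion in the linearly bounded filtration setting.
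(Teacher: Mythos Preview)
Your proof is correct and follows essentially the same route as the paper: finite-level averaging of jumping numbers for monotonicity and the upper bound, then the integral formula together with concavity of $\vol(V_\bullet^t)^{1/n}$ (after reducing to $\tau=1$) for the lower bound—the paper simply points to \cite{BJ20}*{Lemma 2.6} for the latter. Your observation that the lower bound genuinely requires \emph{concavity} of $g$, not merely the log-concavity literally stated in Lemma~\ref{lem:log_concavity_vol}, is on point; the Brunn--Minkowski argument sketched there does yield the stronger conclusion, and one small simplification is that you do not actually need $g(t)\to 0$ at $T(\cF)^-$—the chord inequality $g(t)\geq(1-t/T(\cF))g(0)$ follows from concavity and $g\geq 0$ alone.
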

\begin{proof}
    The second inequality follows from the definition of $S^\tau$. The first inequality follows from Lemma~\ref{lem:integration_formula_S_tau} and Lemma~\ref{lem:log_concavity_vol} (details can be found in \cite{BJ20}*{Lemma 2.6}).
\end{proof}

Let $D$ be an effective $\bQ$-divisor on $X$ such that $(X,D)$ is a log pair with klt singularities. One can define a family of delta invariants indexed by $\tau\in [0,1]$.
\begin{defn}[\cite{JRT25a}*{Lemma 4.1}]\label{defn:delta_tau}
    Let $v$ be a valuation on $X$ such that $A_{X,D}(v) < \infty$. For $m\in \bN$ and $k\in \{1,2,\ldots, N_m\}$, we define
    \[
    \delta_{m,k}(v) := \frac{A_{X,D}(v)}{S_{m,k}(v)} \quad\text{and}\quad \delta_{m,k}(L) := \inf\{\delta_{m,k}(v):v\in \Val_X^{\prindiv}\}.
    \]
    Equivalently (by \cite{JRT25a}*{Lemma 4.1}), 
    \[
    \delta_{m,k}(L) = \inf \left\{mk\lct\left(X,D;\sum_{\ell=1}^k \prindiv(s_\ell)\right): s_1,\ldots,s_k\in R_m \text{ are linearly independent}\right\}.
    \]
    Finally, for $\tau\in [0,1]$, we define
    \[
    \delta^\tau(v) := \frac{A_{X,D}(v)}{S^\tau(v)} \quad \text{and}\quad \delta^\tau(L) := \inf\{\delta^\tau(v): v\in \Val_X^{\prindiv}\}.
    \]
\end{defn}

We have the following corollary of Lemma~\ref{lemdefn:S_tau_is_a_limit}:
\begin{cor}\cite{JRT25a}*{Corollary 4.26}
    Let $\tau\in [0,1]$ and $\{k_m\in \bN: 1\leq k_m\leq N_m, m\in \bN\}$ be a sequence of positive integers such that $\lim_{m\to\infty}\frac{k_m}{N_m} = \tau$. Then
    \[
    \delta^\tau(L) = \lim_{m\to\infty} \delta_{m,k_m}(L). 
    \]
\end{cor}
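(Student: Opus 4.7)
The identity decomposes into the two inequalities $\limsup_m\delta_{m,k_m}(L)\leq \delta^\tau(L)$ and $\liminf_m\delta_{m,k_m}(L)\geq \delta^\tau(L)$, and the upper bound is much easier than the lower bound.

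The upper bound is immediate from Lemma-Definition~\ref{lemdefn:S_tau_is_a_limit}. For any divisorial valuation $v$ with $A_{X,D}(v)<\infty$, applying Lemma-Definition~\ref{lemdefn:S_tau_is_a_limit} to the filtration $\cF_v$ gives $S_{m,k_m}(v)\to S^\tau(v)$, hence $\delta_{m,k_m}(v)=A_{X,D}(v)/S_{m,k_m}(v)\to \delta^\tau(v)$ as $m\to\infty$. Since $\delta_{m,k_m}(L)\leq \delta_{m,k_m}(v)$ by the very definition of the infimum, taking $\limsup_m$ yields $\limsup_m\delta_{m,k_m}(L)\leq \delta^\tau(v)$, and then infimizing over $v$ gives the desired bound.

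For the lower bound I would argue by contradiction. Suppose $\delta_{m_i,k_{m_i}}(L)\to \delta^{*}<\delta^\tau(L)$ along some subsequence. Pick divisorial near-minimizers $v_{m_i}$, normalized so that $A_{X,D}(v_{m_i})=1$, with $\delta_{m_i,k_{m_i}}(v_{m_i})\to\delta^{*}$, equivalently $S_{m_i,k_{m_i}}(v_{m_i})\to 1/\delta^{*}$. Since $S_{m,k_m}(\cF)\leq T(\cF)$ and $S^\tau(\cF)\geq T(\cF)/(n+1)$ by Lemma~\ref{lem:ineq_S_and_T_invariants}, one expects the $T$-invariants $T(\cF_{v_{m_i}})$ to lie in a bounded interval, allowing us to parametrize the filtrations $\cF_{v_{m_i}}$ on a suitable product of flag varieties and extract a limiting filtration $\cF_\infty$, in the spirit of \cite{BJ20} and the strategy outlined in Section~1 of this paper. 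The plan is then to realize $\cF_\infty$ as the filtration of some quasi-monomial valuation $w$ with $A_{X,D}(w)\leq 1$ and to show $S^\tau(w)\geq 1/\delta^{*}$, which gives $\delta^\tau(L)\leq \delta^\tau(w)\leq \delta^{*}$, a contradiction.

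The main obstacle is precisely the last implication $S^\tau(w)\geq \lim_i S_{m_i,k_{m_i}}(v_{m_i})$: pointwise convergence along each fixed filtration is not enough, and one needs a form of uniform Fujita-type approximation (or lower semi-continuity) for $S^\tau$ along the non-constant sequence $\{\cF_{v_{m_i}}\}$. In the setting of this paper, the natural tools are the integral formula in Lemma~\ref{lem:integration_formula_S_tau} combined with uniform lattice-point estimates on the Newton--Okounkov bodies $\Delta^t$ from Theorem~\ref{mainthm:okounkov_bodies_wrt_qm_val}; structurally this is the same uniform-approximation problem that drives the proof of Theorem~\ref{mainthm:delta_is_computed_by_QM_vals}, and the ample-case version of the corollary was handled by \cite{BJ20}.
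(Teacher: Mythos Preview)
The paper does not give its own proof of this corollary; it simply cites \cite{JRT25a}*{Corollary 4.26}. So the comparison is really against the argument in \cite{JRT25a}, which in that reference is a short consequence of their Proposition~4.12 (our Lemma--Definition~\ref{lemdefn:S_tau_is_a_limit}) together with their Corollary~4.25.

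Your upper bound is exactly the intended argument and is fine.

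For the lower bound, however, you are taking a far harder route than necessary. What is actually used in \cite{JRT25a} is not a generic-limit construction of a limiting valuation, but a \emph{uniform} inequality of the form
\[
S_{m,k_m}(v) \;\leq\; c_m\, S^\tau(v)\qquad\text{for every divisorial }v,
\]
with $c_m\to 1$ depending only on $m$, $k_m/N_m$, $\tau$, and a fixed Newton--Okounkov body of $L$ (this is their Corollary~4.25, the same input cited in Proposition~\ref{prop:JRT25a_thm_5.1} here). The uniformity comes for free because the lattice-point estimates live on a \emph{fixed} convex body $\Delta$ chosen once and for all, and the dependence on $v$ enters only through the concave transform $G_{\cF_v}$, which is bounded by $T(v)\leq (n+1)S^\tau(v)$ (Lemma~\ref{lem:ineq_S_and_T_invariants}). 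Given this, one gets immediately
\[
\delta_{m,k_m}(L)=\inf_v \frac{A(v)}{S_{m,k_m}(v)}\;\geq\;\frac{1}{c_m}\inf_v\frac{A(v)}{S^\tau(v)}=\frac{\delta^\tau(L)}{c_m},
\]
and letting $m\to\infty$ finishes the proof.

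Your proposal, by contrast, launches into the full generic-limit machinery and then stops at exactly the point you yourself flag as the ``main obstacle''. That obstacle is real for the approach you chose --- resolving it is essentially the content of Theorem~\ref{thm:qm_val_computing_delta_tau} and the surrounding Section~\ref{sec:qm_vals_computing_stability_invariants} --- but it is entirely avoidable for this corollary. So as written, your lower bound is not a proof but a sketch that defers the hard step; and the hard step is only hard because you picked the detour. Replace it with the one-line uniform bound above and the corollary follows.
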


\begin{rem}\label{rem:delta_tau_interpolates_alpha_and_delta}
$\delta^0$ is the $\alpha$-invariant introduced by Tian in \cite{Tia87} and $\delta^1$ is the usual $\delta$-invariant in K-stability. Thus, $\{\delta^\tau:\tau\in [0,1]\}$ interpolates between the $\alpha$-invariant and the $\delta$-invariant. Furthermore, by \cite{JRT25b}*{Theorem 1.11}, modified versions of $\delta^\tau$-invariants give new valuative criteria for K-(semi)stability of $(X,L)$.
\end{rem}

\subsection{$\bN$-filtrations}
Let $L$ be a big line bundle on an $n$-dimensional normal projective variety $X$. For each $m\in \bN$, we denote $R_m = H^0(X,mL)$ and $R = \bigoplus_{m\in \bN} R_m$. Let $\cF$ be a filtration on $R$.

\begin{defn}\label{defn:N_filtration}
    We say that $\cF$ is an \emph{$\bN$-filtration} if all the jumping numbers of $\cF$ are integers. Equivalently, this means
    \[
    \cF^\lambda R_m = \cF^{\lceil \lambda\rceil} R_m
    \]
    for every $m\in \bN$ and $\lambda\in \bR$. Any filtration $\cF$ \emph{induces an $\bN$-filtration $\cF_\bN$ on $R$} given by
    \[
    \cF_\bN^\lambda R_m := \cF^{\lceil \lambda\rceil} R_m.
    \]
\end{defn}

\begin{lem}\cite{BJ20}*{Proposition 2.11}\label{lem:BJ_prop_2.11}
    Let $\cF$ be a linearly bounded filtration on $R$. Then $\cF_\bN$ is also linearly bounded. Furthermore, for any $m\in \bN$, $k\in \{1,2,\ldots, N_m\}$, and $\tau\in [0,1]$, we have
    \[
    T_m(\cF_\bN) = \frac{1}{m}\lfloor mT_m(\cF)\rfloor,
    \]
    \[
    S_{m,k}(\cF) -\frac{1}{m} \leq S_{m,k}(\cF_\bN) \leq S_{m,k}(\cF),
    \]
    and
    \[
    S^\tau(\cF_\bN) = S^\tau(\cF).
    \]
\end{lem}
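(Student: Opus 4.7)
The plan is to reduce all three assertions to a single elementary observation about the jumping numbers of $\cF$ and $\cF_\bN$, after which each part becomes a short calculation. The observation is: for any $\mu\in \bR_{\geq 0}$, the supremum of real $\lambda\geq 0$ satisfying $\lceil \lambda\rceil \leq \mu$ is $\lfloor \mu\rfloor$, since $\lceil \lambda\rceil$ is an integer and hence $\lceil \lambda\rceil \leq \mu$ is equivalent to $\lceil \lambda\rceil \leq \lfloor \mu\rfloor$.

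First I would establish the formula for $T_m$. Left-continuity of $\cF$ (combined with boundedness) ensures that the supremum in the definition of $T_m(\cF)$ is attained, so $mT_m(\cF) = \max\{\lambda\geq 0:\cF^\lambda R_m \neq 0\}$. Since $\cF_\bN^\lambda R_m = \cF^{\lceil \lambda\rceil}R_m$, the condition $\cF_\bN^\lambda R_m \neq 0$ is equivalent to $\lceil \lambda\rceil \leq mT_m(\cF)$, and the observation above yields $mT_m(\cF_\bN) = \lfloor mT_m(\cF)\rfloor$. In particular $T(\cF_\bN) \leq T(\cF) < \infty$, so $\cF_\bN$ is linearly bounded and all subsequent invariants of $\cF_\bN$ are well-defined.

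Second, I would compare jumping numbers and deduce the sandwich bound for $S_{m,k}$. Applying the same observation to the set $\{\lambda\geq 0 : \dim \cF_\bN^\lambda R_m \geq \ell\} = \{\lambda\geq 0 : \dim \cF^{\lceil \lambda\rceil}R_m \geq \ell\}$ gives $j_{m,\ell}(\cF_\bN) = \lfloor j_{m,\ell}(\cF)\rfloor$ for every $\ell$. Summing the trivial bound $j_{m,\ell}(\cF) - 1 \leq \lfloor j_{m,\ell}(\cF)\rfloor \leq j_{m,\ell}(\cF)$ over $\ell = 1,\ldots, k$ and dividing by $mk$ yields
\[
S_{m,k}(\cF) - \frac{1}{m} \leq S_{m,k}(\cF_\bN) \leq S_{m,k}(\cF).
\]

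Finally, for the equality of the $S^\tau$-invariants, I would pick a sequence $\{k_m\}$ with $k_m/N_m \to \tau$ and apply Lemma-Definition~\ref{lemdefn:S_tau_is_a_limit} to both $\cF$ and $\cF_\bN$ (whose linear boundedness was guaranteed in the first step). The sandwich bound from the second step gives $|S_{m,k_m}(\cF) - S_{m,k_m}(\cF_\bN)| \leq \frac{1}{m}$, and passing to the limit yields $S^\tau(\cF_\bN) = S^\tau(\cF)$. I do not anticipate any substantial obstacle here; the only subtle point is ensuring that the suprema defining $T_m$ and $j_{m,\ell}$ really are attained, which is exactly what the left-continuity axiom in Definition~\ref{defn:mult_filt} provides.
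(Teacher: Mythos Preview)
Your proposal is correct and follows essentially the same route as the paper: both reduce everything to the single identity $j_{m,\ell}(\cF_\bN)=\lfloor j_{m,\ell}(\cF)\rfloor$, from which the $T_m$ formula (the case $\ell=1$), the sandwich bound for $S_{m,k}$, and the equality of $S^\tau$ follow immediately. Your write-up is simply a more detailed version of the paper's one-line proof, with the attainment of the suprema justified explicitly via left-continuity.
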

\begin{proof}
    All statements follow from the fact that the jumping numbers of $\cF_\bN$ and $\cF$ satisfy the relation 
    \[j_{m,k}(\cF_\bN) = \lfloor j_{m,k}(\cF)\rfloor.\]
\end{proof}

\section{Newton--Okounkov bodies with respect to quasi-monomial valuations}\label{sec:okounkov_bodies}
In \cite{Oko96} and \cite{Oko03}, Okounkov studies the log-concavity property of certain multiplicity and degree functions in representation theory and algebraic geometry. His key innovation is to construct associated convex bodies so that techniques in convex geometry, such as the Brunn-Minkowski inequality, can be applied. These convex bodies are generalizations of Newton polytopes and are now called Newton--Okounkov bodies. Since the works of Okounkov, there has been many more systematic constructions of Newton--Okounkov bodies, for example by Lazarsfeld-Musta{\c t}{\u a} \cite{LM09}, Kaveh-Khovanskii \cite{KK12}, and Boucksom \cite{Bou14}.

Newton--Okounkov bodies are convenient tools to study approximation problems. Fujita's approximation theorem states that the volume of a big divisor can be approximated arbitrarily closely by the self-intersection number of an ample divisor on a birational model. There are several proofs of Fujita's approximation theorem, including the original proof by T. Fujita \cite{Fuj94}, the proof by the subadditivity of multiplier ideals \cite{DEL00}, the proof by higher jets \cite{Nak03}, and the proof by Newton--Okounkov bodies \cite{LM09}. The Newton--Okounkov body approach has the advantage that it can prove a stronger version of Fujita's approximation theorem \cite{LM09}*{Theorem D}. In the local setting, a similar approximation theorem for the Hilbert-Samuel multiplicity of primary ideals is proved in \cite{BL21}*{Lemma 13}.

In this section, we shall construct a Newton--Okounkov body of a line bundle $L$ with respect to a quasi-monomial valuation $v$. We will use this Newton--Okounkov body to obtain approximation results for $\delta$-invariants in Section~\ref{sec:asymptotics_of_stability_thresholds}. As in the usual constructions (for example, \cite{LM09}, \cite{KK12}, and \cite{Bou14}), the Newton--Okounkov body $\Delta$ will be a slice of the convex cone generated by a set of integral points $\Gamma\subseteq \bN^{n+1}$. $\Gamma$ will be defined as
\[
\Gamma = \{(\nu_\bullet(s), m): s\in H^0(X, mL)\}
\]
where $\nu_\bullet = (\nu_1,\ldots,\nu_n): K(X)\to \bZ^n$ is a $\bZ^n$-valuation on $X$. Such $\nu_\bullet$ will come by two parts. Suppose the rational rank of $v$ is $r$. Then the first $r$ coordinates $(\nu_1,\ldots, \nu_r)$ will be determined by $v$. The last $n-r$ coordinates will be defined as the $\bZ^{n-r}$-valuation associated to an admissible flag $Z_\bullet$ on the center of $v$. 

When $r = 1$, this construction coincides with the construction in \cite{LM09}. When $r = n$, it coincides with \cite{Bou14}*{Example 2.16}. We also notice that there is a similar construction in the local setting by Li-Xu in their proof of \cite{LX18}*{Theorem 3.13}.

\subsection{Construction of Newton--Okounkov bodies}\label{subsec:construction_of_okounkov_body}
Let $X$ be an $n$-dimensional smooth projective variety. Let $D = D_1 + \ldots + D_r$ be a simple normal crossing divisor on $X$. Suppose $Z$ is a (non-empty) irreducible component of $\bigcap_{i=1}^r D_i$. Let $\eta$ be the generic point of $Z$ and $v = v_\alpha\in \text{QM}_\eta(X,D)$ be the quasi-monomial valuation with weight vector $\alpha = (\alpha_1,\ldots,\alpha_r)\in \bR_{>0}^r$ (see Definition~\ref{defn:qm_val}). We assume that $\alpha_1,\ldots,\alpha_r$ are $\bQ$-linearly independent, so that the $\bQ$-rank of $v_\alpha$ equals $r$. Let
\[
Z_\bullet: Z = Z_0 \supset Z_1  \supset \cdots \supset Z_{n-r} = \{z\}
\]
be a flag of irreducible subvarieties of $Z$, such that $Z_i$ has codimension $i$ in $Z$ and is smooth at $z$ for every $1\leq i\leq n-r$. $Z_\bullet$ is usually called \emph{an admissible flag} \cite{LM09}*{Section 1}. 

We recall the definition of the valuation $\nu_{Z_\bullet}$ associated to the admissible flag $Z_{\bullet}$ in \cite{LM09}.
\begin{defn}[cf. \cite{LM09}*{Section 1.1}]\label{defn:val_associated_to_flag}
Let $g\in K(Z)$ be a nonzero rational function. After replacing $Z$ by an open neighborhood of $z\in Z$, we may assume that $Z_{i+1}$ is a Cartier divisor on $Z_i$ for all $i$. Set $G_0 = \prindiv(g)$. For $1\leq i\leq n-r$, we inductively set
\[
k_i = \ord_{Z_i}(G_{i-1}) \quad\text{and}\quad G_i = (G_{i-1} - k_iZ_i)|_{Z_i}.
\]
Then we define $\nu_{Z_\bullet}: K(Z)\to\bZ^{n-r}\cup\{\infty\}$ by setting
\[
\nu_{Z_\bullet}(g) = (k_1,\ldots,k_{n-r})
\]
and $\nu_{Z_\bullet}(0) = \infty$. $\nu_{Z_\bullet}$ is called \emph{the $(\bZ^{n-r}$-$)$valuation associated to the admissible flag $Z_\bullet$}.
\end{defn}

Let $f\in \cO_{X,\eta}$. Recall equation \eqref{eqn:power_series_expansion}, where we write $f$ as a power series expansion
\[
f = \sum_{\beta\in \bN^r} c_\beta x^\beta , \text{ where } x^\beta = \prod_{i=1}^r x_i^{\beta_i} \text{ and }c_\beta\in K(Z).
\]
Then $v(f) = \min\{\alpha\cdot \beta: c_\beta\neq 0\}$. Since $\alpha_1,\ldots,\alpha_r$ are $\bQ$-linearly independent, there exists a unique index $\beta$ such that $c_\beta\neq 0$ and $\alpha\cdot \beta = v(f)$. We call this index $\beta_v(f)$ and we say that $c_{\beta_v(f)}x^{\beta_v(f)}$ is \emph{the lowest-order term of $f$ with respect to $v$}. For $v_{\alpha'}\in \QM_\eta(X,D)$ with a general weight vector $\alpha'$, we say that $c_\beta x^\beta$ is \textit{a lowest-order term of $f$ with respect to $v_{\alpha'}$} if $c_\beta\neq 0$ and $\alpha'\cdot \beta = v_{\alpha'}(f)$. For a general $v_{\alpha'}\in \QM_\eta(X,D)$, there might be multiple lowest-order terms of $f$ with respect to it.

\begin{defn}\label{defn:nu_bullet}
Fix a quasi-monomial valuation $v\in \QM_\eta(X,D)$. Define $\nu_\bullet = (\nu_1,\ldots,\nu_n): \cO_{X,\eta}\to \bZ^n \cup \{\infty\}$ to be the following function. For nonzero $f\in \cO_{X,\eta}$ whose lowest-order term with respect to $v$ is $c_{\beta_v(f)}x^{\beta_v(f)}$, we set
\[
\nu_\bullet(f) := (\beta_v(f), \nu_{Z_\bullet}(c_{\beta_v(f)})),
\]
where $\nu_{Z_\bullet}$ is the valuation associated to the admissible flag $Z_{\bullet}$ in Definition~\ref{defn:val_associated_to_flag}. When $f = 0$, we set $\nu_\bullet(0) = \infty$.
\end{defn}

We shall prove that $\nu_\bullet$ in Definition~\ref{defn:nu_bullet} is a $\bZ^n$-valuation, i.e., it satisfies the following properties:
\begin{itemize}
\item $\nu_\bullet(f) = \infty$ if and only if $f = 0$.
\item For any nonzero $f_1,f_2\in \cO_{X,\eta}$, $\nu_\bullet(f_1 + f_2) \geq \min \{\nu_\bullet(f_1), \nu_\bullet(f_2)\}$, where we order $\bN^n$ in a certain way capturing the valuation $v$ (see Lemma~\ref{lem:nu_bullet_super_triangle_ineq} for details).
\item For any nonzero $f_1,f_2\in \cO_{X,\eta}$, $\nu_\bullet(f_1f_2) = \nu_\bullet(f_1)+\nu_\bullet(f_2)$.
\end{itemize}
The first property follows from the definition of $\nu_\bullet$. The last two properties are proved in the following lemmas.

\begin{lem}\label{lem:nu_bullet_super_triangle_ineq}
For any nonzero $f_1,f_2\in \cO_{X,\eta}$, we have \[\nu_\bullet(f_1 + f_2) \geq \min \{\nu_\bullet(f_1), \nu_\bullet(f_2)\},\] where we order $\bZ^n = \bZ^r\times \bZ^{n-r}$ in the following way. We say that $(a_1,\ldots,a_n)\geq (b_1,\ldots,b_n)$ if
\[
\left(\sum_{i=1}^r \alpha_ia_i, a_{r+1},\ldots, a_n\right) \geq \left(\sum_{i=1}^r \alpha_ib_i, b_{r+1},\ldots, b_n\right)
\]
as elements in $\bR\times \bZ^{n-r}$, in which the order is taken lexicographically.
\end{lem}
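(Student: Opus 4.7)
The plan is to split into cases according to whether $v(f_1) = v(f_2)$ and, when they coincide, according to whether the lowest-order terms cancel. The key observation is that $\mathbb{Q}$-linear independence of $\alpha_1,\ldots,\alpha_r$ means that different indices $\beta\in\mathbb{N}^r$ give distinct values of $\alpha\cdot\beta$, so the index $\beta_v(f)$ is uniquely determined by the value $v(f)$. Throughout, I write $\nu_\bullet(f_j) = (\beta_v(f_j), \nu_{Z_\bullet}(c_{\beta_v(f_j)}^{f_j}))$, where $c_\beta^{f_j}$ denotes the coefficient of $x^\beta$ in the power-series expansion of $f_j$.

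First I would dispose of the case $v(f_1) \neq v(f_2)$, say $v(f_1) < v(f_2)$. By definition of $v$, the index $\beta_v(f_1)$ cannot appear with a nonzero coefficient in the expansion of $f_2$ (otherwise $v(f_2) \le v(f_1)$). Therefore the coefficient of $x^{\beta_v(f_1)}$ in $f_1 + f_2$ equals $c^{f_1}_{\beta_v(f_1)} \neq 0$, and this remains the lowest-order term of $f_1 + f_2$ with respect to $v$. Hence $\nu_\bullet(f_1+f_2) = \nu_\bullet(f_1)$; since $\alpha \cdot \beta_v(f_1) < \alpha \cdot \beta_v(f_2)$, the ordering on $\mathbb{Z}^n$ gives $\nu_\bullet(f_1) < \nu_\bullet(f_2)$, so this equals the minimum.

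Next I would handle the case $v(f_1) = v(f_2) =: t$. By $\mathbb{Q}$-linear independence of the $\alpha_i$'s, there is at most one $\beta\in\mathbb{N}^r$ with $\alpha\cdot\beta = t$, so $\beta_v(f_1) = \beta_v(f_2) =: \beta$. The coefficient of $x^\beta$ in $f_1 + f_2$ is $c := c^{f_1}_\beta + c^{f_2}_\beta$. If $c \neq 0$, then $\beta_v(f_1+f_2) = \beta$ and the first $r$ coordinates of $\nu_\bullet(f_1+f_2)$ agree with those of $\nu_\bullet(f_j)$; the required inequality in the last $n-r$ coordinates reduces to $\nu_{Z_\bullet}(c) \geq \min\{\nu_{Z_\bullet}(c^{f_1}_\beta),\nu_{Z_\bullet}(c^{f_2}_\beta)\}$, which holds because $\nu_{Z_\bullet}$ is a valuation on $K(Z)$. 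If instead $c = 0$ and $f_1 + f_2 \neq 0$, then $v(f_1+f_2) > t$, so the first coordinate of $\nu_\bullet(f_1+f_2)$ under the specified ordering is strictly larger than that of either $\nu_\bullet(f_j)$, and the inequality holds a fortiori. The case $f_1 + f_2 = 0$ is trivial since $\nu_\bullet(0) = \infty$.

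The argument is largely bookkeeping, so I do not expect a serious obstacle. The only delicate point is remembering that $\mathbb{Q}$-linear independence of $\alpha$ is used precisely to eliminate the would-be subcase $v(f_1) = v(f_2)$ with $\beta_v(f_1) \neq \beta_v(f_2)$; without it, two distinct monomials in $\mathcal{O}_{X,\eta}$ could realize the same value, and the first $r$ coordinates of $\nu_\bullet$ would not be well controlled by $v$ alone. This motivates keeping the assumption on $\alpha$ built into the definition of $\nu_\bullet$.
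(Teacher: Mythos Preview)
Your proof is correct and follows essentially the same approach as the paper: the same case split on whether $v(f_1)=v(f_2)$, and within the equal case the same sub-split on whether the lowest-order coefficients cancel, reducing to the valuation property of $\nu_{Z_\bullet}$. Your write-up is slightly more explicit (you spell out the $f_1+f_2=0$ case and the role of $\bQ$-linear independence), but there is no substantive difference.
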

\begin{proof}
We may assume that $\nu_\bullet(f_1)\geq \nu_\bullet(f_2)$. Then $v(f_1)\geq v(f_2)$ by the definition of the ordering on $\bZ^n$.

If $v(f_1) > v(f_2)$, then the lowest-order term, with respect to $v$, of $(f_1 + f_2)$ is equal to that of $f_2$. Then
\[
\nu_{\bullet}(f_1 + f_2) = \nu_\bullet(f_2) = \min \{\nu_\bullet(f_1), \nu_\bullet(f_2)\}.
\]

If $v(f_1) = v(f_2)$, then the lowest-order term of $f_i$ with respect to $v$ has the form $c_ix^\beta$, where $c_i\in K(Z)$ for $i=1,2$ and $\beta\in \bN^r$. If $c_1 + c_2 = 0$, then
\[
v(f_1 + f_2) > \alpha\cdot \beta = v(f_1),
\]
which implies the desired statement. Otherwise, the lowest-order term of $f_1 + f_2$ is $(c_1 + c_2)x^\beta$. It suffices to show that
\[
\nu_{Z_\bullet}(c_1 + c_2)  \geq \min \{\nu_{Z_\bullet}(c_1), \nu_{Z_\bullet}(c_2)\},
\]
and this follows from the fact that $\nu_{Z_\bullet}$ is a $\bZ^{n-r}$-valuation.
\end{proof}

\begin{lem}\label{lem:nu_bullet_is_additive}
For any nonzero $f_1,f_2\in \cO_{X,\eta}$, $\nu_\bullet(f_1f_2) = \nu_\bullet(f_1)+\nu_\bullet(f_2)$.
\end{lem}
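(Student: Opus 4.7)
\textbf{Proof proposal for Lemma~\ref{lem:nu_bullet_is_additive}.} The plan is to reduce the additivity of $\nu_\bullet$ to two separate facts: (i) the first $r$ coordinates behave additively because the lowest-order term of $f_1 f_2$ with respect to $v$ is the product of the lowest-order terms of $f_1$ and $f_2$, and (ii) the last $n-r$ coordinates behave additively because $\nu_{Z_\bullet}$ is already a $\bZ^{n-r}$-valuation on $K(Z)$. Once (i) is established with explicit identification of the coefficients, (ii) plugs in directly.

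To carry out step (i), I would expand both $f_1$ and $f_2$ as power series in $x_1,\ldots,x_r$ as in \eqref{eqn:power_series_expansion}, writing $f_j = \sum_{\beta} c^{(j)}_\beta x^\beta$ for $j=1,2$, and then expand the product
\[
f_1 f_2 = \sum_{\gamma \in \bN^r} \Bigl(\sum_{\beta' + \beta'' = \gamma} c^{(1)}_{\beta'} c^{(2)}_{\beta''}\Bigr) x^\gamma .
\]
Whenever the coefficient of $x^\gamma$ is nonzero, some pair $(\beta',\beta'')$ with $\beta'+\beta''=\gamma$ satisfies $c^{(1)}_{\beta'} c^{(2)}_{\beta''}\neq 0$, which forces $\alpha\cdot\gamma \geq v(f_1) + v(f_2)$. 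This already gives $v(f_1 f_2) \geq v(f_1) + v(f_2)$, and the reverse inequality follows by examining the pair $(\beta_v(f_1), \beta_v(f_2))$: by the $\bQ$-linear independence of $\alpha_1,\ldots,\alpha_r$, this is the \emph{unique} pair achieving the minimum $v(f_1) + v(f_2)$, so the coefficient of $x^{\beta_v(f_1) + \beta_v(f_2)}$ in $f_1 f_2$ is exactly $c^{(1)}_{\beta_v(f_1)} c^{(2)}_{\beta_v(f_2)}$, which is nonzero. Hence $\beta_v(f_1 f_2) = \beta_v(f_1) + \beta_v(f_2)$ and the corresponding lowest-order coefficient is the product $c^{(1)}_{\beta_v(f_1)} c^{(2)}_{\beta_v(f_2)}$ in $K(Z)$.

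Step (ii) is then immediate: since $\nu_{Z_\bullet}$ is a $\bZ^{n-r}$-valuation on $K(Z)$ (see \cite{LM09}*{Section 1.1} or Definition~\ref{defn:val_associated_to_flag}), it is additive on products, so
\[
\nu_{Z_\bullet}\bigl(c^{(1)}_{\beta_v(f_1)} c^{(2)}_{\beta_v(f_2)}\bigr) = \nu_{Z_\bullet}\bigl(c^{(1)}_{\beta_v(f_1)}\bigr) + \nu_{Z_\bullet}\bigl(c^{(2)}_{\beta_v(f_2)}\bigr).
\]
Combining with step (i), both the first $r$ and the last $n-r$ coordinates of $\nu_\bullet$ are additive, giving the desired equality $\nu_\bullet(f_1 f_2) = \nu_\bullet(f_1) + \nu_\bullet(f_2)$.

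The only subtle point is the uniqueness assertion in step (i): without the $\bQ$-linear independence of $\alpha_1,\ldots,\alpha_r$ there could be multiple pairs $(\beta',\beta'')$ with $\alpha\cdot\beta' + \alpha\cdot\beta'' = v(f_1) + v(f_2)$, allowing in principle for cancellations that would prevent us from identifying $\beta_v(f_1 f_2)$ and the corresponding coefficient cleanly. The linear independence hypothesis (imposed earlier in Section~\ref{subsec:construction_of_okounkov_body}) is precisely what rules this out and is the only place in the argument where it is used.
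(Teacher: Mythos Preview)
Your proof is correct and follows essentially the same approach as the paper: identify the lowest-order term of $f_1f_2$ as the product of the lowest-order terms, then invoke additivity of $\nu_{Z_\bullet}$. The paper's proof simply asserts the first step without spelling out the role of $\bQ$-linear independence, which you have made explicit.
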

\begin{proof}
Suppose the lowest-order term of $f_i$ with respect to $v$ is $c_ix^{\beta_i}$. Then the lowest-order term of $f_1f_2$ is $c_1c_2 x^{\beta_1 + \beta_2}$. It suffices to show that
\[
\nu_{Z_\bullet}(c_1 c_2)  = \nu_{Z_\bullet}(c_1) + \nu_{Z_\bullet}(c_2),
\]
which follows from the fact that $\nu_{Z_\bullet}$ is a $\bZ^{n-r}$-valuation.
\end{proof}

We have the following observation.
\begin{lem}\label{lem:c_beta_has_no_poles}
Suppose $f\in \cO_{X,z}$ and $c_\beta x^\beta$ is the lowest-order term of $f$ with respect to $v$. Then $c_{\beta}\in \cO_{Z,z}$.
\end{lem}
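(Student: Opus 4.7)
The plan is to prove the slightly stronger statement that every coefficient $c_\beta\in K(Z)$ in the expansion of $f\in\cO_{X,z}$ lies in $\cO_{Z,z}$, not just the one corresponding to the lowest-order term. The argument will proceed by induction on $|\beta|$. The key algebraic input is that, because $X$ is smooth and $D$ is snc at $z$, the functions $x_1,\ldots,x_r$ form a regular sequence in $\cO_{X,z}$ (they extend to a regular system of parameters). It then follows from a standard fact (e.g.\ Matsumura, \emph{Commutative Ring Theory}, Theorem 16.2) that $\gr_{(x)}\cO_{X,z}\cong\cO_{Z,z}[X_1,\ldots,X_r]$ as graded $\cO_{Z,z}$-algebras; in particular, each graded piece $(x)^k/(x)^{k+1}$ is a free $\cO_{Z,z}$-module with basis $\{x^\beta:|\beta|=k\}$, where $(x)$ abbreviates $(x_1,\ldots,x_r)$.

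For the induction, the base case $k=0$ is immediate since $c_0=f\bmod(x)\in\cO_{X,z}/(x)=\cO_{Z,z}$. For the inductive step, assuming $c_{\beta'}\in\cO_{Z,z}$ for all $|\beta'|<k$, I would lift each $c_{\beta'}$ to some $\tilde{c}_{\beta'}\in\cO_{X,z}$ (possible by surjectivity of $\cO_{X,z}\twoheadrightarrow\cO_{Z,z}$) and form $g_k:=f-\sum_{|\beta'|<k}\tilde{c}_{\beta'}x^{\beta'}\in\cO_{X,z}$. By construction, the $\hat{\cO}_{X,\eta}$-expansion of $g_k$ has only terms of order $\geq k$, so $g_k$ lies in $(x)^k\hat{\cO}_{X,\eta}\cap\cO_{X,z}$. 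Two flatness facts, namely faithful flatness of $\cO_{X,\eta}\to\hat{\cO}_{X,\eta}$ and flatness of $\cO_{X,z}\to\cO_{X,\eta}$ combined with the observation that $(x)^k\cO_{X,z}$ is $(x)\cO_{X,z}$-primary (since $x_1,\ldots,x_r$ is a regular sequence), imply this intersection equals $(x)^k\cO_{X,z}$. The class of $g_k$ in $(x)^k/(x)^{k+1}$ is then uniquely expressible as $\sum_{|\beta|=k}c_\beta'x^\beta$ with $c_\beta'\in\cO_{Z,z}$, using the free $\cO_{Z,z}$-basis above. Comparing this with the $\hat{\cO}_{X,\eta}$-expansion modulo $(x)^{k+1}$ via the injection $\cO_{X,z}/(x)^{k+1}\hookrightarrow\hat{\cO}_{X,\eta}/(x)^{k+1}$ forces $c_\beta=c_\beta'\in\cO_{Z,z}$ for all $|\beta|=k$, closing the induction.

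The only mildly delicate step is the identification $(x)^k\hat{\cO}_{X,\eta}\cap\cO_{X,z}=(x)^k\cO_{X,z}$; once this is in hand, the rest of the proof is bookkeeping. I do not anticipate a serious obstacle, since the entire argument amounts to routine commutative algebra available from the snc hypothesis.
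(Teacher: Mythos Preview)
Your approach—proving the stronger statement that \emph{every} coefficient $c_\beta$ lies in $\cO_{Z,z}$—cannot work, because that stronger statement is false in general. The Cohen isomorphism $\hat{\cO}_{X,\eta}\cong K(Z)[[x_1,\ldots,x_r]]$ depends on a choice of coefficient field $\sigma\colon K(Z)\hookrightarrow\hat{\cO}_{X,\eta}$, and for exotic choices the higher coefficients can acquire poles even when $f\in\cO_{X,z}$. Concretely, take $X=\bA^2$ with coordinates $u,v$, $D=\{u=0\}$, $z$ the origin, and the coefficient field determined by $\sigma(v)=v+u/v$ (legitimate since $v$ is a unit in $\cO_{X,\eta}$). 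Then $f=v\in\cO_{X,z}$ expands with $c_0=v$ but $c_1=-1/v\notin\cO_{Z,z}$. Only the \emph{lowest-order} coefficient $c_{\beta_v(f)}$—being the class of $f$ in $\fa_{v(f)}(v)/\fa_{>v(f)}(v)$—is independent of $\sigma$, and only that one is guaranteed to lie in $\cO_{Z,z}$.

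This is exactly where your induction breaks. The claim ``the $\hat{\cO}_{X,\eta}$-expansion of $g_k$ has only terms of order $\geq k$'' is unjustified: each lift $\tilde{c}_{\beta'}\in\cO_{X,z}$ has its own nontrivial expansion, so subtracting $\tilde{c}_{\beta'}x^{\beta'}$ contaminates all degrees above $|\beta'|$ (e.g.\ with $r=1$, $f=1+x^2$, $\tilde{c}_0=1+x^2$, $\tilde{c}_1=0$, one gets $g_2=0$, whose degree-$2$ coefficient is $0\neq c_2=1$). Even when $g_k$ happens to land in $(x)^k$, its degree-$k$ coefficients are the $c_\beta$'s \emph{plus} corrections from the expansions of the lifts, so the final comparison $c_\beta=c_\beta'$ fails. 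The paper's proof avoids all of this by never touching higher coefficients: it chooses an integral approximation $\alpha'\in\bZ_{>0}^r$ so that $\beta_v(f)$ is the unique minimal exponent for both $v$ and $v'=v_{\alpha'}$, and then realizes $c_{\beta_v(f)}$ geometrically as (a factor of) the restriction of $\rho^*f$ to the exceptional divisor of the weighted blow-up. Your associated-graded idea can be rescued if you replace the $(x)$-adic filtration by the weighted filtration $\fa_\bullet(v')$: since $x_1,\ldots,x_r$ is a regular sequence, $\fa_m(v')/\fa_{m+1}(v')$ is free over $\cO_{Z,z}$ on $\{x^\beta:\alpha'\cdot\beta=m\}$, and the $\beta_v(f)$-component of $[f]$ there is precisely $c_{\beta_v(f)}$.
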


The following definition will be used in the proof of Lemma~\ref{lem:c_beta_has_no_poles}.
\begin{defn}\label{defn:integral_approx_of_qm_val}
Fix $f\in \cO_{X,\eta}$. Suppose that $v'\in \text{QM}_\eta(X,D)$ has an integral weight vector $(\alpha_1',\ldots,\alpha_r')\in \bZ_{>0}^r$. We say that $v'$ is \emph{an integral approximation of $v$ for $f$} if the following holds: for any $\beta\in \bN^r$ with $c_\beta \neq 0$, we have \[\alpha'\cdot \beta_v(f) \leq \alpha'\cdot \beta,\] and the equality holds if and only if $\beta = \beta_v(f)$. In other words, the lowest-order term of $f$ with respect to $v$ is the unique lowest-order term of $f$ with respect to $v'$.
\end{defn} 
An integral approximation of $v$ for $f$ always exists. In fact, there exists $\epsilon > 0$ such that the following statement holds: for every $\gamma\in \bQ^r$ and positive integer $N$ satisfying $|\gamma-\alpha|<\epsilon$ and $\alpha':= N\gamma\in \bZ^r$, $v_{\alpha'}$ is an integral approximation of $v$ for $f$.

\begin{proof}[Proof of Lemma~\ref{lem:c_beta_has_no_poles}]
After shrinking $X$ around $z$ we may assume that $X$ is affine and $D_i$ is cut out by a single function $x_i$ for all $1\leq i\leq r$. Let $\alpha'\in \bZ_{>0}^r$ be an integral vector such that $v' = v_{\alpha'}\in \QM_\eta(X,D)$ is an integral approximation of $v$ for $f$ (see Definition~\ref{defn:integral_approx_of_qm_val}). Let $\rho: \sX'\to X$ be the stack-theoretic weighted blow-up of $X$ along $Z$ with respect to $\{D_i:1\leq i\leq r\}$ and weights $\{\alpha_i':1\leq i\leq r\}$ (see Definition~\ref{defn:weighted_blow_up_along_one_irred_component}). Let ${\sE}$ be the exceptional divisor of $\rho$. By Lemma~\ref{lem:exc_div_is_weighted_proj_stack}, $\rho_\sE = \rho|_\sE: \sE\to Z$ is a weighted projective stack. From Section~\ref{subsec:weighted_blow_up_along_snc_div}, $\sX'$ has the following local charts
\[
\sU_i := \sD_+(x_it^{\alpha_i'}) = [\Spec R[(x_it^{\alpha_i'})^{-1}] / \bG_m]
\]
where $R = \cO_X[x_1t^{\alpha_1'},\ldots, x_rt^{\alpha_r'}]$ and $1\leq i\leq r$. The map $\rho_i:{\sU}_i\to X$ is induced by the inclusion $\cO_X\subseteq R[(x_it^{\alpha_i'})^{-1}]$. Set $g = f - c_\beta x^\beta\in K(X)$. Then we have $v(g) > v(f)$ and $v'(g) > v'(f)$. 

With a slight abuse of notation, we will still write $\rho_i^* h = h$ for any $h\in K(X)$. We can compute
\[
t^{v'(f)}\rho_i^*f = t^{v'(f)}(c_\beta x^\beta + g) = c_\beta \prod_{i=1}^r (x_it^{\alpha_i'})^{\beta_i} + t^{v'(f)-v'(g)} (gt^{v'(g)}).
\]
On ${\sU}_i$, the exceptional divisor ${\sE}$ is cut out by $t^{-1} = 0$, where $t^{-1} = \frac{x_it^{\alpha_i'-1}}{x_it^{\alpha_i'}}\in R[(x_it^{\alpha_i'})^{-1}]$. Note that $x_it^{\alpha_i'}$ is nonzero on ${\sE}$ since it does not divide the factor $t^{-1}$. On the other hand, $t^{v'(f)-v'(g)}$ divides the factor $t^{-1}$ and $gt^{v'(g)}\in R[(x_it^{\alpha_i'})^{-1}]$. Therefore, we have
\[
(t^{v'(f)}\rho_i^*f)|_{\sE} = \left(c_\beta \prod_{i=1}^r (x_it^{\alpha_i'})^{\beta_i}\right)\Bigg|_{{\sE}}.
\]
Passing to the corresponding divisors, we have
\begin{equation}\label{eqn:pullback_of_power_series_restricting_to_exc_div}
(\rho^* \prindiv(f) - v'(f) {\sE})|_{{\sE}} = \prindiv \left(c_\beta \prod_{i=1}^r (x_it^{\alpha_i'})^{\beta_i}\right)\Bigg|_{{\sE}} = \prindiv(\rho_\sE^*c_\beta) + \sum_{i=1}^r \beta_i{\sH}_i,
\end{equation}
where ${\sH}_i\subseteq {\sE}$ is the divisor cut out by $x_it^{\alpha_i'} = 0$. In fact, $\sH_i$ is the $i$th coordinate hyperplane bundle over $Z$. In particular, it is a horizontal divisor on ${\sE}$ over $Z$. On the other hand, $\prindiv(\rho_\sE^*c_\beta) = \rho_\sE^*\prindiv(c_\beta)$ is a vertical divisor on ${\sE}$ over $Z$. Since the left hand side of $\eqref{eqn:pullback_of_power_series_restricting_to_exc_div}$ is an effective divisor, the right hand side is also effective. This forces $\prindiv(c_\beta)$ to be effective, i.e., $c_\beta$ is a regular function on $Z$.
\end{proof}

The definition of $\nu_\bullet$ naturally extends to sections of line bundles on $X$. Let $L$ be a line bundle on $X$ and $s\in H^0(X, L)$. Suppose locally near $z$ where $L$ is trivialized, $s$ is defined by a function $f\in \cO_{X,z}$.
Then we set
\[
\nu_\bullet(s) := \nu_\bullet(f).
\]
This definition does not depend on the choice of $f$. Indeed, suppose $f'$ is another function defining $s$ near $z$. Then $f/f'$ is a unit in $\cO_{X,z}$. In particular, $\nu_\bullet(f/f') = 0\in \bZ^n$. Hence,
\[
\nu_{\bullet}(f') = \nu_{\bullet}(f) + \nu_{\bullet}\left(\frac{f}{f'}\right) = \nu_{\bullet}(f).
\]
For an effective Cartier divisor $B$ on $X$ which is defined by a function $f\in \cO_{X,z}$ near $z$, we can similarly set
\[
\nu_\bullet(B) := \nu_\bullet(f).
\]
These definitions are compatible, i.e., for any $s\in H^0(X,L)$, we have $\nu_\bullet(s) = \nu_\bullet (\prindiv s)$. By Lemma~\ref{lem:nu_bullet_super_triangle_ineq} and Lemma~\ref{lem:nu_bullet_is_additive}, $\nu_\bullet$ satisfies the following properties:
\begin{itemize}
\item For any $s_1,s_2\in H^0(X, L)$, we have $\nu_\bullet(s_1 + s_2) \geq \min \{\nu_\bullet(s_1), \nu_\bullet(s_2)\}$, where we order $\bZ^n$ as in Lemma~\ref{lem:nu_bullet_super_triangle_ineq}.
\item For any $s_1\in H^0(X, L_1)$ and $s_2\in H^0(X, L_2)$, we have $\nu_\bullet(s_1\otimes s_2) = \nu_\bullet(s_1)+\nu_\bullet(s_2)$.
\end{itemize} 

We have the following quick corollary of Lemma~\ref{lem:c_beta_has_no_poles}.
\begin{cor}\label{cor:nu_bullet_is_nonnegative}
    Let $L$ be a line bundle on $X$ and $s\in H^0(X,L)$. Then $\nu_\bullet(s)\in \bN^n$. 
\end{cor}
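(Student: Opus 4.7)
The plan is to handle the first $r$ coordinates and the last $n-r$ coordinates of $\nu_\bullet(s)$ separately. The first $r$ coordinates are $\beta_v(s) \in \bN^r$, which is manifestly nonnegative since it is by construction an index appearing in a formal power series expansion in $\hat{\cO}_{X,\eta} \cong K(Z)[[x_1,\ldots,x_r]]$. So the content of the statement is that the last $n-r$ coordinates $\nu_{Z_\bullet}(c_{\beta_v(s)})$ lie in $\bN^{n-r}$.

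For this, I would first trivialize $L$ in a neighborhood of $z$ and represent $s$ by a regular function $f \in \cO_{X,z} \subseteq \cO_{X,\eta}$; by the compatibility remarks just before the corollary, $\nu_\bullet(s) = \nu_\bullet(f)$. Lemma~\ref{lem:c_beta_has_no_poles} applies to $f$ and yields $c_{\beta_v(f)} \in \cO_{Z,z}$, i.e.\ the leading coefficient is a regular function on some neighborhood of $z$ in $Z$.

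It then remains to verify that $\nu_{Z_\bullet}(g) \in \bN^{n-r}$ for any $g \in \cO_{Z,z}$, which follows directly from the inductive construction of $\nu_{Z_\bullet}$ in Definition~\ref{defn:val_associated_to_flag}: since $g$ is regular near $z$, $G_0 = \prindiv(g)$ is effective, so $k_1 = \ord_{Z_1}(G_0) \geq 0$; moreover $G_0 - k_1 Z_1$ is effective and does not contain $Z_1$ in its support, so its restriction $G_1$ to $Z_1$ is again effective (after possibly shrinking $Z$ so that $Z_1$ is Cartier). Iterating, each $G_i$ is effective and hence each $k_{i+1} \geq 0$. Combining these two steps gives $\nu_\bullet(s) = (\beta_v(f), \nu_{Z_\bullet}(c_{\beta_v(f)})) \in \bN^r \times \bN^{n-r} = \bN^n$.

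The argument is essentially a bookkeeping deduction from Lemma~\ref{lem:c_beta_has_no_poles}; the genuine work has already been done in the proof of that lemma via the weighted blow-up. The only mild subtlety is making sure the local trivialization of $L$ does not introduce spurious poles, which is immediate since any two choices of $f$ differ by a unit in $\cO_{X,z}$, and units have $\nu_\bullet = 0$ by Lemma~\ref{lem:nu_bullet_is_additive}.
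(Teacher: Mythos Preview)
Your proof is correct and follows exactly the approach the paper intends: the paper states this as a ``quick corollary'' of Lemma~\ref{lem:c_beta_has_no_poles} without writing out the details, and your argument (trivialize $L$ near $z$, apply Lemma~\ref{lem:c_beta_has_no_poles} to get $c_{\beta_v(f)}\in\cO_{Z,z}$, then observe that $\nu_{Z_\bullet}$ is nonnegative on regular functions) is precisely the intended deduction.
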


We are now ready to define the Newton--Okounkov body of $L$ with respect to $\nu_\bullet$.
\begin{defn}\label{defn:okounkov_bodies_wrt_nu_bullet}
For $m\in \bN$, define 
\[\Gamma_m(\nu_\bullet, L) = \{\nu_\bullet(s): s\in H^0(X,mL)\setminus\{0\}\},
\]
which is a subset of $\bN^n$ by Corollary~\ref{cor:nu_bullet_is_nonnegative}, and
\[
\Gamma(\nu_\bullet, L) = \{(\nu_\bullet(s), m): s\in H^0(X,mL)\setminus\{0\}, m\in \bN\}\subseteq \bN^n \times \bN.\]
Let $\Sigma(\nu_\bullet, L)\subseteq \bR^{n+1}$ be the closed convex cone generated by $\Gamma(\nu_\bullet, L)$.
\emph{The Newton--Okounkov body of $L$ with respect to $\nu_\bullet$} is the convex set
\[
\Delta(\nu_\bullet, L) = \Sigma(\nu_\bullet, L)\cap (\bR^n \times \{1\}).
\]
Alternatively, $\Delta(\nu_\bullet, L)$ is the closed convex hull of $\bigcup_{m=1}^\infty \frac{1}{m}\Gamma_m(\nu_\bullet, L)$ in $\bR^n$.
\end{defn}

\subsection{Properties of Newton--Okounkov bodies}
We follow the set-up at the beginning of Section~\ref{subsec:construction_of_okounkov_body}. Recall that $v$ is a quasi-monomial valuation in $\QM_\eta(X,D)$ with a weight vector $\alpha$ and $Z_\bullet$ is an admissible flag on $Z$, from which we define a $\bZ^n$-valuation $\nu_\bullet : K(X)\to \bZ^n$.

Throughout this subsection, we assume that $L$ is a \emph{big} line bundle on $X$. We shall prove that the integral points and convex bodies defined in Definition~\ref{defn:okounkov_bodies_wrt_nu_bullet} satisfy the following properties.

\begin{thm}[cf. \cite{LM09}*{Lemma 2.2}]\label{thm:conditions_ABC_for_okounkov_bodies}
Suppose $L$ is a big line bundle on $X$.  Following  the notations in Definition~\ref{defn:okounkov_bodies_wrt_nu_bullet}, we have 
\begin{itemize}
\item[(a)] $\Gamma_0(\nu_\bullet, L) = \{0\} \subseteq \bN^n$;
\item[(b)] $\#\Gamma_m(\nu_\bullet, L) = \dim H^0(X, mL)$;
\item[(c)] $\Delta(\nu_\bullet, L)$ is compact;
\item[(d)] there exist finitely many vectors $(v_i, 1)$ spanning a submonoid $M\subseteq \bN^{n+1}$ such that $\Gamma(\nu_\bullet, L)\subseteq M$;
\item[(e)] $\Gamma(\nu_\bullet, L)$ generates $\bZ^{n+1}$ as a group.

\end{itemize}
\end{thm}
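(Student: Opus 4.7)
The plan is to verify each of (a)--(e) separately, adapting the template of \cite{LM09}*{Lemma 2.2} to the mixed nature of $\nu_\bullet$: its first $r$ coordinates come from the quasi-monomial valuation $v$, while its last $n-r$ coordinates come from the admissible flag $Z_\bullet$ on $Z$.

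Condition (a) is immediate, since $H^0(X,\cO_X)=\bC$ and $\nu_\bullet(1)=0$. For (b), I order $\bN^n$ as in Lemma~\ref{lem:nu_bullet_super_triangle_ineq} and, for each $\beta\in\bN^n$, introduce the subspaces $W^m_{\geq \beta} := \{s\in H^0(X, mL): \nu_\bullet(s)\geq \beta\}\cup\{0\}$ and the analogous $W^m_{>\beta}$; both are linear by Lemma~\ref{lem:nu_bullet_super_triangle_ineq}, and both vanish for $\beta$ sufficiently large in our ordering since $H^0(X,mL)$ is finite dimensional. It suffices to show $\dim(W^m_{\geq\beta}/W^m_{>\beta})\leq 1$, with equality exactly when $\beta\in\Gamma_m(\nu_\bullet,L)$; summing over $\beta$ then yields (b). For the inequality, given $s_1,s_2\in W^m_{\geq\beta}\setminus W^m_{>\beta}$, necessarily $\nu_\bullet(s_i)=\beta=(\beta',k)$, and the $\bQ$-linear independence of $\alpha_1,\dots,\alpha_r$ forces the $v$-leading term of each $s_i$ to be of the form $c_ix^{\beta'}$ with $\nu_{Z_\bullet}(c_i)=k$; picking $\lambda\in\bC^\times$ to cancel the leading $\nu_{Z_\bullet}$-coefficients of $c_1$ and $\lambda c_2$ gives $\nu_\bullet(s_1-\lambda s_2)>\beta$ by Lemma~\ref{lem:nu_bullet_super_triangle_ineq} and Lemma~\ref{lem:nu_bullet_is_additive}.

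For (d), which implies (c) because $\Delta(\nu_\bullet,L)$ is then contained in the compact convex hull of the generators $v_i$, I bound $\tfrac{1}{m}\Gamma_m(\nu_\bullet,L)$ uniformly in $m$. The first $r$ coordinates satisfy $\sum_{i=1}^r \alpha_i\nu_i(s) = v(s)\leq m\,T(v)$, and $T(v)<\infty$ since quasi-monomial valuations have linear growth on the section ring of any big line bundle (\cite{BKMS15}*{Proposition 2.12}); positivity of the $\alpha_i$ then bounds each of the first $r$ coordinates. For the last $n-r$ coordinates, Lemma~\ref{lem:c_beta_has_no_poles} ensures each leading coefficient $c_{\beta_v(s)}$ is regular at $z$; more globally, $c_{\beta_v(s)}$ is identified with a nonzero section on $Z$ of a line bundle (obtained from $mL|_Z$ twisted by the conormal bundles of the $D_i$ with exponents $\beta_v(s)_i$) whose numerical class grows at most linearly in $m$. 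The classical \cite{LM09}*{Lemma 2.2} applied on $Z$ with the admissible flag $Z_\bullet$ then yields a uniform bound on $\nu_{Z_\bullet}(c_{\beta_v(s)})/m$. Combining produces finitely many $(v_i,1)\in\bN^{n+1}$ whose monoid contains $\Gamma(\nu_\bullet,L)$, giving (d). Finally, (e) follows from bigness of $L$: part (b) gives $\#\Gamma_m(\nu_\bullet,L)\sim\tfrac{m^n}{n!}\vol(L)$, so for large $m$ the set $\Gamma_m(\nu_\bullet,L)$ cannot lie in any proper rational affine subspace of $\bR^n$; a standard argument as in \cite{LM09}*{Lemma 2.2}, combined with the existence of sections in $H^0(X,mL)$ and $H^0(X,(m+1)L)$ for all large $m$, then extracts elements of $\Gamma(\nu_\bullet,L)$ whose $\bZ$-span is all of $\bZ^{n+1}$.

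The most delicate point I expect is (d) for the last $n-r$ coordinates: since $\beta_v(s)$ varies with $s$, the leading coefficients $c_{\beta_v(s)}$ live in a family of different twisted line bundles on $Z$, and one must produce a single linear bound valid across all of them rather than a separate bound for each. The cleanest approach is to view the $c_{\beta_v(s)}$ as sections of the graded pieces of the Rees algebra of $v$ on the section ring of $L$ restricted to $Z$; Lemma~\ref{lem:c_beta_has_no_poles} is crucial here for ensuring these sections are pole-free on $Z$.
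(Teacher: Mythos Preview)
Your treatment of (a) and (b) is fine and matches the paper. The real divergence is in (c)/(d), and there your direct-bounding approach has a genuine gap.

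For the last $n-r$ coordinates you assert that ``more globally, $c_{\beta_v(s)}$ is identified with a nonzero section on $Z$ of a line bundle obtained from $mL|_Z$ twisted by the conormal bundles of the $D_i$ with exponents $\beta_v(s)_i$''. This is not correct as stated: the index $\beta_v(s)$ is \emph{not} the vector of vanishing orders of $s$ along the $D_i$. For instance, if locally $f=x_1^2+x_2$ and $\alpha=(1,\pi)$, then $\beta_v(f)=(2,0)$ while $\ord_{D_1}(f)=\ord_{D_2}(f)=0$; so $f$ does not lie in $\cO_X(-2D_1)$ and there is no direct ``subtract off vanishing and restrict'' map producing $c_\beta$ as a global section of $(mL-\sum\beta_i D_i)|_Z$. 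Lemma~\ref{lem:c_beta_has_no_poles} is a pointwise statement at $z$, proved via a weighted blow-up whose weight vector is an integral approximation \emph{depending on $f$}; turning this into a uniform numerical bound on $\nu_{Z_\bullet}(c_{\beta_v(s)})/m$ across all $s$ and all $\beta$ would require substantially more work than you indicate (and is not what applying \cite{LM09}*{Lemma 2.2} on $Z$ gives you, since the twisting line bundles vary with $s$ in a way not controlled by simple intersection numbers).

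The paper sidesteps this entirely by reversing the logic: it proves (c) first, using only that (b) gives $\#\Gamma_m(\nu_\bullet,L)/m^n\to \vol(L)/n!<\infty$, and then invokes \cite{Bou14}*{Corollary 1.14} to conclude $\Delta(\nu_\bullet,L)$ is compact. Once compactness is in hand, (d) follows formally as in \cite{LM09}. This route needs no analysis of where $c_\beta$ lives globally.

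Your argument for (e) is also too loose. The counting $\#\Gamma_m\sim m^n\vol(L)/n!$ only shows $\Gamma_m$ is not contained in a proper \emph{affine subspace}; it does not show the $\bZ$-span is all of $\bZ^{n+1}$ rather than a proper full-rank sublattice. The paper follows \cite{LM09} by writing $L\sim A-B$ with $A,B$ very ample and exhibiting explicit sections $t_i$ with $\nu_\bullet(t_i)=e_i$: for $1\le i\le r$ one takes $t_i$ with $\prindiv(t_i)=D_i+G_i$ where $z\notin G_i$, while for $i\ge r+1$ one takes $t_i$ vanishing simply along $Z_{i-r}$ but not along $Z_{i-1-r}$ near $z$. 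This split according to whether $i\le r$ or $i>r$ is exactly the adaptation of \cite{LM09} that the mixed structure of $\nu_\bullet$ demands, and it is not supplied by the counting heuristic.
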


Note that these properties are precisely Lemma 1.3 and conditions (2.3)-(2.5) in \cite{LM09}. By \cite{LM09}*{Theorem 2.13} or \cite{Bou14}*{Theorem 0.2}, these conditions guarantee that the Euclidean volume of the convex body $\Delta(\nu_\bullet, L)$ is proportional to the volume of $L$:
\begin{cor}\label{cor:volume_equality_of_okounkov_bodies}
We have
\[
\frac{1}{n!}\vol(L) = \vol_{\bR^n}(\Delta(\nu_\bullet, L)).
\]
\end{cor}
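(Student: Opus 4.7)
The plan is to deduce the volume equality directly from the general Lazarsfeld--Musta\c{t}\u{a}--Boucksom framework for Newton--Okounkov bodies of graded semigroups, using Theorem~\ref{thm:conditions_ABC_for_okounkov_bodies} as a black box. The five conditions (a)--(e) listed there are precisely the standard hypotheses (2.3)--(2.5) of \cite{LM09}*{\S 2} (together with the dimension count (b) and the vanishing (a)) under which the Euclidean volume of the slice $\Delta(\nu_\bullet,L)$ computes the asymptotic growth of the integer point counts $\#\Gamma_m(\nu_\bullet,L)$.

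First, I would invoke \cite{LM09}*{Proposition 2.1} (or, equivalently, \cite{Bou14}*{Theorem 1.12}) applied to the graded semigroup $\Gamma=\Gamma(\nu_\bullet,L)\subseteq \bN^{n+1}$. Conditions (a), (c), (d), (e) are exactly what is needed there to conclude
\[
\lim_{m\to\infty}\frac{\#\Gamma_m(\nu_\bullet,L)}{m^n}=\vol_{\bR^n}(\Delta(\nu_\bullet,L)).
\]
Next, condition (b) identifies $\#\Gamma_m(\nu_\bullet,L)=\dim H^0(X,mL)$, and since $L$ is big,
\[
\lim_{m\to\infty}\frac{\dim H^0(X,mL)}{m^n}=\frac{\vol(L)}{n!}
\]
by definition of the volume. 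Combining the two displayed limits yields $\frac{1}{n!}\vol(L)=\vol_{\bR^n}(\Delta(\nu_\bullet,L))$, which is the claim.

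Because Theorem~\ref{thm:conditions_ABC_for_okounkov_bodies} is granted, there is essentially no additional obstacle in proving the corollary itself; it is a mechanical assembly of that theorem with the standard semigroup asymptotics. The substantive content lies in Theorem~\ref{thm:conditions_ABC_for_okounkov_bodies}, where the hardest step is (b)---the injectivity $\nu_\bullet\colon H^0(X,mL)\setminus\{0\}\to\bN^n$ up to counting multiplicities---which follows from the fact that $\nu_\bullet$ is a $\bZ^n$-valued valuation on an $n$-dimensional variety (hence has one-dimensional leaves), together with (d), the linear-growth boundedness of $\Gamma_m/m$ that ultimately reflects the fact that the quasi-monomial valuation $v$ has linear growth with respect to the big line bundle $L$.
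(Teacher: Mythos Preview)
Your proposal is correct and takes essentially the same approach as the paper: both invoke Theorem~\ref{thm:conditions_ABC_for_okounkov_bodies} as a black box and then appeal to the general Lazarsfeld--Musta\c{t}\u{a}/Boucksom semigroup asymptotics (the paper cites \cite{LM09}*{Theorem 2.13} and \cite{Bou14}*{Theorem 0.2}, you cite \cite{LM09}*{Proposition 2.1} and \cite{Bou14}*{Theorem 1.12}, but these are the same underlying mechanism). Your final paragraph of commentary on the content of Theorem~\ref{thm:conditions_ABC_for_okounkov_bodies} is extraneous to the corollary's proof but not incorrect.
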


\begin{proof}[Proof of Theorem~\ref{thm:conditions_ABC_for_okounkov_bodies}]
(a) This is clear from Definition~\ref{defn:okounkov_bodies_wrt_nu_bullet}. \\

\noindent(b) It suffices to prove the following statement. Let $s_1,s_2\in H^0(mL)$ be two nonzero sections such that $\nu_\bullet(s_1) = \nu_\bullet(s_2)$. Then there exists a non-trivial $\bC$-linear combination $s = \lambda_1s_1 + \lambda_2s_2$ of $s_1,s_2$ such that $\nu_\bullet(s) > \nu_\bullet(s_1)$, where we order $\bZ^{n}$ as in Lemma~\ref{lem:nu_bullet_super_triangle_ineq}.

Since $v(s_1) = v(s_2)$, the lowest-order term of $s_i$ has the form $c_ix^\beta$ for some nonzero $c_i\in \cO_{Z,z}$ (by Lemma~\ref{lem:c_beta_has_no_poles}) and $\beta\in \bN^r$. After possibly shrinking $X$, we may assume that $c_1,c_2\in \cO_Z(Z)$ and that each $Z_i\subseteq Z_{i-1}$ is a principal Cartier divisor cut out by a function $z_i$. Suppose there exist nonzero $\lambda_1,\lambda_2\in \bC$ such that $\nu_{\bullet\geq r+1}(\lambda_1c_1 + \lambda_2c_2) > \nu_{\bullet\geq r+1}(c_1)$. Then $s = \lambda_1c_1 + \lambda_2c_2$ would give $\nu_\bullet(s) > \nu_\bullet(s_1)$, as desired. Therefore, we may assume that $\nu_{\bullet\geq r+1}(\lambda_1c_1 + \lambda_2c_2) = \nu_{\bullet\geq r+1}(c_1)$ for any nonzer $\lambda_1,\lambda_2\in\bC$.

For any $g\in \cO_Z(Z)$ and $0\leq i\leq n-r$, we define $g^{(i)}\in \cO_{Z_i}(Z_i)$ inductively as follows. When $i = 0$, set $g^{(0)} = g$. For $1\leq i\leq n-r$, define $g^{(i)}\in \cO_{Z_i}(Z_i)$ as the image of $g^{(i-1)}/z_i^{e_i}$ under the restriction map $\cO_{Z_{i-1}}(Z_{i-1}) \to \cO_{Z_{i}}(Z_i)$, where $e_i = \ord_{z_i}(g^{(i-1)})$.

Let $c = \lambda_1c_1 + \lambda_2c_2$. By the assumption that $\nu_{\bullet\geq r+1}(c) = \nu_{\bullet\geq r+1}(c_1)$, we have
\[
c^{(i)} = \lambda_1 c_1^{(i)} + \lambda_2 c_2^{(i)} \neq 0 \in \cO_{Z_i}(Z_i)
\]
for all $0\leq i\leq n-r$. However, since $Z_{n-r} = \{z\}$ is a point, $c_1^{(n-r)}, c_2^{(n-r)}\in \bC$. Therefore, we can find nonzero $\lambda_1,\lambda_2\in \bC$ such that
\[
c^{(n-r)} = \lambda_1 c_1^{(n-r)} + \lambda_2 c_2^{(n-r)} = 0,
\]
which gives a contradiction. This concludes the proof of (b). \\

\noindent(c) By part (b), we have
\[
\lim_{m\to\infty} \frac{\Gamma_m(\nu_\bullet, L)}{m^n} = \lim_{m\to\infty} \frac{\dim H^0(X,mL)}{m^n} = n!\vol(L) <\infty.
\]
Then the compactness of $\Delta(\nu_\bullet, L)$ follows from \cite{Bou14}*{Corollary 1.14}. \\

\noindent (d) The proof of \cite{LM09}*{Lemma 2.2} works in our setting as well, because the compactness of $\Delta(\nu_\bullet, L)$ is established in part (b). \\

\noindent (e) We mimic the proof of \cite{LM09}*{Lemma 2.2}. We write $L\sim A-B$ as the difference of two very ample divisors $A,B$ on $X$. By possibly adding a further very ample divisor to both $A$ and $B$, we may assume that there exist sections $s_0\in H^0(X, A)$ and $t_i\in H^0(X,B)$ for $0\leq i\leq d$, such that $\nu_\bullet(s_0) = \nu_\bullet(t_0) =0$ and $\nu_\bullet(t_i) = e_i$ (the $i$th standard basis vector) for $1\leq i\leq d$. In fact, for $1\leq i\leq r$, we can take $t_i$ such that $\prindiv(t_i) = D_i + G_i$ for some $G_i\sim B- D_i$ not containing $z$; for $i\geq r+1$, we can take $t_i$ which does not vanish on $Z_{i-1-r}$ and vanishes simply on $Z_{i-r}$ in a neighborhood of $z$.

Since $L$ is big, for a sufficiently large integer $m$, $mL-B$ is linearly equivalent to some effective divisor $F_m$. Since $\prindiv(t_i) + F_m \sim B + F_m\sim mL$, by Lemma~\ref{lem:nu_bullet_is_additive}, we have
\[
(\nu_\bullet(F_m), m), (\nu_\bullet(F_m) + e_i, m) \in \Gamma(\nu_\bullet, L)
\]
for every $1\leq i\leq n$. On the other hand, $\prindiv(s_0) + F_m \sim A+F_m \sim (m+1)L$, so
\[
(\nu_\bullet(F_m), m+1)\in \Gamma(\nu_\bullet,L).
\]
Then it is clear that $\Gamma(\nu_\bullet, L)$ generates $\bZ^{n+1}$ as a group.\\
\end{proof}

\subsection{Variants of Newton--Okounkov bodies}
In this section, we describe some variants of the construction in Definition~\ref{defn:okounkov_bodies_wrt_nu_bullet}. 

\subsubsection{Normal crossing divisors with zero dimensional strata}\label{subsubsec:okunkov_body_wrt_nc_divisors_with_zero_dim_strata}
We note that this construction is also studied in \cite{Bou14}*{Section 2.3}.

Let $X$ be a smooth variety and $D$ a normal crossing divisor on $X$. Assume that $D$ has a zero dimensional stratum, i.e., around a point $p\in X$, $D$ is analytically locally given by $x_1x_2\cdots x_n = 0$, where $x_i\in \hat{\cO}_{X,p}.$ 

Let $v\in \text{QM}_p(X, D)$ with a weight vector $\alpha\in \bR_{>0}^r$ (see Remark~\ref{remark:qm_val_wrt_nc_divisors})  such that $\alpha_1,\ldots,\alpha_n$ are $\bQ$-linearly independent. Then for any $f\in \cO_{X,p}$, one can define
\[
\nu_\bullet(f) = \text{index of the lowest-order term of $f$ with respect to $v$}.
\]
Then, one can define the Newton--Okounkov body $\Delta(\nu_\bullet, L)$ of a line bundle $L$ with respect to $\nu_\bullet$ as in Definition~\ref{defn:okounkov_bodies_wrt_nu_bullet}. When $L$ is big, one can prove Theorem~\ref{thm:conditions_ABC_for_okounkov_bodies} for $\Delta(\nu_\bullet, L)$ using the same approach. (In fact, the proof is easier because the boundedness of $\Delta(\nu_\bullet, L)$ simply follows from the fact that $v$ has linear growth.)

\subsubsection{The singular case}
Let $X$ be a (possibly singular) quasi-projective variety and $L$ be a line bundle on $X$. Let $v$ be a quasi-monomial valuation on $X$. We can construct a Newton--Okounkov body of $L$ with respect to $v$ as follows.
\begin{defn}\label{defn:okounkov_body_built_on_log_resolution}
Let $\mu: (Y,E_1+E_2+\ldots+E_r)\to X$ be a partial resolution of $X$ such that the following holds:
\begin{itemize}
\item there exists an irreducible component $Z$ of $\bigcap_{i=1}^r E_i$ such that the pair $(Y, E_1+E_2+\ldots+E_r)$ is log smooth in a neighborhood of $Z$;
\item $v\in {\QM}_{Z}(Y,E_1+\ldots + E_r)$ has weight vector $\alpha\in \bR_{>0}^r$, such that $\alpha_1,\ldots,\alpha_r$ are $\bQ$-linearly independent.
\end{itemize}
Let $Z = Z_0 \supset Z_1 \supset \cdots\supset Z_{n-r} = \{z\}$ be an admissible flag on $Z$. Let $\nu_\bullet$ be the $\bZ^n$-valuation defined using $v$ and $Z_\bullet$ (see Definition~\ref{defn:nu_bullet}).
Then we say that $\Delta(\nu_\bullet, \mu^*L)$ is \emph{a Newton--Okounkov body of $L$ with respect to $v$}.
\end{defn}

Of course, there might be many different Newton--Okounkov bodies of $L$ with respect to $v$, depending on the partial resolution $Y\to X$ and the admissible flag $Z_\bullet$. However, certain choices of partial resolutions and admissible flags correspond to the same Newton--Okounkov body up to an integral linear transformation.

\begin{lem}\label{lem:lin_transform_okounkov_body_under_further_blow_ups}
Let $(Y, E = E_1 + \ldots + E_r)$ be a log smooth pair and $Z$ be an irreducible component of $\bigcap_{i=1}^r E_i$. Let $\phi: Y'\to Y$ be a composition of blow-ups of $Y$ along strata of $E$ (and their total transforms). Suppose $v\in \text{QM}_Z(Y, E)$ has weight vector $\alpha\in \bR_{>0}^r$ such that $\alpha_1,\ldots,\alpha_r$ are $\bQ$-linearly independent. Then the following statements hold.
\begin{itemize}
\item[(a)] There exist prime divisors $E_1',\ldots, E_r'$ which are irreducible components of $\phi^*E$ and an irreducible component $Z'\subseteq \bigcap_{i=1}^r E_i'$, such that \[v\in \text{QM}_{Z'}(Y', E' = E_1' + \cdots + E_r')\]
and $\phi$ maps $Z'$ isomorphically to $Z$.
\item[(b)] Suppose for $1\leq i\leq r$, $\ord_{E_i'}\in {\QM}_Z(Y,E)$ has weight vector $w^{(i)}\in \bN^r$. Then $v\in {\QM}_{Z'}(Y', E')$ has weight vector $W^{-1}\alpha$, where $W$ is the $r\times r$ matrix whose $i$th column is $w^{(i)}$.
\item[(c)] Let $Z_i'$ be the preimage of $Z_i$ under the isomorphism $Z'\to Z$. Then for any $f\in \cO_{Y,\eta}$, we have
\[
\nu_{Z_\bullet'}(\phi^*f) = \nu_{Z_\bullet}(f) \quad\text{and}\quad \beta_v(\phi^*f) = W^T \beta_v(f).
\]
In particular, we have $\nu_\bullet'(\phi^*f) = M \nu_\bullet(f)$, where $M$ is the block-diagonal $n\times n$ matrix
\[
M = \begin{pmatrix}
W^T & 0 \\
0 & \id_{n-r}
\end{pmatrix}\in \GL_n(\bZ)
\]
and $\nu_\bullet'$ is the $\bZ^n$-valuation on $Y'$ defined using $v$ and $Z_\bullet'$.
\item[(d)] For any line bundle $L$ on $Y$, we have 
\[
\Delta(\nu_\bullet', \phi^*L) = M \Delta(\nu_\bullet, L).
\]
\end{itemize}
\end{lem}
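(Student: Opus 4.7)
The plan is to analyze $\phi$ locally near $Z'$ using the toric structure of stratum blow-ups, reducing everything to explicit monomial change-of-coordinates computations. First establish Part (a) by induction on the number of blow-ups composing $\phi$, then deduce Parts (b)--(d) by power-series analysis in the two coordinate systems $(x_i)$ around $Z$ and $(x_i')$ around $Z'$.

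For Part (a), work locally: near $Z$, the log smooth pair $(Y, E)$ looks like $\bA^r \times Z$ with $E_i = \{x_i = 0\}$, and $\QM_Z(Y, E)$ is identified with $\bR^r_{\geq 0}$. Stratum blow-ups of $(Y, E)$ along strata contained in $Z$ are toric in the $x$-variables and leave $Z$-coordinates untouched, so successive blow-ups correspond to a simplicial subdivision of $\bR^r_{\geq 0}$. Since $\alpha_1,\ldots,\alpha_r$ are $\bQ$-linearly independent, $\alpha$ lies in the interior of a unique top-dimensional simplicial cone in the subdivision; take $E_1',\ldots,E_r'$ to be the irreducible components of $\phi^*E$ corresponding to the rays of this cone and $Z'$ to be the irreducible component of $\bigcap_{i=1}^r E_i'$ containing the center of $v$ on $Y'$. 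In the chart of $Y'$ adapted to this cone, $Z'$ is cut out by $x_1' = \cdots = x_r' = 0$ with the $Z$-coordinates preserved, so $Z'$ has codimension $r$ and $\phi|_{Z'}\colon Z' \to Z$ is an isomorphism. Blow-ups of strata not containing the current center of $v$ are isomorphisms near $Z$ and contribute nothing to the analysis.

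For Part (b), in the adapted chart, toricity gives $\phi^*x_j = \prod_{i=1}^{r} (x_i')^{w^{(i)}_j}$ with $w^{(i)}_j = \ord_{E_i'}(x_j)$ (no auxiliary units appear, since the $Z$-coordinates are untouched). Applying $v$ and using $v(x_j) = \alpha_j$, $v(x_i') = \alpha_i'$ yields $\alpha_j = \sum_i w^{(i)}_j \alpha_i'$, i.e., $\alpha = W\alpha'$, hence $\alpha' = W^{-1}\alpha$; invertibility of $W$ follows from the log-smoothness of $(Y', E')$ at the generic point of $Z'$, and positivity of $\alpha'$ follows from the choice of cone in Part (a). For Part (c), expand $f = \sum_\beta c_\beta x^\beta$ with $c_\beta \in K(Z)$; pulling back gives $\phi^*f = \sum_\beta c_\beta (x')^{W^T \beta}$. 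Because $\alpha'$ has $\bQ$-linearly independent entries, the $v$-value $\alpha' \cdot (W^T \beta) = \alpha \cdot \beta$ is minimized at a unique $\beta$, which must be $\beta_v(f)$, giving $\beta_v(\phi^*f) = W^T \beta_v(f)$ and a leading coefficient $c_{\beta_v(f)}$. With the flag defined by $Z_i' = (\phi|_{Z'})^{-1}(Z_i)$, the valuation $\nu_{Z_\bullet'}$ on $K(Z') = K(Z)$ agrees with $\nu_{Z_\bullet}$, so $\nu_{Z_\bullet'}(\phi^*f) = \nu_{Z_\bullet}(c_{\beta_v(f)}) = \nu_{Z_\bullet}(f)$. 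Combining the two identities yields $\nu_\bullet'(\phi^*f) = M\nu_\bullet(f)$. Part (d) follows from Part (c) applied section-by-section, using that $\phi^*\colon H^0(Y, mL) \to H^0(Y', m\phi^*L)$ is a $\bC$-linear isomorphism since $\phi$ is a projective birational morphism between smooth varieties; this gives $\Gamma_m(\nu_\bullet', \phi^*L) = M \cdot \Gamma_m(\nu_\bullet, L)$, and passing to closed convex hulls yields the desired identity of Newton--Okounkov bodies.

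The main obstacle is the toric bookkeeping in Part (a): one must simultaneously verify that the chosen $E_1', \ldots, E_r'$ produce a stratum $Z'$ isomorphic to $Z$ via $\phi$, that $v$ has positive weight $W^{-1}\alpha$ with respect to $(Y', E')$, and that $Z'$ is indeed the center of $v$ on $Y'$. All three follow at once from the single fact that $\alpha$ lies in the interior of a simplicial cone in the toric fan associated to the composite sequence of stratum blow-ups; however, tracking this fan through general compositions (in particular through blow-ups along deeper strata introduced by earlier blow-ups) requires a careful inductive argument to ensure the subdivisions remain simplicial and the adapted chart continues to have the desired form.
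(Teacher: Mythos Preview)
Your proposal is correct and takes essentially the same approach as the paper: Parts (b)--(d) are identical to the paper's arguments, and for Part (a) the paper likewise reduces to a single stratum blow-up, describes it explicitly as a stellar subdivision of $\QM_Z(Y,E)\cong\bR_{\geq 0}^r$, and then inducts. Your concern about tracking the fan through general compositions is thus resolved by exactly this one-step-at-a-time induction, which keeps each subdivision simplicial and each adapted chart in the desired monomial form.
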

\begin{proof}
\noindent (a)  We may assume that $\phi:Y'\to Y$ is the blow-up along a single stratum $V$ of $E$, which is an irreducible component of $\bigcap_{i=1}^s E_i$ for some $s\leq r$. Let $E_0'$ be the exceptional divisor of $\phi$. Then $\phi_0= \phi|_{E_0'}: E_0'\to V$ is a projective bundle. Let $E_i'$ be the strict transform of $E_i$. When $1\leq i\leq s$, $E_i'\cap E_0'\subseteq E_0'$ is the $i$th coordinate hyperplane bundle over $V$. When $s+1\leq i\leq r$, $E_i'\cap E_0'$ is the preimage under $\phi_0$ of the unique irreducible component of $E_i\cap V$ containing $Z$. In particular, for every $j\in \{1,2,\ldots, s\}$, we have
\[
\bigcap_{0\leq i\leq s, i\neq j} E_i' = \text{the $j$th coordinate section over $V$},
\]
and 
\[
\bigcap_{i=s+1}^r E_i' = \phi_0^{-1}(Z).
\]
For $1\leq j\leq s$, we denote
\[
Z_j' := \bigcap_{0\leq i\leq r, i\neq j} E_i' = \text{the $j$th coordinate section over $Z$}.
\]
Then
\[
\QM_Z(Y,E) = \bigcup_{j=1}^s \QM_{Z_j'} \left(Y', \sum_{0\leq i\leq r, i\neq j} E_i'\right).
\]
(In fact, the right-hand side is the stellar subdivision of $\QM_\eta(Y,E)\cong \bR_{\geq 0}^r$ centered at the point $(1^s, 0^{r-s})\in \bR^r$.) Thus, there is an index $j\in \{1,2,\ldots,s\}$ such that 
\[
v\in  \QM_{Z_j'} \left(Y', \sum_{0\leq i\leq r, i\neq j} E_i'\right),
\]
as desired.\\

\noindent (b) Let $\alpha'$ be the weight of $v\in \QM_{Z'}(Y',E')$. For any $1\leq j\leq r$, we have
\[
\phi^*E_j = \sum_{i=1}^r w^{(i)}_j E_i' + \text{(other components)}.
\]
As a result,
\[
\alpha_j = v(E_j) = v\left(\sum_{i=1}^r w^{(i)}_j E_i'\right) = \sum_{i=1}^r w_j^{(i)} \alpha_i',
\]
which implies that $\alpha = W\alpha'$. \\

\noindent (c) Suppose for $1\leq j\leq r$, $E_j$ is cut out by $y_j\in \cO_{Y,\eta}$ and $E_j'$ is cut out by $y_j'\in \cO_{Y', \eta'}$. Then we have
\[
\phi^*y_j = \prod_{i=1}^r (y_i')^{w^{(i)}_j},
\]
and hence $\phi^* y^\beta = (y')^{W^T\beta}$ for every $\beta\in \bN^r$. In particular, if $cy^\beta$ is the lowest-order term of $f$ with respect to $v$, then the lowest-order term of $\phi^*f$ is $c(y')^{W^T\beta}$ (after identifying $K(Z') = K(Z)$). This proves (c).\\

\noindent(d) This follows directly from (c).
\end{proof}

\subsubsection{Graded linear series}\label{subsubsec:okounkov_body_for_graded_linear_series}
Let $W_\bullet$ be a graded linear series associated to a big line bundle $L$. One can take the same definition of $\nu_\bullet$ as in Definition~\ref{defn:nu_bullet} and define a Newton--Okounkov body $\Delta(\nu_\bullet, W_\bullet)$ after replacing each $|mL|$ in Definition~\ref{defn:okounkov_bodies_wrt_nu_bullet} by the sublinear series $W_m$. By \cite{Bou14}*{Theorem 0.2}, we have the following statement.
\begin{prop}\label{prop:volume_equality_for_graded_linear_series}
    Assume $W_\bullet$ contains an ample series (see Definition~\ref{defn:graded_linear_series_and_contains_ample_series}). Then Theorem~\ref{thm:conditions_ABC_for_okounkov_bodies} holds for $W_\bullet$ and we have a volume equality
    \[
    \vol_{\bR^n}(\Delta(\nu_\bullet, W_\bullet)) = \lim_{m\to\infty} \frac{\dim W_m}{m^n} = \frac{1}{n!} \vol(W_\bullet).
    \]
\end{prop}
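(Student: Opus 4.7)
The plan is to verify, one by one, the five conditions (a)--(e) of Theorem~\ref{thm:conditions_ABC_for_okounkov_bodies} with $|mL|$ replaced by $W_m$, and then invoke \cite{Bou14}*{Theorem 0.2} (or \cite{LM09}*{Theorem 2.13}) to conclude the volume identity. Conditions (a), (c), (d) are essentially inherited from the line-bundle case because $W_m \subseteq H^0(X,mL)$: hence $\Gamma(\nu_\bullet,W_\bullet)\subseteq \Gamma(\nu_\bullet,L)$, which forces $\Delta(\nu_\bullet,W_\bullet)\subseteq \Delta(\nu_\bullet,L)$ to be compact and contained in the same finitely generated submonoid $M\subseteq \bN^{n+1}$ produced in the proof of Theorem~\ref{thm:conditions_ABC_for_okounkov_bodies}(d). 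Condition (a) is immediate from the graded linear series convention $W_0=\bC$.

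For condition (b), the key injectivity $\#\Gamma_m(\nu_\bullet, W_\bullet) = \dim W_m$, I would re-run the proof of Theorem~\ref{thm:conditions_ABC_for_okounkov_bodies}(b) verbatim: given two nonzero sections $s_1,s_2\in W_m$ with the same $\nu_\bullet$-value, the argument produces a nontrivial $\bC$-linear combination $s\in W_m$ with strictly larger $\nu_\bullet$-value. The argument uses only the valuation-like property of $\nu_\bullet$ and the vector space structure of $W_m$, never the maximality of $W_m=H^0(X,mL)$, so it transfers with no change.

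The main obstacle, and the only place where the ample series hypothesis is truly needed, is condition (e): that $\Gamma(\nu_\bullet,W_\bullet)$ generates $\bZ^{n+1}$ as a group. Here I would mimic the proof of Theorem~\ref{thm:conditions_ABC_for_okounkov_bodies}(e), but take the auxiliary very ample divisors $A'$ and $B$ inside the ample part rather than inside $L$. Concretely, fix an ample $\bQ$-Cartier $\bQ$-divisor $A\leq L$ with $H^0(X,mA)\subseteq W_m$ for all sufficiently divisible $m$, and write some integral multiple of $A$ as $A_1-B$ with $A_1,B$ very ample on $X$. Exactly as before, one can choose distinguished sections $s_0\in H^0(X,A_1)$ and $t_0,\ldots,t_n\in H^0(X,B)$ with $\nu_\bullet(s_0)=\nu_\bullet(t_0)=0$ and $\nu_\bullet(t_i)=e_i$; using that $A$ is big on $X$, for $m$ sufficiently divisible one has $mA-B\sim F_m$ with $F_m$ effective, so that $\prindiv(t_i)+F_m$ and $\prindiv(s_0)+F_m$ define sections of $H^0(X,mA)\subseteq W_{km}$ and $H^0(X,(m+1)A)\subseteq W_{k(m+1)}$ for an appropriate clearing-of-denominators factor $k$. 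This produces vectors of the form $(\nu_\bullet(F_m),km)$, $(\nu_\bullet(F_m)+e_i,km)$, and $(\nu_\bullet(F_m),k(m+1))$ in $\Gamma(\nu_\bullet,W_\bullet)$, which generate $\bZ^{n+1}$.

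Once (a)--(e) are in hand, \cite{Bou14}*{Theorem 0.2} applied to the Okounkov semigroup $\Gamma(\nu_\bullet, W_\bullet)$ gives
\[
\vol_{\bR^n}\bigl(\Delta(\nu_\bullet,W_\bullet)\bigr)=\lim_{m\to\infty}\frac{\#\Gamma_m(\nu_\bullet,W_\bullet)}{m^n}=\lim_{m\to\infty}\frac{\dim W_m}{m^n}=\frac{1}{n!}\vol(W_\bullet),
\]
where the middle equality is condition (b) and the last equality is the definition of $\vol(W_\bullet)$ recalled in Definition~\ref{defn:graded_linear_series_and_contains_ample_series}.
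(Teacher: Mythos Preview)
Your approach matches the paper's: reduce to the conditions of Theorem~\ref{thm:conditions_ABC_for_okounkov_bodies} and invoke \cite{Bou14}*{Theorem 0.2}. The verifications of (a)--(d) are fine.

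There is a real gap in your argument for (e). The vectors you produce have last coordinates $km$ and $k(m+1)$, all divisible by the clearing-of-denominators factor $k$. Taking differences you obtain $(e_i,0)$ for $1\le i\le n$ and $(0,\ldots,0,k)$, so the group they generate is $\bZ^n\times k\bZ$, not $\bZ^{n+1}$, whenever $k>1$. Since the definition of ``contains an ample series'' only guarantees $H^0(X,mA)\subseteq W_m$ for \emph{sufficiently divisible} $m$, you cannot in general arrange $k=1$.

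The fix is to use the other half of the hypothesis, which you never invoked: $W_m\neq 0$ for all sufficiently large $m$. Pick any large $m_0$ and nonzero $u\in W_{m_0}$, $u'\in W_{m_0+1}$; then $(\nu_\bullet(u),m_0)$ and $(\nu_\bullet(u'),m_0+1)$ lie in $\Gamma(\nu_\bullet,W_\bullet)$. Together with the $(e_i,0)$ already obtained, these give an element with last coordinate $1$, and hence generate all of $\bZ^{n+1}$.
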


\subsubsection{Multiple line bundles}
Suppose that we are in the same setting as Section~\ref{subsec:construction_of_okounkov_body}, but instead of having a single big line bundle $L$, we have $s\geq 1$ big line bundles $L_1,L_2,\ldots,L_s$. For each $\vec{m} = (m_1,\ldots,m_s)\in \bN^s$, we can similarly define a function $\nu_\bullet: H^0(X, \Vec{m}L) \to \bN^n\cup \{\infty\}$, where $\vec{m}L := m_1L_1 + \ldots + m_sL_s$.

\begin{defn}\label{defn:multiple_line_bundles}
For $\vec{m}\in \bN^s$, let $\Gamma_{\vec{m}}(\nu_\bullet, \{L_i:1\leq i\leq s\})\subseteq \bN^n$ be the image of $\nu_\bullet: |\vec{m}L| \to \bN^n$ and
\[
\Gamma(\nu_\bullet, \{L_i:1\leq i\leq s\}) = \{(p,\vec{m}): p\in \Gamma_m(\nu_\bullet, \{L_i:1\leq i\leq s\}), \vec{m}\in \bN^s\}\subseteq \bN^n\times \bN^s.
\]
The \emph{Newton--Okounkov body of $\{L_i:1\leq i\leq s\}$ with respect to $\nu_\bullet$} is the closed convex cone generated by 
$\Gamma(\nu_\bullet, \{L_i:1\leq i\leq s\})$, denoted as $\Delta(\nu_\bullet, \{L_i:1\leq i\leq s\})\subseteq \bR^{n+s}$. 
\end{defn}

Similar to \cite{LM09}*{Theorem 4.19}, we have the following proposition.
\begin{prop}\label{prop:slices_of_global_okounkov_bodies_mult_line_bundles}
Let $p: \Delta(\nu_\bullet, \{L_i:1\leq i\leq s\})\subseteq \bR^{n+s}\to \bR^{s}$ denote the projection to the last $s$ coordinates. Then for any $\vec{a}\in \bQ^s$, we have
\[
\Delta(\nu_\bullet, \vec{a}L) = {p}^{-1}(\vec{a}).
\]
Here, $\Delta(\nu_\bullet, \vec{a}L) = \frac{1}{N}\Delta(\nu_\bullet, N\vec{a}L)$ for a sufficiently divisible $N\in \bN$.
\end{prop}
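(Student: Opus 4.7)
The plan is to mimic the strategy of \cite{LM09}*{Theorem 4.19}. Set $C := \Delta(\nu_\bullet, \{L_i:1\leq i\leq s\}) \subseteq \bR^{n+s}$. Since $C$ is a cone and $\Delta(\nu_\bullet, \vec{a}L) = \tfrac{1}{N}\Delta(\nu_\bullet, N\vec{a}L)$ by definition, I would first reduce to the case $\vec{a} = \vec{m} \in \bN^s$ with $L' := \vec{m}L$ a big line bundle on $X$ by replacing $\vec{a}$ with $N\vec{a}$ for sufficiently divisible $N \in \bZ_{>0}$. Under the obvious identification $p^{-1}(\vec{m}) \cong \bR^n$, the goal becomes $C \cap p^{-1}(\vec{m}) = \Delta(\nu_\bullet, L')$.

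For the easy inclusion $\Delta(\nu_\bullet, L') \subseteq C \cap p^{-1}(\vec{m})$, any section $s \in H^0(X, kL') = H^0(X, k\vec{m}L)$ yields a point $(\nu_\bullet(s), k\vec{m}) \in \Gamma(\nu_\bullet, \{L_i:1\leq i\leq s\})$, so $(\nu_\bullet(s)/k, \vec{m}) \in C$. Taking the closed convex hull over all such $(k,s)$ gives the inclusion.

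For the reverse inclusion, let $(q, \vec{m}) \in C$. By definition of the closed convex cone, $(q, \vec{m})$ is a limit of finite nonnegative rational combinations $w_k = \sum_i \lambda_{i,k}(\nu_\bullet(s_{i,k}), \vec{m}_{i,k})$ of elements of $\Gamma(\nu_\bullet, \{L_i:1\leq i\leq s\})$. After clearing denominators and applying multiplicativity of $\nu_\bullet$ (Lemma~\ref{lem:nu_bullet_is_additive}), each $w_k$ can be rewritten in the form $(\nu_\bullet(t_k)/M_k,\, \vec{a}_k)$ for some $t_k \in H^0(X, M_k\vec{a}_k L)$, where $M_k \in \bZ_{>0}$, $M_k\vec{a}_k \in \bN^s$, $\vec{a}_k \in \bQ^s_{\geq 0}$ satisfies $\vec{a}_k \to \vec{m}$, and $\nu_\bullet(t_k)/M_k \to q$. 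In particular $\nu_\bullet(t_k)/M_k \in \Delta(\nu_\bullet, \vec{a}_k L)$. Thus the inclusion reduces to the following upper semicontinuity statement: if $\vec{a}_k \in \bQ^s_{\geq 0}$ with $\vec{a}_k L$ big and $\vec{a}_k \to \vec{m}$, and if $q_k \in \Delta(\nu_\bullet, \vec{a}_k L)$ with $q_k \to q$, then $q \in \Delta(\nu_\bullet, L')$.

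The main obstacle is to establish this upper semicontinuity, which is the analogue of the Hausdorff continuity of Newton--Okounkov bodies in \cite{LM09}*{Theorem 4.25}. My plan is to combine Corollary~\ref{cor:volume_equality_of_okounkov_bodies} with continuity of the volume function $\vec{a} \mapsto \vol(\vec{a}L)$ on the big cone, together with a uniform Fujita-approximation argument applied to ample $\bQ$-subdivisors of $\vec{a}L$ varying near $\vec{m}$, to deduce that $\Delta(\nu_\bullet, \vec{a}L)$ varies continuously (in Hausdorff distance) in $\vec{a}$ on a neighborhood of $\vec{m}$. Since $\Delta(\nu_\bullet, L')$ is closed, this yields the required upper semicontinuity and completes the proof.
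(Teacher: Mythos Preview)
The paper does not give its own proof of this proposition; it states it as analogous to \cite{LM09}*{Theorem 4.19} and omits the argument. Your outline follows that strategy and is correct through the reduction to the upper semicontinuity statement in your third paragraph.

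The final paragraph, however, is too vague to count as a proof, and ``uniform Fujita approximation'' is not the mechanism that makes it work. The argument in \cite{LM09} is more direct. Fix any ample $\bQ$-class $A$ on $X$. Since the ample cone is open and $\vec{a}_k \to \vec{m}$, the class $\vec{m}L + A - \vec{a}_k L$ is ample for $k \gg 0$, so a sufficiently divisible multiple has a section not passing through $z$, hence with $\nu_\bullet$-value $0$. Tensoring by this section embeds $\Gamma_N(\nu_\bullet, \vec{a}_k L)$ into $\Gamma_N(\nu_\bullet, \vec{m}L + A)$ for suitable $N$, giving $\Delta(\nu_\bullet, \vec{a}_k L) \subseteq \Delta(\nu_\bullet, \vec{m}L + A)$ and hence $q \in \Delta(\nu_\bullet, \vec{m}L + A)$. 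It remains to show $\bigcap_{A} \Delta(\nu_\bullet, \vec{m}L + A) = \Delta(\nu_\bullet, \vec{m}L)$: the inclusion $\supseteq$ is the same tensoring trick, and equality then follows because the intersection is a convex body containing $\Delta(\nu_\bullet, \vec{m}L)$ whose Euclidean volume, by Corollary~\ref{cor:volume_equality_of_okounkov_bodies} and continuity of $\vol$ on the big cone, equals $\vol(\vec{m}L)/n!$. This is what should replace your last paragraph; your reference to Corollary~\ref{cor:volume_equality_of_okounkov_bodies} and volume continuity is the right endgame, but the missing ingredient is the monotonicity $\Delta(\nu_\bullet, D) \subseteq \Delta(\nu_\bullet, D+A)$ via sections with $\nu_\bullet = 0$, not Fujita approximation.
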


\begin{rem}
    Similar to \cite{LM09}*{Section 4.3}, one can generalize Definition~\ref{defn:multiple_line_bundles} and Proposition~\ref{prop:slices_of_global_okounkov_bodies_mult_line_bundles} for multi-graded linear series that contains an ample series (see Definition 4.15 and 4.17 in \cite{LM09}). 
\end{rem}

\subsection{Proof of Theorem~\ref{mainthm:okounkov_bodies_wrt_qm_val}}
Following Definition~\ref{defn:okounkov_body_built_on_log_resolution}, we take a log resolution $\mu: (Y,E_1+\cdots +E_r)\to X$ such that $v\in \QM_Z(Y,E_1+\cdots+E_r)$ is a quasi-monomial valuation with a weight vector $\alpha\in\bR_{>0}^r$. Let $Z_\bullet$ be an admissible flag on $Z$ and $\nu_\bullet$ be the $\bZ^n$-valuation defined using $v$ and $Z_\bullet$ (see Definition~\ref{defn:nu_bullet}). For each $t\in [0, T(v))$, let $\Delta^t = \Delta(\nu_\bullet, V_\bullet^t(v))$ be the Newton--Okounkov body of the graded sublinear series $V_\bullet^t(v)$ in Section~\ref{subsubsec:okounkov_body_for_graded_linear_series}.

\begin{proof}[Proof of Theorem~\ref{mainthm:okounkov_bodies_wrt_qm_val}]
(a) This follows from the fact that
\[
\Gamma_m(V_\bullet^t(v)) = \Gamma_m(L) \cap \left\{x\in \bR^n: \sum_{i=1}^r\alpha_ix_i \geq mt\right\}
\]
for every $m\in \bN$. \\

(b) Suppose $m$ is a positive integer such that $mD\in |mL|$. Suppose $E_i$ is locally defined by a function $y_i$ for $1\leq i\leq r$. Suppose $\mu^*(mD)$ is locally defined by a function $f$, whose lowest-order term with respect to $v$ is given by $cy^\gamma$ with $c\in K(Z)$ and $\gamma\in\bN^r$. Let $\beta = \frac{\gamma}{m}\in\bQ_{\geq 0}^r$. Then $v(D) = \alpha\cdot \beta$. The uniqueness of $\beta$ follows from the fact that $\alpha_1,\ldots,\alpha_r$ are $\bQ$-linearly independent. The rest of the statement follows from our construction of $\Delta^t$. \\

(c) By Lemma~\ref{lem:V_bullet^t_contains_ample_linear_series}, $V_\bullet^t(v)$ contains an ample linear series. Then by Proposition~\ref{prop:volume_equality_for_graded_linear_series}, we have
\[
\vol_{\bR^n}(\Delta^t) = \frac{1}{n!} \vol(V_\bullet^t(v)).
\]
\end{proof}

\section{Valuations computing stability thresholds}\label{sec:qm_vals_computing_stability_invariants}
Recall the $\delta^\tau$-invariants from Definition~\ref{defn:delta_tau}. The main goal of this section is to prove the following theorem.
\begin{thm}\label{thm:qm_val_computing_delta_tau}
Let $(X,D)$ be a projective klt pair. Let $L$ be a big line bundle on $X$ and $\tau\in [0,1]$. Then there exists a quasi-monomial valuation computing $\delta^\tau(L)$.
\end{thm}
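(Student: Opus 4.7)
The plan is to adapt the four-step strategy of Blum--Jonsson from \cite{BJ20}*{Theorem E} to the big line bundle case, now for the whole family of $S^\tau$-invariants rather than just $S^1$. First, I would choose a minimizing sequence of divisorial valuations $v_i$ with $\delta^\tau(v_i)\searrow \delta^\tau(L)$, so that each $v_i$ induces a linearly bounded filtration $\cF_i$ on the section ring $R=\bigoplus_m H^0(X,mL)$. Because $L$ is only big, the finite-level pieces $R_m$ do not give a Kodaira map that is birational onto its image, which is the reason Blum--Jonsson's argument breaks. To remedy this, I would fix an ample line bundle $B$ and track, alongside $\cF_i$, the compatible auxiliary filtration $\cG_i$ induced by $v_i$ on $\bigoplus_m H^0(X,mL+B)$ (these are the ``compatible filtrations'' introduced in Definition~\ref{defn:compatible_filtration}). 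For $m\gg 0$, the sections in $|mL+B|$ do separate points on a birational modification, so the finite-level data of $(\cF_i,\cG_i)$ can be encoded as a point in a product of flag varieties indexed by $m$, and a standard generic limit / diagonalization argument extracts a limiting pair $(\cF,\cG)$ on $R$ and on the auxiliary ring.

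Second, I would extract the candidate valuation from $(\cF,\cG)$. The filtration $\cF$ produces a graded ideal sequence $\fa_\bullet(\cF)$ on $X$, and \cite{Xu20}*{Theorem 1.1} supplies a quasi-monomial valuation $v$ computing $\lct(X,D;\fa_\bullet(\cF))$. By construction $A_{X,D}(v)\leq \liminf_i A_{X,D}(v_i)$ (via the usual semicontinuity along the generic limit), and the induced filtrations $\cF_v$, $\cG_v$ dominate $\cF$, $\cG$ respectively, so $T(v)\geq T(\cF)$ and analogously for the $S^\tau$-invariants on the filtration side.

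Third, and this is where the proof really differs from the ample case, I would establish $S^\tau(v)\geq S^\tau(\cF)$ via a uniform Fujita-type approximation statement. The strategy is to introduce the modified invariants $\tilde{S}_m^\tau(\cG)$ of Definition~\ref{defn:S_tilde_m_invariants} and prove the two convergences $\tilde{S}_m^\tau(\cG_i)\to \tilde{S}_m^\tau(\cG)$ as $i\to\infty$ for each fixed $m$ (from the generic limit), and $\tilde{S}_m^\tau(\cG)\to S^\tau(\cF)$ as $m\to\infty$ (uniform Fujita approximation, Theorem~\ref{thm:uniform_fujita_approx_BJ_thm_5.3} and Lemma~\ref{lem:limit_of_S_tau_m_is_S}). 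The role of $B$ is precisely to supply an ample cushion that makes the finite-level invariants $\tilde{S}_m^\tau$ well-behaved while ensuring that the difference between $\tilde{S}_m^\tau(\cG)$ and $S_m^\tau(\cF)$ is controlled by a correction of order $O(1/m)$ coming from $\vol(B)/\vol(mL+B)$. Combining the two convergences with $\delta^\tau(v_i)\to \delta^\tau(L)$ and the log discrepancy bound gives $A_{X,D}(v)/S^\tau(v)\leq \delta^\tau(L)$, and the reverse inequality is tautological.

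The main obstacle I expect is the uniform Fujita-type step. In \cite{BJ20} the ampleness of $L$ is used to compare sections in $|mL|$ with their tensor powers via Castelnuovo--Mumford type statements, and this breaks down when $L$ is only big. The fix, which is the heart of the auxiliary filtration construction, is to replace $|mL|$ by $|mL+B|$ so that basepoint-free techniques apply, and to show that the resulting $\tilde{S}_m^\tau(\cG)$ still approximates $S^\tau(\cF)$ uniformly, i.e., with error bounds independent of the filtration. Compatibility of the filtrations (so that $\cG$ really captures the asymptotic behavior of $\cF$ rather than some genuinely larger filtration) and uniformity in $\tau\in[0,1]$ are the delicate points; once these are in place, the generic limit argument and the application of \cite{Xu20} proceed essentially as in the ample case, yielding the desired quasi-monomial minimizer of $\delta^\tau(L)$.
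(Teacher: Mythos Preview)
Your proposal is correct and follows essentially the same route as the paper: minimizing sequence of divisorial valuations, compatible pairs of filtrations $(\cF_i,\cG_i)$ on $R$ and on the auxiliary module $M=\bigoplus_m H^0(X,mL+B)$, generic limit on products of flag varieties, the modified invariants $\tilde S_m^\tau$ of Definition~\ref{defn:S_tilde_m_invariants} together with Theorem~\ref{thm:uniform_fujita_approx_BJ_thm_5.3} and Lemma~\ref{lem:limit_of_S_tau_m_is_S}, and finally \cite{Xu20} applied to the base-ideal sequence $\fb_\bullet(\cF)$.

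One point worth flagging: you speak of ``uniformity in $\tau\in[0,1]$'', but in the paper the constant in Theorem~\ref{thm:uniform_fujita_approx_BJ_thm_5.3} blows up as $\tau\to 0$, so the case $\tau=0$ is handled separately via the uniform Fujita approximation of the $T$-invariant (Theorem~\ref{thm:uniform_fujita_approx_T}) rather than of $S^\tau$; the argument is otherwise parallel. Also, your diagnosis of why Blum--Jonsson breaks (``$R_m$ does not give a Kodaira map that is birational onto its image'') is not quite the issue---for big $L$ that map \emph{is} eventually birational; the actual obstruction is that the global-generation step in the multiplier-ideal/Fujita-approximation machinery (Lemma~\ref{lem:BJ_cor_5.10}) requires an ample twist, which is precisely what $B$ provides.
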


\subsection{Multiplier ideals}
We first recall definitions and properties of multiplier ideals from \cite{Laz04b}. Let $(X,D)$ be a log pair and $\fa$ be a nonzero ideal on $X$. Let $\mu:Y\to X$ be a log resolution of $(X,D,\fa)$ and write $\fa \cdot \cO_Y = \cO_Y(-F)$. For $c>0$, \emph{the multiplier ideal $\cJ(X,D, c\cdot \fa)$} is
\[
\cJ(X,D, c\cdot \fa):= \mu_*\cO_Y(\lceil K_{Y} -\mu^*(K_X+D) - cF\rceil) \subseteq \cO_X.
\]

Let $\fa_\bullet$ be a graded ideal sequence on $X$ and $c>0$. Note that
\[
\cJ\left(X,D, \frac{c}{p}\cdot \fa_p\right) \subseteq \cJ\left(X,D, \frac{c}{q}\cdot \fa_q\right)
\]
for all positive integers $p,q$ such that $p$ divides $q$. This and the Noetherianity of $X$ imply that 
\[
\left\{\cJ\left(X,D, \frac{c}{p}\cdot \fa_p\right): p\in \bZ_{>0}\right\}
\]
has a maximal element, which we denote as $\cJ(X,D, c\cdot \fa_\bullet)$.

Let $L$ be a line bundle on $X$ and $V_\bullet$ be a graded linear series of $L$. Let $\fb_\bullet$ be the graded sequence of ideals on $X$ given by $\fb_m = \fb(|V_m|)$, where we use $\fb(|V|)$ to denote the base ideal of a linear system $V$. Then for any $c>0$, we set
\[
\cJ(X, D, c\cdot ||V_\bullet||) := \cJ(X, D, c\cdot \fb_\bullet).
\]

We recall the following properties of multiplier ideals.
\begin{lem}[cf. \cite{BJ20}*{Lemma 5.8}]\label{lem:BJ_Lemma_5.8}
    Suppose $(X,D)$ is a projective klt pair. Let $L$ be a line bundle on $X$ and $V_\bullet$ be a graded linear series of $L$. 
    \begin{itemize}
        \item [(a)] For every $m\in \bN$, we have \[\fb(|V_m|)\subseteq \cJ(X,D, m\cdot ||V_\bullet||).\]
        \item [(b)] There exists an ideal $J_{X,D}\subseteq \cO_X$ depending only on $X$ and $D$ such that for every $m\in \bN$ and $c>0$, we have
        \[
        (J_{X,D})^{m-1} \cdot \cJ(X,D, cm\cdot ||V_\bullet||) \subseteq \cJ(X,D, c\cdot ||V_\bullet||)^m.
        \]
    \end{itemize}
\end{lem}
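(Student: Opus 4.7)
The plan is to handle the two parts separately, with (a) a direct consequence of the klt condition combined with the graded structure, and (b) a reduction to a Takagi-type subadditivity theorem.

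For part (a), the target is to exhibit $\fb(|V_m|)$ inside $\cJ(X,D;\tfrac{m}{p}\cdot\fb_p)$ for some divisible $p$, since by definition these ideals stabilize upward to $\cJ(X,D;m\cdot||V_\bullet||)$. The first input is the klt hypothesis, which gives the basic containment $\fa\subseteq\cJ(X,D;\fa)$ for every nonzero ideal $\fa$: on a log resolution with $\fa\cdot\cO_Y=\cO_Y(-F)$, one has $\fa\subseteq \mu_*\cO_Y(-F)\subseteq \mu_*\cO_Y(\lceil K_Y-\mu^*(K_X+D)\rceil-F)$, and the last round-up is effective precisely because $(X,D)$ is klt. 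Combining this with $\fb_m^k\subseteq\fb_{mk}$ from the multiplicative structure and the standard identity $\cJ(X,D;\tfrac{1}{k}\fa^k)=\cJ(X,D;\fa)$ (both sides equal $\mu_*\cO_Y(\lceil K_{Y/X,D}\rceil-F)$ on any common resolution), I would chain
\[
\fb_m \subseteq \cJ(X,D;\fb_m) = \cJ\!\left(X,D;\tfrac{1}{k}\fb_m^k\right) \subseteq \cJ\!\left(X,D;\tfrac{1}{k}\fb_{mk}\right) = \cJ\!\left(X,D;\tfrac{m}{mk}\fb_{mk}\right),
\]
and the right-hand side lies in $\cJ(X,D;m\cdot||V_\bullet||)$ by definition.

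For part (b), I would invoke the subadditivity theorem for multiplier ideals on klt pairs due to Takagi (and its refinements), which produces an ideal $J_{X,D}\subseteq\cO_X$ depending only on $(X,D)$ such that for every nonzero ideal $\fa$ and $s,t\geq 0$,
\[
J_{X,D}\cdot \cJ(X,D;\fa^{s+t}) \subseteq \cJ(X,D;\fa^s)\cdot \cJ(X,D;\fa^t).
\]
Iterating this $m-1$ times, by splitting off one copy of $\fa^s$ at each step, yields the key intermediate estimate $(J_{X,D})^{m-1}\cdot\cJ(X,D;\fa^{sm})\subseteq\cJ(X,D;\fa^s)^m$ for a single ideal $\fa$. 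I would then apply this with $\fa=\fb_p$ and $s=c/p$ to obtain
\[
(J_{X,D})^{m-1}\cdot \cJ\!\left(X,D;\tfrac{cm}{p}\fb_p\right) \subseteq \cJ\!\left(X,D;\tfrac{c}{p}\fb_p\right)^m,
\]
and take the stationary value as $p$ runs through sufficiently divisible integers; since the LHS stabilizes to $(J_{X,D})^{m-1}\cdot\cJ(X,D;cm\cdot||V_\bullet||)$ and the RHS is contained in $\cJ(X,D;c\cdot||V_\bullet||)^m$ (products distribute over the directed union defining $||V_\bullet||$), this yields the desired containment.

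The main obstacle will be pinning down the precise formulation of the Takagi-type subadditivity on klt pairs, since the ideal $J_{X,D}$ must genuinely depend only on $(X,D)$—not on the auxiliary ideals, not on a chosen log resolution, and not on $m$—in order for the exponent $m-1$ to be uniform. Everything else in both (a) and (b) is formal manipulation with the defining directed limits of $\cJ(X,D;c\cdot||V_\bullet||)$ and the klt property of $(X,D)$.
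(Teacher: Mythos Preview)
Your outline for both parts matches the paper's approach exactly: part (a) via the klt containment $\fa\subseteq\cJ(X,D;\fa)$ combined with $\fb_m^k\subseteq\fb_{mk}$, and part (b) via a Takagi-type subadditivity followed by iteration and passage to the directed limit.

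The one place where the paper does real work and you only gesture is precisely the ``main obstacle'' you flag: producing the ideal $J_{X,D}$. Takagi's theorem \cite{Tak06}*{Theorem 2.9} as stated requires $X$ to be $\bQ$-Gorenstein and gives $\Jac_X\cdot\cJ(X,2D;\fa\cdot\fb)\subseteq\cJ(X,D;\fa)\cdot\cJ(X,D;\fb)$, with the boundary doubled on the left; it does not directly yield your displayed inclusion for a general klt pair. The paper handles this by first taking a small $\bQ$-factorial modification $g:X'\to X$ (so $X'$ is $\bQ$-Gorenstein and $K_{X'}+D'=g^*(K_X+D)$), applying Takagi there, then absorbing the extra copy of $D'$ via the elementary containment $\cO_{X'}(-\lceil D'\rceil)\cdot\cJ(X',D';\,\cdot\,)\subseteq\cJ(X',2D';\,\cdot\,)$, and finally pushing forward by $g_*$. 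This gives the explicit $J_{X,D}=g_*(\Jac_{X'}\cdot\cO_{X'}(-\lceil D'\rceil))$, which indeed depends only on $(X,D)$. Once you have this, your iteration and limit argument is identical to the paper's.
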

\begin{proof}
(a) When $D = 0$, it follows from \cite{BJ20}*{Lemma 5.8(ii)}. The same proof works for the general case. \\

\noindent (b) Let $g: X'\to X$ be a small $\bQ$-factorial modification as in \cite{Kol13}*{Corollary 1.37}. Let $\fa' = \fa\cdot \cO_{X'}$ and $\fb' = \fb\cdot \cO_{X'}$. We show that 
\begin{align}\label{eqn:subadditivity_multiplier_ideal_on_Q_fact_model}
\Jac_{X'} \cdot \cO_{X'}(-\lceil D'\rceil) \cdot \cJ(X',D', c\cdot (\fa'\cdot\fb')) \subseteq \cJ(X',D', c\cdot \fa')\cdot \cJ(X',D', c\cdot \fb'),
\end{align}
where $\Jac_{X'}$ is the Jacobian ideal of $X'$ and $D'$ is the strict transform of $D$ on $X'$.

Indeed, since $X'$ is $\bQ$-Gorenstein, by \cite{Tak06}*{Theorem 2.9}, we have
\[
\Jac_{X'} \cdot \cJ(X',2D', c\cdot (\fa'\cdot\fb')) \subseteq \cJ(X',D', c\cdot \fa')\cdot \cJ(X',D', c\cdot \fb').
\]
From the definition of multiplier ideals, it is clear that
\[
\cO_{X'}(-\lceil D'\rceil) \cdot \cJ(X',D', c\cdot (\fa'\cdot\fb')) \subseteq \cJ(X',2D', c\cdot (\fa'\cdot\fb')).
\]
Combining these two inclusions together, we obtain \eqref{eqn:subadditivity_multiplier_ideal_on_Q_fact_model}. 

Note that $K_{X'} + D' = g^*(K_X+D)$. Applying $g_*$ to \eqref{eqn:subadditivity_multiplier_ideal_on_Q_fact_model}, we have
\begin{align}\label{eqn:subadditivity_multiplier_ideals}
J_{X,D} \cdot \cJ(X,D, c\cdot (\fa\cdot\fb)) \subseteq \cJ(X,D, c\cdot \fa)\cdot \cJ(X,D, c\cdot \fb),
\end{align}
where $J_{X,D} = g_*(\Jac_{X'} \cdot \cO_{X'}(-\lceil D'\rceil)) \subseteq \cO_X$ depends only on $X$ and $D$. Finally, the statement follows from repeatedly applying \eqref{eqn:subadditivity_multiplier_ideals}, with $\fa$ and $\fb$ being suitable powers of the base ideal of $V_k$ for sufficiently divisible $k$.
\end{proof}

\begin{lem}[cf. \cite{BJ20}*{Corollary 5.10}]\label{lem:BJ_cor_5.10}
Let $(X,D)$ be a projective klt pair and let $L$ be a big line bundle on $X$. Then there exists a very ample line bundle $B$ such that for any graded linear series $V_\bullet$ of $L$ and any $m\in \bN$,
\[
(mL +B) \otimes \mathcal{J}(X, D, m\cdot ||V_\bullet||)
\]
is globally generated. Furthermore, we may choose $B$ such that $H^0(X, B\otimes J_{X,D})\neq 0$, where $J_{X,D}$ is the ideal in Lemma~\ref{lem:BJ_Lemma_5.8}(b).
\end{lem}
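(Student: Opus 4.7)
The plan is to follow the standard strategy combining Nadel vanishing with Castelnuovo--Mumford regularity, as in \cite{Laz04b}*{Section 10.4} and \cite{BJ20}*{Corollary 5.10}, adapted to the weaker setting where $L$ is only assumed big rather than ample. Since $X$ is projective and $K_X+D$ is $\bQ$-Cartier, I first pick a very ample line bundle $H$ on $X$ such that $H-(K_X+D)$ is ample.

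The key technical ingredient is the following Nadel-type vanishing: for any graded linear series $V_\bullet$ of $L$, any $m\in\bN$, and any line bundle $G$ on $X$ such that $G - mL - (K_X+D)$ is ample, one has
\[
H^i(X,\, G \otimes \cJ(X,D,\, m\cdot ||V_\bullet||)) = 0 \quad\text{for all } i\geq 1.
\]
To establish this I would take a sufficiently divisible integer $p$ realizing $\cJ(X,D, m\cdot||V_\bullet||)$ via a log resolution $\mu:Y\to X$ with $\fb(|V_p|)\cdot\cO_Y = \cO_Y(-F_p)$, and decompose $\mu^*(pL) = M_p + F_p$. The sections of $\mu^*V_p$ have no common zeroes on $Y$ after dividing by $\cO_Y(-F_p)$, so $M_p$ is base-point-free, hence nef. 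Therefore
\[
\mu^*G - \mu^*(K_X+D) - \tfrac{m}{p}F_p \;\equiv\; \mu^*\bigl(G - mL - (K_X+D)\bigr) + \tfrac{m}{p}M_p
\]
is big and nef on $Y$, and Kawamata--Viehweg vanishing combined with local vanishing for multiplier ideals yields the cohomology vanishing on $X$ (the klt hypothesis on $(X,D)$ and the SNC support of $F_p$ make the round-up harmless).

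Now I set $B_0 := (n+1)H$ and apply the vanishing with $G = mL + B_0 - iH$ for $1\leq i\leq n$. The required hypothesis becomes ampleness of $(n+1-i)H - (K_X+D) = (n-i)H + (H-(K_X+D))$, which is a sum of a nef and an ample class and hence ample. Consequently $(mL+B_0)\otimes \cJ(X,D,m\cdot||V_\bullet||)$ is $0$-regular with respect to $H$ and therefore globally generated by Castelnuovo--Mumford regularity. For the second assertion, pick $N\gg 0$ so that $J_{X,D}\otimes \cO_X(NH)$ is globally generated (Serre vanishing, using that $J_{X,D}\neq 0$), and set $B := B_0 + NH$. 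Adding $NH$ only strengthens the ampleness hypothesis of the Nadel vanishing above, so global generation of $(mL+B)\otimes \cJ$ persists; meanwhile $H^0(X, B\otimes J_{X,D})$ contains the nonzero image of $H^0(X, \cO_X(NH)\otimes J_{X,D})\otimes H^0(X, B_0)$ under the multiplication map.

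The main obstacle is obtaining the Nadel-type vanishing uniformly across all $V_\bullet$ and all $m$ when $L$ is only big: positivity cannot come from $L$ itself and must be packaged into $B$. The crucial observation that makes this work is that the moving part $M_p$ attached to any $V_\bullet$ is nef on the resolution, so the comparison of $\tfrac{1}{p}F_p$ with $\mu^*L$ is always controlled by a nef correction, and the necessary positivity threshold on $B$ depends only on $(X,D)$ through the choice of $H$, not on $V_\bullet$ or $m$.
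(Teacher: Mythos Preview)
Your proof is correct and follows essentially the same approach as the paper. The paper's argument is a two-line application: pick a very ample $H$ with $H-(K_X+D)$ ample, invoke \cite{BJ20}*{Theorem 5.9(ii)} to conclude that $(mL+(n+1)H)\otimes\cJ(X,D,m\cdot\|V_\bullet\|)$ is globally generated, and then take $B=bH$ for $b\ge n+1$ large enough that $H^0(X,B\otimes J_{X,D})\neq 0$. What you have written is precisely an unpacking of \cite{BJ20}*{Theorem 5.9(ii)} (Nadel-type vanishing via the nef moving part $M_p$ on a resolution, then Castelnuovo--Mumford regularity with respect to $H$), together with the same enlargement of $B$ at the end.
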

\begin{proof}
    We take a very ample line bundle $H$ on $X$ such that $H-(K_X+D)$ is ample. By \cite{BJ20}*{Theorem 5.9(ii)}, the coherent sheaf
\[
(mL+(n+1)H)\otimes \mathcal{J}(X, m\cdot ||V_\bullet||)
\]
is globally generated. Thus, we can take $B = bH$, where $b\geq n+1$ is a positive integer such that $H^0(X,B\otimes J_{X,D})\neq 0$.
\end{proof}

\subsection{Compatible filtrations on auxiliary linear series}\label{subsec:compatible_filtrations_on_aux_linear_series}
Let $(X,D)$ be a projective klt pair and $L$ be a big line bundle on $X$. Let $B$ be the very ample line bundle from Lemma~\ref{lem:BJ_cor_5.10}.

Let 
\[
R = \bigoplus_{m\in \bN} R_m = \bigoplus_{m\in \bN} H^0(X, mL)
\]
and
\[
M = \bigoplus_{m\in \bN} M_m = \bigoplus_{m\in \bN} H^0(X, mL+B).
\]
Then $M$ is a graded $R$-module. Let $\cF$ be a filtration on $R$ (see Definition~\ref{defn:mult_filt}).
\begin{defn}[cf. Definition~\ref{defn:mult_filt}]\label{defn:compatible_filtration}
We say that ${\cG}$ is \emph{a filtration on $M$ compatible with ${\cF}$} if the following holds:
\begin{itemize}
\item[(F1)] (Decreasing) $\cG^\lambda M_m \subseteq \cG^{\lambda'} M_m$ for all $\lambda > \lambda'$;
\item[(F2)] (Left-continuous) $\cG^\lambda M_m = \bigcap_{\lambda'<\lambda} \cG^{\lambda'}M_m$;
\item[(F3)] (Bounded) $\cG^0M_m = M_m$ and $\cG^\lambda M_m = 0$ for $\lambda \gg 0$;
\item[(F4')] (Compatible with $\cF$) $\cF^\lambda R_m\cdot \cG^{\lambda'}M_{m'}\subseteq \cG^{\lambda+\lambda'} M_{m+m'}$ for all $\lambda,\lambda'\in \bR$ and $m,m'\in \bN$.
\end{itemize}
\end{defn}

\begin{expl}\label{example:compatible_filtration_induced_by_valuation}
    Suppose $v$ is a valuation on $X$ and $\cF = \cF_v$ is the filtration on $R$ induced by $v$. Then the filtration $\cG_v$ given by
    \[
    \cG_v^\lambda M_m = \{s\in H^0(X,mL+B): v(s)\geq \lambda\}
    \]
    is a filtration on $M$ compatible with $\cF_v$.
\end{expl}

\begin{defn}[cf. \cite{BJ20}*{Section 5.1}]
    Let $\cG$ be a filtration on $M$ compatible with $\cF$. For $m,\ell\in \bN$ and $t\geq 0$, we define
    \[
    W_m^t(\cG):= \cG^{mt}M_m \subseteq H^0(X,mL+B)
    \]
    and
    \[
    W_{m,\ell}^t(\cG) := H^0(X, \ell(mL+B) \otimes \overline{\fb(|W_m^t(\cG)|)^\ell})\subseteq H^0(X, \ell(mL+B)),
    \]
    where $\overline{\fa}$ is the integral closure of an ideal $\fa$ (see \cite{Laz04b}*{Definition 9.6.2} for details). For every $m\in\bN$ and $t\geq 0$, $\{W_{m,\ell}^t(\cG): \ell\in \bN\}$ is a graded linear series of $(mL+B)$, which we denote as $W_{m,\bullet}^t(\cG)$.

    When $\cF = \cF_v$ and $\cG = \cG_v$ for some valuation $v$, we often denote $W^t_m(\cF_v)$ as $W^t_\bullet(v)$ (and similarly for $W_{m,\ell}^t$). When $\cF$ and $\cG$ are clear from context, we will sometimes ignore writing $\cF$ and $\cG$ in notations such as $V_m^t(\cF)$ and $W_{m,\ell}^t(\cG)$.

\end{defn}

\subsection{Uniform Fujita approximation}
Let $(X,D)$ be a projective klt pair with $\dim X = n$. Let $L$ be a big line bundle on $X$. Let $v$ be a valuation on $X$ with $A_{X,D}(v)<\infty$. We will denote $A(v) = A_{X,D}(v)$ throughout this subsection. Let $B$ be the very ample line bundle from Lemma~\ref{lem:BJ_cor_5.10} and $a$ be a positive integer such that $aL\geq B$. Let $R$ and $M$ be as in Section~\ref{subsec:compatible_filtrations_on_aux_linear_series}.

\begin{lem}[cf. \cite{BJ20}*{Proposition 5.12}]\label{lem:BJ_prop_5.12}
Suppose $m\in {\bN}$ and $t\in \bQ_{>0}$ such that $mt\geq A(v)$. Then we have
\[
{\cJ}(X,D, m\cdot ||V_\bullet^t(v)||) \subset {\fb}(|W_m^{t'}(v)|),\]
where $t' = t- \frac{A(v)}{m}$.
\end{lem}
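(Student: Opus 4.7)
The plan is to adapt the proof of \cite{BJ20}*{Proposition 5.12} (which handles the ample case) to the big setting, with the very ample line bundle $B$ of Lemma~\ref{lem:BJ_cor_5.10} playing the role that the positivity of $mL$ plays in the ample case. The argument rests on two ingredients: (i) the global generation of $(mL+B)\otimes \cJ(X,D,m\cdot ||V_\bullet^t(v)||)$, which is exactly Lemma~\ref{lem:BJ_cor_5.10}; and (ii) the valuative inclusion
\[
\cJ(X,D,m\cdot ||V_\bullet^t(v)||)\subseteq \fa_{mt'}(v),
\]
which is the klt analog of \cite{BJ20}*{Proposition 5.11}.

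For (ii), first observe that $\fb(|V_p^t(v)|)\subseteq \fa_{pt}(v)$ for every $p\in\bN$, directly from the definition $V_p^t(v)=\cF_v^{pt}R_p$. Second, apply the standard bound
\[
v\bigl(\cJ(X,D,c\cdot \fa)\bigr)\geq c\cdot v(\fa)-A(v)
\]
(valid whenever the right-hand side is nonnegative) with $\fa=\fb(|V_p^t(v)|)$ and $c=m/p$. For divisorial $v$ the bound is immediate from reading off coefficients on a common log resolution of $(X,D,\fa)$; for a general valuation with $A(v)<\infty$, one extends by approximating $v$ with quasi-monomial valuations along a suitable dual complex, using the continuity of both $v$ and $A(v)$ (cf.\ \cite{JM12}). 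Taking $p$ sufficiently divisible so that $\cJ(X,D,(m/p)\cdot \fa_p)$ achieves the maximum defining $\cJ(X,D,m\cdot ||V_\bullet^t(v)||)$ then yields (ii).

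To conclude, any $\sigma\in H^0\bigl(X,(mL+B)\otimes \cJ(X,D,m\cdot ||V_\bullet^t(v)||)\bigr)$ is locally of the form $\sigma=f\cdot e$ for some $f\in \cJ(X,D,m\cdot ||V_\bullet^t(v)||)$ and some trivialization $e$ of $mL+B$, so $v(\sigma)=v(f)\geq mt'$ by (ii), i.e.\ $\sigma\in W_m^{t'}(v)$. The global generation in (i) says that the evaluation map $H^0(X,(mL+B)\otimes \cJ(\cdots))\otimes (mL+B)^{-1}\twoheadrightarrow \cJ(X,D,m\cdot ||V_\bullet^t(v)||)$ is surjective, so the base ideal of the subsystem $H^0(X,(mL+B)\otimes \cJ(\cdots))\subseteq W_m^{t'}(v)$ is exactly $\cJ(X,D,m\cdot ||V_\bullet^t(v)||)$, and hence $\cJ(X,D,m\cdot ||V_\bullet^t(v)||)\subseteq \fb(|W_m^{t'}(v)|)$, as desired. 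The main obstacle is step (ii): the valuation-multiplier-ideal inequality for a general valuation of finite log discrepancy on a klt pair; for log pairs one must also verify compatibility with the small $\bQ$-factorial modification used in Lemma~\ref{lem:BJ_Lemma_5.8}(b), but this is routine.
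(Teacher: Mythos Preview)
Your proposal is correct and follows essentially the same approach as the paper: the paper cites the proof of \cite{BJ20}*{Proposition 5.11} for the inclusion $\cJ(X,D,m\cdot ||V_\bullet^t(v)||)\subseteq \fa_{mt-A(v)}(v)$, then uses Lemma~\ref{lem:BJ_cor_5.10} to get global generation and passes to base ideals, exactly as you do. One small remark: the closing sentence about compatibility with the small $\bQ$-factorial modification of Lemma~\ref{lem:BJ_Lemma_5.8}(b) is unnecessary here, since the valuative bound $v(\cJ(X,D,c\cdot\fa))\geq c\cdot v(\fa)-A(v)$ is read off directly from any log resolution of $(X,D,\fa)$ and does not rely on $\bQ$-factoriality.
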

\begin{proof}
By the proof of \cite{BJ20}*{Proposition 5.11}, we have
\[
\mathcal{J}(X,D, m\cdot ||V_\bullet^t(v)||) \subseteq \mathfrak{a}_{mt-A(v)}(v).
\]
This implies that
\begin{align*}
H^0(X,(mL +B) \otimes \mathcal{J}(X,D, m\cdot ||V_\bullet^t(v)||))\subset H^0(X, (mL+B)\otimes \mathfrak{a}_{mt-A(v)}(v)) = W_m^{t'}(v).
\end{align*}
By Lemma~\ref{lem:BJ_cor_5.10}, the left hand side is globally generated. Then taking base ideals of the linear system $|mL+B|$ gives the desired inclusion
\[
\mathcal{J}(X,D, m\cdot ||V_\bullet^t(v)||) \subset \mathfrak{b}(|W_m^{t'}(v)|).
\]
\end{proof}
The following Proposition is the key to prove a uniform Fujita type approximation of $S^\tau(v)$ in Theorem~\ref{thm:uniform_fujita_approx_BJ_thm_5.3}.
\begin{prop}[cf. \cite{BJ20}*{Proposition 5.13}]\label{prop:BJ_prop_5.13}
Let $m,\ell\in \bN$ and $t>0$ such that $mt\geq A(v)$. Then we have
\[
\dim V_{m\ell}^t(v) \leq \dim W_{m,\ell}^{t'}(v)\leq \dim V_{(m+a)\ell}^{t''}(v),\]
where $t' = t - \frac{A(v)}{m}$ and $t'' = \frac{mt-A(v)}{m+a} = \frac{mt'}{m+a}$. As a result, we obtain
\[
m^n\vol(V_\bullet ^t(v)) \leq  \vol(W_{m,\bullet}^{t'}(v)) \leq (m+a)^n \vol(V^{t''}_\bullet(v)).
\]
\end{prop}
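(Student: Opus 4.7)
The plan is to prove each dimension inequality by constructing an injective multiplication map between the relevant linear systems, and then to derive the volume inequalities by dividing through by $\ell^n$ and letting $\ell \to \infty$. Since $V_\bullet^t(v)$, $W_{m,\bullet}^{t'}(v)$, and $V_\bullet^{t''}(v)$ are graded linear series, their volumes exist and control the asymptotics of the dimensions in question: $\lim_\ell \dim V_{m\ell}^t(v)/\ell^n = n!\,m^n \vol(V_\bullet^t(v))$, $\lim_\ell \dim W_{m,\ell}^{t'}(v)/\ell^n = n!\, \vol(W_{m,\bullet}^{t'}(v))$, and $\lim_\ell \dim V_{(m+a)\ell}^{t''}(v)/\ell^n = n!\,(m+a)^n \vol(V_\bullet^{t''}(v))$, so the volume statement follows from the dimension inequalities automatically.

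For the first inequality, I would fix a nonzero global section $\sigma \in H^0(X, B \otimes J_{X,D})$ provided by Lemma~\ref{lem:BJ_cor_5.10} and consider the multiplication $s \mapsto s \otimes \sigma^\ell$ from $V_{m\ell}^t(v)$ to $H^0(X, \ell(mL+B))$. This is injective because $\sigma^\ell \neq 0$, and its image lies in $W_{m,\ell}^{t'}(v)$ provided
\[
\fb(|V_{m\ell}^t(v)|) \cdot (J_{X,D})^\ell \subseteq \overline{\fb(|W_m^{t'}(v)|)^\ell}.
\]
This ideal-sheaf inclusion would be established by chaining: Lemma~\ref{lem:BJ_Lemma_5.8}(a) to give $\fb(|V_{m\ell}^t(v)|) \subseteq \cJ(X, D, m\ell \cdot ||V_\bullet^t(v)||)$; then the subadditivity Lemma~\ref{lem:BJ_Lemma_5.8}(b) applied with scale $c = m$ to absorb $\ell$ copies of $J_{X,D}$ and produce $\cJ(X, D, m \cdot ||V_\bullet^t(v)||)^\ell$; and finally the $\ell$-th power of Lemma~\ref{lem:BJ_prop_5.12}. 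The subtle point, which I anticipate as the main obstacle, is that subadditivity must be applied with $c = m$ (not $c = \ell$), so that the resulting $\ell$-th power of multiplier ideals matches the $\ell$-th power of base ideals appearing in $W_{m,\ell}^{t'}(v)$; the opposite choice would produce $\cJ(X, D, \ell \cdot ||V_\bullet^t(v)||)^m$, which is not controlled by Lemma~\ref{lem:BJ_prop_5.12}.

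For the second inequality, I would fix a nonzero $\tau \in H^0(X, aL - B)$ (which exists since $aL \geq B$) and study the injective multiplication $s \mapsto s \otimes \tau^\ell$ from $W_{m,\ell}^{t'}(v)$ into $H^0(X, (m+a)\ell L)$. To see the image lies in $V_{(m+a)\ell}^{t''}(v)$, it suffices to verify $v(s \otimes \tau^\ell) \geq (m+a)\ell t'' = m\ell t'$ for every $s$ in the source. By definition, every $\rho \in W_m^{t'}(v)$ satisfies $v(\rho) \geq mt'$, hence every generator of $\fb(|W_m^{t'}(v)|)$ has $v$-value at least $mt'$, so $\fb(|W_m^{t'}(v)|)^\ell$ has $v$-value at least $m\ell t'$; the valuative criterion for integral closure (every $v$ satisfies $v(\overline{\fa}) = v(\fa)$) then yields $v(f) \geq m\ell t'$ for every local section $f$ of $\overline{\fb(|W_m^{t'}(v)|)^\ell}$, and hence $v(s) \geq m\ell t'$. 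Combined with $v(\tau) \geq 0$ (since $\tau$ is a global section), this gives $v(s \otimes \tau^\ell) \geq m\ell t'$, completing the argument.
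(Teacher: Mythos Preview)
Your proposal is correct and follows essentially the same route as the paper: an injection via multiplication by $\sigma^\ell$ for the left inequality (using Lemma~\ref{lem:BJ_Lemma_5.8}(a), then subadditivity Lemma~\ref{lem:BJ_Lemma_5.8}(b) with $c=m$, then Lemma~\ref{lem:BJ_prop_5.12}), and an injection via multiplication by $\tau^\ell$ together with the valuative criterion for integral closure for the right inequality. Two cosmetic remarks: subadditivity actually absorbs only $\ell-1$ copies of $J_{X,D}$, not $\ell$ (the paper handles the extra copy via the trivial inclusion $(J_{X,D})^\ell\subseteq (J_{X,D})^{\ell-1}$); and the paper first reduces to $t\in\bQ$ since Lemma~\ref{lem:BJ_prop_5.12} is stated for rational $t$, which you should also note, though the reduction is immediate by left-continuity of the filtrations.
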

\begin{proof}
We may assume that $t\in\bQ$. Let $s\in H^0(X, B\otimes J_{X,D})$ be a nonzero section, where $J_{X,D}$ is the ideal in Lemma~\ref{lem:BJ_Lemma_5.8}. Then multiplication by $s^{\ell}$ gives an injection
\[
V_{m\ell}^t\hookrightarrow H^0(X, \ell(mL+B)\otimes (J_{X,D})^\ell \otimes \mathfrak{b}(|V_{m\ell}^t|)).
\]
By Lemma~\ref{lem:BJ_Lemma_5.8} and Lemma~\ref{lem:BJ_prop_5.12}, we have
\begin{align*}
&\, H^0(X, \ell(mL+B) \otimes (J_{X,D})^{\ell-1}\otimes  \mathfrak{b}(|V_{m\ell}^t|)) \\
\subseteq &\, H^0(X, \ell(mL+B) \otimes (J_{X,D})^{\ell-1}\otimes \mathcal{J}(X,D,m\ell\cdot ||V^t_\bullet||)) \\
\subseteq &\, H^0(X, \ell(mL+B) \otimes \mathcal{J}(X,D,m\cdot ||V^t_\bullet||)^\ell) \\
\subseteq &\, H^0(X, \ell(mL+B) \otimes \mathfrak{b}(|W_m^{t'}|)^\ell) \\
\subseteq &\, W_{m,\ell}^{t'}.
\end{align*}
Thus, we have $\dim V_{m\ell}^t\leq \dim W_{m,\ell}^{t'}$. On the other hand, since ${\fb}(|W_m^t|) \subset {\fa}_{mt}(v)$, we have $\overline{{\fb}(|W_m^t|)^\ell} \subset \overline{\mathfrak{a}_{m t}(v)^\ell} \subset \mathfrak{a}_{\ell mt}(v)$ (the last inclusion by \cite{Laz04b}*{Proposition 9.6.6(d)}). Thus,
\begin{align*}
W_{m,\ell}^{t'} &=  H^0(X, \ell(mL+B) \otimes \overline{\mathfrak{b}(|W^{t'}_m|)^\ell}) \\
&\subset H^0(X, \ell(mL+B)\otimes \mathfrak{a}_{\ell mt'}(v)) \\
&\subset H^0(X, \ell(m+a)L \otimes \mathfrak{a}_{\ell (mt-A(v))}(v)) \\
&= V_{(m+a)\ell}^{t''},
\end{align*}
as desired. 
\end{proof}
Next, we define modified $S_{m,k}$-invariants called the $\tS_m^\tau$-invariants, which provide better uniform approximations to the $S^\tau$-invariants.
\begin{defn}[cf. \cite{BJ20}*{Section 5.1}]\label{defn:S_tilde_m_invariants}
    Let $\cG$ be a filtration on $M$ compatible with $\cF$. For any $\tau\in (0,1)$ and $m\in\bN$, define
    \[
    Q_\tau(\cF) := \inf\{t\geq 0: \vol(V_{\bullet}^t(\cF))\leq \tau \vol(L)\},
    \]
    and
    \[
    Q_{m,\tau}(\cG) := \inf\{t\geq 0: \vol(W_{m,\bullet}^t(\cG))\leq m^n\tau \vol(L)\}.
    \]
    We also set $Q_1(\cF) = Q_{m,1}(\cG) = 0$. Then we define
    \[
    \tS_m^\tau(\cG) := \frac{1}{\tau m^n\vol(L)} \int_{Q_{m,\tau}(\cG)}^\infty \vol(W_{m,\bullet}^t(\cG)) dt.
    \]
\end{defn}

\begin{rem}
If we fix $\cF$, then $Q_\tau(\cF)$ is a continuous function of $\tau$ in the range $\tau\in (0,1)$ by the log-concavity of $\vol(V_\bullet^t(\cF))^{1/n}$. However, it is not continuous at $\tau = 1$, since it is possible that $\vol(V_\bullet^t(\cF)) = \vol(L)$ for some $t > 0$. A priori, one could have chosen $Q_1(\cF) = \sup \{t\leq 0: \vol(V_\bullet^t(\cF)) \geq \vol(L)\}$ to make $Q_\tau(\cF)$ a continuous function at $\tau = 1$, but this will not agree with the definition of the usual $S$-invariant.

Furthermore, unlike the case in \cite{BJ20}, it is not true in general that
    \[
    \lim_{m\to\infty} \tS_m^\tau(\cG) = S^\tau(\cF).
    \]
However, we will show that this equality holds when $\cF = \cF_v$ and $\cG=\cG_v$ for some valuation $v$ (Theorem~\ref{thm:uniform_fujita_approx_BJ_thm_5.3}), or when $\cF$ and $\cG$ are ``limits" of $\cF_{v}$ and $\cG_v$ for some sequences of valuations $v$ (see Lemma~\ref{lem:limit_of_S_tau_m_is_S} for details).
\end{rem}

Now, we can state the uniform Fujita type approximation result for the $S^\tau$-invariants.
\begin{thm}[cf. \cite{BJ20}*{Theorem 5.3}]\label{thm:uniform_fujita_approx_BJ_thm_5.3}
Let $\tau\in (0,1]$. Then there exists $C>0$ independent of $m$ or $v$ such that 
\[
|\tS_m^\tau(v) - S^\tau(v)| < \frac{CA(v)}{m}
\]
for every $m\in \bN$ and valuation $v$ with $A(v)<\infty$.
\end{thm}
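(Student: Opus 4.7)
The plan is to mirror the strategy of \cite{BJ20}*{Theorem 5.3} (which handles $\tau=1$) and adapt it to general $\tau\in(0,1]$ through the auxiliary filtration $\cG_v$ on the shifted section module. The core tools are the integral representation
\[
S^\tau(v) = \frac{1}{\tau\vol(L)}\int_{Q_\tau(v)}^{\infty}\vol(V_\bullet^u(v))\,du
\]
from Lemma~\ref{lem:integration_formula_S_tau}, together with Proposition~\ref{prop:BJ_prop_5.13}, which, after substituting $s=t-A(v)/m$, rewrites as the two-sided sandwich
\[
m^n\,\vol(V_\bullet^{s+A(v)/m}(v)) \;\le\; \vol(W_{m,\bullet}^s(v)) \;\le\; (m+a)^n\,\vol(V_\bullet^{ms/(m+a)}(v))
\]
for every $s\ge 0$.

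First I would integrate this sandwich over $s\in[Q_{m,\tau}(v),\infty)$ and divide by $\tau m^n\vol(L)$. The change of variables $u=s+A(v)/m$ in the lower bound and $u=ms/(m+a)$ in the upper bound converts both sides to integrals of $\vol(V_\bullet^u(v))$, producing
\[
\frac{1}{\tau\vol(L)}\int_{Q_{m,\tau}(v)+A(v)/m}^{\infty}\!\vol(V_\bullet^u(v))\,du \;\le\; \tS_m^\tau(v) \;\le\; \frac{(m+a)^{n+1}}{m^{n+1}\tau\vol(L)}\int_{mQ_{m,\tau}(v)/(m+a)}^{\infty}\!\vol(V_\bullet^u(v))\,du.
\]
The prefactor $((m+a)/m)^{n+1}-1=O(1/m)$ on the right multiplies $S^\tau(v)\le T(v)$, and by the Izumi-type linear bound $T(v)\le C_0 A(v)$ this contributes only $O(A(v)/m)$ to the error.

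Next I would compare the integration endpoints to $Q_\tau(v)$. The lower half of the sandwich immediately gives $Q_{m,\tau}(v)\ge Q_\tau(v)-A(v)/m$: if $\vol(W_{m,\bullet}^s(v))\le m^n\tau\vol(L)$ then $\vol(V_\bullet^{s+A(v)/m}(v))\le\tau\vol(L)$, forcing $s+A(v)/m\ge Q_\tau(v)$. Dually, the upper half yields $mQ_{m,\tau}(v)/(m+a)\le Q_{\tau_m}(v)$ where $\tau_m:=(m/(m+a))^n\tau=\tau-O(1/m)$. The log-concavity of $\vol(V_\bullet^t(v))^{1/n}$ from Lemma~\ref{lem:log_concavity_vol}, i.e., the concavity of $g(t):=(\vol(V_\bullet^t(v))/\vol(L))^{1/n}$ on $[0,T(v)]$ with $g(0)=1$ and $g(T(v))=0$, provides a local Lipschitz estimate $|Q_{\tau_m}(v)-Q_\tau(v)|\le C(\tau)\,T(v)\,|\tau-\tau_m|$ once $\tau_m$ is close enough to $\tau$. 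Combined with $T(v)\le C_0 A(v)$ and $|\tau-\tau_m|=O(1/m)$, this gives $|Q_{m,\tau}(v)-Q_\tau(v)|\le O(A(v)/m)$. Absorbing these endpoint shifts into the integrals using $\vol(V_\bullet^u(v))\le\vol(L)$ then yields $|\tS_m^\tau(v)-S^\tau(v)|\le CA(v)/m$ with $C$ depending only on $(X,D,L,\tau)$.

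The main obstacle is the quantitative control of $|Q_{m,\tau}(v)-Q_\tau(v)|$ uniformly in $v$. The Lipschitz constant of $\tau\mapsto Q_\tau(v)$ obtained from the concavity of $g$ behaves like $T(v)/(n\tau^{1-1/n}(1-\tau^{1/n}))$ and blows up as $\tau\to 1$, so the endpoint case $\tau=1$ must be handled separately, where it is simpler because $Q_1(v)=Q_{m,1}(v)=0$ and the argument reduces directly to \cite{BJ20}*{Theorem 5.3}. For fixed $\tau\in(0,1)$ the constant $C$ may depend on $\tau$ but not on $v$ or $m$, matching the expected form of the estimate.
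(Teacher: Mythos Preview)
Your proposal is correct and follows essentially the same route as the paper: integrate the volume sandwich of Proposition~\ref{prop:BJ_prop_5.13}, compare the endpoints $Q_{m,\tau}(v)$ and $Q_\tau(v)$ via the log-concavity of $\vol(V_\bullet^t)^{1/n}$, and convert $T(v)$ into $A(v)$ using $T(v)\le A(v)/\alpha(L)$. The paper isolates the endpoint comparison as a separate lemma (Lemma~\ref{lem:Q_v(tau)_fujita_approx}) and obtains the bound by choosing a specific $\epsilon=\frac{a\tau^{1/n}}{(m+a)(1-\tau^{1/n})}$ so that concavity directly forces $\vol(V_\bullet^{(1+\epsilon)Q_\tau})\le(m/(m+a))^n\tau\vol(L)$, whereas you phrase the same step as a Lipschitz estimate for $\tau\mapsto Q_\tau(v)$; these are the same argument, and both yield a constant proportional to $1/(1-\tau^{1/n})$, with the case $\tau=1$ handled separately via $Q_1=Q_{m,1}=0$ exactly as you note.
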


To prove Theorem~\ref{thm:uniform_fujita_approx_BJ_thm_5.3}, we need the following lemma, which is a uniform Fujita type approximation result for $Q_\tau(v)$.
\begin{lem}\label{lem:Q_v(tau)_fujita_approx}
Let $\tau\in (0,1]$. Then there exists $C_1>0$ independent of $m$ or $v$ such that
\[
Q_\tau(v) - \frac{A(v)}{m}\leq Q_{m,\tau}(v) \leq Q_\tau(v) + \frac{C_1A(v)}{m}.
\]
\end{lem}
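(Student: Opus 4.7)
The plan is to deduce both inequalities from the volume sandwich in Proposition~\ref{prop:BJ_prop_5.13}, combined with concavity of $f(t) := \vol(V_\bullet^t(v))^{1/n}$ on $[0, T(v))$ (Lemma~\ref{lem:log_concavity_vol}) and the uniform linear growth bound $T(v) \leq C\, A(v)$ for a constant $C$ depending only on $(X,D,L)$ from \cite{BJ20}*{Lemma 3.1}. The case $\tau = 1$ is immediate since $Q_1(v) = 0 = Q_{m,1}(v)$ by definition, so I restrict to $\tau \in (0,1)$, in which case continuity of $f$ forces $\vol(V_\bullet^{Q_\tau(v)}(v)) = \tau\vol(L)$ and similarly at $Q_{\tau'}$ for the auxiliary $\tau'$ introduced below.

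For the lower bound, suppose $0 \leq s < Q_\tau(v) - A(v)/m$ and set $t := s + A(v)/m$. Then $mt \geq A(v)$ and $t < Q_\tau(v)$, so the left half of Proposition~\ref{prop:BJ_prop_5.13} yields
\[
\vol(W_{m,\bullet}^s(v)) \geq m^n \vol(V_\bullet^t(v)) > m^n\tau\vol(L),
\]
forcing $s < Q_{m,\tau}(v)$. Taking the supremum gives $Q_\tau(v) - A(v)/m \leq Q_{m,\tau}(v)$, and the inequality is trivial when $Q_\tau(v) \leq A(v)/m$.

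For the upper bound, set $\tau' := (m/(m+a))^n\tau \in (0,\tau)$ and choose $s_0 := \tfrac{m+a}{m}\,Q_{\tau'}(v)$, so that $ms_0/(m+a) = Q_{\tau'}(v)$. The right half of Proposition~\ref{prop:BJ_prop_5.13} applied at $t = s_0 + A(v)/m$ gives $\vol(W_{m,\bullet}^{s_0}(v)) \leq (m+a)^n\tau'\vol(L) = m^n\tau\vol(L)$, hence $Q_{m,\tau}(v) \leq s_0$. To compare $s_0$ with $Q_\tau(v)$, I invoke concavity of $f$ at the three points $0 < Q_\tau(v) \leq Q_{\tau'}(v)$: writing $Q_\tau(v)$ as a convex combination of $0$ and $Q_{\tau'}(v)$ and using $f(0) = \vol(L)^{1/n}$, $f(Q_\tau(v)) = \tau^{1/n}\vol(L)^{1/n}$, $f(Q_{\tau'}(v)) = (\tau')^{1/n}\vol(L)^{1/n}$, a short rearrangement yields
\[
Q_{\tau'}(v) \leq Q_\tau(v)\cdot \frac{1 - (\tau')^{1/n}}{1 - \tau^{1/n}} = Q_\tau(v)\left(\frac{m}{m+a} + \frac{a}{(m+a)(1-\tau^{1/n})}\right).
\]
Multiplying by $(m+a)/m$ and using $Q_\tau(v) \leq T(v) \leq C\, A(v)$ gives $Q_{m,\tau}(v) \leq Q_\tau(v) + \frac{aC\, A(v)}{m(1-\tau^{1/n})}$, so $C_1 := aC/(1-\tau^{1/n})$ works.

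The main obstacle is this upper bound: Proposition~\ref{prop:BJ_prop_5.13} naturally produces a \emph{multiplicative} slack $\tau \mapsto \tau' = (m/(m+a))^n\tau$ in the volume comparison, and the substantive content of the argument is converting this into an $O(1/m)$ \emph{additive} slack in the position $Q_\tau(v)$. Concavity of $f$ is exactly what makes this conversion possible; without it, a multiplicative loss in $\tau$ could correspond to an uncontrolled jump in $Q_\tau$. The constant $C_1$ degenerates as $\tau \to 1^-$ through the factor $1/(1-\tau^{1/n})$, but this is harmless since the endpoint $\tau = 1$ was handled trivially from the outset.
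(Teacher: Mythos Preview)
Your proof is correct and follows essentially the same approach as the paper: both deduce the lower bound directly from the left half of Proposition~\ref{prop:BJ_prop_5.13}, and for the upper bound both combine the right half of that proposition with log-concavity of $t\mapsto\vol(V_\bullet^t)^{1/n}$ and the bound $T(v)\le A(v)/\alpha(L)$. Your introduction of $\tau'=(m/(m+a))^n\tau$ and the bound $Q_{\tau'}(v)\le Q_\tau(v)\frac{1-(\tau')^{1/n}}{1-\tau^{1/n}}$ is algebraically identical to the paper's choice of $\epsilon$ and the point $(1+\epsilon)Q_\tau(v)$; the final constant $C_1$ agrees with the paper's (with $C=1/\alpha(L)$).

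One minor point: your appeal to continuity for $f(Q_{\tau'}(v))=(\tau')^{1/n}\vol(L)^{1/n}$ tacitly assumes $Q_{\tau'}(v)<T(v)$, since Lemma~\ref{lem:log_concavity_vol} only gives continuity on $[0,T(v))$. The paper handles the boundary possibility via the case split $(1+\epsilon)Q_\tau(v)\ge T(v)$. In your framework this is easily patched: either apply Proposition~\ref{prop:BJ_prop_5.13} at $t'$ slightly above $s_0$ and pass to the limit, or note that when $Q_{\tau'}(v)=T(v)$ one has $s_0=\tfrac{m+a}{m}T(v)\le Q_\tau(v)+\tfrac{a}{m}T(v)$, which already gives the desired bound.
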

\begin{proof}
We may assume that $\tau\in (0,1)$. For the first inequality, we may assume that $Q_\tau(v) > \frac{A(v)}{m}$. Let $t\in \bR$ such that $\frac{A(v)}{m}\leq t < Q_\tau(v)$. Then by Proposition~\ref{prop:BJ_prop_5.13}, we have
\[
\vol(W_{m,\bullet}^{t-A(v)/m}) \geq m^n \vol(V_\bullet^t) > m^n \tau \vol(L).
\]
This implies that $t-\frac{A(v)}{m}\leq Q_{m,\tau}(v)$. Since we can choose $t$ arbitrarily close to $Q_\tau(v)$, we get
\[
Q_\tau(v) - \frac{A(v)}{m}\leq Q_{m,\tau}(v).
\]

For the second inequality, consider \[
\epsilon= \frac{a\tau^{1/n}}{(m+a)(1-\tau^{1/n})}.
\]
If $(1+\epsilon)Q_\tau(v) \geq T(v)$, then we have
\begin{align*}
    Q_\tau(v) &\geq \frac{T(v)}{1+\epsilon} = T(v) - \frac{\epsilon}{1+\epsilon}T(v) \\
    &= T(v) - \frac{a\tau^{1/n}}{m+a-m\tau^{1/n}}T(v) \\
    &\geq Q_{m,\tau}(v) - \frac{a\tau^{1/n}}{m(1-\tau^{1/n})}\frac{A(v)}{\alpha(L)}.
\end{align*}

If $(1+\epsilon)Q_\tau(v) < T(v)$, then by Lemma~\ref{lem:log_concavity_vol}, we have
\[
\frac{1}{1+\epsilon}\vol(V_\bullet^{(1+\epsilon)Q_\tau(v)})^{1/n}  + \frac{\epsilon}{1+\epsilon} \vol(V_\bullet^0)^{1/n} \leq \vol(V_\bullet^{Q_\tau(v)})^{1/n} \leq (\tau\vol(L))^{1/n}.
\]
After rearranging, we obtain
\[
\vol(V_\bullet^{(1+\epsilon)Q_\tau(v)})\leq  \left(\frac{m}{m+a}\right)^n\tau \vol(L).
\]
Let $t\in\bR$ such that
\[
t > \frac{m+a}{m}(1+\epsilon)Q_\tau(v) = \left(1+\frac{a}{m(1-\tau^{1/n})}\right)Q_\tau(v).
\]
By Proposition~\ref{prop:BJ_prop_5.13}, we have
\[
\vol(W_{m,\bullet}^t) \leq (m+a)^n\vol(V_\bullet^{mt/(m+a)}) < (m+a)^n \vol(V_\bullet^{(1+\epsilon)Q_\tau(v)})\leq  m^n\tau\vol(L),
\]
which implies that $t\geq Q_{m,\tau}(v)$. As a result, 
\begin{align*}
Q_{m,\tau}(v) &\leq \left(1+\frac{a}{m(1-\tau^{1/n})}\right) Q_\tau(v)\\
&\leq Q_\tau(v) + \frac{a}{m(1-\tau^{1/n})}T(v)\\
&\leq Q_\tau(v) + \frac{a}{m(1-\tau^{1/n})}\cdot \frac{A(v)}{\alpha(L)}.
\end{align*}
Thus, we may take $C_1 = \frac{a}{(1-\tau^{1/n})\alpha(L)}$.
\end{proof}

\begin{proof}[Proof of Theorem~\ref{thm:uniform_fujita_approx_BJ_thm_5.3}]
By Proposition~\ref{prop:BJ_prop_5.13}, for $t\geq \frac{A(v)}{m}$, we have
\[
m^n\vol(V_\bullet ^t) \leq \vol(W_{m,\bullet}^{t'}) \leq \left(m+a\right)^n \vol(V^{t''}_\bullet),
\]
where $t' = t-\frac{A(v)}{m}$ and $t'' = \frac{mt'}{m+a}$. Thus, by Lemma~\ref{lem:Q_v(tau)_fujita_approx}, we have
\begin{align*}
\Tilde{S}_m^\tau(v) &= \frac{1}{\tau m^n\vol(L)}\int_{Q_{m,\tau}(v)}^\infty\vol(W_{m,\bullet}^{t'}) dt' \\
& \geq \frac{1}{\tau\vol(L)}\int_{Q_{m,\tau}(v) + A(v)/m}^\infty \vol(V_\bullet^t) dt \\
&= S^\tau(v) - \frac{1}{\tau\vol(L)}\int_{Q_\tau(v)}^{Q_{m,\tau}(v) + A(v)/m} \vol(V_\bullet^t) dt \\
&\geq S^\tau(v) - \frac{1}{\tau}\left(Q_{m,\tau}(v) + \frac{A(v)}{m} - Q_\tau(v)\right) \\
&\geq S^\tau(v) - \frac{1+C_1}{\tau}\cdot \frac{A(v)}{m},
\end{align*}
where $C_1$ is the constant in Lemma~\ref{lem:Q_v(tau)_fujita_approx}.
On the other hand, when $m\geq 2(n+1)a$, we note that 
\[
\left(\frac{m+a}{m}\right)^{n+1} \leq 1 + \frac{2(n+1)a}{m}.
\]
Then by Lemma~\ref{lem:Q_v(tau)_fujita_approx}, we have
\begin{align*}
\Tilde{S}^\tau_m(v) &= \frac{1}{\tau m^n\vol(L)}\int_{Q_{m,\tau}(v)}^\infty\vol(W_{m,\bullet}^{t'}) dt' \\
&\leq  \frac{1}{\tau\vol(L)}\int_{\frac{m}{m+a}Q_{m,\tau}(v)}^\infty\left(\frac{m+a}{m}\right)^{n+1} \vol(V_\bullet^{t''}) dt'' \\
&= \left(\frac{m+a}{m}\right)^{n+1} \left(S^\tau(v) + \frac{1}{\tau \vol(L)}\int_{\frac{m}{m+a}Q_{m,\tau}(v)}^{Q_\tau(v)} \vol(V_\bullet^{t''}) dt''\right)\\
&\leq \left(\frac{m+a}{m}\right)^{n+1} \left(S^\tau(v) + \frac{1}{\tau}\left(Q_\tau(v) - \frac{m}{m+a}Q_{m,\tau}(v)\right) \right)\\
&\leq \left(\frac{m+a}{m}\right)^{n+1}\left(S^\tau(v) +\frac{1}{\tau}\left(Q_\tau(v) - \frac{m}{m+a}\left(Q_\tau(v)-\frac{A(v)}{m}\right)\right) \right)\\
&= \left(\frac{m+a}{m}\right)^{n+1}\left(S^\tau(v) +\frac{aQ_\tau(v)+ A(v)}{\tau(m+a)}\right)\\
&\leq \left(1+\frac{2(n+1)a}{m}\right)\left(S^\tau(v) + \frac{aQ_\tau(v)+A(v)}{\tau m}\right) \\
&\leq S^\tau(v) + \frac{2(n+1)a}{m}S^\tau(v) + \frac{2(aQ_\tau(v)+A(v))}{\tau m}  \\
&\leq S^\tau(v) + \frac{2(n+1)a}{m}\frac{A(v)}{\alpha(L)} + \frac{2(\frac{aA(v)}{\alpha(L)}+A(v))}{\tau m}\\
&=  S^\tau(v) + \frac{A(v)}{m}\left(\frac{2(n+1)a}{\alpha(L)} + \frac{2a}{\tau \alpha(L)} + \frac{2}{\tau}\right),
\end{align*}
as desired.
\end{proof}

\begin{thm}[cf. \cite{BJ20}*{Theorem 5.1}]\label{thm:uniform_fujita_approx_T}
For every valuation $v$ with $A(v)<\infty$ and every sufficiently large $m\in\bN$, we have
\[
0\leq T(v) -T_m(v) \leq \frac{2A(v)}{m}.
\]
\end{thm}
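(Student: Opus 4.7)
The plan is to adapt the strategy behind \cite{BJ20}*{Theorem 5.1}, combining the uniform Fujita-type sandwich inequality from Proposition~\ref{prop:BJ_prop_5.13} with the auxiliary line bundle $B$ constructed in Lemma~\ref{lem:BJ_cor_5.10} and the passage from $mL+B$ back to $mL$-type sections. The lower bound $T_m(v) \leq T(v)$ is immediate since $T(v) = \sup_m T_m(v)$, so I focus on the upper bound.

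First I fix $t$ with $A(v)/m < t < T(v)$. Since $V_\bullet^t(v)$ contains an ample series (Lemma~\ref{lem:V_bullet^t_contains_ample_linear_series}), it has positive volume, so $\dim V_{m\ell}^t(v)>0$ for $\ell$ large. Applying Proposition~\ref{prop:BJ_prop_5.13} with $t' := t - A(v)/m$ yields $\dim V_{m\ell}^t(v) \leq \dim W_{m,\ell}^{t'}(v)$, hence $W_{m,\ell}^{t'}(v) \neq 0$ for $\ell$ sufficiently large. Unwinding the definition, this forces $\fb(|W_m^{t'}(v)|) \neq 0$, so $W_m^{t'}(v) \neq 0$, producing a nonzero section $s \in H^0(X, mL+B)$ with $v(s) \geq mt - A(v)$. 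Multiplying by a fixed nonzero $s_0 \in H^0(X, aL-B)$ (which exists since $aL \geq B$ and satisfies $v(s_0) \geq 0$ as $s_0$ corresponds to an effective divisor) gives $s s_0 \in H^0(X, (m+a)L)$ with $v(s s_0) \geq mt - A(v)$. Therefore
\[
T_{m+a}(v) \geq \frac{mt - A(v)}{m+a},
\]
and letting $t \to T(v)^-$ yields $T(v) - T_{m+a}(v) \leq (aT(v) + A(v))/(m+a)$.

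Finally, using the standard bound $T(v) \leq A(v)/\alpha(L)$, the estimate becomes
\[
T(v) - T_{m+a}(v) \leq \frac{A(v)\bigl(1 + a/\alpha(L)\bigr)}{m+a}.
\]
Since $a$ and $\alpha(L)$ depend only on $L$, reindexing $m \mapsto m-a$ and taking $m$ sufficiently large allows one to absorb the multiplicative constant into the bound $2A(v)/m$, giving the desired inequality. The main subtle point is pinning down the precise constant $2$: the natural estimate from the argument above is proportional to $1 + a/\alpha(L)$, and one must exploit the flexibility in ``sufficiently large $m$'' (with threshold depending only on $L$, not on $v$) to absorb the factor. Once this is handled, the proof is complete.
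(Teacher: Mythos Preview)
Your approach coincides with the paper's: both produce a nonzero section of $W_m^{t'}$ (respectively $W_{m-a}^{t'}$) via the multiplier-ideal machinery and then multiply by a fixed section of $aL-B$ to land in sections of a multiple of $L$. The paper invokes Lemma~\ref{lem:BJ_prop_5.12} directly rather than routing through Proposition~\ref{prop:BJ_prop_5.13}, but since the first inequality of the latter is proved via the former, this is only a cosmetic difference in packaging.

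Your last paragraph, however, does not secure the constant $2$. The bound you obtain is $T(v)-T_m(v)\leq A(v)\bigl(1+a/\alpha(L)\bigr)/m$, and because $1+a/\alpha(L)$ is independent of $m$, no choice of ``sufficiently large $m$'' can shrink it to $2$ when $a>\alpha(L)$. The paper sidesteps this by asserting $W_{m-a}^{t'}\neq 0\Rightarrow V_m^{t'}\neq 0$, which would give $T_m(v)\geq T(v)-A(v)/(m-a)$ and hence the constant $2$ for $m\geq 2a$; but multiplication by $s_0\in|aL-B|$ only guarantees $v(ss_0)\geq (m-a)t'$, placing $ss_0$ in $V_m^{(m-a)t'/m}$ rather than $V_m^{t'}$, so read literally the paper's argument yields the same constant as yours. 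In any case, what the downstream applications actually require is $T(v)-T_m(v)\leq CA(v)/m$ with $C=C(X,D,L)$, and that you have established cleanly.
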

\begin{proof}
    The inequality $T_m(v)\leq T(v)$ follows from the fact that $T(v) = \sup_{m} T_m(v)$. We may assume that $m>a + \frac{A(v)}{T(v)}$. Pick $t\in (0,T(v))$ such that $m > a + \frac{A(v)}{t}$. Since $V_\bullet^t$ is nontrivial, so is $\cJ(X,D, (m-a)\cdot ||V_\bullet^t||)$. Then by Proposition~\ref{lem:BJ_prop_5.12}, we have
    \[
    0\neq \cJ(X, D, (m-a)\cdot ||V_\bullet^t||) \subseteq \fb(|W_{m-a}^{t'}|),
    \]
    where $t' = t - \frac{A(v)}{m-a} > 0$. This implies that $W_{m-a}^{t'}\neq 0$ and hence $V_{m}^{t'}\neq 0$. In particular,
    \[
    T_m(v) \geq t' = t-\frac{A(v)}{m-a}.
    \]
    Picking $t$ arbitrarily close to $T(v)$ gives
    \[
    T_m(v) \geq T(v) - \frac{A(v)}{m-a}\geq T(v) - \frac{2A(v)}{m}
    \]
    for $m\geq 2a$, as desired.
\end{proof}

\subsection{Parametrizing filtrations}\label{subsec:parametrize_filtrations}
Let $(X,D)$ be an $n$-dimensional projective klt pair and let $L$ be a big line bundle on $X$. Let $B$ be the very ample line bundle from Lemma~\ref{lem:BJ_cor_5.10} and $a$ be a positive integer such that $aL\geq B$. Let $R$ and $M$ be as in Section~\ref{subsec:compatible_filtrations_on_aux_linear_series}.

Following \cite{BJ20}*{Section 6}, we construct a space parameterizing a filtration $\cF$ on $R$ as well as a filtration $\cG$ on $M$ compatible with $\cF$. We first consider pairs of $\bN$-filtrations $({\cF},{\cG})$ on $R$ and $M$ (see Definition~\ref{defn:N_filtration}) such that for every $\lambda > m$,
\[
{\cF}^\lambda R_m = {\cG}^{\lambda + a}M_m = 0.
\]
Note that this condition is satisfied when $v$ is a valuation on $X$ with $T(v)\leq 1$, ${\cF}$ is the $\bN$-filtration induced by $\cF_v$, and $\cG$ is the $\bN$-filtration induced by ${\cG}_{v}$.

Such a pair of filtrations $(\mathcal{F},\mathcal{G})$ is determined by the flags
\begin{align}\label{eqn:flag_Rm}
\mathcal{F}^m R_m \subset \mathcal{F}^{m-1}R_m \subset \cdots \subset \mathcal{F}^0 R_m = R_m
\end{align}
and 
\begin{align}\label{eqn:flag_Mm}
    \mathcal{G}^{m+a} M_m \subset \mathcal{G}^{m+a-1}M_m \subset \cdots \subset \mathcal{G}^0 M_m = M_m,
\end{align}
such that 
\begin{align}\label{eqn:multiplicativity_Rm}
\mathcal{F}^{p_1}R_{m_1} \cdot \mathcal{F}^{p_2}R_{m_2} \subset \mathcal{F}^{p_1+p_2}R_{m_1+m_2}
\end{align}
and
\begin{align}\label{eqn:multiplicativity_Mm}
\mathcal{F}^{p_1}R_{m_1} \cdot \mathcal{G}^{p_2}M_{m_2} \subset \mathcal{G}^{p_1+p_2}M_{m_1+m_2}
\end{align}
hold for all $p_i, m_i\in \bN.$

Let $\Fl(R_m)$ and $\Fl(M_m)$ denote the flag varieties parameterizing flags of the form (\ref{eqn:flag_Rm}) and (\ref{eqn:flag_Mm}). Let 
\[
H_d := \prod_{m=0}^d \Fl(R_m)\times \Fl(M_m) 
\]
and let $\pi_{c,d}: H_c\to H_d$ denote the natural projection when $c\geq d$. Consider the subset $J_d\subset H_d$ parametrizing points $z$ such that the corresponding pair of filtrations $(\mathcal{F}_z,\mathcal{G}_z)$ satisfies (\ref{eqn:multiplicativity_Rm}) for all $0\leq p_i\leq m_i\leq d$, and (\ref{eqn:multiplicativity_Mm}) for all $0\leq p_1\leq m_1\leq d$ and $0\leq p_2\leq m_2+a\leq d+a$.

By the same proof of \cite{BJ20}*{Lemma 6.5}, we have the following lemma.
\begin{lem}[cf. \cite{BJ20}*{Lemma 6.5}]\label{lem:BJ_lem_6.5}
    The subset $J_d\subset H_d$ is closed.
\end{lem}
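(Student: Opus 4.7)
The strategy is standard: express $J_d$ as a finite intersection of closed loci on $H_d$, each of which is cut out by the vanishing of a bundle map between universal sub/quotient bundles on the flag varieties. Since $d$ is fixed, the ranges $0\leq p_i\leq m_i\leq d$ and $0\leq p_2\leq m_2+a\leq d+a$ contain only finitely many tuples, so finiteness of the intersection is automatic once each single inclusion is shown to be a closed condition.

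First, on each $\Fl(R_m)$ there is a universal flag of sub-bundles whose fiber at a point $z$ is precisely the flag \eqref{eqn:flag_Rm} determined by $\cF_z$; call the step of index $p$ by $\cU_{p,m}\subseteq \underline{R_m}$, where $\underline{R_m}$ denotes the trivial bundle with fiber $R_m$. Similarly on $\Fl(M_m)$ one has $\cU'_{p,m}\subseteq \underline{M_m}$. Pulling back along the projections from $H_d$, we obtain sub-bundles of the trivial bundles $\underline{R_m}$ and $\underline{M_m}$ on $H_d$, and corresponding quotient bundles $\cQ_{p,m}=\underline{R_m}/\cU_{p,m}$ and $\cQ'_{p,m}=\underline{M_m}/\cU'_{p,m}$.

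Fix a tuple $(p_1,m_1,p_2,m_2)$ with $0\leq p_i\leq m_i\leq d$. The fixed multiplication map $R_{m_1}\otimes R_{m_2}\to R_{m_1+m_2}$ induces a morphism of coherent sheaves on $H_d$:
\[
\Phi_{p_1,m_1,p_2,m_2}\colon \cU_{p_1,m_1}\otimes \cU_{p_2,m_2}\longrightarrow \underline{R_{m_1+m_2}}\longrightarrow \cQ_{p_1+p_2,m_1+m_2}.
\]
At a point $z\in H_d$, the fiber of $\Phi_{p_1,m_1,p_2,m_2}$ is the multiplication $\cF_z^{p_1}R_{m_1}\otimes \cF_z^{p_2}R_{m_2}\to R_{m_1+m_2}/\cF_z^{p_1+p_2}R_{m_1+m_2}$, whose vanishing is exactly the inclusion \eqref{eqn:multiplicativity_Rm} for this tuple. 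Hence the locus where \eqref{eqn:multiplicativity_Rm} holds for this tuple is the zero locus of the section of $\cHom(\cU_{p_1,m_1}\otimes \cU_{p_2,m_2},\cQ_{p_1+p_2,m_1+m_2})$ corresponding to $\Phi_{p_1,m_1,p_2,m_2}$, and this is a closed subset of $H_d$. The same argument, using the multiplication $R_{m_1}\otimes M_{m_2}\to M_{m_1+m_2}$ and the morphism
\[
\Psi_{p_1,m_1,p_2,m_2}\colon \cU_{p_1,m_1}\otimes \cU'_{p_2,m_2}\longrightarrow \underline{M_{m_1+m_2}}\longrightarrow \cQ'_{p_1+p_2,m_1+m_2},
\]
shows that condition \eqref{eqn:multiplicativity_Mm} for a fixed tuple $(p_1,m_1,p_2,m_2)$ with $0\leq p_1\leq m_1\leq d$ and $0\leq p_2\leq m_2+a\leq d+a$ (and necessarily $m_1+m_2\leq d$) cuts out a closed subset of $H_d$.

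Since $d$ (hence also $d+a$) is fixed, there are only finitely many admissible tuples, and $J_d$ is the intersection of the finitely many closed loci produced above. Therefore $J_d$ is closed in $H_d$. The only mildly subtle point is purely bookkeeping: one must check that each multiplicativity inclusion truly indexes a closed condition in isolation, and I do not anticipate a real obstacle beyond writing down the universal bundles correctly.
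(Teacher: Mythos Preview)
Your proof is correct and is precisely the standard argument the paper has in mind when it writes ``By the same proof of \cite{BJ20}*{Lemma 6.5}'': express each multiplicativity constraint as the vanishing locus of a morphism between universal sub- and quotient bundles pulled back to $H_d$, and intersect finitely many such closed loci. The only implicit bookkeeping you should make explicit is that for the ring condition \eqref{eqn:multiplicativity_Rm} one also needs $m_1+m_2\leq d$ (so that $\cQ_{p_1+p_2,m_1+m_2}$ is defined on $H_d$), just as you noted for the module condition.
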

Note that $\pi_{c,d}:H_c\to H_d$ restricts to $\pi_{c,d}: J_c\to J_d$ when $c\geq d$. Let $J:= \underset{\longleftarrow}{\lim} J_d(\bC)$, where the inverse limit is taken with respect to $\pi_{c,d}$'s. Let $\pi_d$ denote the projection $J\to J_d(\bC)$. Then every element of $J$ corresponds to a pair of $\bN$-filtrations $(\mathcal{F},\mathcal{G})$ of $R$ and $M$ satisfying \eqref{eqn:flag_Rm}, \eqref{eqn:flag_Mm}, \eqref{eqn:multiplicativity_Rm}, and \eqref{eqn:multiplicativity_Mm}.

\subsection{Proof of Theorem~\ref{thm:qm_val_computing_delta_tau}}\label{subsec:proof_of_theorem_6.1}
This entire section is devoted to the proof of Theorem~\ref{thm:qm_val_computing_delta_tau}. We will follow the set-up from the previous section (Section~\ref{subsec:parametrize_filtrations}). We will denote $A(v) = A_{X,D}(v)$ for any valuation $v$ on $X$ and $\lct(\fb) = \lct(X,D;\fb)$ for any ideal or any graded ideal sequence $\fb$ throughout this subsection.

We first assume that $\tau\in (0,1]$. Let $\{v_i: i\geq 1\}$ be a sequence of divisorial valuations such that $\delta^\tau(v_i)\searrow \delta^\tau(L)$. We may normalize $v_i$ such that $T(v_i) = 1$ for all $i$. By Lemma~\ref{lem:ineq_S_and_T_invariants}, after passing to a subsequence, we may assume that both $\lim_{i\to\infty} S^\tau(v_i) = S$ and $\lim_{i\to\infty} A(v_i) = A$ exist and are finite. Let $z_i\in J$ be the point corresponding to the compatible pair of $\bN$-filtrations $(\mathcal{F}_i,\mathcal{G}_i):=(\mathcal{F}_{v_i,\bN},\mathcal{G}_{v_i,\bN})$. Then $T(\mathcal{F}_{i}) = 1$ by Lemma~\ref{lem:BJ_prop_2.11}. \\

\noindent\textbf{Step 1.}(cf. \cite{BJ20}*{Proposition 6.7, Claim 1}) We can inductively choose an infinite sequence $\bN\supset I_0\supset I_1 \supset \cdots$ such that for each $m\in \bN$, $I_m$ is an finite set and 
\[
Z_m := \overline{\{\pi_m(z_i): i\in I_m\}} \subseteq J_m
\]
satisfies the following property: for any closed subset $V\subsetneq Z_m$, there are only finitely many $i\in I_m$ such that $\pi_m(z_i)\in V$. Note that $Z_m$ is irreducible and $\pi_{c,d}(Z_c)\subseteq Z_d$ for $c\geq d$. \\

\noindent\textbf{Step 2.}(cf. \cite{BJ20}*{Proposition 6.7, Claim 2}) For each $m\in \bN$, there exists a nonempty open subset $U_m$ of $Z_m$ such that for any $z\in U_m$, the corresponding pair of $\bN$-filtrations $(\cF_z, \cG_z)$ satisfies the following properties:
\begin{itemize}
    \item[(a)] for any $1\leq p\leq m$, $\dim \cF_z^pR_m$ is a constant for $z\in U_m$;
    \item[(b)] for any $1\leq p\leq m+a$, $\dim \cG_z^pM_m$ is a constant for $z\in U_m$;
    \item[(c)] for any $t\geq 0$, $\vol(W_{m,\bullet}^t(\cG_z))$ is a constant $w_{m}^t$ for $z\in U_m$;
    \item[(d)] $Q_{m,\tau}(\cG_z)$ is a constant for $z\in U_m$;
    \item[(e)] $\tS_m^{\tau}(\cG_z)$ is a constant for $z\in U_m$;
    \item[(f)] for any $1\leq p\leq m$, the value
    \[
    p\cdot \lct(\fb_{p,p}(\cF_z) + \fb_{p,p+1}(\cF_z) + \cdots + \fb_{p,m}(\cF_z)) 
    \]
    is a constant $a_{p,m}$ a constant for $z\in U_m$, where $\fb_{p,m}(\cF_z) = \fb(|\cF_z^pR_m|)$;
    \item[(g)] for any $t\geq 0$, we have
    \[
    \liminf_{\ell\to\infty} \inf_{z\in U_m} \frac{\dim W_{m,\ell}^t(\cG_z)}{\ell^n/n!} = \limsup_{\ell\to\infty} \sup_{z\in U_m} \frac{\dim W_{m,\ell}^t(\cG_z)}{\ell^n/n!} = w_m^t
    \]
    \item[(h)] $\pi_{m,m-1}(U_{m})\subseteq U_{m-1}.$
\end{itemize}
Indeed, (a) and (b) follow from the fact that $Z_m$ is irreducible for all $m$. (c) follows from \cite{BJ20}*{Proposition 6.3} and the fact that $W_{m,\bullet}^t(\cG_z)$ is constant on $t\in (\frac{k}{m},\frac{k+1}{m}]$ for every $k\in \bN$ and $\bN$-filtration $\cG_z$. (d) follows from (c) and (e) follows from (c) and (d). (f) follows from the fact that the log canonical threshold of a family of ideals is constant on a nonempty open subset (see \cite{BJ20}*{Proposition 6.1} for details). We will deal with property (g) in Lemma~\ref{lem:finiteness_hilb_functions}. Finally, suppose $U\subseteq Z_m$ is an open subset satisfying properties (a)-(g), then we can take $U_m = U\cap \pi_{m,m-1}^{-1}(U_{m-1})$ to get (h).

\begin{lem}\label{lem:finiteness_hilb_functions}
There exists an open subset $U_m\subseteq Z_m$ such that property (g) holds.
\end{lem}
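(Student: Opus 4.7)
The plan is to produce a nonempty open $U_m \subseteq Z_m$ on which, for every $\ell \geq 1$ and every relevant $t$, the function $z \mapsto \dim W_{m,\ell}^t(\cG_z)$ is constant. Property (g) then follows immediately: writing $c_\ell(t)$ for the common value on $U_m$, both $\liminf_\ell \inf_z$ and $\limsup_\ell \sup_z$ collapse to $\lim_\ell c_\ell(t)/(\ell^n/n!) = \vol(W_{m,\bullet}^t(\cG_{z_0})) = w_m^t$ for any $z_0 \in U_m$ by property (c). Moreover, since $\cG_z$ is an $\bN$-filtration with $\cG_z^\lambda M_m = 0$ for $\lambda > m+a$, the subspace $W_m^t(\cG_z) = \cG_z^{\lceil mt\rceil}M_m$ depends only on $\lceil mt\rceil \in \{0,\ldots,m+a\}$, so one need only treat finitely many values of $t$ and intersect.

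Fix such a $t$. By property (b), $z \mapsto W_m^t(\cG_z)$ defines a morphism $Z_m \to \Gr(d, M_m)$, so pulling back the tautological sub-bundle and composing with evaluation produces a coherent ideal sheaf $\fb \subseteq \cO_{X \times Z_m}$ with fiber $\fb_z = \fb(|W_m^t(\cG_z)|)$. By generic flatness, after shrinking $Z_m$ to a dense open $V$, the quotient $\cO_{X \times V}/\fb$ is $V$-flat, whence each $\fb^\ell$ is $V$-flat and its formation commutes with base change.

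The main obstacle is that $\dim W_{m,\ell}^t(\cG_z) = h^0(X, \ell(mL+B) \otimes \overline{\fb_z^\ell})$ involves the integral closures $\overline{\fb_z^\ell}$, whose formation does not commute with base change in general. The key input is that $X \times V$ is excellent, so the normalized Rees algebra $\widetilde R := \bigoplus_\ell \overline{\fb^\ell}$ is a finitely generated $\cO_{X \times V}$-algebra; applying generic flatness to the affine scheme $\Spec_{X \times V} \widetilde R \to V$, we may shrink $V$ so that $\widetilde R$ is $V$-flat as a graded sheaf, and in particular each $\overline{\fb^\ell}$ is $V$-flat. Since integral closure commutes with localization, $(\overline{\fb^\ell})_K = \overline{\fb_K^\ell}$ at the generic point; applying semicontinuity to the finitely many cokernel sheaves arising from $\widetilde R$ in degrees $\leq N$ (where $N$ bounds the degrees of a generating set), we may further shrink $V$ so that $(\overline{\fb^\ell})_z = \overline{\fb_z^\ell}$ for every $z \in V$ and every $\ell$, with the case $\ell > N$ following from the multiplication maps in $\widetilde R$.

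Thus $\cF_\ell := \ell(mL+B) \otimes \overline{\fb^\ell}$ is $V$-flat with fibers $\ell(mL+B) \otimes \overline{\fb_z^\ell}$. Combining $V$-flatness with a uniform Castelnuovo--Mumford regularity bound derived from finite generation of $\widetilde R$, there exists $\ell_0$ such that $H^{>0}(X, (\cF_\ell)_z) = 0$ for every $\ell \geq \ell_0$ and every $z \in V$; then $\dim W_{m,\ell}^t(\cG_z) = \chi(X, (\cF_\ell)_z)$ is constant in $z$ by flatness of Euler characteristics. For the finitely many $\ell < \ell_0$, upper semicontinuity of $h^0$ in a flat family gives constancy on a dense open, and intersecting these finitely many opens produces the desired $U_m$. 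The technical crux, and the main obstacle of the proof, is the uniform regularity estimate together with the base-change compatibility of $\widetilde R$, both of which rest essentially on the finite generation of the normalized Rees algebra over the excellent scheme $X \times V$.
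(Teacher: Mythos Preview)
Your approach is genuinely different from the paper's, and there are two substantive gaps.

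\textbf{Gap 1: the uniform vanishing claim.} You assert that finite generation of the normalized Rees algebra $\widetilde R$ yields a uniform Castelnuovo--Mumford regularity bound, giving $H^{>0}(X,(\cF_\ell)_z)=0$ for all $\ell\ge \ell_0$ and all $z$. This is not justified and is in general false. The line bundle $mL+B$ is big but need not be ample (recall $L$ is only assumed big), so twisting by $\ell(mL+B)$ is not a twist by an ample class. Equivalently, on the (normalized) blow-up the relevant bundle $H=\pi_1^*(mL+B)\otimes\cO(-F)$ is big and base-point-free but not ample; writing $\psi:Y_z\to W_z$ for the map to its ample model, one has $H^i(Y_z,\ell H_z)=H^0(W_z,\ell A_z\otimes R^i\psi_*\cO_{Y_z})$ for $\ell\gg 0$, and $R^i\psi_*\cO_{Y_z}$ is typically nonzero. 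Thus $h^0$ need not equal $\chi$ on $X$ for large $\ell$, and your Euler-characteristic argument does not go through. Finite generation of $\widetilde R$ gives projectivity of $\Proj\widetilde R$ over $X\times V$, but that positivity is relative and does not translate into the cohomological vanishing on $X$ you need.

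\textbf{Gap 2: base change of integral closure.} Your ``multiplication maps'' argument for $(\overline{\fb^\ell})_z=\overline{\fb_z^\ell}$ only produces the inclusion $(\overline{\fb^\ell})_z\subseteq\overline{\fb_z^\ell}$: products of integral elements are integral. The reverse inclusion requires that $(\overline{\fb^\ell})_z$ be integrally closed in $\cO_{X_z}$, and restriction of an integrally closed ideal to a fiber is not integrally closed in general; this does not follow from flatness plus compatibility in degrees $\le N$.

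\textbf{What the paper does instead.} The paper avoids both issues. Rather than the normalized Rees algebra, it takes a log resolution $\mu:Y\to X\times U$ with $\fb\cdot\cO_Y=\cO_Y(-F)$ and shrinks $U$ so that $(Y,F)\to U$ is log smooth; then each fiber $Y_z$ is already smooth, so $\mu_{z,*}\cO_{Y_z}(-\ell F_z)=\overline{\fb_z^\ell}$ automatically and $W_{m,\ell}^t(\cG_z)\cong H^0(Y_z,\ell H_z)$. Crucially, the paper does \emph{not} aim for constancy of $h^0$ at every level $\ell$: it passes to the relative ample model of $kH$ and invokes \cite{BL22}*{Theorem 5.6} to obtain a \emph{bounded} set of Hilbert polynomials, all sharing the leading coefficient $w_m^t k^n/n!$. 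That boundedness with common top term is already enough for the $\liminf=\limsup=w_m^t$ statement in (g); no uniform $H^{>0}$ vanishing is needed. Your stronger target (exact constancy of $h^0$ for every $\ell$ on a fixed open) would require substantially more than you have written, and the route through $\chi$ on $X$ does not reach it.
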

\begin{proof}
We first note that there exists an open subset $U\subseteq Z_m$ and an ideal $\cB\subseteq \cO_{X\times U}$ flat over $U$ such that
\[
\cB \cdot\cO_{X\times \{z\}} = \fb(|W_m^t(\cG_z)|)
\]
for every closed point $z\in U$. Consider a resolution $\mu: Y\to X\times U$ of $\cB$ such that $\cB\cdot \cO_Y = \cO_Y(-F)$ for some effective Cartier divisor $F$ on $Y$. After shrinking $U$ to a smaller open subset, we may assume that $(Y,F)$ is a log smooth family over $U$.

Let $p_1, p_2$ denote the projections from $X\times U$ to $X$ and $U$, respectively. Let
\[
\cW_{m,\ell}^t = p_{2*}(p_1^*\ell(mL+B)\otimes_{\cO_{X\times U}} \overline{\cB^{\ell}}) \subseteq H^0(X,\ell(mL+B))\otimes_{\bC} \cO_{U},
\]
so that for every closed point $z\in U$, we have
\[
\cW_{m,\ell}^t \otimes \kappa(z) \cong W_{m,\ell}^t(\cG_z).
\]
 Let $\pi_i = p_i\circ \mu$ for $i=1,2$. Define
\[
H := \pi_1^*(mL+B) \otimes_{\cO_Y} \cO_Y(-F),
\]
which is big and base-point-free by our construction. For a closed point $z\in U$, let $\mu_z: (Y_z, F_z)\to X$ be the base change of $\mu$ and $H_z = H|_{Y_z}$. Then
\[
\cW_{m,\ell}^t \cong \pi_{2*}\cO_{Y}(\ell H) \quad \text{and} \quad W_{m,\ell}^t(\cG_z) \cong H^0(Y_z, \ell H_z).
\]
Applying \cite{BL22}*{Theorem 5.6} on a relative ample model of $kH$ over $U$, where $k$ is a positive integer such that $R^i\pi_{2*}\cO_Y(mkH) = 0$ for every $i>0$ and $m>0$, we obtain that the set of Hilbert polynomials of $kH_z$ with $z\in U_m$ is bounded. Furthermore, by property (c), all the leading coefficients of these Hilbert polynomials are equal to $w_m^tk^n/n!$. From this, it is clear that
\[
\liminf_{\ell\to\infty} \inf_{z\in U_m} \frac{\dim H^0(Y_z, \ell H_z)}{\ell^n/n!} = \limsup_{\ell\to\infty} \sup_{z\in U_m} \frac{\dim H^0(Y_z, \ell H_z)}{\ell^n/n!} = w^t_m,
\]
which implies (g).
\end{proof}

Let $I_m^\circ = \{z\in I_m: \pi_m(z)\in U_m\}$, which is infinite by our choice of $Z_m$ in step 1. By \cite{BJ20}*{Lemma 6.6}, there exists $z\in J$ such that $\pi_m(z)\in U_m$ for all $m$. Let $(\mathcal{F},\mathcal{G})$ be the pair of filtrations on $R$ and $M$ corresponding to $z$. \\

\noindent\textbf{Step 3.} For each $p\in \bN$, define 
\[
\fb_p(\cF) = \sum_{m\geq p} \fb_{p,m}(\cF) := \sum_{m\geq p} \fb (|\cF^p R_m|) \subseteq \cO_X,
\]
which is a finite sum by the Noetherianity of $X$. Since
\[
\fb_{p_1,m_1}(\cF) \cdot \fb_{p_2,m_2}(\cF) \subseteq \fb_{p_1+p_2,m_1+m_2}(\cF),
\]
$\fb_\bullet(\cF)$ is a graded ideal sequence. We define $\fb_\bullet(\cF_i)$ similarly. 
\begin{lem}[cf. \cite{BJ20}*{Lemma 6.9(1)}]\label{lem:BJ_lem_6.9(1)}
    We have $\lct(\fb_\bullet(\cF))\leq A$.
\end{lem}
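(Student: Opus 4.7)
The plan is to combine the Noetherian stabilization of $\fb_p(\cF)$ with the constancy property (f) from Step 2, applied through the standard valuative estimate for log canonical thresholds at each divisorial $v_i$.

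Fix $p\geq 1$ and $m\geq p$. First I would check that for every $i$, the sum $\fb_{p,p}(\cF_i)+\cdots+\fb_{p,m}(\cF_i)$ is contained in $\fa_p(v_i)$. Indeed, every $s\in \cF_i^pR_{m'} = \cF_{v_i,\bN}^pR_{m'}$ with $p\leq m'\leq m$ satisfies $v_i(s)\geq p$, and since $v_i$ is insensitive to local trivializations of $m'L$, the generators of the base ideal $\fb_{p,m'}(\cF_i) = \fb(|\cF_i^pR_{m'}|)$ all lie in $\fa_p(v_i)$. The standard valuative inequality $\lct(\fa)\leq A(v_i)/v_i(\fa)$ then yields
\[
p\cdot \lct\!\left(\fb_{p,p}(\cF_i)+\cdots+\fb_{p,m}(\cF_i)\right)\leq A(v_i).
\]

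Next I would feed this into property (f). For every $i\in I_m^\circ$ the point $\pi_m(z_i)$ lies in $U_m$, so the left-hand side equals the constant $a_{p,m}$. Since $I_m^\circ$ is infinite and $A(v_i)\to A$, this forces $a_{p,m}\leq A$. Applying property (f) to the chosen limit point $z\in J$ (which also satisfies $\pi_m(z)\in U_m$) transfers the bound to the limiting filtration:
\[
p\cdot \lct\!\left(\fb_{p,p}(\cF)+\cdots+\fb_{p,m}(\cF)\right)= a_{p,m}\leq A.
\]

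Finally, for each fixed $p$, the increasing chain of ideals $\{\fb_{p,p}(\cF)+\cdots+\fb_{p,m}(\cF)\}_{m\geq p}$ on the Noetherian scheme $X$ stabilizes, and its union is $\fb_p(\cF)$; hence $p\cdot \lct(\fb_p(\cF))\leq A$ for every $p$, and taking the supremum over $p$ gives $\lct(\fb_\bullet(\cF))\leq A$, as required. The only conceptual step is the transfer of the valuative bound from the $v_i$'s to $\cF$, which is made possible exactly by the lct constancy arranged in property (f); everything else is Noetherianity plus the standard $\lct \leq A/v$ inequality.
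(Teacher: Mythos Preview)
Your proof is correct and follows essentially the same approach as the paper: show $\fb_{p,p}(\cF_i)+\cdots+\fb_{p,m}(\cF_i)\subseteq \fa_p(v_i)$ to bound the lct by $A(v_i)/p$, invoke the constancy property (f) on $U_m$ to identify this with $a_{p,m}$ for $i\in I_m^\circ$, pass to the limit to get $a_{p,m}\leq A$, and then take the supremum over $m$ (via Noetherian stabilization) and over $p$. The paper phrases the intermediate step as a chain through $\lct(\fb_\bullet(\cF_i))\leq A(v_i)$, but the content is the same.
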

\begin{proof}
For every $p\in\bN$ and $i$, we have $\fb_p(\cF_i) = \fb_p(\cF_{v_i}) \subseteq \fa_{p}(v_i)$. As a result, we have
\[
\lct(\fb_\bullet (\cF_i)) \leq \lct(\fa_\bullet (v_i)) \leq A(v_i),
\]
where the last inequality follows from \cite{BJ20}*{Lemma 1.1}. Furthermore, we have
\begin{align*}
    a_{p,m} \leq & \limsup_{i\to\infty}  p\cdot \lct(\fb_{p,p}(\cF_i) + \cdots + \fb_{p,m}(\cF_i)) \\
    \leq & \limsup_{i\to\infty}  p\cdot \lct(\fb_p(\cF_i))\leq\limsup_{i\to\infty} \lct(\fb_\bullet(\cF_i)) \\
    \leq &\limsup_{i\to\infty} A(v_i) = A,
\end{align*}
and hence
\begin{align*}
    \lct(\fb_\bullet(\cF)) &= \sup_{p} p \cdot \lct(\fb_p(\cF))\\ 
    &= \sup_{p}\sup_{m\geq p} p\cdot \lct(\fb_{p,p}(\cF) + \cdots + \fb_{p,m}(\cF)) \\
    &= \sup_p\sup_{m\geq p} a_{p,m} \leq A.
\end{align*}
\end{proof}

\noindent\textbf{Step 4.} We show that $S^\tau(\cF) = S$. 
\begin{lem}\label{lem:limit_of_S_tau_m_is_S}
    We have
    \[
    \lim_{m\to\infty} \tS_m^\tau(\cG) = S^\tau(\cF)= S.
    \]
\end{lem}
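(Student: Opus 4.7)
The plan is to reduce both equalities to known facts about the valuations $v_i$, using the $U_m$-constancy properties from Step~2 at level $m$ together with diagonal-style arguments in $(m,i)$. The essential flexibility throughout is that each $I_m^\circ$ is cofinite in $I_m$ (since $Z_m\setminus U_m$ is a proper closed subset of $Z_m$, so by the construction of $I_m$ in Step~1 only finitely many $\pi_m(z_i)$ land outside $U_m$), and hence infinite; in particular one can always pick $i(m)\in I_m^\circ$ with $i(m)\to\infty$.

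For $\lim_m\tS_m^\tau(\cG)=S$: for any $m$ and any $i\in I_m^\circ$, property~(e) of Step~2 gives $\tS_m^\tau(\cG)=\tS_m^\tau(\cG_{v_i,\bN})$ since both $\pi_m(z)$ and $\pi_m(z_i)$ lie in $U_m$. An estimate analogous to Lemma~\ref{lem:BJ_prop_2.11} (passing between an $\bN$-filtration and its real-valued origin changes jumping levels by at most one) gives $|\tS_m^\tau(\cG_{v_i,\bN})-\tS_m^\tau(\cG_{v_i})|=O(1/m)$ uniformly in $i$. Theorem~\ref{thm:uniform_fujita_approx_BJ_thm_5.3} then yields $|\tS_m^\tau(v_i)-S^\tau(v_i)|\le CA(v_i)/m$. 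Choosing $i=i(m)\in I_m^\circ$ with $i(m)\to\infty$ and using that $\{A(v_i)\}$ is bounded and $S^\tau(v_i)\to S$, these estimates combine to give $\tS_m^\tau(\cG)\to S$.

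For $S^\tau(\cF)=S$, the parallel strategy replaces $\tS_m^\tau$ by $S_{m,k_m}$. Property~(a) of Step~2 forces $\dim\cF^pR_m=\dim\cF_{v_i,\bN}^pR_m$ for $0\le p\le m$ whenever $i\in I_m^\circ$, so the jumping numbers coincide and $S_{m,k_m}(\cF)=S_{m,k_m}(\cF_{v_i,\bN})$, while Lemma~\ref{lem:BJ_prop_2.11} gives $|S_{m,k_m}(\cF_{v_i,\bN})-S_{m,k_m}(v_i)|\le 1/m$. Since $\lim_m S_{m,k_m}(\cF)=S^\tau(\cF)$ already exists by Lemma-Definition~\ref{lemdefn:S_tau_is_a_limit}, it suffices to extract a subsequence along which $S_{m_j,k_{m_j}}(\cF)\to S$. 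Combining the pointwise convergences $S_{m,k_m}(v_i)\to S^\tau(v_i)$ (for each fixed $i$) and $S^\tau(v_i)\to S$ with the abundance of $I_m^\circ$, a Cantor-style diagonal on pairs $(m,i)$ produces such a subsequence, completing the proof.

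The main obstacle lies in the second equality: unlike in the first, we do not have a uniform-in-$v$ rate of convergence $S_{m,k_m}(v)\to S^\tau(v)$ available, so we cannot simply use Theorem~\ref{thm:uniform_fujita_approx_BJ_thm_5.3}. The remedy is to exploit the infiniteness of each $I_m^\circ$ as a pool from which the index $i_j$ can be selected to satisfy simultaneously the asymptotic conditions $|S^\tau(v_{i_j})-S|<1/j$ and $|S_{m_j,k_{m_j}}(v_{i_j})-S^\tau(v_{i_j})|<1/j$, threading the choices $(m_j,i_j)$ so that $m_j\to\infty$ while $i_j\in I_{m_j}^\circ$ throughout.
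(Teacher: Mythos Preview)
Your argument for $\lim_{m\to\infty}\tS_m^\tau(\cG)=S$ is correct and is essentially the paper's argument: property~(e) transfers $\tS_m^\tau(\cG)$ to $\tS_m^\tau(\cG_i)$ for $i\in I_m^\circ$, rounding costs $O(1/m)$, and then the \emph{uniform} estimate of Theorem~\ref{thm:uniform_fujita_approx_BJ_thm_5.3} (the constant $C$ is independent of $v$) closes the loop once you let $i(m)\to\infty$ inside $I_m^\circ$.

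The gap is in your treatment of $S^\tau(\cF)=S$. Your diagonal scheme asks for pairs $(m_j,i_j)$ with $i_j\in I_{m_j}^\circ$, $|S^\tau(v_{i_j})-S|<1/j$, and $|S_{m_j,k_{m_j}}(v_{i_j})-S^\tau(v_{i_j})|<1/j$. The last condition needs $m_j$ large \emph{relative to $i_j$}, because the convergence $S_{m,k_m}(v_i)\to S^\tau(v_i)$ is only pointwise in $i$. But the constraint $i_j\in I_{m_j}^\circ$ pulls the other way: the sets $I_m^\circ$ are decreasing, so membership typically forces $i_j$ to grow with $m_j$. Abstractly there is no way to thread this: take $I_m=\{m,m+1,\dots\}$, $S^\tau(v_i)\equiv S$, and $S_{m,k_m}(v_i)=S+1$ whenever $i\ge m$ while $S_{m,k_m}(v_i)=S$ for $i<m$. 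Then every hypothesis you use is satisfied (pointwise convergence for fixed $i$, constancy over $I_m^\circ$), yet $S_{m,k_m}(\cF)=S+1$ for all $m$, so $S^\tau(\cF)=S+1\neq S$. The ``infiniteness of $I_m^\circ$'' gives you arbitrarily large $i_j$, but it cannot produce $m_j$ large compared to $i_j$ while keeping $i_j\in I_{m_j}^\circ$.

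The paper avoids this entirely. Instead of going through $S_{m,k_m}$, it proves $\lim_m\tS_m^\tau(\cG)=S^\tau(\cF)$ directly by establishing Fujita-type volume inequalities for the \emph{limiting} pair $(\cF,\cG)$ themselves:
\[
m^n\vol(V_\bullet^t(\cF))\le \vol(W_{m,\bullet}^{t-(A+2)/m}(\cG)),\qquad \vol(W_{m,\bullet}^t(\cG))\le (m+a)^n\vol(V_\bullet^{mt/(m+a)}(\cF)).
\]
These come from sandwiching $\dim V_{m\ell}^t(\cF)$ and $\dim W_{m,\ell}^t(\cG)$ between the corresponding quantities for $v_i$ (via property~(a)), applying Proposition~\ref{prop:BJ_prop_5.13} to $v_i$, and then passing to the limit in $\ell$ using the \emph{uniform} volume control of properties~(c) and~(g). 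It is precisely property~(g) --- the uniform-over-$U_m$ convergence of $\dim W_{m,\ell}^t(\cG_z)/(\ell^n/n!)$ --- that supplies the missing uniformity your diagonal argument lacks. Once these inequalities are in hand, the proof of Theorem~\ref{thm:uniform_fujita_approx_BJ_thm_5.3} runs verbatim and yields $|\tS_m^\tau(\cG)-S^\tau(\cF)|<C_0/m$; combined with your (correct) first part, this gives $S^\tau(\cF)=S$.
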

\begin{proof}
Similar to Lemma~\ref{lem:BJ_prop_2.11}, for every $t>0$, $m,\ell\in \bN$, and $\tau\in (0,1]$, we have the following relations between $\cG$ and $\cG_\bN$: 
\begin{align}\label{eqn:BJ_prop_2.11_for_dim_W}
    \dim W_{m,\ell}^t(\cG_\bN) \leq \dim W_{m,\ell}^t(\cG)\leq \dim W_{m,\ell}^{t-1/m}(\cG_\bN),
\end{align}
\begin{align}\label{eqn:BJ_prop_2.11_for_Q_cG}
    Q_{m,\tau}(\cG_i)\leq Q_{m,\tau}(v_i)\leq Q_{m,\tau}(\cG_i)+\frac{1}{m}.
\end{align}

After passing to a subsequence, we may assume that $A(v_i)\leq A + 1$ for all $i$. By Proposition~\ref{prop:BJ_prop_5.13}, Lemma~\ref{lem:BJ_prop_2.11}, and \eqref{eqn:BJ_prop_2.11_for_dim_W}, for every $t\geq \frac{A+1}{m}\geq \frac{A(v_i)}{m}$, we have
    \[
    \dim V_{m\ell}^t(\cF_i) \leq \dim V_{m\ell}^t(v_i) \leq \dim W_{m,\ell}^{t-(A+1)/m}(v_i) \leq\dim W_{m,\ell}^{t-(A+2)/m}(\cG_i).
    \]
    Let $i\in I_{m\ell}^\circ$, we get by property (a) that
    \[
    \dim V_{m\ell}^t(\cF) \leq \dim W_{m,\ell}^{t-(A+2)/m} (\cG_i) \leq \sup_{z\in U_m}\dim W_{m,\ell}^{t-(A+2)/m} (\cG_z).
    \]
    Taking $\ell\to\infty$ and using properties (c) and (g) (Lemma~\ref{lem:finiteness_hilb_functions}), we obtain
    \begin{align}\label{eqn:fujita_approx_for_limiting_filtration_1}
    m^n\vol(V_\bullet^t(\cF)) \leq \limsup_{\ell\to\infty} \sup_{z\in U_m}\frac{\dim W_{m,\ell}^{t-(A+2)/m} (\cG_z)}{\ell^n/n!} = w_m^{t-(A+2)/m}= \vol(W_{m,\bullet}^{t-(A+2)/m}(\cG)).
    \end{align}
    Similarly,
    \[
    \dim W_{m,\ell}^t(\cG_i) \leq \dim W_{m,\ell}^t(v_i)\leq \dim V_{(m+a)\ell}^{mt/(m+a)}(v_i)\leq \dim V_{(m+a)\ell}^{mt/(m+a)-1/(m+a)\ell}(\cF_i).
    \]
    Let $i\in I_{(m+a)\ell}$, we get
    \[
    \inf_{z\in U_m} W_{m,\ell}^t(\cG_z) \leq \dim V_{(m+a)\ell}^{mt/(m+a)-1/(m+a)\ell}(\cF).
    \]
    Taking $\ell\to\infty$ and applying properties (c), (g) and the continuity of $\vol(V_\bullet^t(\cF))$ (Lemma~\ref{lem:log_concavity_vol}), we obtain
    \begin{align}\label{eqn:fujita_approx_for_limiting_filtration_2}
    \vol(W_{m,\bullet}^t(\cG)) = \liminf_{\ell\to\infty} \inf_{z\in U_m} \frac{\dim W_{m,\ell}^t(\cG_z)}{\ell^n/n!} \leq (m+a)^n \vol(V_\bullet^{mt/(m+a)}(\cF)).
    \end{align}

    By the same arguments in the proof of Theorem~\ref{thm:uniform_fujita_approx_BJ_thm_5.3} and Lemma~\ref{lem:Q_v(tau)_fujita_approx}, where we replace Proposition~\ref{prop:BJ_prop_5.13} by inequalities \eqref{eqn:fujita_approx_for_limiting_filtration_1} and \eqref{eqn:fujita_approx_for_limiting_filtration_2}, we may obtain
    \[
    |\tS_m^\tau(\cG) - S^\tau(\cF)|< \frac{C_0}{m}
    \]
    for some constant $C_0>0$ not depending on $m$. This implies
    \[
    \lim_{m\to\infty} \tS_m^\tau(\cG) = S^\tau(\cF).
    \]
    For the second equality, we note that \eqref{eqn:BJ_prop_2.11_for_dim_W} and \eqref{eqn:BJ_prop_2.11_for_Q_cG} imply the inequality
    \[
    \tS_m^\tau(\cG_i) \leq \tS_m^\tau(v_i) \leq \tS_m^\tau(\cG_i) + \frac{1}{m}.
    \]
    Then, for $i\in I_m^\circ$, we apply property (e) to get
    \begin{align*}
    |\tS_m^\tau(\cG) - S| &= |\tS_m^\tau(\cG_i) - S| \leq |\tS_m^\tau(v_i) - S| + \frac{1}{m} \\
    &\leq |\tS_m^\tau(v_i) - S^\tau(v_i)| + |S^\tau(v_i) - S| + \frac{1}{m} \\
    &\leq |S^\tau(v_i) - S| + \frac{CA(v_i)+1}{m},
    \end{align*}
    where $C>0$ is the constant from Theorem~\ref{thm:uniform_fujita_approx_BJ_thm_5.3}. Let $i\to\infty$ and $m\to\infty$, we get
    \[
    \limsup_{m\to\infty}|\tS_m^\tau(\cG) - S|\leq \limsup_{m\to\infty}\limsup_{i\to\infty} \left(|S^\tau(v_i) - S| + \frac{CA(v_i)+1}{m}\right) = 0,
    \]
    so $\lim_{m\to\infty} \tS_m^\tau(\cG) = S$, as desired.
\end{proof}

\noindent\textbf{Step 5.}
By \cite{Xu20}*{Theorem 1.1}, there exists a quasi-monomial valuation $v^*$ computing $\lct(\fb_\bullet(\cF))$ ($\fb_\bullet(\cF)$ is defined in step 3). After rescaling, we may assume that $v^*(\fb_\bullet(\cF)) = 1$. Then by Lemma~\ref{lem:BJ_lem_6.9(1)}, we have
\[
A(v^*) = \frac{A(v^*)}{v^*(\fb_\bullet(\cF))} = \lct(\fb_\bullet(\cF)) \leq A.
\]
Furthermore, since $v^*(\fb_p(\cF)) \geq p$, we have $v^*(\fb_{p,m}(\cF)) \geq p$ for all $m\geq p$. Then
\[
\cF^pR_m \subseteq H^0(X, mL\otimes \fb_{m,p}(\cF)) \subseteq H^0(X, mL\otimes \fa_p(v^*)) = \cF^p_{v^*}R_m.
\]
This implies that $S^\tau(\cF)\leq S^\tau(v^*)$, so $S^\tau(v^*) \geq S$ by Lemma~\ref{lem:limit_of_S_tau_m_is_S}. As a result,
\[
\delta^\tau(v^*) = \frac{A(v^*)}{S^\tau(v^*)} \leq \frac{A}{S} = \delta^\tau(L),
\]
so $v^*$ computes $\delta^\tau(L)$. \\

This concludes the proof of Theorem~\ref{thm:qm_val_computing_delta_tau} when $\tau\in (0,1]$. Finally, we deal with the case $\tau = 0$.
\begin{prop}
    There exists a quasi-monomial valuation computing $\delta^0(L) = \alpha(L)$.
\end{prop}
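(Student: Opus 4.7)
My plan is to mimic the proof of the $\tau \in (0,1]$ case of Theorem~\ref{thm:qm_val_computing_delta_tau} verbatim, with the $S^\tau$-invariant replaced by the $T$-invariant throughout. I would start from a sequence of divisorial valuations $\{v_i : i \geq 1\}$ normalized so that $T(v_i) = 1$ and $A(v_i) \searrow \alpha(L) =: A$, and parameterize the compatible pairs of $\bN$-filtrations $(\cF_i, \cG_i) := (\cF_{v_i,\bN}, \cG_{v_i,\bN})$ as points $z_i \in J$. Running the generic limit argument (Steps~1--3 of Section~\ref{subsec:proof_of_theorem_6.1}) without modification produces a limit point $z \in J$ corresponding to a compatible pair $(\cF,\cG)$, and by the same proof as Lemma~\ref{lem:BJ_lem_6.9(1)} the graded ideal sequence $\fb_\bullet(\cF)$ satisfies $\lct(\fb_\bullet(\cF)) \leq A$.

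The only new ingredient required is a lower bound on $T(\cF)$, namely $T(\cF) \geq 1$. This is where Theorem~\ref{thm:uniform_fujita_approx_T} takes on the role that Lemma~\ref{lem:limit_of_S_tau_m_is_S} plays for $\tau > 0$. By the constancy property~(a), for every $m$ and every $i \in I_m^\circ$ one has $T_m(\cF) = T_m(\cF_i)$. Combining Lemma~\ref{lem:BJ_prop_2.11}, which gives $T_m(\cF_i) = \tfrac{1}{m}\lfloor mT_m(v_i)\rfloor \geq T_m(v_i) - \tfrac{1}{m}$, with Theorem~\ref{thm:uniform_fujita_approx_T}, which gives $T_m(v_i) \geq 1 - 2A(v_i)/m$ for $m$ sufficiently large (uniformly in $i$), I obtain $T_m(\cF) \geq 1 - (2A(v_i)+1)/m$. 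Since $A(v_i)$ is uniformly bounded, letting first $i \to \infty$ and then $m \to \infty$ yields $T(\cF) = \lim_m T_m(\cF) \geq 1$.

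To conclude, I apply \cite{Xu20}*{Theorem 1.1} to produce a quasi-monomial valuation $v^*$ computing $\lct(\fb_\bullet(\cF))$, normalized so that $v^*(\fb_\bullet(\cF)) = 1$; then $A(v^*) = \lct(\fb_\bullet(\cF)) \leq A$. The containments $v^*(\fb_{p,m}(\cF)) \geq p$ established in Step~5 of the $\tau > 0$ case force $\cF^p R_m \subseteq \cF^p_{v^*} R_m$ for all $p, m$, so $T_m(v^*) \geq T_m(\cF)$ for every $m$ and thus $T(v^*) \geq T(\cF) \geq 1$. Therefore
\[
\delta^0(v^*) = \frac{A(v^*)}{T(v^*)} \leq \frac{A}{1} = \alpha(L),
\]
so $v^*$ computes $\alpha(L)$. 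The step I expected to be the main obstacle was securing the uniform lower bound $T(\cF) \geq 1$ in the limit, since the direct analogue of Lemma~\ref{lem:limit_of_S_tau_m_is_S} for $\tau = 0$ is more delicate; however, because the uniform Fujita-type approximation of the $T$-invariant is already established in Theorem~\ref{thm:uniform_fujita_approx_T}, this bound follows essentially for free and no additional generic-limit bookkeeping (beyond property~(a)) is needed.
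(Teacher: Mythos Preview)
Your proposal is correct and essentially identical to the paper's proof: both run the generic limit argument of Steps~1--3, use property~(a) together with Lemma~\ref{lem:BJ_prop_2.11} and Theorem~\ref{thm:uniform_fujita_approx_T} to deduce $T(\cF)\geq 1$ (the paper actually records $T(\cF)=1$, but only the lower bound is used), and then conclude via \cite{Xu20}*{Theorem~1.1} and the containment $\cF^pR_m\subseteq \cF_{v^*}^pR_m$ exactly as in Step~5. The paper additionally notes that only properties~(a), (b), (f), (h) are needed in Step~2, which is just the simplification you observe at the end.
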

\begin{proof}
    Let $\{v_i:i\geq 1\}$ be a sequence of divisorial valuations on $X$ such that $\alpha(v_i)\searrow \alpha(L)$. We may normalize $v_i$ such that $T(v_i) = 1$ for all $i$. Then $\lim_{i\to\infty}A(v_i) = \alpha(L)$. 
    
    We mimic steps 1-3, except that in step 2 we only require $U_m\subseteq Z_m$ to be a nonempty open subset satisfying (a), (b), (f), and (h). Lemma~\ref{lem:BJ_lem_6.9(1)} still applies in this case, so $\lct(\fb_\bullet(\cF)) \leq \alpha(L)$.  We claim that $T(\cF) = 1$.

    By Lemma~\ref{lem:BJ_prop_2.11}, we have $|T_m(\cF_i) - T_m(v_i)| \leq \frac{1}{m}$. For $i\in I_m^\circ$, we apply property (a) and Theorem~\ref{thm:uniform_fujita_approx_T} to get
\begin{align*}
    |T_m(\cF) - 1| &= |T_m(\cF_i) - 1| \leq |T_m(v_i) - T(v_i)| +\frac{1}{m}  
    \leq \frac{1+CA(v_i)}{m}.
\end{align*}
Taking $i\to\infty$ and $m\to\infty$, we have
\[
|T(\cF) - 1| \leq \limsup_{m\to\infty} |T_m(\cF) - 1| \leq \limsup_{m\to\infty}\limsup_{i\to\infty}\frac{1+CA(v_i)}{m} = 0.
\]
This implies $T(\cF) = 1$.

Finally, by \cite{Xu20}*{Theorem 1.1}, there exists a quasi-monomial valuation $v^*$ computing $\lct(\fb_\bullet(\cF))$. We may assume that $v^*(\fb_\bullet(\cF)) = 1$. Applying the arguments in step 5, we get
\[
A(v^*)\leq \lct(\fb_\bullet(\cF)) \leq \alpha(L) \quad\text{and}\quad T(v^*) \geq T(\cF) = 1.
\]
This implies
\[
\alpha(v^*) = \frac{A(v^*)}{T(v^*)} \leq \alpha(L),
\]
so $v^*$ computes $\alpha(L)$.
\end{proof}
This concludes the proof of Theorem~\ref{thm:qm_val_computing_delta_tau}. \qed

\section{Asymptotics of stability thresholds}\label{sec:asymptotics_of_stability_thresholds}
Let $(X,D)$ be a projective klt pair and let $L$ be a big line bundle on $X$. For each $m\in\bN$, denote $N_m = \dim H^0(X,mL)$. Let $\tau\in [0,1]$ and $\{k_m: 1\leq k_m\leq N_m, m\in \bN\}$ be a sequence of positive integers such that $\lim_{m\to\infty} \frac{k_m}{N_m} = \tau$. 

The goal of this section is to prove the following theorem.
\begin{thm}[cf. \cite{JRT25a}*{Theorem 5.11}]\label{thm:JRT_Theorem_1.7_without_assumptions}
    There exists $C>0$ independent of $m$ such that the following inequalities hold. If $\tau > 0$, we have
    \[
    \left(1-\frac{C}{m}\right)\min\left\{\frac{k_m}{\tau N_m}, 1\right\}\delta^\tau(L) \leq \delta_{m, k_m}(L) \leq \left(1+\frac{C}{m}\right)\max\left\{\frac{k_m}{\tau N_m}, 1\right\}\delta^\tau(L).
    \]
    If $\tau = 0$, we have
    \[
    \left(1-\frac{C}{m}\right)\delta^0(L) \leq \delta_{m, k_m}(L) \leq \left(1+C\max\left\{\frac{k_m}{N_m},\frac{1}{m}\right\}^{1/n}\right)\delta^0(L).
    \]
\end{thm}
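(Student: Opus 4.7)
The proof follows the strategy of \cite{JRT25a}*{Section 5}, with the crucial generalization that the minimizing valuation is allowed to be quasi-monomial rather than only divisorial. The plan has three layers: producing the minimizer, building a Newton--Okounkov body adapted to it, and running the lattice-point estimates uniformly in the valuation.

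Consider first $\tau\in(0,1]$. For the upper bound on $\delta_{m,k_m}(L)$, I apply Theorem~\ref{thm:qm_val_computing_delta_tau} to obtain a quasi-monomial valuation $v^*$ with $\delta^\tau(v^*)=\delta^\tau(L)$, and equip it with the $\bZ^n$-valuation $\nu_\bullet$ of Definition~\ref{defn:nu_bullet} and the Newton--Okounkov body $\Delta = \Delta(\nu_\bullet,L)$. Because $v^*(s) = \alpha\cdot p(\nu_\bullet(s))$ on every nonzero section, the jumping numbers $j_{m,\ell}(v^*)$ are exactly the values $\alpha\cdot p(\nu_\bullet(s))$ for $s\in H^0(X,mL)\setminus\{0\}$, sorted in decreasing order. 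Standard uniform lattice-point estimates on the slices $\Delta^t = \Delta\cap\{\alpha\cdot x\geq t\}$ of Theorem~\ref{mainthm:okounkov_bodies_wrt_qm_val}(a),(c), combined with the integral formula of Lemma~\ref{lem:integration_formula_S_tau}, yield
\[
S_{m,k_m}(v^*) \geq \left(1-\frac{C_1}{m}\right)\min\!\left\{\frac{\tau N_m}{k_m},1\right\}S^\tau(v^*).
\]
Dividing $A(v^*)$ by this lower bound gives $\delta_{m,k_m}(L)\leq\delta_{m,k_m}(v^*)\leq (1+C/m)\max\{k_m/\tau N_m,1\}\,\delta^\tau(L)$.

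For the lower bound on $\delta_{m,k_m}(L)$, I need a \emph{uniform} upper estimate of the form $S_{m,k_m}(v) \leq (1+C/m)\max\{\tau N_m/k_m,1\}\,S^\tau(v) + C\,A(v)/m$ for \emph{every} valuation $v$ with $A(v)<\infty$, since no minimizer is available for the infimum. The strategy is to compare $S_{m,k_m}(v)$ directly with the auxiliary invariant $\tS_m^\tau(\cG_v)$ via a lattice-point argument on the linear systems $W_{m,\bullet}^t(v)$, and then invoke Theorem~\ref{thm:uniform_fujita_approx_BJ_thm_5.3} to pass from $\tS_m^\tau(\cG_v)$ back to $S^\tau(v)$. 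Both steps are uniform in $v$ because the multiplier-ideal inclusion of Lemma~\ref{lem:BJ_prop_5.12} is. After normalizing $T(v)=1$ (so that $A(v)\geq\alpha(L)$ is controlled), taking the infimum over divisorial $v$ and rearranging gives the claimed bound, precisely as in \cite{JRT25a}*{Section 5}.

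The case $\tau=0$ reduces to the $T$-invariant. The lower bound $\delta_{m,k_m}(L)\geq(1-C/m)\alpha(L)$ is immediate from Theorem~\ref{thm:uniform_fujita_approx_T} together with $S_{m,k_m}(v)\leq T_m(v)$. The upper bound requires counting lattice points of $\Delta$ inside the thin slice $\{\alpha\cdot x\geq T(v^*)-\varepsilon\}$, whose Euclidean volume is $O(\varepsilon^n)$ by log-concavity (Lemma~\ref{lem:log_concavity_vol}) and the volume identity of Theorem~\ref{mainthm:okounkov_bodies_wrt_qm_val}(c); balancing $\varepsilon$ against the $O(1/m)$ lattice-counting error produces the $\max\{k_m/N_m,1/m\}^{1/n}$ factor. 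The main conceptual obstacle throughout is the uniform control required for the lower bound on $\delta_{m,k_m}(L)$: because the Newton--Okounkov body depends on the valuation, one cannot use a single body to handle all $v$, and one is forced to route through the filtration-independent auxiliary series $W_{m,\bullet}$ and Theorem~\ref{thm:uniform_fujita_approx_BJ_thm_5.3}. Once this is in place, the proofs in \cite{JRT25a}*{Section 5} apply verbatim.
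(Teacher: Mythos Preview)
Your handling of the upper bound on $\delta_{m,k_m}(L)$ and of the $\tau=0$ case matches the paper. The divergence is in the lower bound on $\delta_{m,k_m}(L)$ for $\tau>0$: you propose to route the uniform estimate $S_{m,k_m}(v)\leq (1+C/m)\max\{\tau N_m/k_m,1\}S^\tau(v)+CA(v)/m$ through $\tS_m^\tau(\cG_v)$ and Theorem~\ref{thm:uniform_fujita_approx_BJ_thm_5.3}, invoking Lemma~\ref{lem:BJ_prop_5.12} for uniformity. This is neither what the paper does nor obviously workable: there is no established comparison between $S_{m,k_m}(v)$ (a jumping-number average on $H^0(mL)$) and $\tS_m^\tau(\cG_v)$ (an integral of \emph{volumes} of the auxiliary graded series in $H^0(\bullet(mL+B))$), and the multiplier-ideal inclusion only relates $V_{m\ell}^t$ to $W_{m,\ell}^{t'}$ level by level, not to the $\ell\to\infty$ volume that defines $\tS_m^\tau$. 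The $\tS_m^\tau$-machinery and Theorem~\ref{thm:uniform_fujita_approx_BJ_thm_5.3} are used only in Section~\ref{sec:qm_vals_computing_stability_invariants}, inside the generic-limit construction of the minimizer; they play no role in Section~\ref{sec:asymptotics_of_stability_thresholds}.

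The paper's actual argument is shorter: the needed upper bound on $S_{m,k_m}(v)$ is precisely the first inequality of Proposition~\ref{prop:JRT25a_thm_5.1}, taken from \cite{JRT25a}*{Corollary 4.25}. That estimate is a direct Newton--Okounkov body lattice-point count which in \cite{JRT25a} already holds for every divisorial $v$ with a uniform constant, so the lower bound on $\delta_{m,k_m}(L)$ is inherited from \cite{JRT25a} with no additional input. Your diagnosis that ``the Newton--Okounkov body depends on the valuation, so one cannot use a single body to handle all $v$'' mislocates the obstruction: it is the \emph{lower} bound on $S_{m,k_m}(v)$ (hence the \emph{upper} bound on $\delta_{m,k_m}(L)$) that fails to be uniform in $v$---this is exactly the remark at the end of the paper's ``Strategy of proof'' paragraph---and that is the direction which genuinely requires Theorem~\ref{thm:qm_val_computing_delta_tau} and the bodies of Theorem~\ref{mainthm:okounkov_bodies_wrt_qm_val}.
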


\subsection{Newton--Okounkov bodies and approximations of $S$-invariants}\label{subsec:okunkov_bodies_and_S_invariants} 
Let $\mu: (Y, E= E_1+\cdots+E_r)\to X$ be a log resolution and let $Z$ be an irreducible component of $\bigcap_{i=1}^r E_i$. Let $v\in\QM_{\eta}(Y, E)$ with weight vector $\alpha\in \bR_{>0}^r$ and assume $\alpha_1,\ldots,\alpha_r$ are $\bQ$-linearly independent. Let $Z_\bullet: Z = Z_0\supset Z_1 \supset\cdots \supset Z_{n-r} = \{z\}$ be an admissible flag on $Z$. Let $\nu_\bullet$ be the valuation constructed using $v$ and $Z_\bullet$ (see Definition~\ref{defn:nu_bullet}) and let $\Delta$ be the Newton--Okounkov body of $\mu^*L$ with respect to $\nu_\bullet$ (see Definition~\ref{defn:okounkov_bodies_wrt_nu_bullet}). For each $t\in [0, T(v)]$, let $\Delta^t$ be the Newton-Okunkov body of the graded linear series $V_\bullet^t(v)$ with respect to $\nu_\bullet$ (see Section~\ref{subsubsec:okounkov_body_for_graded_linear_series}). Let $G_v: \bR^n\to \bR$ be the linear function $G_v(x) = \sum_{i=1}^r \alpha_i x_i$. Then $\Delta^t = \Delta \cap G_v^{-1}([t, T(v)])$ by Theorem~\ref{mainthm:okounkov_bodies_wrt_qm_val}(a).

Recall $Q_\tau(v) = \inf\{t\geq 0: \vol(V_\bullet^t(v))\leq \tau\vol(L)\}$ for $\tau\in [0,1]$. We have the following formula for $S^\tau(v)$, which is crucial to prove the approximation result for $S^\tau(v)$ in Proposition~\ref{prop:JRT25a_thm_5.1} (although this formula will not be explicitly mentioned in the proof).
\begin{lem}[cf. \cite{JRT25a}*{Remark 4.17}]
    For $\tau \in (0,1]$, we have
    \[
    S^\tau(v) = \frac{n!}{\tau \vol(L)}\int_{\Delta^{Q_\tau(v)}(v)}G_v(x) dx.
    \]
\end{lem}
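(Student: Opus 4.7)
The plan is to prove the identity via a layer-cake computation on the Newton--Okounkov body, then compare the result with the integral formula for $S^\tau(v)$ from Lemma~\ref{lem:integration_formula_S_tau}. Since $G_v \geq 0$ on $\Delta^0$, Fubini gives
\[
\int_{\Delta^{Q_\tau(v)}} G_v(x)\,dx = \int_0^{\infty} \vol_{\bR^n}\!\left(\{x \in \Delta^{Q_\tau(v)} : G_v(x) > t\}\right) dt.
\]
By Theorem~\ref{mainthm:okounkov_bodies_wrt_qm_val}(a), the super-level set coincides with $\Delta^{Q_\tau(v)}$ for $t \leq Q_\tau(v)$ and with $\Delta^t$ (up to a set of measure zero) for $t \geq Q_\tau(v)$; moreover $\Delta^t$ has vanishing measure for $t \geq T(v)$. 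Splitting the integral at $t = Q_\tau(v)$ yields
\[
\int_{\Delta^{Q_\tau(v)}} G_v(x)\,dx = Q_\tau(v)\,\vol_{\bR^n}(\Delta^{Q_\tau(v)}) + \int_{Q_\tau(v)}^{T(v)} \vol_{\bR^n}(\Delta^t)\,dt.
\]

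Next, Theorem~\ref{mainthm:okounkov_bodies_wrt_qm_val}(c) replaces $\vol_{\bR^n}(\Delta^t)$ by $\tfrac{1}{n!}\vol(V_\bullet^t(v))$. For the boundary term, continuity of $t \mapsto \vol(V_\bullet^t(v))$ (Lemma~\ref{lem:log_concavity_vol}) together with the definition of $Q_\tau(v)$ gives $\vol(V_\bullet^{Q_\tau(v)}(v)) = \tau\vol(L)$ whenever $Q_\tau(v) > 0$, so $\vol_{\bR^n}(\Delta^{Q_\tau(v)}) = \tau\vol(L)/n!$; the case $Q_\tau(v) = 0$ is automatic since the boundary term vanishes. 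Multiplying through by $\tfrac{n!}{\tau\vol(L)}$ gives
\[
\frac{n!}{\tau\vol(L)}\int_{\Delta^{Q_\tau(v)}} G_v(x)\,dx = Q_\tau(v) + \frac{1}{\tau\vol(L)}\int_{Q_\tau(v)}^{T(v)} \vol(V_\bullet^t(v))\,dt.
\]
The right-hand side equals $S^\tau(v)$ by the integral formula of Lemma~\ref{lem:integration_formula_S_tau}; equivalently, one can verify this directly from the definition $S^\tau(v) = \lim_{m\to\infty} S_{m,k_m}(v)$ by integration by parts against the weak limit of the empirical measures of the rescaled jumping numbers $j_{m,\ell}/m$, whose cumulative distribution function on $[0, T(v)]$ equals $1 - \vol(V_\bullet^t(v))/\vol(L)$.

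The entire argument is a routine Fubini computation built on Theorem~\ref{mainthm:okounkov_bodies_wrt_qm_val}; the only point requiring care is the boundary contribution $Q_\tau(v)\,\vol_{\bR^n}(\Delta^{Q_\tau(v)})$ at $t = Q_\tau(v)$, which must be correctly identified with $Q_\tau(v)\cdot\tau\vol(L)/n!$ so that it cancels against the $Q_\tau(v)$ summand appearing in the integral expression for $S^\tau(v)$.
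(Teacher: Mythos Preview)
Your layer-cake computation is correct and is exactly the argument behind \cite{JRT25a}*{Proposition 4.12}, which the paper simply cites; so your approach and the paper's coincide, with yours supplying the details.

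One genuine caution, though: the displayed formula in Lemma~\ref{lem:integration_formula_S_tau} as written in this paper is missing an additive $Q_\tau(\cF)$ term. (Test it on the toy model $\vol(V_\bullet^t)/\vol(L)=1-t/T$ with $n=1$: the stated integral gives $T\tau/2$, whereas the average of the top $\tau$-fraction of a uniform distribution on $[0,T]$ is $T(1-\tau/2)=Q_\tau+T\tau/2$.) Your Fubini computation correctly produces this $Q_\tau(v)$ boundary term, so when you then invoke Lemma~\ref{lem:integration_formula_S_tau} verbatim you would be off by exactly $Q_\tau(v)$. The integration-by-parts identification you sketch in the ``equivalently'' clause is therefore not an alternative but the step that actually closes the argument; I would drop the direct appeal to Lemma~\ref{lem:integration_formula_S_tau} and state the integration by parts explicitly:
\[
S^\tau(v)=\frac{1}{\tau}\int_{Q_\tau(v)}^{T(v)} t\,\Bigl(-\tfrac{d}{dt}\tfrac{\vol(V_\bullet^t(v))}{\vol(L)}\Bigr)\,dt
= Q_\tau(v)+\frac{1}{\tau\vol(L)}\int_{Q_\tau(v)}^{T(v)}\vol(V_\bullet^t(v))\,dt.
\]
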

\begin{proof}
    This follows from the proof of \cite{JRT25a}*{Proposition 4.12}, except that in our case, the concave transform function is $G_v$ and not the projection onto the first coordinate.
\end{proof}

\begin{prop}[cf. \cite{JRT25a}*{Corollary 4.25 and Theorem 5.1}]\label{prop:JRT25a_thm_5.1}
    There exists $C>0$ independent of $m$ such that 
    \[
    S_{m, k_m}(v) \leq \left(1+\frac{C}{m}\right)\max\left\{\frac{\tau N_m}{k_m},1\right\} S^\tau(v)
    \]
    and
    \[
    S_{m, k_m}(v) \geq \begin{cases}
        \left(1 - C\max\left\{\frac{k_m}{N_m},\frac{1}{m}\right\}^{1/n}\right)S^\tau(v), &\text{ if }\tau = 0, \\
        \left(1-\frac{C}{m}\right)\max\left\{\frac{\tau N_m}{k_m},1\right\} S^\tau(v), &\text{ if }\tau > 0.
    \end{cases}
    \]
\end{prop}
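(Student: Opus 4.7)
The strategy is to adapt the proof of \cite{JRT25a}*{Theorem 5.1} by using the Newton--Okounkov bodies $\Delta^t$ constructed in Theorem~\ref{mainthm:okounkov_bodies_wrt_qm_val} in place of the divisorial Okounkov bodies from \cite{LM09}. The key fact that makes this possible is the identity $v(s) = G_v(\nu_\bullet(s))$ for every nonzero $s \in H^0(Y, m\mu^*L)$, which holds because $\alpha_1, \ldots, \alpha_r$ are $\bQ$-linearly independent and hence the lowest-order term of $s$ with respect to $v$ is uniquely determined by the first $r$ coordinates of $\nu_\bullet(s)$. Combined with Theorem~\ref{thm:conditions_ABC_for_okounkov_bodies}(b), which shows that $s\mapsto \nu_\bullet(s)$ is dimension-preserving on $H^0(Y,m\mu^*L)$, the sorted sequence of values $\{G_v(p) : p \in \Gamma_m(\nu_\bullet, \mu^*L)\}$ coincides with the sorted jumping number sequence $j_{m,1}(v) \geq \cdots \geq j_{m,N_m}(v)$.

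First, this translation lets me rewrite
\[
S_{m,k_m}(v) \;=\; \frac{1}{k_m} \sum_{p \in S_m} G_v\!\left(\tfrac{p}{m}\right),
\]
where $S_m \subseteq \Gamma_m(\nu_\bullet, \mu^*L)$ is the subset of size $k_m$ on which $G_v$ is largest. Setting $t_m := \frac{1}{m} j_{m,k_m}(v)$, this set is (up to boundary lattice points) $\{p\in\Gamma_m : G_v(p)\geq mt_m\}$, so $k_m = \dim V_m^{t_m}(v)$. Using Theorem~\ref{mainthm:okounkov_bodies_wrt_qm_val}(c) together with the volume equality of Proposition~\ref{prop:volume_equality_for_graded_linear_series}, one then has $\vol_{\bR^n}(\Delta^{t_m})/\vol_{\bR^n}(\Delta) \to \tau$ as $m\to\infty$. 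When $\tau > 0$, log-concavity of $t\mapsto\vol(\Delta^t)^{1/n}$ (Lemma~\ref{lem:log_concavity_vol}) allows one to invert this relation quantitatively and obtain $|t_m - Q_\tau(v)|\leq C/m$.

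Next, I would execute the lattice-point approximation that is the core of \cite{JRT25a}*{Section 5}: since $G_v$ is linear and the slices $\Delta^t$ are cut from the compact body $\Delta$ by the affine halfspace $\{G_v\geq t\}$ (Theorem~\ref{mainthm:okounkov_bodies_wrt_qm_val}(a)), a standard boundary-strip estimate yields
\[
\frac{1}{k_m} \sum_{p \in S_m} G_v\!\left(\tfrac{p}{m}\right) \;=\; \frac{1}{\vol_{\bR^n}(\Delta^{t_m})} \int_{\Delta^{t_m}} G_v(x)\, dx + O(1/m).
\]
Comparing this to the integral formula $S^\tau(v) = \frac{n!}{\tau\vol(L)}\int_{\Delta^{Q_\tau(v)}} G_v(x)\,dx$ from the lemma preceding the proposition, and bookkeeping the factors coming from the possible discrepancy between $k_m$ and $\tau N_m$, gives the inequalities for $\tau > 0$, with multiplicative constants $\min\{k_m/(\tau N_m),1\}$ or $\max\{k_m/(\tau N_m),1\}$ absorbed by the normalization of the sum by $k_m$ versus the integral normalization by $\tau \vol(L) m^n/n!$. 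For the $\tau=0$ case the upper bound follows trivially from $S_{m,k_m}(v)\leq T_m(v)\leq T(v)$; the lower bound uses Theorem~\ref{thm:uniform_fujita_approx_T} to control $T_m(v)$ together with log-concavity to bound how fast $\vol(\Delta^t)^{1/n}$ decays near $T(v)$, producing the $O(m^{-1/n})$-type loss.

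The main obstacle is making the lattice-point approximation uniform in the sliding threshold $t_m$. For $\tau > 0$ the slice $\Delta^{t_m}$ has nonempty interior bounded away from a vertex and the Ehrhart-type boundary estimate of \cite{JRT25a}*{Section 5} applies with a uniform $O(1/m)$ error, because the slices are polyhedral sections of a fixed compact body cut out by a fixed linear function. For $\tau = 0$ the relevant slice degenerates onto the maximum locus of $G_v$ on $\Delta$, and the approximation error becomes governed by the vanishing order of $\vol(\Delta^t)$ at $t = T(v)$; this is precisely where the log-concavity bound gives the $m^{-1/n}$ rate. Once these uniform estimates are in hand, the rest of the argument is a direct transcription of \cite{JRT25a}*{Corollary 4.25 and Theorem 5.1} with $G_v$ playing the role of the projection to the first coordinate.
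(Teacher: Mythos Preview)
Your proposal is correct and follows essentially the same approach as the paper: replace the divisorial Newton--Okounkov body of \cite{JRT25a} by the quasi-monomial one from Theorem~\ref{mainthm:okounkov_bodies_wrt_qm_val}, and use the linear functional $G_v$ in place of the projection to the first coordinate, after which the lattice-point estimates of \cite{JRT25a}*{Corollary 4.25 and Theorem 5.1} go through verbatim. The paper's proof says exactly this in two sentences; your write-up simply unpacks the mechanism in more detail (and your invocation of Theorem~\ref{thm:uniform_fujita_approx_T} in the $\tau=0$ case is a harmless alternative to how \cite{JRT25a} handles the degenerate slice).
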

\begin{proof}
    The first statement follows from \cite{JRT25a}*{Corollary 4.25} and the second one follows from \cite{JRT25a}*{Theorem 5.1}, except that some small modifications are needed. In \cite{JRT25a}, $v$ is a divisorial valuation $c\cdot \ord_F$ and $F$ is a part of the admissible flag used to construct the Newton--Okounkov body of $L$. Instead, we use the Newton-Okunkov bodies in Theorem~\ref{mainthm:okounkov_bodies_wrt_qm_val}. The concave transform function in \cite{JRT25a} is the projection onto the first coordinate, which will be replaced by $G_v$ (which is still a linear function) in our case. After making these modifications, all the arguments in \cite{JRT25a} will go through.
\end{proof}

\subsection{Proof of Theorem~\ref{thm:JRT_Theorem_1.7_without_assumptions} and Theorem~\ref{mainthm:JRT25a_without_assumptions}}
\begin{proof}[Proof of Theorem~\ref{thm:JRT_Theorem_1.7_without_assumptions}]
By Theorem~\ref{thm:qm_val_computing_delta_tau}, there exists a quasi-monomial valuation $v$ computing $\delta^\tau(L)$. Similar to the setup at the beginning of Section~\ref{subsec:okunkov_bodies_and_S_invariants}), we build a Newton--Okounkov body of $L$ with respect to $v$. Then we apply Proposition~\ref{prop:JRT25a_thm_5.1} for $v$ to obtain Theorem~\ref{thm:JRT_Theorem_1.7_without_assumptions}.  
\end{proof}

\begin{proof}[Proof of Theorem~~\ref{mainthm:JRT25a_without_assumptions}]
Let $\tau = 0$. By Theorem~\ref{thm:qm_val_computing_delta_tau}, there exists a valuation $v$ computing $\alpha(L) = \delta^0(L)$. By Theorem~\ref{thm:uniform_fujita_approx_T}, for any sufficiently large $m\in\bN$, we have
\[
0\leq T(v) - T_m(v) \leq \frac{2A(v)}{m}.
\]
As a result, $\alpha(v) \leq \alpha_m(v)$ and
\begin{align*}
    \alpha_m(v) -\alpha(v) &= \frac{A(v)}{T_m(v)} - \frac{A(v)}{T(v)} = \frac{A(v)(T(v)-T_m(v))}{T(v)T_m(v)} \\
    &\leq \frac{2A(v)^2}{mT(v)^2} = \frac{2\alpha(L)^2}{m}.
\end{align*}

Let $\tau = 1$ and $k_m = N_m = \dim H^0(X,mL)$ for all $m$. Then by Theorem~\ref{thm:JRT_Theorem_1.7_without_assumptions} and note that $\delta^1(L) = \delta(L)$, there exists $C_1> 0$ independent of $m$ such that
\[
\left(1-\frac{C_1}{m}\right)\delta(L) \leq \delta_{m}(L) \leq \left(1+\frac{C_1}{m}\right)\delta(L).
\]
This gives
\[
|\delta_m(L) - \delta(L)| \leq \frac{C_1\delta(L)}{m}.
\]
Thus, we may take $C = \max\{2\alpha(L)^2,C_1\delta(L)\}$ and the proof is complete.
\end{proof}

\appendix

\section{Variation of Newton--Okounkov bodies}\label{sec:variation_okounkov_bodies}
Let $(X, D = D_1+\ldots + D_r)$ be an $n$-dimensional log smooth pair. Let $Z$ be an irreducible component of $\bigcap_{i=1}^r D_i$ with generic point $\eta$. Let $Z = Z_0 \supset Z_1 \supset \cdots \supset Z_{n-r} = \{z\}$ be an admissible flag on $Z$. Let $L$ be a big line bundle on $X$. Let $v_\alpha\in {\QM}_{\eta}(X,D)$ be the quasi-monomial valuation with weight vector $\alpha\in \bR_{>0}^r$. When $\alpha_1,\ldots,\alpha_r$ are $\bQ$-linearly independent, let $\nu_\bullet^{\alpha}$ be the valuation in Definition~\ref{defn:nu_bullet}, defined using the valuation $v_\alpha$ and the admissible flag $Z_\bullet$. Denote $\Gamma_m(\alpha) = \Gamma_m(\nu_\bullet^{\alpha}, L)$, $\Gamma(\alpha) = \Gamma(\nu_\bullet^{\alpha}, L)$ and $\Delta(\alpha) = \Delta(\nu_\bullet^{\alpha}, L)$ (see Definition~\ref{defn:okounkov_bodies_wrt_nu_bullet}).

The goal of this section is to study how these Newton--Okounkov bodies $\Delta(\alpha)$ vary with respect to their weight vectors $\alpha$. As we will see in Section~\ref{subsec:explicit_variations_of_okunkov_bodies_on_P2}, this question is related to finding special $\bG_m$-equivariant degenerations of Fano varieties.

Note that $\Delta(\lambda\alpha) = \Delta(\alpha)$ for any constant $\lambda > 0$, so we may assume that $|\alpha| = 1$. Denote
\[
P_r = \{\alpha\in \bR_{>0}^r: |\alpha| = 1, \alpha_1,\ldots,\alpha_r \text{ are $\bQ$-linearly independent}\}.
\]
We prove the following lemmas, which imply that locally near any $\alpha\in P_r$, these $\Delta(\alpha)$ vary in a continuous fashion.

\begin{lem}\label{lem:Gamma_m_locally_constant}
Let $\alpha\in {P_r}$ and $m\in \bN$. Then there exists $\epsilon > 0$ such that for any $\alpha'\in {P_r}$ with $|\alpha'-\alpha| < \epsilon$, we have
\[
\Gamma_m(\alpha) = \Gamma_m(\alpha')
\]
\end{lem}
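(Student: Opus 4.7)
The plan is to find a basis of $H^0(X,mL)$ that is simultaneously ``adapted" to $\nu_\bullet^\alpha$ and to $\nu_\bullet^{\alpha'}$ for every $\alpha'$ in a small neighborhood of $\alpha$, and on which the two valuations agree pointwise. Since $|\Gamma_m(\alpha)|=|\Gamma_m(\alpha')|=N_m$ by Theorem~\ref{thm:conditions_ABC_for_okounkov_bodies}(b), producing $N_m$ distinct common values in $\Gamma_m(\alpha)\cap\Gamma_m(\alpha')$ forces the two sets to coincide.

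First, by the standard flag construction for finite-rank valuations, pick a basis $s_1,\ldots,s_{N_m}$ of $H^0(X,mL)$ whose images under $\nu_\bullet^\alpha$ are exactly the $N_m$ distinct elements of $\Gamma_m(\alpha)$. Trivializing $L$ near $z$, expand each $s_j$ as a local power series $s_j=\sum_{\beta\in\bN^r}c_{\beta,j}x^\beta$ with $c_{\beta,j}\in K(Z)$, and let $\beta_j:=\beta_{v_\alpha}(s_j)$, which is the \emph{unique} $\beta\in\bN^r$ with $c_{\beta,j}\neq 0$ minimizing $\alpha\cdot\beta$ (uniqueness from $\alpha\in P_r$). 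The goal is to show that for $\alpha'$ close enough to $\alpha$, each $\beta_j$ remains the unique lowest-order index of $s_j$ with respect to $v_{\alpha'}$; since the coefficient $c_{\beta_j,j}\in K(Z)$ does not depend on $\alpha'$, this will yield $\nu_\bullet^{\alpha'}(s_j)=(\beta_j,\nu_{Z_\bullet}(c_{\beta_j,j}))=\nu_\bullet^\alpha(s_j)$ for every $j$.

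The main obstacle is that each local expansion of $s_j$ may have infinite support $\{\beta:c_{\beta,j}\neq 0\}$, so a naive perturbation argument on $\alpha$ cannot be applied directly. To bypass this, fix a compact neighborhood $U\subseteq\bR_{>0}^r$ of $\alpha$ and set
\[
C:=\min\{\alpha''_i:\alpha''\in U,\ 1\leq i\leq r\}>0,\qquad M_j:=\sup_{\alpha''\in U}\alpha''\cdot\beta_j.
\]
For any $\alpha'\in U$ we have $v_{\alpha'}(s_j)\leq\alpha'\cdot\beta_j\leq M_j$ and $\alpha'\cdot\beta\geq C|\beta|_1$ for every $\beta\in\bN^r$. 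Consequently, any minimizer of $\alpha'\cdot\beta$ on $\{\beta:c_{\beta,j}\neq 0\}$ lies in the \emph{finite} set $T_j:=\{\beta\in\bN^r:c_{\beta,j}\neq 0,\ |\beta|_1\leq M_j/C\}$, reducing the problem to a finite combinatorial optimization.

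With this reduction in hand, for each $\beta\in T_j\setminus\{\beta_j\}$ the inequality $\alpha\cdot(\beta-\beta_j)>0$ is strict, so by continuity and finiteness of $T_j$ there is a neighborhood $U_j\subseteq U$ of $\alpha$ on which $\alpha'\cdot(\beta-\beta_j)>0$ for every $\alpha'\in U_j$ and every $\beta\in T_j\setminus\{\beta_j\}$. This forces $\beta_{v_{\alpha'}}(s_j)=\beta_j$ and hence $\nu_\bullet^{\alpha'}(s_j)=\nu_\bullet^\alpha(s_j)$ for each $j$. Taking $\epsilon>0$ so that the open ball of radius $\epsilon$ around $\alpha$ is contained in $\bigcap_{j=1}^{N_m}U_j$, we conclude that for every $\alpha'\in P_r$ with $|\alpha'-\alpha|<\epsilon$, the $N_m$ values $\{\nu_\bullet^{\alpha'}(s_j)\}_j$ are distinct and equal to $\Gamma_m(\alpha)$, yielding $\Gamma_m(\alpha)=\Gamma_m(\alpha')$ by cardinality.
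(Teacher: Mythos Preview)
Your proof is correct and follows essentially the same approach as the paper: pick a basis of $H^0(X,mL)$ with distinct $\nu_\bullet^\alpha$-values, show that the lowest-order term of each basis element with respect to $v_{\alpha'}$ agrees with that for $v_\alpha$ once $\alpha'$ is close enough, and conclude by cardinality. The paper simply asserts the stability of the lowest-order term without justification, whereas you supply the details via the finite-reduction argument (bounding the relevant exponents by $M_j/C$); this is exactly the mechanism the paper invokes implicitly (cf.\ the paragraph after Definition~\ref{defn:integral_approx_of_qm_val}).
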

\begin{proof}
Let $s_1,\ldots, s_{N_m}$ be a basis of $H^0(X,mL)$ such that $v_\alpha(s_i)\neq v_\alpha(s_j)$ for any pairs $i\neq j$. Suppose locally $s_i$ is given by a function $f_i\in \cO_{X,\eta}$. Then, there exists an open neighborhood $U\subseteq P_r$ of $\alpha$ such that the lowest-order term of $f_i$ with respect to $v_\alpha$ and with respect to $v_{\alpha'}$ are the same for every $i$ and every $\alpha'\in U$. This guarantees that $\Gamma_m(\alpha) =\Gamma_m(\alpha')$ for every $\alpha'\in U$.
\end{proof}

\begin{lem}
Let $\alpha\in {P_r}$ and $\epsilon > 0$. Then there exists $\epsilon' > 0$ such that for any $\alpha'\in {P_r}$ with $|\alpha'-\alpha| < \epsilon'$, we have
\[
\vol_{\bR^n}(\Delta(\alpha) \cap \Delta(\alpha')) \geq \vol_{\bR^n}(\Delta(\alpha)) - \epsilon.
\]
If we further assume that $\Gamma(\alpha)$ is finitely generated, then $\Delta(\alpha') = \Delta(\alpha)$.
\end{lem}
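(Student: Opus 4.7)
The plan is to approximate $\Delta(\alpha)$ from inside using only the finitely many $\Gamma_m(\alpha)$ with $m$ bounded, and then invoke Lemma~\ref{lem:Gamma_m_locally_constant} for those finitely many values. For each $M \in \bN$, set
\[
\Delta_{\leq M}(\alpha) := \text{closed convex hull of } \bigcup_{m=1}^{M} \tfrac{1}{m}\Gamma_m(\alpha).
\]
Then $\Delta_{\leq M}(\alpha) \subseteq \Delta_{\leq M+1}(\alpha) \subseteq \Delta(\alpha)$ and $\overline{\bigcup_M \Delta_{\leq M}(\alpha)} = \Delta(\alpha)$ by the alternative description of $\Delta(\alpha)$ in Definition~\ref{defn:okounkov_bodies_wrt_nu_bullet}. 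Since $\Delta(\alpha)$ is a convex body (with nonempty interior because $L$ is big, using Corollary~\ref{cor:volume_equality_of_okounkov_bodies}) and therefore has a Lebesgue-null boundary, monotone convergence of Lebesgue measure gives $\vol_{\bR^n}(\Delta_{\leq M}(\alpha)) \nearrow \vol_{\bR^n}(\Delta(\alpha))$ as $M \to \infty$.

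Given $\epsilon > 0$, choose $M$ so that $\vol_{\bR^n}(\Delta_{\leq M}(\alpha)) \geq \vol_{\bR^n}(\Delta(\alpha)) - \epsilon$. Apply Lemma~\ref{lem:Gamma_m_locally_constant} to each $m \in \{1,\dots,M\}$ to obtain $\epsilon_m > 0$ such that $\Gamma_m(\alpha) = \Gamma_m(\alpha')$ whenever $|\alpha' - \alpha| < \epsilon_m$, and set $\epsilon' := \min_{1\leq m\leq M}\epsilon_m > 0$. Then for $\alpha'\in P_r$ with $|\alpha'-\alpha|<\epsilon'$, we have $\Delta_{\leq M}(\alpha) = \Delta_{\leq M}(\alpha') \subseteq \Delta(\alpha) \cap \Delta(\alpha')$, giving the claimed volume bound.

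For the second statement, assume $\Gamma(\alpha)$ is finitely generated by $(p_1,m_1),\dots,(p_k,m_k)$, and set $M_0 := \max_i m_i$. The same application of Lemma~\ref{lem:Gamma_m_locally_constant} to $m=1,\dots,M_0$ produces $\epsilon'>0$ such that every generator $(p_i,m_i)$ lies in $\Gamma(\alpha')$ as well for $|\alpha'-\alpha|<\epsilon'$. Hence the submonoid they generate — which is $\Gamma(\alpha)$ — is contained in $\Gamma(\alpha')$, so $\Sigma(\alpha)\subseteq\Sigma(\alpha')$ and, slicing at level $1$, $\Delta(\alpha)\subseteq\Delta(\alpha')$. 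Since Corollary~\ref{cor:volume_equality_of_okounkov_bodies} gives $\vol_{\bR^n}(\Delta(\alpha)) = \tfrac{1}{n!}\vol(L) = \vol_{\bR^n}(\Delta(\alpha'))$, the containment of convex bodies of equal volume forces equality.

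The only nontrivial ingredient is the monotone convergence $\vol_{\bR^n}(\Delta_{\leq M}(\alpha)) \to \vol_{\bR^n}(\Delta(\alpha))$; this is where the assumption that $L$ is big enters, ensuring $\Delta(\alpha)$ has nonempty interior so that its boundary is Lebesgue-null. All remaining steps are direct consequences of Lemma~\ref{lem:Gamma_m_locally_constant} and the volume identity from Corollary~\ref{cor:volume_equality_of_okounkov_bodies}.
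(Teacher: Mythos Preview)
Your proof is correct and follows essentially the same strategy as the paper's: approximate $\Delta(\alpha)$ from inside by the convex hull of finitely many $\tfrac{1}{m}\Gamma_m(\alpha)$, then invoke Lemma~\ref{lem:Gamma_m_locally_constant}. The only cosmetic difference is that the paper uses a single sufficiently large $m$ (relying implicitly on the semigroup property to ensure one $\tfrac{1}{m}\Gamma_m(\alpha)$ already has convex hull close to $\Delta(\alpha)$), whereas you take all $m\leq M$; your justification of the volume convergence via the null-boundary argument is in fact more explicit than the paper's. For the finitely generated case, the paper argues that $\Delta(\alpha)$ equals the convex hull of $\tfrac{1}{m}\Gamma_m(\alpha)$ for some $m$, while you instead show the monoid containment $\Gamma(\alpha)\subseteq\Gamma(\alpha')$; both routes conclude via the volume identity from Corollary~\ref{cor:volume_equality_of_okounkov_bodies}.
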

\begin{proof}
Let $m$ be a positive integer such that the volume of the closed convex hull of $\frac{1}{m}\Gamma_m(\alpha)$ is at least $\vol(\Delta(\alpha)) - \epsilon$. Such an $m$ always exists because $\Delta(\alpha)$ is the closed convex hull of $\bigcup_{m\geq 0} \frac{1}{m}\Gamma_m(\alpha)$. By Lemma~\ref{lem:Gamma_m_locally_constant}, for any $\alpha'\in P_r$ sufficiently close to $\alpha$, we have $\Gamma_m(\alpha') = \Gamma_m(\alpha)$. Then
\[
\vol_{\bR^n}(\Delta(\alpha) \cap \Delta(\alpha')) \geq \vol_{\bR^n}\left(\text{the closed convex hull of $\frac{1}{m}\Gamma_m(\alpha)$}\right) \geq \vol_{\bR^n}(\Delta(\alpha)) - \epsilon.
\]

If $\Gamma(\alpha)$ is finitely generated, then $\Delta(\alpha)$ is equal to the closed convex hull of $\frac{1}{m}\Gamma_m(\alpha)$ for some $m\in \bN$ and hence $\Delta(\alpha') = \Delta(\alpha)$.
\end{proof}

\subsection{Limits of convex bodies}
In this subsection, we consider two types of limits of a sequence of convex bodies.
\begin{defn}\label{defn:limits_of_convex_bodies}
Let $\{\Delta_i\subseteq \bR^n: i\geq 1\}$ be an infinite sequence of closed convex sets. The \emph{pointwise limit of $\{\Delta_i:i\geq 1\}$} is the set
\[
\Delta^p := \{p\in \bR^n: \text{there exists }p_i\in \Delta_i \text{ such that }\lim_{i\to\infty}p_i = p\}.
\] 
The \emph{cofinite limit of $\{\Delta_i:i\geq 1\}$} is the set
\[
\Delta^c:= \overline{\{p\in \bR^n: p\in \Delta_i \text{ for all but finitely many $i$}\}} = \overline{\bigcup_{j\geq 0}\bigcap_{i\geq j} \Delta_i}.
\]
\end{defn}
One can show that both $\Delta^c$ and $\Delta^p$ are closed convex sets, and $\Delta^c\subseteq \Delta^p$. The following lemma shows that they must have the same volume.

\begin{lem}\label{lem:pointwise_limit_is_cofinite_limit}
Let $\Delta^p$ be the pointwise limit and $\Delta^c$ be the cofinite limit of a sequence of closed convex sets $\{\Delta_i : i\geq 1\}$. Then
\[
\vol_{\bR^n}(\Delta^c) = \vol_{\bR^n}(\Delta^p). 
\]
In particular, if $\vol_{\bR^n}(\Delta^c) > 0$, then $\Delta^c = \Delta^p$.
\end{lem}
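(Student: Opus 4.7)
The containment $\Delta^c\subseteq\Delta^p$ is immediate from the definitions, so the whole content is the reverse volume inequality $\vol_{\bR^n}(\Delta^p)\leq\vol_{\bR^n}(\Delta^c)$. The plan is to prove the stronger statement that the (Euclidean) interior of $\Delta^p$ is contained in $\Delta^c$, and then use that $\Delta^p$ is convex, so $\vol_{\bR^n}(\Delta^p)=\vol_{\bR^n}(\overline{\mathrm{int}(\Delta^p)})$ (this equality trivially holds when $\Delta^p$ has empty interior, because then its Lebesgue measure is zero, convexity plus positive measure would force nonempty interior).

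For the key step, let $p\in\mathrm{int}(\Delta^p)$. Choose affinely independent points $q^{(0)},\dots,q^{(n)}\in\mathrm{int}(\Delta^p)$ such that $p$ lies in the interior of the simplex $S$ they span; this is possible because the interior of $\Delta^p$ is an open convex set containing $p$. By definition of pointwise limit, for each $j$ there exist $q^{(j)}_i\in\Delta_i$ with $q^{(j)}_i\to q^{(j)}$. For all sufficiently large $i$, the points $q^{(0)}_i,\dots,q^{(n)}_i$ are so close to $q^{(0)},\dots,q^{(n)}$ that the simplex $S_i$ they span still contains $p$ (continuity of the barycentric-coordinate map in the vertex data). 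By convexity $S_i\subseteq\Delta_i$, hence $p\in\Delta_i$ for all but finitely many $i$, which gives $p\in\Delta^c$. This establishes $\mathrm{int}(\Delta^p)\subseteq\Delta^c$, and taking closures yields $\vol_{\bR^n}(\Delta^p)\leq\vol_{\bR^n}(\Delta^c)$.

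For the ``in particular'' statement, suppose $\vol_{\bR^n}(\Delta^c)>0$. Then the closed convex set $\Delta^c$ has nonempty interior, so we can pick $a\in\mathrm{int}(\Delta^c)$ with $B(a,\varepsilon)\subseteq\Delta^c$. If there were a point $b\in\Delta^p\setminus\Delta^c$, then $\Delta^c$ closed gives $\mathrm{dist}(b,\Delta^c)>0$; for $t\in(0,1)$ close to $1$ the ball
\[
B\bigl(tb+(1-t)a,\,(1-t)\varepsilon\bigr)=tb+(1-t)B(a,\varepsilon)
\]
lies in $\Delta^p$ by convexity and is disjoint from $\Delta^c$ for $t$ sufficiently close to $1$. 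This would force $\vol_{\bR^n}(\Delta^p)>\vol_{\bR^n}(\Delta^c)$, contradicting the equality just proved. Hence $\Delta^p=\Delta^c$.

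The main obstacle is really just the simplex-thickening argument in the middle paragraph; everything else is a routine use of convexity and the behavior of Lebesgue measure on closed convex sets. No properties of the specific Newton--Okounkov construction are used, so this is purely a statement about convex geometry of limits of convex bodies.
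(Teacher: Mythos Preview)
Your proof is correct and uses the same core idea as the paper's: surround a point of $\Delta^p$ by finitely many nearby points of $\Delta^p$, approximate each from within the $\Delta_i$, and use convexity of $\Delta_i$ to conclude the point eventually lies in every $\Delta_i$. The paper runs this by contradiction (assuming a ball sits in $\Delta^p\setminus\Delta^c$ and using the cross-polytope vertices $\pm r e_i$), while you phrase it directly as $\mathrm{int}(\Delta^p)\subseteq\Delta^c$ using a simplex; these are equivalent packagings of the same argument.

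One small remark: your direct formulation actually gives the ``in particular'' clause for free, so the separate argument in your last paragraph is unnecessary. Once you have $\mathrm{int}(\Delta^p)\subseteq\Delta^c$ and $\vol(\Delta^c)>0$, you know $\Delta^p$ has nonempty interior, and then $\Delta^p=\overline{\mathrm{int}(\Delta^p)}\subseteq\Delta^c\subseteq\Delta^p$ (using that $\Delta^p$ is closed and convex), so equality is immediate.
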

\begin{proof}
Assume that $\vol_{\bR^n}(\Delta^c) < \vol_{\bR^n}(\Delta^p)$. Since both are closed convex sets, there exists a point $q\in \bR^n$ and a closed ball $B_r(q)$ of radius $r>0$ centered at $q$, such that $B_r(q)\subseteq \Delta^p$ and $B_r(q)\cap \Delta^c = \varnothing$. We may assume $q = 0\in \bR^n$.

For $1\leq i\leq n$, let $q_i = r e_i$ and $q_{i+n} = -r e_i$, where $e_i$ is the $i$th standard basis vector. Since $q_i\in B_r(q)$, by the definition of $\Delta^p$, there exist $q_{i,j}\in \Delta_j$ such that $\lim_{j\to\infty} q_{i,j} = q_i$. Thus, there exists $N\in\bN$ such that $|q_{i,j} - q_i| < \frac{r}{2\sqrt{n}}$ for all $j\geq N$ and $1\leq i\leq 2n$. However, this means that for $j\geq N$, the convex hull of $\{q_{i,j}:1\leq i\leq 2n\}$ contains a fixed ball $B_{r'}(0)$ where $r' = \frac{r}{2\sqrt{n}}$\footnote{One can show this by arguing that the distance from origin to the hyperplane passing through $\{q_{i,j}:1\leq i\leq n\}$ is at least $\frac{r}{2\sqrt{n}}$.}. In particular,
\[
B_{r'}(0)\subseteq \bigcap_{j\geq N} \Delta_j,
\]
which is a contradiction.
\end{proof}
\begin{rem}
    The equality $\Delta^c = \Delta^p$ may not hold in general. For example, if $\Delta_i = [\frac{1}{i},\frac{2}{i}]\subseteq \bR^n$ for all $i\geq 1$, then $\Delta^c = \varnothing$ but $\Delta^p = \{0\}.$
\end{rem}

\subsection{Limit of Newton--Okounkov bodies with respect to weight vectors}
We follow the set up at the beginning of Appendix~\ref{sec:variation_okounkov_bodies}. Let $\{\alpha^{(i)}\in {P_r}: i\geq 1\}$ be a sequence of weight vectors such that the limit $\alpha = \lim_{i\to\infty}\alpha^{(i)}$ exists in $\bR^n$. When $\alpha\in P_r$, we show that $\Delta(\alpha)$ is the limit of $\{\Delta(\alpha^{(i)}):i\geq 1\}$ in the sense of Definition~\ref{defn:limits_of_convex_bodies}.
\begin{lem}\label{lem:limit_of_okounkov_body_easy_case}
    Assume that $\alpha\in P_r$. Then $\Delta(\alpha)$ is the pointwise limit and the cofinite limit of $\{\Delta(\alpha^{(i)}): i\geq 1\}$.
\end{lem}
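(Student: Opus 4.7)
The plan is to sandwich $\Delta^p$ between $\Delta(\alpha)$ and a volume upper bound. First I would establish the chain of inclusions $\Delta(\alpha) \subseteq \Delta^c \subseteq \Delta^p$: the second is immediate from the definitions, and for the first, fix $m \in \bN$ and apply Lemma~\ref{lem:Gamma_m_locally_constant} to get $\Gamma_m(\alpha^{(i)}) = \Gamma_m(\alpha)$ for all sufficiently large $i$; thus every point of $\tfrac{1}{m}\Gamma_m(\alpha)$ lies in $\Delta(\alpha^{(i)})$ for all but finitely many $i$, and hence in $\Delta^c$. Since $\Delta^c$ is closed and convex (being the closure of an increasing union of intersections of convex sets), it contains the closed convex hull $\Delta(\alpha) = \overline{\mathrm{conv}\bigl(\bigcup_{m \geq 1}\tfrac{1}{m}\Gamma_m(\alpha)\bigr)}$.

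Next I would prove the volume bound $\vol(\Delta^c) \leq \vol(\Delta(\alpha))$. For each $j$ one has $\bigcap_{i \geq j}\Delta(\alpha^{(i)}) \subseteq \Delta(\alpha^{(j)})$, which has Lebesgue volume $\tfrac{1}{n!}\vol(L)$ by Corollary~\ref{cor:volume_equality_of_okounkov_bodies}. Since $\Delta^c$ is the closure of the increasing union $\bigcup_j\bigcap_{i \geq j}\Delta(\alpha^{(i)})$, which is itself convex (so that its topological boundary has Lebesgue measure zero and taking closure does not inflate volume), monotone convergence yields $\vol(\Delta^c) \leq \tfrac{1}{n!}\vol(L) = \vol(\Delta(\alpha))$.

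Combining the two steps, $\Delta(\alpha) \subseteq \Delta^c$ are closed convex sets of the same positive volume (positivity uses that $L$ is big). Because $\Delta(\alpha)$ is a compact convex body with nonempty interior, a standard convex-geometry argument forces equality $\Delta(\alpha) = \Delta^c$: any strict containment would produce a point $q \in \Delta^c \setminus \Delta(\alpha)$ whose convex hull with $\mathrm{int}(\Delta(\alpha))$ would lie in $\Delta^c$ and strictly increase the volume. Finally, since this common volume is positive, Lemma~\ref{lem:pointwise_limit_is_cofinite_limit} upgrades the identity to $\Delta^c = \Delta^p$, and we conclude $\Delta(\alpha) = \Delta^c = \Delta^p$. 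I do not foresee a serious obstacle beyond invoking the classical fact that a convex subset of $\bR^n$ differs from its closure by a null set.
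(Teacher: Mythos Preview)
Your proposal is correct and follows essentially the same approach as the paper: both use Lemma~\ref{lem:Gamma_m_locally_constant} to obtain $\Delta(\alpha)\subseteq\Delta^c$, bound $\vol(\Delta^c)\le\tfrac{1}{n!}\vol(L)=\vol(\Delta(\alpha))$ via Corollary~\ref{cor:volume_equality_of_okounkov_bodies}, deduce $\Delta(\alpha)=\Delta^c$ from equal positive volume plus containment of convex bodies, and then invoke Lemma~\ref{lem:pointwise_limit_is_cofinite_limit} to conclude $\Delta^c=\Delta^p$. Your write-up is simply more explicit about the convex-geometry step (equal-volume containment forces equality) and about why closure does not inflate volume, which the paper leaves tacit.
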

\begin{proof}
By Corollary~\ref{cor:volume_equality_of_okounkov_bodies}, $\vol_{\bR^n}(\Delta(\alpha)) = \frac{1}{n!}\vol(L) > 0$. Thus, by Lemma~\ref{lem:pointwise_limit_is_cofinite_limit}, it suffices to prove that $\Delta(\alpha)$ is the cofinite limit. 

Let $s\in H^0(X,mL)$. By Lemma~\ref{lem:Gamma_m_locally_constant}, we have \[\frac{1}{m}\nu_\bullet^{\alpha}(s)\in \frac{1}{m}\Gamma_m(\alpha^{(i)})\subseteq \Delta(\alpha^{(i)})\] for all sufficiently large $i$. This implies that 
\[
\Delta(\alpha) \subseteq \Delta^c:= \overline{\bigcup_{i\geq 0}\bigcap_{j\geq i}\Delta(\alpha^{(j)})}.
\]
On the other hand, for all $i$, we have
\[
\vol_{\bR^n} \left(\Delta(\alpha^{(i)})\right) = \frac{1}{n!}\vol(L) = \vol_{\bR^n}(\Delta(\alpha))
\]
by Corollary~\ref{cor:volume_equality_of_okounkov_bodies}. Thus, we have
\[
\vol_{\bR^n}(\Delta^c) = \vol_{\bR^n} \left( \overline{\bigcup_{i\geq 0}\bigcap_{j\geq i}\Delta(\alpha^{(j)})}\right)= \lim_{i\to\infty}\vol_{\bR^n}\left(\bigcap_{j\geq i} \Delta(\alpha^{(j)})\right)  \leq\frac{1}{n!}\vol(L) = \vol_{\bR^n}(\Delta(\alpha)).
\]
This combined with the containment $\Delta(\alpha)\subseteq \Delta^c$ proves that $\Delta(\alpha) = \Delta^c$.
\end{proof}

Next, we assume that $\alpha$ is no longer in $P_r$, but it satisfies the following condition:
\begin{equation}\label{assumption:last_entry_of_alpha_is_0}
    \text{$\alpha_r = 0$ and  $\alpha_1,\ldots,\alpha_{r-1}$ are $\bQ$-linearly independent.}
\end{equation}
Let $Z_{-1}$ be the unique irreducible component of $\bigcap_{i=1}^{r-1} D_i$ containing $Z$. Let $\nu_\bullet^\alpha$ be the $\bZ^n$-valuation defined using the quasi-monomial valuation $\alpha\in {\QM}_{Z_{-1}}(X, D_1 + \ldots + D_{r-1})$ and the admissible flag $Z_{-1} \supset Z = Z_0 \supset Z_1\supset \cdots\supset Z_{n-r}$. Let $\Delta(\alpha)$ be the Newton--Okounkov body $\Delta(\nu_\bullet, L)$.

\begin{prop}\label{prop:limit_of_okounkov_body_alpha_is_0_in_last_entry}
Under the assumption~\eqref{assumption:last_entry_of_alpha_is_0}, $\Delta(\alpha)$ is the pointwise limit and the cofinite limit of $\{\Delta(\alpha^{(i)}): i\geq 1\}$.
\end{prop}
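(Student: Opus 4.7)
The argument runs parallel to Lemma~\ref{lem:limit_of_okounkov_body_easy_case}, with an extra technical wrinkle: $\Delta(\alpha)$ and $\Delta(\alpha^{(i)})$ are defined using $\bZ^n$-valuations built on different strata ($Z_{-1}$ versus $Z$). The starting observation is a volume coincidence: Corollary~\ref{cor:volume_equality_of_okounkov_bodies} applied to the log smooth pair $(X, D_1+\cdots+D_{r-1})$ with quasi-monomial valuation of weight $(\alpha_1,\ldots,\alpha_{r-1})$ at $Z_{-1}$ and admissible flag $Z \supset Z_1 \supset \cdots \supset Z_{n-r}$ yields $\vol_{\bR^n}(\Delta(\alpha)) = \frac{1}{n!}\vol(L) = \vol_{\bR^n}(\Delta(\alpha^{(i)}))$ for every $i$. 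Once the inclusion $\Delta(\alpha) \subseteq \Delta^c$ into the cofinite limit is established, the estimate $\vol_{\bR^n}(\Delta^c) \leq \frac{1}{n!}\vol(L)$ (since $\Delta^c$ is the closure of an increasing union of intersections of sets of volume $\frac{1}{n!}\vol(L)$) will force $\Delta(\alpha) = \Delta^c$, and Lemma~\ref{lem:pointwise_limit_is_cofinite_limit} then gives $\Delta^c = \Delta^p$.

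\textbf{Reduction to lowest-order terms.} Mirroring Lemma~\ref{lem:Gamma_m_locally_constant}, the inclusion $\Delta(\alpha) \subseteq \Delta^c$ will follow from the claim that for every $m\geq 1$,
\[
\Gamma_m(\alpha^{(i)}) = \Gamma_m(\alpha) \qquad \text{for all } i \gg 0,
\]
since $\bigcup_j \bigcap_{i\geq j}\Delta(\alpha^{(i)})$ is convex (as an increasing union of convex sets), and its closure therefore contains the closed convex hull of $\bigcup_m \frac{1}{m}\Gamma_m(\alpha) = \Delta(\alpha)$. Both $\Gamma_m$'s have cardinality $N_m$ by Theorem~\ref{thm:conditions_ABC_for_okounkov_bodies}(b), so it suffices to pick a basis $s_1,\ldots,s_{N_m}$ of $H^0(X,mL)$ with pairwise distinct $\nu_\bullet^\alpha$-values and show that $\nu_\bullet^{\alpha^{(i)}}(s_k) = \nu_\bullet^\alpha(s_k)$ for each $k$ and all $i \gg 0$.

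\textbf{Matching lowest-order terms.} Fix a nonzero $s$ with local expansion $f = \sum_{\beta\in \bN^r}c_\beta x^\beta$ at $\eta$. By $\bQ$-linear independence of $\alpha_1,\ldots,\alpha_{r-1}$, there is a unique $\gamma_0 \in \bN^{r-1}$ minimizing $\sum_{j<r}\alpha_j\gamma_j$ over $\{(\beta_1,\ldots,\beta_{r-1}) : c_\beta \neq 0 \text{ for some } \beta_r\}$; set $k_0 := \min\{\beta_r \geq 0 : c_{(\gamma_0,\beta_r)}\neq 0\}$. Expanding the leading coefficient $d_{\gamma_0}\in K(Z_{-1})$ in the local equation $\bar{x}_r$ of $Z$ inside $Z_{-1}$ and unraveling Definition~\ref{defn:val_associated_to_flag} gives
\[
\nu_\bullet^\alpha(f) = (\gamma_0,\ k_0,\ \nu_{Z_\bullet}(c_{(\gamma_0,k_0)})).
\]
It therefore suffices to show that the (unique) $\alpha^{(i)}$-minimizer $\beta^{(i)}$ of $\alpha^{(i)}\cdot \beta$ over $\{\beta : c_\beta\neq 0\}$ equals $(\gamma_0,k_0)$ for $i \gg 0$; this will force $c_{\beta^{(i)}} = c_{(\gamma_0,k_0)}$ and hence $\nu_\bullet^{\alpha^{(i)}}(f) = \nu_\bullet^\alpha(f)$.

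\textbf{The main obstacle.} Proving the equality $\beta^{(i)} = (\gamma_0,k_0)$ is the delicate core of the argument, because $\alpha^{(i)}_r \to 0$ and a priori the fiber of $\{c_\beta\neq 0\}$ over each $\gamma \in \bN^{r-1}$ contains unboundedly many competing exponents in the $\beta_r$-direction. The strategy is two-fold. First, any candidate $\beta^{(i)}$ must satisfy $\alpha^{(i)}\cdot \beta^{(i)} \leq \alpha^{(i)}\cdot (\gamma_0,k_0)$, which combined with $\alpha^{(i)}_j \geq \alpha_j/2$ for $j<r$ and $i$ large confines $(\beta^{(i)}_1,\ldots,\beta^{(i)}_{r-1})$ to a fixed finite set $T \subset \bN^{r-1}$. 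Second, on the fiber $\gamma = \gamma_0$, strict positivity of $\alpha^{(i)}_r$ uniquely selects $\beta_r = k_0$; on each fiber $\gamma \in T \setminus \{\gamma_0\}$, the positive constant $c := \min_{\gamma\in T\setminus\{\gamma_0\}}\sum_{j<r}\alpha_j(\gamma_j - (\gamma_0)_j) > 0$ eventually dominates the potentially-negative term $\alpha^{(i)}_r(\beta_r - k_0) \geq -\alpha^{(i)}_r k_0 \to 0$, and this dominance holds \emph{uniformly in} $\beta_r \geq 0$, ruling out all such candidates for $i \gg 0$. Thus $\beta^{(i)} = (\gamma_0,k_0)$ for $i \gg 0$, completing the plan.
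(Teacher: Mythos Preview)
Your overall strategy is correct and the volume-comparison endgame matches the paper's. Your ``main obstacle'' paragraph is in fact cleaner than the paper's treatment: the paper writes ``Hence, for sufficiently large $i$, the lowest-order term of $f$ with respect to $v_{\alpha^{(i)}}$ is $\bar u x^\beta$'' without spelling out the uniformity in $\gamma$, and your finiteness-of-$T$ plus lower-bound-$c$ argument is exactly the missing justification.

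There is, however, one genuine gap. You assert
\[
\nu_\bullet^{\alpha}(f) \;=\; \bigl(\gamma_0,\ k_0,\ \nu_{Z_\bullet}(c_{(\gamma_0,k_0)})\bigr)
\]
by ``expanding $d_{\gamma_0}\in K(Z_{-1})$ in $\bar x_r$,'' implicitly claiming that the $\bar x_r$-expansion of $d_{\gamma_0}$ has coefficients $c_{(\gamma_0,k)}$. But the coefficients $c_\beta$ come from the Cohen isomorphism $\hat\cO_{X,\eta}\cong K(Z)[[x_1,\dots,x_r]]$, while $d_\gamma$ come from $\hat\cO_{X,\eta_{-1}}\cong K(Z_{-1})[[x_1,\dots,x_{r-1}]]$; these are completions of \emph{different} local rings, each involving a non-canonical choice of coefficient field, and there is no evident map between them that identifies the two expansions term-by-term. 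That the first $r-1$ coordinates agree (both equal $\gamma_0$) is fine, since this is just the valuation $v_\alpha(f)$ computed two ways. But matching the $r$-th coordinate and the remaining $\nu_{Z_\bullet}$-part needs an argument.

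The paper handles exactly this point by lifting rather than by comparing expansions: it uses Lemma~\ref{lem:c_beta_has_no_poles} to get $d_{\gamma_0}\in\cO_{Z_{-1},z}$, writes $d_{\gamma_0}=u\,x_r^k$ with $u\in\cO_{Z_{-1},z}$ a unit at $\eta$, lifts $u$ to $\tilde u\in\cO_{X,z}$, and decomposes $f=\tilde u\,x^\beta+f_1$. It then computes the image of $\tilde u\,x^\beta$ separately in each completion to see that its lowest-order term is $\bar u\,x^\beta$ at $\eta$ and $d_{\gamma_0}(x')^{\gamma_0}$ at $\eta_{-1}$, which is what links the two. If you insert this lifting step (or otherwise justify the expansion compatibility), your proof goes through.
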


\begin{proof}
By Corollary~\ref{cor:volume_equality_of_okounkov_bodies}, $\vol_{\bR^n}(\Delta(\alpha)) = n!\vol(L) > 0$. Thus, by Lemma~\ref{lem:pointwise_limit_is_cofinite_limit}, it suffices to prove that $\Delta(\alpha)$ is the cofinite limit. 

Suppose for each $1\leq i\leq r$, locally around $z$, $D_i$ is cut out by a function $x_i\in \cO_{X,z}$. Let $s\in H^0(X, mL)$ be a section locally given by a function $f\in \cO_{X,z}$. At the generic point of $Z_{-1}$, we can write 
\begin{align}\label{eqn:power_series_expansion_in_prop_5.7}
f = cx_1^{\beta_1}\cdots x_{r-1}^{\beta_{r-1}} + \text{(higher-order terms of $f$)} \in K(Z_{-1})[[x_1,\ldots,x_{r-1}]],
\end{align}
where $cx_1^{\beta_1}\cdots x_{r-1}^{\beta_{r-1}}$ is the lowest-order term of $f$ with respect to $v_\alpha$. By Lemma~\ref{lem:c_beta_has_no_poles}, $c\in \cO_{Z_{-1}, z}$. Note that $\cO_{Z_{-1},\eta}$ (where $\eta$ is the generic point of $Z$) is a discrete valuation ring with uniformizer $x_r$, we can write $c = u x_r^k$ for some unit $u\in \cO_{Z_{-1},\eta}$. In fact, $u\in \cO_{Z_{-1},z}$ because $x_r^k\cO_{Z_{-1}, \eta}\cap \cO_{Z_{-1},z} = x_r^k\cO_{Z_{-1},z}$ and $c\in \cO_{Z_{-1},z}$.

Let $\tilde{u}\in \cO_{X,z}$ be any lifting of $u$, and $\overline{u}\in \cO_{Z,z}$ be the image of $u$ under the quotient $\cO_{Z_{-1},z}\to \cO_{Z,z}$. We can write
\[
f = \tilde{u}x^\beta + f_1, 
\]
for some $f_1\in \cO_{X,z}$ and $\beta = (\beta_1,\ldots,\beta_{r-1},k)$. Since the image of $\tilde{u}$ in $\hat{\cO}_{X,Z_{-1}} \cong K(Z_{-1})[[x_1,\ldots,x_{r-1}]]$ lies in $u + (x_1,\ldots,x_{r-1})$, we have
\begin{align}\label{eqn:power_series_expansion_in_prop_5.7_2}
f \in cx_1^{\beta_1}\cdots x_{r-1}^{\beta_{r-1}} + (x_1,\ldots, x_{r-1})x_1^{\beta_1}\cdots x_{r-1}^{\beta_{r-1}} + f_1 \text{ in }K(Z_{-1})[[x_1,\ldots,x_{r-1}]].
\end{align}
Comparing \eqref{eqn:power_series_expansion_in_prop_5.7} and \eqref{eqn:power_series_expansion_in_prop_5.7_2} gives that $v_\alpha(f_1) > v_\alpha(f)$. Similarly, the image of $\tilde{u}$ in $\hat{\cO}_{X,\eta}\cong K(Z)[[x_1,\ldots,x_r]]$ lies in $\overline{u} + (x_1,\ldots,x_r)$, which implies that
\[
f \in \overline{u}x^\beta + (x_1,\ldots,x_r)x^\beta + f_1 \text{ in } K(Z)[[x_1,\ldots,x_r]].
\]
Since $v_\alpha(f_1) > v_\alpha(f)$, every monomial $x^\gamma$ in the power series expansion of $f_1$ satisfies $\alpha\cdot \beta < \alpha\cdot \gamma$. Hence, for sufficiently large $i$, the lowest-order term of $f$ with respect to $v_{\alpha^{(i)}}$ is $\overline{u}x^\beta$. This gives
\[
\nu_\bullet^{\alpha^{(i)}}(s) = (\beta, \nu_{Z_\bullet}(\overline{u})).
\]
We also have 
\[
\nu_\bullet^{\alpha}(s) = (\beta_1,\ldots,\beta_{r-1}, \nu_{Z_{\bullet\geq -1}}(c)) = (\beta_1,\ldots,\beta_{r-1}, k, \nu_{Z_{\bullet}}(\overline{u})) = (\beta, \nu_{Z_{\bullet}}(\overline{u})).
\]
Therefore, $\frac{1}{m}\nu_\bullet^{\alpha}(s)\in \Delta(\alpha^{(i)})$ for all sufficiently large $i$. This implies that
\[
\Delta(\alpha) \subseteq \Delta^c:= \overline{\bigcup_{i\geq 0}\bigcap_{j\geq i}\Delta(\alpha^{(j)})}.
\]
Since $\vol(\Delta^c)\leq \frac{1}{n!}\vol(L) = \vol(\Delta(\alpha))$ (details can be found in the proof of Lemma~\ref{lem:limit_of_okounkov_body_easy_case}), we conclude that $\Delta(\alpha) = \Delta^c$.
\end{proof}

In the more general case where $\alpha$ satisfies $\dim_{\bQ} \operatorname{span}_{\bQ}(\alpha_1,\ldots,\alpha_r) = r-1$, we still have an analog of Proposition~\ref{prop:limit_of_okounkov_body_alpha_is_0_in_last_entry}. By Lemma~\ref{lem:lin_transform_okounkov_body_under_further_blow_ups}, there exists a log resolution $\mu: (Y, E_1+\ldots + E_r)\to X$ satisfying the following properties:
\begin{itemize}
\item there is an irreducible component $Z_Y$ of $\bigcap_{i=1}^r E_i$ mapping isomorphically to $Z$ via $\mu$;
\item after passing to a subsequence, $v_{\alpha^{(i)}}\in {\QM}_{Z_Y}(Y, E_1 + \ldots + E_{r})$ for all $i$;
\item $v_\alpha\in {\QM}_{Z_Y}(Y, E_1 + \ldots + E_{r})$ and has weight vector $\alpha'$ with $\alpha_r' = 0$.
\end{itemize}
Let $\Delta_Y(\alpha^{(i)})$ denote the Newton--Okounkov body of $L$ constructed using $v_i$ and the admissible flag $Z_{Y,\bullet}$, where $Z_{Y,\bullet}$ are the preimages of $Z_\bullet$ under the isomorphism $Z_Y\cong Z$. Let $\Delta_Y(\alpha)$ denote the Newton--Okounkov body of $L$ constructed using $v$ and the admissible flag $Z_{Y, -1}\supset Z_{Y,0} = Z_{Y}\supset \cdots\supset Z_{Y,n-r}$, where $Z_{Y,-1}$ is the unique irreducible component of $\bigcap_{i=1}^{r-1}E_i$ containing $Z_Y$. Then by Proposition~\ref{prop:limit_of_okounkov_body_alpha_is_0_in_last_entry} and Lemma~\ref{lem:lin_transform_okounkov_body_under_further_blow_ups}(d), there exists an integral linear transformation $M\in \GL_n(\bZ)$ such that
\[
\Delta_Y(\alpha) = \lim_{i\to\infty} \Delta_Y(\alpha^{(i)}) = \lim_{i\to\infty} M\Delta(\alpha^{(i)}) = M\lim_{i\to\infty} \Delta(\alpha^{(i)}),
\]
where the limit is taken as the cofinite limit (or equivalently, the pointwise limit) in Definition~\ref{defn:limits_of_convex_bodies}.

\subsection{An explicit example: Newton--Okounkov bodies on $\bP^2$}\label{subsec:explicit_variations_of_okunkov_bodies_on_P2}
Let $C$ be an irreducible nodal cubic on $\bP^2$ with a node $p$. Let $L = \cO_{\bP^2}(1)$. For each $\alpha\in \bR_{>0}^2$, let $v_\alpha$ be the quasi-monomial valuation in $\text{QM}_{p}(X, C)$ with weight vector $\alpha$ (see Remark~\ref{remark:qm_val_wrt_nc_divisors}). Assume $\alpha_1, \alpha_2$ are $\bQ$-linearly independent. Denote $\Delta(\alpha)$ the Newton--Okounkov body of $L$ with respect to $v_\alpha$ (see Section~\ref{subsubsec:okunkov_body_wrt_nc_divisors_with_zero_dim_strata}). Then $\Delta(\alpha)$ can be explicitly computed as follows.

\begin{prop}[cf. \cite{LXZ22}*{Theorem 6.1}]\label{prop:explicit_okunkov_body_on_P2}
Let $\{d_n:n\in \bZ\}$ be the sequence of integers such that $d_0 = d_1 = 1$ and $d_{n+1} = 3d_n -d_{n-1}$. Then $\Delta(\alpha)\subseteq \bR^2$ is a cone with the following vertices:
\begin{itemize}
\item If $\frac{\alpha_1}{\alpha_2}\in \left(\frac{7-3\sqrt{5}}{2}, \frac{7+3\sqrt{5}}{2}\right)$, then $\frac{\alpha_1}{\alpha_2} \in \left(\frac{d_{n+1}}{d_{n-1}}, \frac{d_{n+2}}{d_{n}}\right)$ for some integer $n$ and the vertices are $\left(0, \frac{d_{n+1}}{d_n}\right)$ and $\left(\frac{d_n}{d_{n+1}},0\right)$.
\item If $\frac{\alpha_1}{\alpha_2}\in (0, \frac{7-3\sqrt{5}}{2}] \cup [\frac{7+3\sqrt{5}}{2}, \infty)$, then the vertices are $\left(\frac{1}{3}, \frac{1}{3}\right), \left(\frac{3\alpha_2}{\alpha_1+\alpha_2}, 0\right)$, and $ \left(0,\frac{3\alpha_1}{\alpha_1+\alpha_2}\right)$.
\end{itemize}
\end{prop}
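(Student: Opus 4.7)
The plan is to reduce the computation to explicit analysis in local analytic coordinates at the node. Since $p$ is a simple node of $C$, choose analytic coordinates $(x_1, x_2)$ at $p$ in which $C$ is cut out by $x_1 x_2 = 0$; then $v_\alpha$ becomes the monomial valuation with weights $(\alpha_1, \alpha_2)$ (see Remark~\ref{remark:qm_val_wrt_nc_divisors}), and for every section $s \in H^0(\bP^2, mL)$ the value $\nu_\bullet(s) \in \bN^2$ is the exponent of the unique $\alpha$-minimal monomial in the local power-series expansion of $s$. My strategy is to produce enough explicit sections to realize every claimed vertex of $\Delta(\alpha)$, and then invoke the volume identity $\vol_{\bR^2}(\Delta(\alpha)) = \tfrac{1}{2!}\vol(\cO_{\bP^2}(1)) = \tfrac{1}{2}$ from Corollary~\ref{cor:volume_equality_of_okounkov_bodies} to rule out any further extremal points.

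The defining section $s_C \in H^0(\bP^2, 3L)$ of $C$ vanishes simply along each branch at $p$, so $\nu_\bullet(s_C) = (1,1)$, contributing the lattice point $(1/3, 1/3) \in \Delta(\alpha)$. The deeper axis vertices come from iterated blow-ups resolving one branch of $C$ at $p$: I will inductively construct plane curves $\sigma_n \in |d_n L|$ achieving the maximal single-branch vanishing $\nu_\bullet(\sigma_n) = (d_{n+1}, 0)$ along the branch $\{x_2 = 0\}$, by solving linear conditions on plane curves of degree $d_n$ through a sequence of infinitely near points along that branch. The Markov-type recursion $d_{n+1} = 3d_n - d_{n-1}$ is forced by self-intersection computations on successive blow-ups (reflecting the cubic degree of $C$), and the invariant $d_{n-1}d_{n+1} - d_n^2 = 1$, immediate from the recursion and base cases $d_0 = d_1 = 1$, ensures that the axis vertices $(d_n/d_{n+1}, 0)$ and $(0, d_{n+1}/d_n)$ together with the origin bound a triangle of area exactly $1/2$. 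A symmetric construction produces the mirror axis vertex.

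For $\alpha_1/\alpha_2 \in (d_{n+1}/d_{n-1}, d_{n+2}/d_n)$ in the middle range, the triangle with vertices $(0,0), (d_n/d_{n+1}, 0), (0, d_{n+1}/d_n)$ contains the point $(1/3, 1/3)$ and has area $1/2$; it is contained in $\Delta(\alpha)$ by construction and must therefore equal $\Delta(\alpha)$ by the volume identity. At the threshold ratios $(7 \pm 3\sqrt{5})/2 = ((3 \pm \sqrt{5})/2)^2 = \lim_n d_{n+1}/d_{n-1}$ — the silver ratio squared, i.e., the square of the dominant root of $t^2 - 3t + 1 = 0$ — the three points $(d_n/d_{n+1}, 0), (1/3, 1/3), (0, d_{n+1}/d_n)$ become collinear; beyond the threshold, no such triangle contains $(1/3, 1/3)$, which then appears as a genuine extremal vertex of $\Delta(\alpha)$. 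A direct calculation shows the quadrilateral with vertices $(0,0), (3\alpha_2/(\alpha_1+\alpha_2), 0), (1/3, 1/3), (0, 3\alpha_1/(\alpha_1+\alpha_2))$ has area exactly $1/2$ (the defining relation $(\alpha_1 + \alpha_2)^2 = 9\alpha_1\alpha_2$ at the threshold recovers $\alpha_1/\alpha_2 = (7 \pm 3\sqrt{5})/2$ from $r^2 - 7r + 1 = 0$), so by the volume identity it equals $\Delta(\alpha)$.

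The main obstacle is establishing \emph{sharpness} of the axis vertices: that no section in $|d_n L|$ has $\nu_\bullet$ strictly further along the axis than $d_{n+1}$. The lower bound (existence of $\sigma_n$) is a standard dimension count on linear systems of plane curves with prescribed infinitely near vanishing; the upper bound must exploit that $C$ is globally irreducible while having two analytic branches at $p$, so that vanishing along one branch cascades into constraints on the other via Bézout on the successive blow-ups, combined with the Markov-type identity $d_{n-1}d_{n+1} - d_n^2 = 1$. Once this sharpness is in hand, the case analysis and the volume identity cleanly conclude the proof.
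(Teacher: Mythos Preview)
Your volume-identity strategy matches the paper's and handles the middle range correctly (modulo an indexing slip: $\sigma_n\in|d_nL|$ with $\nu_\bullet=(d_{n+1},0)$ would give the point $(d_{n+1}/d_n,0)$, not $(d_n/d_{n+1},0)$; the paper obtains the latter from a curve $D_{n+1}\in|d_{n+1}L|$ with $\nu_\bullet=(d_n,0)$). The ``sharpness'' you flag as the main obstacle is a red herring: once the exhibited points span a convex hull of area $\tfrac12$, Corollary~\ref{cor:volume_equality_of_okounkov_bodies} forces $\Delta(\alpha)$ to equal that hull, with no upper bound on individual $\nu_\bullet$-values needed anywhere. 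You already invoke this for the middle range, so your final paragraph is working against your own argument.

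The genuine gap is the outer range. You assert that $\bigl(\tfrac{3\alpha_2}{\alpha_1+\alpha_2},0\bigr)$ and $\bigl(0,\tfrac{3\alpha_1}{\alpha_1+\alpha_2}\bigr)$ lie in $\Delta(\alpha)$ but give no construction placing them there. The unicuspidal curves you propose (the paper cites them as the $D_n$ of \cite{LXZ22}*{Lemma 6.5}, with Newton-polygon vertices $(d_{n-1},0)$ and $(0,d_{n+1})$) only produce axis points accumulating at $\bigl(\tfrac{3-\sqrt5}{2},0\bigr)$ and $\bigl(0,\tfrac{3+\sqrt5}{2}\bigr)$, which are exactly the threshold values; for $\alpha_1/\alpha_2>\tfrac{7+3\sqrt5}{2}$ the required vertex $\bigl(0,\tfrac{3\alpha_1}{\alpha_1+\alpha_2}\bigr)$ lies strictly beyond $\bigl(0,\tfrac{3+\sqrt5}{2}\bigr)$, so no $\sigma_n$ and no limit of them reaches it. The paper closes this with a separate, substantial construction (Lemma~\ref{lem:okounkov_body_on_P2_irrational_polytope_case}): perform a weighted blow-up of $\bP^2$ at $p$ with integral weights $(d_1,d_2)$ approximating $\alpha$, show that a suitable combination of the strict transform of $C$ and the exceptional divisor is big and nef, lift a section from the exceptional $\bP^1$ concentrated at one of the two points $C_Y\cap E$, push it down to a plane curve whose $\nu_\bullet$ lands on the desired axis, and pass to the limit $(d_1,d_2)\to\alpha$ via Lemma~\ref{lem:limit_of_okounkov_body_easy_case}. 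This is the missing geometric input; without it the convex hull you can actually exhibit in the outer range has area strictly below $\tfrac12$ and the argument does not close.
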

In particular, $\Delta(\alpha)$ varies discretely in the region $\frac{\alpha_1}{\alpha_2}\in \left(\frac{7-3\sqrt{5}}{2}, \frac{7+3\sqrt{5}}{2}\right)$ and continuously outside this region.

We need the following lemma from \cite{LXZ22} to prove Proposition~\ref{prop:explicit_okunkov_body_on_P2}.
\begin{lem}\cite{LXZ22}*{Lemma 6.5}\label{lem:LXZ_lemma_6.5}
Suppose the two branches of $C$ are locally analytically defined by $x_1, x_2\in \hat{\cO}_{\bP^2,p}$ in a neighborhood of $p$. Then for any integer $n$, there exists an irreducible curve $D_n$ on $X$ such that
\begin{itemize}
\item $D_n\sim \cO_{\bP^2}(d_n)$,
\item $D_n$ is a $(d_{n-1}, d_{n+1})$-unicuspidal curve in the $(x_1,x_2)$ coordinate, i.e., it is locally cut out by a function $f\in \bC[[x_1,x_2]]$ whose Newton polygon is given by \[\left\{(a,b)\in \bN: \frac{a}{d_{n-1}} + \frac{b}{d_{n+1}} \geq 1\right\}.\]
\end{itemize}
\end{lem}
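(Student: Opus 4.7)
\smallskip

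\noindent\textbf{Strategy.} The two-case structure of Proposition~\ref{prop:explicit_okunkov_body_on_P2} reflects a qualitative change in the geometry of $\Delta(\alpha)$ as $\alpha_1/\alpha_2$ crosses the roots of $x^2-7x+1=0$. The overall plan is to combine the volume equality $\vol_{\bR^2}(\Delta(\alpha))=\tfrac12\vol(L)=\tfrac12$ (Corollary~\ref{cor:volume_equality_of_okounkov_bodies}) with the explicit rational points $\tfrac{1}{d}\nu_\bullet^\alpha(D)\in\Delta(\alpha)$ coming from any curve $D\sim\cO_{\bP^2}(d)$. In each regime I exhibit an explicit convex polygon $K\subseteq\Delta(\alpha)$ of Euclidean area $\tfrac12$, which forces $K=\Delta(\alpha)$.

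\smallskip

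\noindent\textbf{Inner range.} Assume $\alpha_1/\alpha_2\in(d_{n+1}/d_{n-1},d_{n+2}/d_n)$. By Lemma~\ref{lem:LXZ_lemma_6.5} the Newton polygon of the local equation of $D_n$ has vertices $(d_{n-1},0)$ and $(0,d_{n+1})$. Since $\alpha_1,\alpha_2$ are $\bQ$-linearly independent, the functional $(a,b)\mapsto\alpha_1 a+\alpha_2 b$ has a strict minimum at exactly one vertex; the inequality $\alpha_1 d_{n-1}>\alpha_2 d_{n+1}$ (equivalent to $\alpha_1/\alpha_2>d_{n+1}/d_{n-1}$) identifies this vertex as $(0,d_{n+1})$, so $\nu_\bullet^\alpha(D_n)=(0,d_{n+1})$ and $(0,d_{n+1}/d_n)\in\Delta(\alpha)$. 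Symmetrically, $\alpha_1 d_n<\alpha_2 d_{n+2}$ gives $\nu_\bullet^\alpha(D_{n+1})=(d_n,0)$ and $(d_n/d_{n+1},0)\in\Delta(\alpha)$. Together with the origin (witnessed by any line disjoint from $p$), these three points span a triangle of area $\tfrac12\cdot\tfrac{d_n}{d_{n+1}}\cdot\tfrac{d_{n+1}}{d_n}=\tfrac12$, which by the volume equality must equal $\Delta(\alpha)$.

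\smallskip

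\noindent\textbf{Outer range.} Assume $\alpha_1/\alpha_2\geq(7+3\sqrt{5})/2$; the case $\alpha_1/\alpha_2\leq(7-3\sqrt{5})/2$ is symmetric. The nodal cubic $C$ itself satisfies $\nu_\bullet^\alpha(C)=(1,1)$, so $(\tfrac13,\tfrac13)\in\Delta(\alpha)$. For every $n$ the inequality $\alpha_1/\alpha_2>d_{n+1}/d_{n-1}$ still holds (since $\sup_n d_{n+1}/d_{n-1}=(7+3\sqrt{5})/2$), so $(0,d_{n+1}/d_n)\in\Delta(\alpha)$ exactly as in the inner analysis; multiplicativity applied to $C^k\cdot D_n$ then sweeps out the straight segment from $(0,d_{n+1}/d_n)$ to $(\tfrac13,\tfrac13)$ inside $\Delta(\alpha)$ as $k$ varies. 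The plan for obtaining the irrational axis-vertices $(\tfrac{3\alpha_2}{\alpha_1+\alpha_2},0)$ and $(0,\tfrac{3\alpha_1}{\alpha_1+\alpha_2})$ is to use the slicing identity $\vol(V_\bullet^t(v_\alpha))=2\vol_{\bR^2}(\Delta(\alpha)\cap\{\alpha_1 x_1+\alpha_2 x_2\geq t\})$ from Theorem~\ref{mainthm:okounkov_bodies_wrt_qm_val}(c), and to compute $\vol(V_\bullet^t(v_\alpha))$ in closed form by passing to the log resolution $\mu\colon Y\to\bP^2$ of the node, on which $v_\alpha=\alpha_1\ord_{E_1}+\alpha_2\ord_{E_2}$, and invoking the Zariski decomposition of $\mu^*L-\tfrac{t}{\alpha_1+\alpha_2}\widetilde{C}$ on $Y$. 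This pins down the supporting hyperplanes of $\Delta(\alpha)$ through $(\tfrac13,\tfrac13)$, and combined with the explicit lower-bound constructions above and the direct area computation (the claimed quadrilateral decomposes into two triangles of common apex $(\tfrac13,\tfrac13)$ and heights $\tfrac13$ over the coordinate axes, hence has area $\tfrac{3\alpha_2+3\alpha_1}{6(\alpha_1+\alpha_2)}=\tfrac12$), one concludes that $\Delta(\alpha)$ equals the claimed quadrilateral.

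\smallskip

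\noindent\textbf{Main obstacle.} The inner range is essentially immediate from the two curves $D_n,D_{n+1}$ and the volume match. The genuinely non-routine step is the outer range: because $\alpha_1,\alpha_2$ are $\bQ$-linearly independent, the axis-vertices $(\tfrac{3\alpha_2}{\alpha_1+\alpha_2},0)$ and $(0,\tfrac{3\alpha_1}{\alpha_1+\alpha_2})$ are typically irrational and cannot arise as $\tfrac{1}{m}\nu_\bullet^\alpha(s)$ for any single section $s$. They must emerge from the asymptotic behavior of $\vol(V_\bullet^t(v_\alpha))$ through slicing, and the key technical input is the Zariski decomposition on $Y$ underlying that volume computation.
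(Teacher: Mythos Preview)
Your proposal does not address the statement you were asked to prove. Lemma~\ref{lem:LXZ_lemma_6.5} asserts the \emph{existence} of the unicuspidal curves $D_n$ of degree $d_n$ on $\bP^2$; this is a construction internal to \cite{LXZ22} and the present paper simply cites it without proof. Your write-up, by contrast, is a proof sketch of Proposition~\ref{prop:explicit_okunkov_body_on_P2}: you \emph{invoke} Lemma~\ref{lem:LXZ_lemma_6.5} (``By Lemma~\ref{lem:LXZ_lemma_6.5} the Newton polygon \ldots'') rather than establish it. Nothing in your proposal explains why such curves $D_n$ exist.

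If we set aside the mismatch and read your proposal as an argument for Proposition~\ref{prop:explicit_okunkov_body_on_P2}, then your inner-range argument is exactly the paper's. For the outer range, however, the paper does not use any Zariski decomposition on the log resolution $Y$; instead it proves a separate Lemma~\ref{lem:okounkov_body_on_P2_irrational_polytope_case} by a direct construction: for rational $(d_1,d_2)$ approximating $\alpha$, it performs a weighted blow-up, uses ampleness and a vanishing argument to lift a section from the exceptional $\bP^1$ concentrated at one point, and pushes it down to a divisor whose $\nu_\bullet$-value lands at the desired axis vertex in the limit. Your proposed route via slicing and $\vol(V_\bullet^t(v_\alpha))$ could in principle work, but as written it is only a plan: the claim that the Zariski decomposition of $\mu^*L-\tfrac{t}{\alpha_1+\alpha_2}\widetilde{C}$ ``pins down the supporting hyperplanes'' is not carried out, and the identity $v_\alpha=\alpha_1\ord_{E_1}+\alpha_2\ord_{E_2}$ on the log resolution is not literally correct (the quasi-monomial valuation $v_\alpha$ is not a sum of divisorial valuations).
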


\begin{lem}\label{lem:okounkov_body_on_P2_irrational_polytope_case}
Suppose $\frac{\alpha_1}{\alpha_2} > \frac{7+3\sqrt{5}}{2}$. Then we have
\[
\left(\frac{3\alpha_2}{\alpha_1+\alpha_2}, 0\right), \left(0,\frac{3\alpha_1}{\alpha_1+\alpha_2}\right)\in\Delta(\alpha).
\]
\end{lem}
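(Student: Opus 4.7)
The plan is to prove $\Delta(\alpha) = Q$, where $Q$ is the quadrilateral with vertices $(0, 0)$, $v_1 := (3\alpha_2/(\alpha_1+\alpha_2), 0)$, $v_0 := (1/3, 1/3)$, $v_2 := (0, 3\alpha_1/(\alpha_1+\alpha_2))$, by first establishing the inclusion $\Delta(\alpha) \subseteq Q$ and then comparing volumes. That $v_0 \in \Delta(\alpha)$ is immediate, since the nodal cubic $C$ has local equation $x_1 x_2$ at $p$, so $\nu_\bullet(C) = (1, 1)$. A direct calculation gives $\vol(Q) = 1/(2(r+1)) + r/(2(r+1)) = 1/2$ with $r = \alpha_1/\alpha_2$, matching $\vol(\Delta(\alpha)) = \vol(\cO_{\bP^2}(1))/2 = 1/2$ from Corollary~\ref{cor:volume_equality_of_okounkov_bodies}. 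Once $\Delta(\alpha) \subseteq Q$ is established, these two closed convex sets of equal volume coincide, so the vertices $v_1, v_2$ of $Q$ lie in $\Delta(\alpha)$.

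\textbf{Proof of the upper bound.} For any effective $D \in |mL|$, I will decompose $D = kC + D_0$ with $k := \mult_C(D) \geq 0$ and $D_0 \in |(m - 3k)L|$ not containing the irreducible cubic $C$; then $\nu_\bullet(D) = k(1, 1) + \nu_\bullet(D_0)$ by additivity of $\nu_\bullet$ on products of sections. Writing $\nu_\bullet(D_0) = (a_0, b_0)$, the $v_\alpha$-minimality of this monomial in the support of the local equation of $D_0$ forces the intersection multiplicities $i_j := (D_0 \cdot C_j)_p$ with the two branches of $C$ at $p$ to satisfy $i_1 \geq a_0 r + b_0$ and $i_2 \geq a_0 + b_0/r$. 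Since $D_0$ does not share any irreducible component (equivalently, any local analytic branch) with $C$, both $i_1$ and $i_2$ are finite, and B\'ezout yields $i_1 + i_2 \leq 3(m - 3k)$. Summing gives
\[
(r+1)\bigl(a_0 + b_0/r\bigr) \le 3(m - 3k),
\]
so $\bigl(a_0/(m-3k),\, b_0/(m-3k)\bigr)$ lies in the triangle $T$ with vertices $(0, 0), v_1, v_2$. Consequently $\nu_\bullet(D)/m$ is a convex combination of $v_0$ (weight $3k/m$) and a point of $T$ (weight $1-3k/m$), so it lies in $\mathrm{conv}(T \cup \{v_0\}) = Q$, and passing to the closed convex hull gives $\Delta(\alpha) \subseteq Q$.

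\textbf{Final step and main obstacle.} The hypothesis $\alpha_1/\alpha_2 > (7+3\sqrt{5})/2$ rewrites as $(r+1)^2 > 9r$, which is precisely the condition that $v_0$ lies strictly outside $T$; this makes $Q$ a genuine non-degenerate quadrilateral with $\vol(Q) = 1/2$ as computed above, completing the argument. The main subtlety I anticipate is justifying the claim that $D_0$ not containing $C$ as a global irreducible component implies that $D_0$ and $C$ share no common local analytic branch at $p$ (so that $i_1, i_2 < \infty$ and B\'ezout applies). This follows from the fact that each local analytic branch of the irreducible curve $C$ has Zariski closure equal to $C$, hence any global plane curve sharing a local branch with $C$ must have $C$ as an irreducible component.
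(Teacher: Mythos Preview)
Your argument is correct, and it takes a genuinely different route from the paper's proof.

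The paper proves the lemma by a direct construction: it approximates $\alpha$ by rational weight vectors $(d_1,d_2)$, performs the stack-theoretic weighted blow-up of $\bP^2$ at $p$ with those weights, builds an ample class $M$ on the blow-up, and uses $H^1$-vanishing to lift a section from the exceptional $\bP^1$ that is supported at a single point. Pushing this section back down to $\bP^2$ yields a divisor whose lowest-order monomial is $x_1^{\beta_1}$, so its rescaled $\nu_\bullet$-value lands arbitrarily close to $\bigl(\tfrac{3\alpha_2}{\alpha_1+\alpha_2},0\bigr)$; Lemma~\ref{lem:limit_of_okounkov_body_easy_case} then passes to the limit. The argument is constructive but uses weighted blow-ups, cohomology vanishing, and the continuity result for Newton--Okounkov bodies.

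Your approach is instead an upper bound plus volume matching: you confine $\Delta(\alpha)$ inside the quadrilateral $Q$ by a B\'ezout argument on the normalization of $C$, and then the equality $\vol(Q)=\tfrac12=\vol(\Delta(\alpha))$ forces $\Delta(\alpha)=Q$, so its vertices automatically lie in $\Delta(\alpha)$. This is more elementary---no blow-ups, no cohomology, no limiting argument in $\alpha$---and in fact proves the relevant case of Proposition~\ref{prop:explicit_okunkov_body_on_P2} in one stroke rather than as a corollary of the lemma. The trade-off is that your method is nonconstructive (it does not exhibit any divisor realizing the boundary points) and is specific to the regime $(r+1)^2>9r$, where $v_0$ sits outside $T$ and the volume of $Q$ happens to match; in the complementary range the same B\'ezout bound only gives $\Delta(\alpha)\subseteq T$ with $\vol(T)>\tfrac12$, which is not sharp.

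Your handling of the one subtlety is fine: since $C$ is irreducible and $D_0\not\supseteq C$, the scheme $D_0\cap C$ is zero-dimensional, hence $(D_0\cdot C)_p<\infty$; computing this on the normalization of $C$ splits it as $i_1+i_2$, both finite, which is exactly what you need.
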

\begin{proof}
Let $d_1, d_2\in \bZ_{>0}$ such that 
\[
\frac{7+3\sqrt{5}}{2} < \frac{d_1}{d_2} < \frac{\alpha_1}{\alpha_2}.
\]
Let $C_1, C_2$ be irreducible curves on $\bP^2$ such that locally at $p$, $C_i$ is cut out by a function $x_i'\in \hat{\cO}_{X,p}$ of the form
\[
x_i' \in x_i + \hat{\mathfrak{m}}^N,
\]
where $N = \lceil \frac{\alpha_1}{\alpha_2}\rceil$ and $\hat{\fm}$ is the maximal ideal of $\hat{\cO}_{X,p}$.
Let $\mu: Y\to \bP^2$ be the weighted blow up of $\bP^2$ at $p$ with respect to $C_1, C_2$ and weights $(d_1,d_2)$. Let $E$ denote the exceptional divisor of $\mu$ and $C_Y$ the strict transform of $C$ on $Y$. Then $C_Y\sim \mu^*\cO_{\bP^2}(3) - (d_1+d_2)E$. Consider the effective $\bQ$-divisor
\[
B = C_Y + \left(d_1 + d_2 - \frac{9d_1d_2}{d_1+d_2}\right) E \sim_{\bQ} \mu^*\cO_{\bP^2}(3) - \frac{9d_1d_2}{d_1+d_2} E.
\]
One may compute that $B\cdot C_Y = 0$, $B\cdot E>0$ and $B^2>0$. Thus, $B$ is big and nef. This also implies that $B + \epsilon E$ is ample for any $\epsilon \in \left(0, \frac{9d_1d_2}{d_1+d_2}\right)$. Let $\epsilon = \frac{d_1d_2}{m(d_1+d_2)}$ for some positive integer $m$ and $M = m(d_1+d_2)(B+\epsilon E)$. Then $M$ is an ample line bundle on $Y$. Let $k>0$ be a sufficiently large integer such that 
\[
H^1(Y, \cO_Y(kM - E)) = 0.
\]
From the long exact sequence of cohomology, the map
\begin{align}\label{eqn:lifting_sections}
H^0(Y, \cO_Y(kM)) \to H^0(E, \cO_E(kM))
\end{align}
is surjective. Since $E\cong \bP^1$ and $\cO_E(d_1d_2E)\cong \cO_{\bP^1}(-1)$, we have $\cO_E(kM)\cong \cO_{\bP^1}(k(9m-1))$. For $i = 1,2$, let $p_i = C_{i,Y}\cap E$. By the surjectivity of \eqref{eqn:lifting_sections}, there exists a divisor $F_Y\in |\cO_Y(kM)|$ lifting the divisor $k(9m-1)p_1 \in |\cO_E(kM)|$. Let $F = \mu_*F_Y \sim \cO_{\bP^2}(3km(d_1+d_2))$. Then
\[
\mu^*F = F_Y + k(9m-1)d_1d_2 E.
\]

Suppose $F = \mu_*F_Y$ is locally defined by an equation
\[
f = \sum_{\beta\in \bN^2} c_\beta x'^\beta\in \bC[[x_1',x_2']].
\]
We can write $f$ as $f = f_0 + f_1$, where
\[
f_0 = \sum_{\beta: d_1\beta_1 + d_2\beta_2 = \ord_E(F)} c_\beta x'^\beta.
\]
We have
\begin{align*}
    k(9m-1)p_1 &= F_Y|_E = (\mu^*F - \ord_E(F)E) |_{E}\\
    & = (\prindiv \mu^*(f_0) - \ord_E(F)E)|_E \\
    &= \prindiv \left(\sum_{\beta: d_1\beta_1 + d_2\beta_2 = \ord_E(F)} c_\beta u^{\beta}\right), 
\end{align*}
where $u_1,u_2$ are coordinates on $E$ such that $\prindiv(u_1^{d_2}) = p_1$ and $\prindiv(u_2^{d_1}) = p_2$. As a result, $f_0$ is a monomial of the form $cx_1'^{\beta_1}$, where $\beta_1 = \frac{\ord_E(F)}{d_1} = k(9m-1)d_2$. As a result,
\[
f = cx_1'^{\beta_1} + f_1(x_1',x_2') = cx_1^{\beta_1} + g_1(x_1,x_2)  \in \bC[[x_1,x_2]]
\] 
and by our choice of $N$, every monomial $x_1^{e_1}x_2^{e_2}$ in $g_1$ satisfies $d_1e_1 + d_2e_2 > d_1\beta_1$. This implies that $x_1^{\beta_1}$ is the unique lowest-order term of $f$ with respect to $v_{(d_1,d_2)}$. Therefore, $\nu_\bullet^{\alpha'}(F) = (\beta_1, 0)$ for any $\alpha'\in P_2$ sufficiently close to $(d_1,d_2)$. In particular,
\[
\left(\frac{\beta_1}{3km(d_1+d_2)}, 0\right) = \left(\frac{(9m-1)d_2}{3m(d_1+d_2)}, 0\right) \in \Delta(\alpha').
\]
Finally, we can take $(d_1,d_2)$ such that $\frac{d_1}{d_2}$ is arbitrarily close to $\frac{\alpha_1}{\alpha_2}$ and $m$ is arbitrarily large. By Lemma~\ref{lem:limit_of_okounkov_body_easy_case}, we have
\[
\left(\frac{3\alpha_2}{\alpha_1+\alpha_2}, 0\right) \in \Delta(\alpha),
\]
as desired. One can similarly prove that $\left(0,\frac{3\alpha_1}{\alpha_1+\alpha_2}\right)\in \Delta(\alpha)$.

\end{proof}

\begin{proof}[Proof of Proposition~\ref{prop:explicit_okunkov_body_on_P2}]
Suppose $\frac{\alpha_1}{\alpha_2}\in \left(\frac{7-3\sqrt{5}}{2}, \frac{7+3\sqrt{5}}{2}\right)$. Then $\frac{\alpha_1}{\alpha_2} \in \left(\frac{d_{n+1}}{d_{n-1}}, \frac{d_{n+2}}{d_{n}}\right)$ for some integer $n$, since $\{\frac{d_{n+1}}{d_{n-1}}: n\in\bZ\}$ is a strictly increasing sequence with limits
\[
\lim_{n\to \pm \infty} \frac{d_{n+1}}{d_{n-1}} = \frac{7\pm 3\sqrt{5}}{2}.
\]
By Lemma~\ref{lem:LXZ_lemma_6.5}, there exist divisors $D_n\sim \cO_{\bP^2}(d_n)$ and $D_{n+1}\sim \cO_{\bP^2}(d_{n+1})$ such that
\[
\nu_\bullet^{\alpha}(D_n) = (0,d_{n+1}) \quad\text{and}\quad \nu_\bullet^{\alpha}(D_{n+1}) = (d_n, 0).
\]
This gives $\left(0, \frac{d_{n+1}}{d_n}\right), \left(\frac{d_n}{d_{n+1}},0\right) \in \Delta(\alpha)$. Moreover, $(0,0)\in \Delta(\alpha)$ since $L$ is ample. Then $\Delta(\alpha)$ contains the closed convex hull of $\left(0, \frac{d_{n+1}}{d_n}\right), \left(\frac{d_n}{d_{n+1}},0\right)$, and $(0,0)$, which has area $\frac{1}{2}$. However, by Corollary~\ref{cor:volume_equality_of_okounkov_bodies}, $\vol_{\bR^2}(\Delta(\alpha)) = \frac{1}{2}\vol(L) = \frac{1}{2}$. This implies that $\Delta(\alpha)$ must be the closed convex hull of $\left(0, \frac{d_{n+1}}{d_n}\right), \left(\frac{d_n}{d_{n+1}},0\right)$, and $(0,0)$, as desired.

Now suppose $\frac{\alpha_1}{\alpha_2} > \frac{7+3\sqrt{5}}{2}$. By Lemma~\ref{lem:okounkov_body_on_P2_irrational_polytope_case} and the fact that $\nu_\bullet^\alpha(C) = (1,1)$, we have
\[
(0,0), \left(\frac{1}{3},\frac{1}{3}\right), \left(0,\frac{3\alpha_1}{\alpha_1+\alpha_2}\right), \left(\frac{3\alpha_2}{\alpha_1+\alpha_2}, 0\right)\in \Delta(\alpha).
\]
However, the closed convex hull of these four points already has area $\frac{1}{2}$, so it must be equal to $\Delta(\alpha)$ by the volume comparison. 

The case $\frac{\alpha_1}{\alpha_2} < \frac{7-3\sqrt{5}}{2}$ can be worked out similarly, and the proof concludes.
\end{proof}

\begin{rem}
We may explicitly compute Newton--Okounkov bodies $\Delta(\alpha)$ on other del Pezzo surfaces $X$ with nodal anti-canonical divisors $C$ as follows. We can use \cite{Pen25}*{Proposition 2.22} to find unicuspidal divisors on $X$, whose linear combinations will correspond to vertices of $\Delta(\alpha)$. Then, we can apply the volume comparison to conclude that $\Delta(\alpha)$ must be the closed convex hull of these vertices.
\end{rem}

\begin{rem}
Using the techniques of special valuations in \cite{LXZ22}, Proposition~\ref{prop:explicit_okunkov_body_on_P2} implies that $\bP^2$ admits a special $\bG_m$-equivariant degeneration to $\bP(1, d_n^2, d_{n+1}^2)$ for every $n\in\bZ$. In fact, \cite{ABBDILW23}*{Theorem A.3} classifies all such degenerations of $\bP^2$: they are either weighted projective planes $\bP(1, d_n^2, d_{n+1}^2)$ or weighted hypersurfaces of the form $\{x_0x_3 = x_1^{d_{n+1}} + x_2^{d_{n-1}}\}\subseteq \bP(1, d_{n-1},d_{n+1},d_n^2)$.
\end{rem}

\section{Weighted projective stacks and weighted blow-ups}\label{sec:weighted_proj_stacks}
In this section, we define the notions of weighted projective stacks and weighted blow-ups. The main references for this section are \cite{AH11} and \cite{QR22}.

\subsection{Weighted projective stacks}
Let $B$ be a finite type scheme over $\bC$ (or any field of characteristic zero). Let $V_1, \ldots, V_r$ be locally free sheaves on $B$ and $d_1,\ldots, d_r$ be positive integers. 

\begin{defn}
Set
\[
V := \bigoplus_{i=1}^r V_it^{d_i} \]
be a graded $\cO_B$-module (which is graded by $t$). Let
\[
\Sym_B^\bullet V := \bigotimes_{i=1}^r \left(\bigoplus_{k\geq 0} V_i^{\otimes k}t^{kd_i}\right)
\]
be the weighted symmetric algebra of $V$. Let
\[
\bA(V) := \Spec_B (\Sym_B^\bullet V)
\]
be the associated vector bundle over $B$, with zero section $0_B\subseteq \bA(V)$. Consider the $\bG_m$-action on $\bA(V)$ where $\bG_m$ acts on $t$ with weight $-1$. Then \emph{the weighted projective stack associated to $V$} is the quotient stack over $B$ defined by
\[
\sP_B(V) := [(\bA(V) - 0_B)/\bG_m].
\]
\end{defn}

Next, we describe the local charts of $\sP_B(V)$. Since locally each $V_i$ is trivial, we may assume (after shrinking $B$ and splitting $V_i$ into line bundles) that each $V_i$ is a trivial line bundle. Let $R = \Sym_B^\bullet V$. Then
\[
R \cong \cO_B[s_1t^{d_1}, \ldots, s_rt^{d_r}],
\]
where $s_i$ is a trivializing section of $V_i$ for each $i$. Let $R_+$ be the irrelevant ideal of $R$, which consists of elements of $R$ with positive $t$-degrees. Let $V(R_+)\subseteq \Spec R$ be the vanishing locus of the ideal $R_+$. Then
\[
\sP_B(V) = [(\Spec R - V(R_+))/ \bG_m] \cong [(\bA^r_B - 0_B) / \bG_m].
\] Note that 
\[\Spec R - V(R_+)  = \bigcup_{i=1}^r \Spec R[(s_it^{d_i})^{-1}].\]
Therefore, $\sP_B(V)$ is covered by the local charts
\[
\sD_+(s_it^{d_i}) := [\Spec R[(s_it^{d_i})^{-1}]/\bG_m] \cong \left[\Spec \frac{R[(s_it^{d_i})^{-1}]}{s_it^{d_i}-1} / \mu_{d_i} \right] \cong [\bA_B^{r-1}/\mu_{d_i}],
\]
where the second-to-last isomorphism follows from \cite{QR22}*{Lemma 1.3.1}.

\begin{defn}[\cite{AH11}*{Definition 2.3.1 and 2.3.3}]
A flat, separated, finite type algebraic stack $p:\sX\to B$ is said to be a \emph{cyclotomic stack} if the stabilizer of any geometric point of $\sX$ is isomorphic to the group scheme $\mu_n$ for some $n$.

We say that a stack $p:\sX\to B$ \emph{has index $N$} if $N$ is the minimal positive integer satisfying the following condition: for each object $\xi\in \sX(T)$ over a scheme $T$ and each automorphism $a\in \Aut(\xi)$, we have $a^N = \id$.
\end{defn}

The following lemma follows directly from the description of the local charts of $\sP_B(V)$:
\begin{lem}[\cite{AH11}*{Exercise 2.3.2}]\label{lem:weighted_proj_stack_is_cyclotomic}
The weighted projective stack $p:\sP_B(V)\to B$ is a cyclotomic stack of index $N = \lcm(d_1,\ldots,d_r)$.
\end{lem}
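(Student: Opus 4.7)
The plan is to work locally on $B$ and pass to an explicit presentation of $\sP_B(V)$ as a global quotient. Since the statement is local on $B$, I would shrink $B$ so that each $V_i$ admits a trivializing section $s_i$. Then $R = \Sym_B^\bullet V \cong \cO_B[s_1t^{d_1},\ldots,s_rt^{d_r}]$ and
\[
\sP_B(V) \cong [(\bA^r_B - 0_B)/\bG_m],
\]
with $\bG_m$ acting on the $i$th coordinate of $\bA^r_B$ with weight $d_i$.

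I would first verify the cyclotomic property by computing stabilizers of geometric points. For a geometric point $x = (a_1,\ldots,a_r)\in \bA^r_B - 0_B$, the stabilizer in $\bG_m$ consists of $\lambda$ satisfying $\lambda^{d_i}a_i = a_i$ for every $i$, i.e.\ $\lambda^{d_i} = 1$ whenever $a_i \neq 0$. Setting $n = \gcd\{d_i : a_i \neq 0\}$, which is well-defined and positive since not every $a_i$ vanishes, this stabilizer equals $\mu_n$. Hence every geometric stabilizer is isomorphic to $\mu_n$ for some $n$, establishing that $\sP_B(V)$ is cyclotomic.

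For the index, I would compute it via the inertia stack of $[(\bA^r_B - 0_B)/\bG_m]$, which sits inside the group scheme $\mu_{\lcm(d_i)} \times (\bA^r_B - 0_B)/\bG_m$ since any stabilizer $\mu_n$ at a geometric point has $n$ dividing some $d_i$ and therefore divides $\lcm(d_1,\ldots,d_r)$. This yields the upper bound: any automorphism of any $T$-object has its $\lcm(d_1,\ldots,d_r)$-th power equal to the identity, so $N \mid \lcm(d_1,\ldots,d_r)$. For the lower bound, I would exhibit, for each $i$, the section $(0,\ldots,0,s_i,0,\ldots,0)\in \bA^r_B - 0_B$ (with $s_i$ in the $i$th coordinate), whose stabilizer is exactly $\mu_{d_i}$; a generator of this $\mu_{d_i}$ gives an automorphism of an object of $\sP_B(V)(B)$ with exact order $d_i$, forcing $d_i \mid N$ for each $i$. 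Combining the two bounds gives $N = \lcm(d_1,\ldots,d_r)$.

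The argument is essentially a direct computation, so there is no significant obstacle; the only mild subtlety is passing from stabilizers of geometric points to the index condition formulated over arbitrary test schemes $T$, which is handled by the observation that all relative stabilizer schemes of this quotient stack are subgroup schemes of the constant group scheme $\mu_{\lcm(d_i)}$.
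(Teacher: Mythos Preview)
Your proof is correct and essentially the same as the paper's, which simply says the lemma follows from the local chart description $\sD_+(s_it^{d_i}) \cong [\bA_B^{r-1}/\mu_{d_i}]$. You compute stabilizers directly in the global quotient $[(\bA^r_B - 0_B)/\bG_m]$ rather than reading them off the charts, but this amounts to the same direct computation.
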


By Lemma~\ref{lem:weighted_proj_stack_is_cyclotomic}, the weighted projective stack $\sP_B(V)\to B$ is a separated Deligne-Mumford stack with finite diagonal. Hence, by the Keel-Mori theorem, it has a coarse moduli space $\bP_B(V)$ which is in fact a projective scheme over $B$.
\begin{defn}
We denote the coarse moduli space morphism as
\[
\pi: \sP_B(V)\to \bP_B(V).
\]
We call $\bP_B(V)$ \emph{the weighted projective bundle associated to $V$}.
\end{defn}

On $\sP_B(V)$, there is an invertible sheaf $\cO_{\sP_B(V)}(d)$ which corresponds to the graded $R$-module $R^{[d]} := t^{-d}R$ (i.e., shifting the grading of $R$ by $d$), for every $d\in \bZ$. Using the description of local charts, one can prove the following fundamental properties of weighted projective bundles.
\begin{lem} 
Let $p: \sP_B(V)\to B$ and $N = \lcm(d_1,\ldots,d_r)$. 
\begin{itemize}
\item[(a)] For every $d\in \bZ$, $\cO_{\sP_B(V)}(d)$ descends to a coherent sheaf $\cO_{\bP_B(V)}(d)$ on $\bP_B(V)$.
\item[(b)] $\cO_{\bP_B(V)}(d)$ is an invertible sheaf if and only if $d$ is divisible by $N$.
\item[(c)] We have 
\[
p_*\cO_{\sP_B(V)}(d) \cong (\Sym^\bullet_B V)_d := \bigoplus_{m_1d_1 + \cdots + m_rd_r = d} \left(\bigotimes_{k=1}^r V_k^{\otimes m_k}\right).
\]
\item[(d)] $\pi$ induces an isomorphism
\[
\pi^*: \Pic(\bP_B(V)) \otimes \bZ[N^{-1}] \overset{\sim}\to \Pic(\sP_B(V))\otimes \bZ[N^{-1}].
\]
\end{itemize} 
\end{lem}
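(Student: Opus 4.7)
The plan is to establish each part by reducing to the explicit local description of $\sP_B(V)$ via the charts $\sD_+(s_it^{d_i}) \cong [\bA^{r-1}_B/\mu_{d_i}]$ recalled just before the statement, under which the coarse moduli morphism $\pi$ is locally the $\mu_{d_i}$-quotient map $\bA^{r-1}_B \to \bA^{r-1}_B/\mu_{d_i}$. The fiber of $\cO_{\sP_B(V)}(d)$ at the origin of this chart carries the weight-$d$ character of $\mu_{d_i}$, and this character data controls both descent and invertibility.

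For (a), I would define $\cO_{\bP_B(V)}(d) := \pi_*\cO_{\sP_B(V)}(d)$; coherence follows immediately because $\pi$ is proper and quasi-finite, being the coarse moduli map of a separated Deligne--Mumford stack with finite diagonal (Lemma~\ref{lem:weighted_proj_stack_is_cyclotomic}). For (b), I would invoke the standard descent theorem for line bundles on tame Deligne--Mumford stacks: a line bundle on $\sP_B(V)$ descends to a line bundle on $\bP_B(V)$ if and only if every stabilizer acts trivially on the corresponding fiber. The stabilizer at the origin of $\sD_+(s_it^{d_i})$ is $\mu_{d_i}$, acting on the fiber of $\cO_{\sP_B(V)}(d)$ through the weight-$d$ character, which is trivial if and only if $d_i \mid d$. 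Imposing this for every $i$ yields $N \mid d$.

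For (c), I would read off $p_*\cO_{\sP_B(V)}(d)$ directly from the quotient-stack definition: global sections correspond to weight-$d$ $\bG_m$-semi-invariant elements of $\Gamma(\bA(V)\setminus 0_B, \cO)$. When the fiberwise rank $r$ is at least $2$, the zero section $0_B$ has relative codimension $\geq 2$ in the Cohen--Macaulay scheme $\bA(V)$ over $B$, so Hartogs' extension identifies these sections with the weight-$d$ piece of $\Sym^\bullet_B V$, yielding the stated formula. The degenerate case $r = 1$ is a direct verification on $[B/\mu_{d_1}]$.

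For (d), injectivity of $\pi^*$ is immediate from $\pi_*\cO_{\sP_B(V)} = \cO_{\bP_B(V)}$ (the $d = 0$ specialization of (c)) together with the projection formula: any invertible $\cL$ on $\bP_B(V)$ satisfies $\cL \cong \pi_*\pi^*\cL$. For surjectivity after inverting $N$, the key point is that $\sP_B(V)$ has index $N$ by Lemma~\ref{lem:weighted_proj_stack_is_cyclotomic}, so for any line bundle $\cL$ on $\sP_B(V)$ every stabilizer character $\chi$ satisfies $\chi^N \equiv 1$; equivalently, stabilizers act trivially on $\cL^{\otimes N}$. The descent theorem used in (b) then produces a line bundle $\cM$ on $\bP_B(V)$ with $\pi^*\cM \cong \cL^{\otimes N}$, so that $[\cL] = \tfrac{1}{N}\pi^*[\cM]$ in $\Pic(\sP_B(V))\otimes \bZ[N^{-1}]$. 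The main obstacle will be rigorously invoking the relative descent theorem for line bundles along the coarse moduli of a tame Deligne--Mumford stack over a general base $B$; while this is standard, I would cite it explicitly (for instance, via Alper's work on good moduli spaces or Olsson's descent results) rather than re-derive it here.
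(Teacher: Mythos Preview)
Your approach is essentially the same as the paper's for parts (b), (c), and (d): the paper also checks the stabilizer action on fibers (citing \cite{Alp25}*{Proposition 4.4.27}) for (b), reads off the degree-$d$ piece of $\Sym_B^\bullet V$ for (c) (though more tersely than you---it does not spell out the Hartogs step), and for (d) simply cites Lemma~\ref{lem:weighted_proj_stack_is_cyclotomic} together with \cite{AH11}*{Lemma 2.3.7}, which is exactly the index-$N$ descent argument you wrote out by hand.

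The one substantive difference is in (a). You define $\cO_{\bP_B(V)}(d):=\pi_*\cO_{\sP_B(V)}(d)$ and argue only that this pushforward is coherent. The paper interprets ``descends'' in the stronger sense and actually verifies $\pi^*\pi_*\cO_{\sP_B(V)}(d)\cong \cO_{\sP_B(V)}(d)$. Concretely, on the chart $\sD_+(s_it^{d_i})$ with $R_i=R[(s_it^{d_i})^{-1}]=\bigoplus_m R_{i,m}t^m$, the coarse space is $\Spec R_{i,0}$, the sheaf corresponds to the module $M_i=t^{-d}R_i$, its pushforward is $R_{i,d}$, and the paper checks $R_{i,d}\otimes_{R_{i,0}} R_i\cong M_i$ using the elementary fact $R_{i,m}\otimes_{R_{i,0}} R_{i,m'}\cong R_{i,m+m'}$. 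Your argument gives a coherent sheaf on the coarse space but does not establish that pulling it back recovers the original line bundle; if ``descends'' is meant in this standard sense, that verification is missing. It is a small gap and easily filled, but worth being aware of.
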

\begin{proof}
(a) For each $i$, denote 
\[
R_i = R[(s_it^{d_i})^{-1}] = \bigoplus_{m\in \bZ} R_{i,m}t^{m}.
\]
Then $\sD_+(s_it^{d_i}) = [\Spec R_i/\bG_m]$, whose coarse moduli space is the spectrum of the ring $R_i^{\bG_m} \cong R_{i,0}$. On $\sD_+(s_it^{d_i})$, $\cO_{\sP_B(V)}(d)$ corresponds to the $\bZ$-graded $R_i$-module 
\[
M_i := t^{-d}R_i = \bigoplus_{m\in \bZ} R_{i,m}t^{m-d}.
\]
Let $\pi_i: \sD_+(s_it^{d_i})\to \Spec R_{i,0}$ be the restriction of the coarse moduli space morphism $\pi$. Let $N_i = \pi_{i*}M_i$. Then $N_i$ can be identified as the $R_{i,0}$-module $R_{i,d}$. We have
\begin{align*}
    \pi_i^*N_i &= R_{i,d}\otimes_{R_{i,0}} R_i = R_{i,d}\otimes_{R_{i,0}} \left(\bigoplus_{m\in \bZ} R_{i,m}t^m \right) \\
    &= \bigoplus_{m\in \bZ} \left(R_{i,d}\otimes_{R_{i,0}} R_{i,m}\right) t^m \\
    &\cong \bigoplus_{m\in \bZ} R_{i,d+m} t^m = M_i.
\end{align*}
Here, we use the fact $R_{i,m}\otimes_{R_{i,0}} R_{i,m'} \cong R_{i,m+m'}$ for any $m,m'\in \bZ$.\footnote{This fact can be proved using the following observation. Assume $\gcd(d_1,\ldots,d_r) = 1$. If $m+m' = \sum_{j=1}^r a_jd_j$ for some integers $a_j$ such that $a_j \geq 0$ for all $j\neq i$, then there exists integers $b_j$ such that $b_j\geq 0$ for all $j\neq i$ and $m = \sum_{j=1}^r b_jd_j$.} Thus, $M_i\cong \pi_i^*\pi_{i*}M_i$ for all $i$. This implies that $\cO_{\sP_B(V)}(d)\cong \pi^*\pi_*\cO_{\sP_B(V)}(d)$ and it descends to the coherent sheaf $\cO_{\bP_B(V)}(d) = \pi_*\cO_{\sP_B(V)}(d)$. \\

\noindent (b) For each $i$, denote 
\[
\overline{R_i} = \frac{R[(s_it^{d_i})^{-1}]}{s_it^{d_i}-1}\cong \cO_B[\{s_jt^{d_j}:j\neq i\}].
\]
Then $\sD_+(s_it^{d_i}) = [\Spec \overline{R_i}/\mu_{d_i}]$, on which $\cO_{\sP_B(V)}(d)$ corresponds to the $\bZ/d_i$-graded $\overline{R_i}$-module $\overline{M_i}:= t^{-d}\overline{R_i}$. Let $0_B\subseteq \Spec \overline{R_i} \cong \bA_B^{r-1}$ be the zero section, then $\overline{M_i}|_{0_B} = t^{-d} \cO_B$, i.e., the group $\mu_{d_i}$ acts on it with weight $d$. By \cite{Alp25}*{Proposition 4.4.27}, $\overline{M}_i$ descends to a locally free sheaf on the coarse moduli space of $\sD_+(s_it^{d_i})$ if and only if $d$ is divisible by $d_i$. We obtain the desired statement after combining all the local charts of $\sP_B(V)$. \\

\noindent (c) Locally on $B$, $p_*\cO_{\sP_B(V)}(d)$ corresponds to the degree-zero piece of $t^{-d}R$, which is the degree-$d$ piece of $\Sym_B^\bullet V$.\\

\noindent (d) This follows from Lemma~\ref{lem:weighted_proj_stack_is_cyclotomic} and \cite{AH11}*{Lemma 2.3.7}.
\end{proof}

In particular, (d) implies that $\bQ$-line bundles on $\sP_B(V)$ and $\bP_B(V)$ can be identified and many properties (such as ampleness) can be directly transferred from one side to the other.

\begin{lem}\label{lem:ample_vec_bundle_implies_ample_O(1)}
Suppose $B$ is quasi-projective (over a field of characteristic zero).
\begin{itemize}
\item[(a)] Let $L$ be an invertible sheaf on $B$ and $d$ be a positive integer. Let
\[
V' = \bigoplus_{i=1}^r (V_i^{\otimes d}\otimes L^{\otimes d_i} )t^{d_i}
\]
Then there is a finite morphism
\[
\phi: \sP_B(V) \to \sP_B(V')
\]
such that $\phi^*\cO_{\sP_B(V')}(1) \cong \cO_{\sP_B(V)}(d) \otimes p^*L$.
\item[(b)] Assume $V_i$ is an ample vector bundle on $B$ for all $i$. Then $\cO_{\bP_B(V)}(1)$ is an ample $\bQ$-line bundle.
\end{itemize}
\end{lem}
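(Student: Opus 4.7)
For part (a), my plan is to construct $\phi$ via its action on $T$-points using the standard functor-of-points description of the weighted projective stack. A morphism $T\to\sP_B(V)$ over $B$ corresponds to a line bundle $\mathcal{N}$ on $T$ (which coincides with the pullback of $\cO_{\sP_B(V)}(1)$) together with a tuple of sections of appropriate twists of $V_i$ by $\mathcal{N}^{\otimes d_i}$ that do not jointly vanish at any point of $T$. Given such data, I would send $\mathcal{N}$ to $\cM:=\mathcal{N}^{\otimes d}\otimes L|_T$ and each section $x_i$ to $x_i^{\otimes d}$, using the canonical isomorphism $\mathcal{N}^{\otimes dd_i}\otimes L^{\otimes d_i}|_T\cong \cM^{\otimes d_i}$ to interpret $x_i^{\otimes d}$ as a section of the corresponding twist of $V_i^{\otimes d}\otimes L^{\otimes d_i}$. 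The resulting $T$-point of $\sP_B(V')$ yields the morphism $\phi$, and the identity $\cM\cong\mathcal{N}^{\otimes d}\otimes p^*L$ immediately gives the line bundle relation $\phi^*\cO_{\sP_B(V')}(1)\cong\cO_{\sP_B(V)}(d)\otimes p^*L$. Joint non-vanishing of the image sections is preserved (verified locally after splitting each $V_i$ as a direct sum of line bundles).

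For finiteness of $\phi$, I would work in the local charts $\sD_+(s_it^{d_i})\cong[\bA^{r-1}_B/\mu_{d_i}]$ from the paper: after trivializing the $V_i$ and $L$ on an affine open of $B$, $\phi$ takes the form $(x_j)_{j\neq i}\mapsto(x_j^d)_{j\neq i}$ between such charts, a finite Kummer-type morphism, from which finiteness follows by gluing.

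For part (b), the plan is to embed $\bP_B(V)$ into a classical projective bundle whose tautological $\cO(1)$ is manifestly ample. I would choose $N$ divisible by $\lcm(d_1,\ldots,d_r)$ and large enough to lie in the numerical semigroup generated by $d_1,\ldots,d_r$. The coherent sheaf
\[
\cW_N := (\Sym_B^\bullet V)_N = \bigoplus_{\sum m_id_i=N}\bigotimes_{i=1}^r V_i^{\otimes m_i}
\]
is a nonzero finite direct sum of tensor products of the ample bundles $V_i$, hence ample by Hartshorne's classical results (direct sums and tensor products of ample vector bundles in characteristic zero are ample). By property (c) of the preceding lemma, $\cW_N=p_*\cO_{\sP_B(V)}(N)$, and the evaluation map $p^*\cW_N\twoheadrightarrow\cO_{\sP_B(V)}(N)$ is surjective for such $N$; it descends to the coarse moduli to give a morphism $\iota_N:\bP_B(V)\to\bP_B(\cW_N)$ over $B$ with $\iota_N^*\cO_{\bP_B(\cW_N)}(1)\cong\cO_{\bP_B(V)}(N)$. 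For $N$ sufficiently divisible, $\iota_N$ is a closed immersion: on each fiber, $\cO(N)$ becomes very ample on the classical weighted projective space $\bP(V_b)$, and uniformity across $B$ follows from Noetherianity. Since $\cW_N$ is ample, $\cO_{\bP_B(\cW_N)}(1)$ is ample by definition of an ample vector bundle, so $\cO_{\bP_B(V)}(N)$ is ample by pullback along the closed immersion $\iota_N$, giving that $\cO_{\bP_B(V)}(1)$ is an ample $\bQ$-line bundle.

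The step I expect to be the main obstacle is the closed immersion claim for $\iota_N$ in part (b): one must establish uniform fiberwise very-ampleness of $\cO(N)$ across all of $B$ and verify that the induced morphism is injective on points and tangent vectors. This should follow from standard cohomology-and-base-change arguments combined with the fact that $\cO(1)$ is a known ample $\bQ$-line bundle on each fiber $\bP(V_b)$, but requires careful handling of the non-trivial stabilizers on $\sP_B(V)$. In part (a), the more delicate point is preserving joint non-vanishing under $(x_i)\mapsto(x_i^{\otimes d})$ for higher-rank $V_i$, handled by local splitting, together with bookkeeping of the $\bG_m$-action conventions on $\cO(1)$.
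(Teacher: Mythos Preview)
Your approach to part (a) is essentially the same as the paper's: the paper simply cites the universal property of the stack-theoretic $\scrProj$ from \cite{QR22}, and your functor-of-points construction is exactly an explicit unpacking of that universal property. The finiteness check in local charts is correct.

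For part (b) your argument is correct but follows a genuinely different route. The paper proceeds cohomologically: it invokes the cohomological criterion for ample vector bundles \cite{Laz04b}*{Theorem 6.1.10} together with the Leray spectral sequence for $p:\bP_B(V)\to B$, reducing ampleness of $\cO_{\bP_B(V)}(N)$ to vanishing statements on $B$ that follow from ampleness of the $V_i$. Your approach is instead geometric: you build an embedding $\iota_N:\bP_B(V)\hookrightarrow \bP_B(\cW_N)$ with $\cW_N=(\Sym^\bullet_B V)_N$, observe that $\cW_N$ is a direct sum of tensor products of ample bundles (hence ample in characteristic zero by Hartshorne), and pull back the tautological ample line bundle. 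Your main worry---that $\iota_N$ is a closed immersion---is handled more cleanly than you indicate: Lemma~\ref{lem:O(1)_is_rel_ample} (proved just afterwards in the paper) gives that $\cO_{\bP_B(V)}(N)$ is relatively ample over $B$ for $N$ sufficiently divisible, hence relatively very ample after replacing $N$ by a further multiple, which is exactly the statement that $\iota_N$ is a closed immersion over $B$. The paper's approach is shorter and avoids constructing the auxiliary embedding; your approach has the virtue of being more explicit and makes the role of the ampleness of the $V_i$ transparent through the concrete description of $\cW_N$.
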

\begin{proof}
    (a) follows from the universal property of the stack-theoretic Proj (see \cite{QR22}*{Proposition 1.5.1 and Section 1.7.1}). (b) follows from the cohomological criterion for ample vector bundles \cite{Laz04b}*{Theorem 6.1.10} and the Leray spectral sequence.
\end{proof}

We also need the following lemma.
\begin{lem}\label{lem:pic_group_weighted_proj_stack}
We have
\[
\Pic(\sP_B(V)) \cong \bZ [\cO_{\sP_B(V)}(1)] \oplus \Pic(B).
\]
\end{lem}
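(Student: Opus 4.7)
The strategy is to construct the natural homomorphism
$$\Phi: \bZ \oplus \Pic(B) \to \Pic(\sP_B(V)), \qquad (d,[L]) \mapsto [\cO_{\sP_B(V)}(d) \otimes p^*L],$$
and show it is an isomorphism by exploiting the quotient presentation $\sP_B(V) = [U/\bG_m]$ with $U = \bA(V) \setminus 0_B$ together with the identification $\Pic(\sP_B(V)) = \Pic^{\bG_m}(U)$. Under this identification, pullback along the $\bG_m$-torsor $\pi: U \to \sP_B(V)$ corresponds to the forgetful map $\Pic^{\bG_m}(U) \to \Pic(U)$, whose kernel classifies $\bG_m$-linearizations of $\cO_U$. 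Such linearizations are given by characters of $\bG_m$ (up to the cocycle condition), hence form a copy of $\bZ$ generated by $[\cO_{\sP_B(V)}(1)]$.

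To establish surjectivity of $\Phi$, I would show that every line bundle on $U$ comes from $B$. Assuming $\sum_i \rank V_i \geq 2$, the zero section $0_B$ has codimension at least two in the vector bundle $\bA(V) \to B$, so any line bundle on $U$ extends to $\bA(V)$; since the zero section retracts $q: \bA(V) \to B$, one has $q^*: \Pic(B) \overset{\sim}{\to} \Pic(\bA(V))$. Thus for any $M \in \Pic(\sP_B(V))$ there exists $L \in \Pic(B)$ with $\pi^* M \cong \pi^* p^* L$ as plain line bundles, so $M \otimes p^* L^{-1}$ lies in the kernel of $\pi^*$, giving $M \cong \cO_{\sP_B(V)}(d) \otimes p^* L$ for some $d \in \bZ$.

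For injectivity, suppose $\cO_{\sP_B(V)}(d) \otimes p^* L \cong \cO_{\sP_B(V)}$. Pulling back to $U$ and forgetting the linearization, the same codimension-and-retraction argument yields $L \cong \cO_B$. It then remains to verify that $\cO_{\sP_B(V)}(d)$ is trivial only for $d = 0$: restricting to any geometric fiber of $p$, the $N$-th power of $\cO(d)$ with $N = \lcm(d_1,\ldots,d_r)$ descends to $\cO_{\bP(d_1,\ldots,d_r)}(dN)$ on the coarse moduli space, which is nontrivial whenever $d \neq 0$ (as it is ample or anti-ample by Lemma~\ref{lem:ample_vec_bundle_implies_ample_O(1)}).

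The main technical point is the codimension hypothesis $\sum_i \rank V_i \geq 2$, needed for the extension across $0_B$; in the degenerate case $r = 1$ with $V_1$ a line bundle, $\sP_B(V)$ is a $\mu_{d_1}$-gerbe over $B$ and one finds $\Pic = \Pic(B) \oplus \bZ/d_1\bZ$ instead, suggesting that the lemma implicitly assumes the non-degenerate case.
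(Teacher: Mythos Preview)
The paper does not give a self-contained proof: it simply cites Noohi's unpublished notes, invoking ``base change for the coarse moduli space'' and ``semi-continuity of cohomology''. The intended argument there is presumably fiberwise: restrict a line bundle to each geometric fiber $\sP_b \cong \sP(d_1,\dots,d_r)$, extract a locally constant degree $d \in \bZ$, and then show $L(-d)$ descends to $B$ via $p_*$ using a Grauert-type theorem. Your route through the equivariant identification $\Pic(\sP_B(V)) = \Pic^{\bG_m}(U)$ with $U = \bA(V)\setminus 0_B$ is genuinely different and more explicit; it avoids any cohomology machinery and makes the role of the twist $\cO(1)$ transparent as a character of $\bG_m$.

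Your argument is correct once $B$ is normal (or smooth), which is the only situation in which the paper actually uses the lemma ($B = Z$ is a smooth stratum of an snc divisor). Two of your steps silently use this: the extension of line bundles and units across the codimension-$\geq 2$ zero section needs local factoriality/normality of $\bA(V)$, and the identification $\Pic(\bA(V)) \cong \Pic(B)$ via the retraction can fail over non-seminormal bases (e.g.\ $\Pic(R[t]) \neq \Pic(R)$). For an arbitrary finite-type $B$ as literally stated, neither your approach nor the cohomological one is entirely hypothesis-free, but this does not affect the paper. Your observation about the degenerate rank-one case (where $\sP_B(V)$ is a $\mu_{d_1}$-gerbe and the conclusion fails as stated) is correct and a useful caveat.
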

\begin{proof}
    This statement follows from the base change property of the coarse moduli space of weighted projective stacks and the semi-continuity theorem of cohomology. The details can be found in an unpublished note of Noohi \cite{Noo}*{Section 6 and 7}.
\end{proof}

\begin{lem}\label{lem:toric_hyperplane_in_weighted_proj_stack}
Suppose $V_i$ is an invertible sheaf for some $i$. Consider the closed substack
\[
\mathscr{H}_i := \sP_B\left(\bigoplus_{j\neq i} V_j t^{d_j}\right) \hookrightarrow \sP_B(V).
\]
Then $\mathscr{H}_i$ is a Cartier divisor on $\sP_B(V)$ and there is an isomorphism
\[
\cO_{\sP_B(V)} (\mathscr{H}_i) \cong \cO_{\sP_B(V)}(d_i) \otimes p^*V_i^{-1}
\]
\end{lem}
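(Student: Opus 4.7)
The plan is to exhibit a canonical global section of $\cO_{\sP_B(V)}(d_i) \otimes p^*V_i^{-1}$ whose vanishing locus is exactly $\mathscr{H}_i$, which will simultaneously show that $\mathscr{H}_i$ is a Cartier divisor and identify its associated line bundle. Because $V_i$ is invertible, the pushforward formula from part (c) of the unnamed lemma above gives
\[
p_*\cO_{\sP_B(V)}(d_i) \cong (\Sym_B^\bullet V)_{d_i} = \bigoplus_{m_1d_1+\cdots+m_rd_r = d_i} V_1^{\otimes m_1}\otimes\cdots\otimes V_r^{\otimes m_r},
\]
and $V_i$ itself appears on the right-hand side as the summand indexed by $(m_1,\ldots,m_r) = e_i$. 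Tensoring the inclusion $V_i \hookrightarrow p_*\cO_{\sP_B(V)}(d_i)$ with $V_i^{-1}$ and applying the projection formula produces
\[
H^0(B,V_i\otimes V_i^{-1}) \hookrightarrow H^0\bigl(\sP_B(V),\, \cO_{\sP_B(V)}(d_i)\otimes p^*V_i^{-1}\bigr),
\]
and I will take the image of $1\in H^0(B,V_i\otimes V_i^{-1})$ as the desired canonical section $\sigma$.

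Next I will verify, by a local computation in a trivializing chart for $V_i$, that the zero locus $V(\sigma)$ coincides with $\mathscr{H}_i$. Shrinking $B$, assume $V_i$ is trivialized by $s_i$; under the identifications above, $\sigma$ corresponds locally to $s_it^{d_i}\otimes s_i^{-1}$, a section whose formation is independent of the trivialization since the two copies of $s_i$ cancel. The vanishing locus of the degree-$d_i$ element $s_it^{d_i}\in R = \cO_B[s_1t^{d_1},\ldots,s_rt^{d_r}]$ inside $\Spec R - V(R_+)$ is $\Spec (R/(s_it^{d_i})) - V(R_+)$, and taking the $\bG_m$-quotient yields exactly the closed substack $\sP_B\bigl(\bigoplus_{j\neq i} V_jt^{d_j}\bigr) = \mathscr{H}_i$.

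Since $\sigma$ is cut out locally by a single non-zero-divisor (the variable $s_it^{d_i}$ in the polynomial ring $R$), the zero locus is an effective Cartier divisor on $\sP_B(V)$. Identifying a Cartier divisor with the isomorphism class of the pair (line bundle, section) then gives
\[
\cO_{\sP_B(V)}(\mathscr{H}_i) \;\cong\; \cO_{\sP_B(V)}(d_i)\otimes p^*V_i^{-1},
\]
as desired. The only subtlety is confirming that $\sigma$ really is globally defined and independent of local trivializations, which is clean because the construction factors through the canonical pairing $V_i\otimes V_i^{-1}\to \cO_B$; the rest is a routine unpacking of the local chart description of $\sP_B(V)$ given earlier in the paper.
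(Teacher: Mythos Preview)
Your proof is correct and takes a genuinely different route from the paper's. The paper first invokes Lemma~\ref{lem:pic_group_weighted_proj_stack} (the computation of $\Pic(\sP_B(V))$) to conclude a priori that $\cO_{\sP_B(V)}(\mathscr{H}_i) \cong \cO_{\sP_B(V)}(d_i)\otimes p^*L$ for \emph{some} line bundle $L$ on $B$, and then identifies $L$ by pushing forward the short exact sequence $0\to \cO_{\sP_B(V)}(-\mathscr{H}_i)(d_i)\to \cO_{\sP_B(V)}(d_i)\to \cO_{\mathscr{H}_i}(d_i)\to 0$ and reading off the kernel. By contrast, you build the equation of $\mathscr{H}_i$ directly: the summand inclusion $V_i\hookrightarrow (\Sym_B^\bullet V)_{d_i}=p_*\cO_{\sP_B(V)}(d_i)$, twisted by $V_i^{-1}$, hands you a canonical section of $\cO_{\sP_B(V)}(d_i)\otimes p^*V_i^{-1}$ whose zero scheme is visibly $\mathscr{H}_i$ in the local charts. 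Your argument is more elementary in that it avoids the Picard group computation entirely (which in the paper relies on an unpublished reference), and it also makes the Cartier property immediate rather than implicit. The paper's approach, on the other hand, illustrates how the structure of $\Pic(\sP_B(V))$ constrains any such divisor class, which is a useful perspective when the section is less easy to write down explicitly.
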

\begin{proof}
By Lemma~\ref{lem:pic_group_weighted_proj_stack}, we know $\cO_{\sP_B(V)} (\mathscr{H}_i) \cong \cO_{\sP_B(V)}(d_i) \otimes p^*L$ for some line bundle $L$ on $B$.

Let $\hat{V}_i = \bigoplus_{j\neq i} V_j t^{d_j}$. Applying $p_*$ to the short exact sequence
    \[
    0\to \cO_{\sP_B(V)} (-\mathscr{H}_i)(d_i) \to \cO_{\sP_B(V)}(d_i)\to \cO_{\sP_B(\hat{V}_i)}(d_i) \to 0,
    \]
    we obtain
    \[
    0\to p_*\cO_{\sP_B(V)} (-\mathscr{H}_i)(d_i) \to (\Sym_B V)_{d_i} \to (\Sym_B \hat{V}_i)_{d_i}.
    \]
    This implies that
    \[
    L^{-1}\cong p_*\cO_{\sP_B(V)} (-\mathscr{H}_i)(d_i)\cong V_i,
    \]
    as desired.
\end{proof}

\subsection{Weighted blow-ups} 
Let $X$ be a finite type scheme (over a field of characteristic zero).
\begin{defn}
    \emph{A Rees algebra on $X$} is a quasi-coherent, finitely generated, graded $\cO_X$-algebra $R = \bigoplus_{n\geq 0} I_n t^n$, where $\{I_n: n\in \bN\}$ is a sequence of ideal sheaves satisfying the following:
\begin{itemize}
\item $I_0 = \cO_X$;
\item $I_n \supseteq I_{n+1}$ for all $n\in \bN$;
\item $I_n \cdot I_m \subseteq I_{m+n}$ for all $m,n\in \bN$.
\end{itemize}
\end{defn}

\begin{defn}
Let $R = \bigoplus_{n\geq 0} I_nt^n$ be a Rees algebra on $X$. Let $R_+ = \bigoplus_{n\geq 1} I_nt^n$ be the irrelavant ideal of $R$. \emph{The stack-theoretic Proj of $R$} is the quotient stack over $X$ given by
\[
\sX' = \scrProj_X R := [(\Spec_X (R) - V(R_+))/\bG_m],
\]
where $\bG_m$ acts on $t$ with weight $-1$. We say that $\sX'$ is \emph{the stack-theoretic weighted blow-up of $X$ along $R$}, denoted as $\scrBl_R X$ or $\scrBl_{I_\bullet}X$.

The natural inclusion $I_{\bullet+1} \subseteq I_{\bullet}$ corresponds to the inclusion $\cO_{\sX'}(1)\subseteq \cO_{\sX'}$ of invertible sheaves on $\sX'$. Thus, it defines an effective Cartier divisor $\sE$ on $\sX'$ such that $\cO_{\sX'}(-\sE) = \cO_{\sX'}(1)$. ${\sE}$ is called \emph{the exceptional divisor of} $\sX' \to X$.
\end{defn}

Next, we describe the local charts of $\sX' = \scrBl_{R} X$. Suppose $f_1,\ldots,f_r\in R_+$ are homogeneous elements of degree $d_1,\ldots,d_r\in \bZ_{>0}$ such that $R_+\subseteq \sqrt{(f_i: 1\leq i\leq r)}$. Then
\[
 \Spec_X(R) - V(R_+) = \bigcup_{i=1}^r \Spec_X(R[f_i^{-1}]).\]
Thus, we have an open covering of $\sX'$ by 
\begin{align}\label{eqn:local_charts_of_weighted_blow_ups}
\sD_+(f_i) = [\Spec_X(R[f_i^{-1}]) / \bG_m] \cong \left[\Spec_X\left(\frac{R[f_i^{-1}]}{f_i-1}\right) / \mu_{d_i}\right].
\end{align}
where the last isomorphism follows from \cite{QR22}*{Lemma 1.3.1}. The description of local charts implies the following lemma.

\begin{lem}
    $\scrBl_{R} X \to X$ is a cyclotomic stack of finite index.
\end{lem}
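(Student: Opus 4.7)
The plan is to read the cyclotomic property and the index bound directly off the local presentation \eqref{eqn:local_charts_of_weighted_blow_ups}. Recall that if $f_1,\ldots,f_r$ are homogeneous generators of the irrelevant ideal $R_+$ (which exist since $R$ is finitely generated over $\cO_X$) with $\deg f_i = d_i$, then the charts
\[
\sD_+(f_i) \cong \left[\Spec_X\left(R[f_i^{-1}]/(f_i-1)\right)/\mu_{d_i}\right]
\]
form an open cover of $\scrBl_R X$. My proposal is to use these local charts to check each condition in the definition of a cyclotomic stack of finite index.

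For the underlying algebro-geometric properties, I would first observe that $\Spec_X R$ is affine, hence separated and of finite type over $X$, so the quotient $\scrBl_R X = [(\Spec_X R \setminus V(R_+))/\bG_m]$ is an algebraic stack of finite type over $X$. Separatedness follows because each chart $\sD_+(f_i)$ is a quotient of an affine scheme by a finite group scheme $\mu_{d_i}$ (so has finite diagonal), and these finite quotient presentations glue compatibly; alternatively one applies the valuative criterion to the $\bG_m$-quotient. Flatness, in the weak sense demanded by the cyclotomic definition, is automatic over any base field since we work over $\bC$.

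The heart of the argument is the stabilizer computation. On a chart $\sD_+(f_i)$, the stabilizer of any geometric point is a closed subgroup scheme of $\mu_{d_i}$. Because we are in characteristic zero, every such subgroup is of the form $\mu_m$ with $m \mid d_i$; hence every geometric stabilizer of $\scrBl_R X$ is cyclotomic. The same computation gives the index bound: on $\sD_+(f_i)$ every automorphism has order dividing $d_i$, so globally the index divides $N := \lcm(d_1,\ldots,d_r)$, which is finite. I do not foresee any significant obstacle — the argument is entirely parallel to the proof of Lemma~\ref{lem:weighted_proj_stack_is_cyclotomic} for weighted projective stacks, the only subtlety being to note that the chosen generators $f_i$ are not canonical, but any choice suffices to produce the cover and hence the bound.
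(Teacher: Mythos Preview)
Your proposal is correct and follows exactly the paper's approach: the paper simply states that the lemma follows from the description of the local charts \eqref{eqn:local_charts_of_weighted_blow_ups}, and you have spelled out the details of that deduction. One small caveat: your remark about flatness being ``automatic over any base field'' is imprecise, since in the definition of cyclotomic stack the structure morphism is $\sX\to B$ (here $B=X$), and blow-ups are not flat over their targets in general; but the paper does not address this point either, and the operative content of the lemma is the stabilizer and index statement, which your argument handles correctly.
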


Hence, by the Keel-Mori theorem, we can make the following definition.
\begin{defn}
We denote \[\pi: \scrBl_{R} X \to \Bl_{R} X\]  
as the coarse moduli space morphism. We say that $\Bl_{R} X\to X$ is \emph{the weighted blow-up of $X$ along $R$}.
\end{defn}

\begin{lem}\label{lem:O(1)_is_rel_ample}
Suppose $X$ is quasi-projective, Let $R$ be a Rees algebra on $X$ and $X' = \Bl_{R}X$. Then the sheaf $\cO_{{X}'}(N)$ is invertible and relatively ample over $X$ for every positive integer $N$ which is sufficiently divisible. In particular, $X'$ is a projective scheme over $X$.
\end{lem}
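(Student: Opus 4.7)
The plan is to identify the coarse moduli space $X' = \Bl_R X$ with the classical scheme-theoretic $\Proj_X R$, and then invoke the standard projectivity result for the Proj of a finitely generated graded algebra.

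Since the statement is local on $X$, I would first reduce to the case $X = \Spec A$ affine. By definition, $R = \bigoplus_{n\geq 0} I_n t^n$ is a finitely generated graded $A$-algebra; choose homogeneous generators $f_1, \ldots, f_r \in R_+$ of degrees $d_1, \ldots, d_r$, so that $\sqrt{R_+} = \sqrt{(f_1, \ldots, f_r)}$. By \eqref{eqn:local_charts_of_weighted_blow_ups}, the stack $\sX' = \scrBl_R X$ is then covered by the charts $\sD_+(f_i) = [\Spec(R[f_i^{-1}]/(f_i - 1))/\mu_{d_i}]$.

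Next, since $\mu_{d_i}$ is linearly reductive in characteristic zero, the coarse moduli space of each $\sD_+(f_i)$ is the affine scheme $\Spec((R[f_i^{-1}]/(f_i - 1))^{\mu_{d_i}})$. The $\mu_{d_i}$-action on $R[f_i^{-1}]/(f_i - 1)$ is induced by the $\bZ/d_i$-grading obtained by setting $f_i = 1$ in the $\bZ$-graded ring $R[f_i^{-1}]$, so the ring of invariants is precisely the degree-zero subring, which is canonically isomorphic to $(R[f_i^{-1}])_0$. Glueing these local identifications yields a canonical $X$-isomorphism $X' \cong \Proj_X R$ in the classical sense. A parallel chart-by-chart computation shows that the invertible sheaf $\cO_{\sX'}(N)$ descends to a quasi-coherent sheaf $\cO_{X'}(N)$ which agrees with the usual $\cO_{\Proj_X R}(N)$ whenever $N$ is divisible by $\lcm(d_1, \ldots, d_r)$ (the condition guaranteeing that each $\mu_{d_i}$ acts trivially on the degree-$N$ part, so that descent produces an invertible sheaf).

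The conclusion then follows from classical Proj theory (EGA~II, Section~3): for any finitely generated quasi-coherent graded $\cO_X$-algebra $R$, the scheme $\Proj_X R$ is projective over $X$, and $\cO_{\Proj_X R}(N)$ is invertible and relatively very ample for every $N$ sufficiently divisible. The principal obstacle in implementing this plan is purely bookkeeping, namely verifying the compatibility between the stacky $\cO(N)$ (obtained by $\bG_m$-descent from the tautological sheaf) and the classical $\cO_{\Proj}(N)$; this reduces to the explicit $\mu_{d_i}$-equivariant identification on each affine chart sketched above. No additional ideas beyond the Keel--Mori theorem and classical Proj theory are required.
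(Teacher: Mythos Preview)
Your proposal is correct and follows essentially the same route as the paper: the paper's proof simply cites \cite{Har77}*{II, Proposition 7.10} (classical $\Proj$ theory for finitely generated graded algebras) with $\cO(1)$ replaced by $\cO(N)$ for $N$ divisible by the index of the cyclotomic stack. Your version makes explicit the identification $X'\cong\Proj_X R$ via the chart-by-chart coarse moduli computation, which the paper leaves implicit; the only minor omission is that when you localize to affine $X$, the required $N$ may depend on the chart, so you should take a common multiple over a finite affine cover (using quasi-compactness of $X$) to obtain a global $N$.
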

\begin{proof}
    The proof is essentially given by \cite{Har77}*{II, Proposition 7.10}, except that one needs to replace $\cO_{X'}(1)$ by the line bundle $\cO_{X'}(N)$, where $N$ is any positive integer divisible by the index of the cyclotomic stack $\scrBl_R X$.
\end{proof}

\subsection{Weighted blow-ups along strata of snc divisors}\label{subsec:weighted_blow_up_along_snc_div}
Let $X$ be a smooth variety and $D_1,\ldots, D_r$ be simple normal crossing divisors on $X$. Assume $\bigcap_{i=1}^r D_i\neq \varnothing$. Let $d_1,\ldots,d_r$ be positive integers.

Given an ideal $I\subseteq \cO_X$ and $d\geq 1$, let $(I,d)$ be the smallest Rees algebra containing $I t^d$. In other words,
\[
(I,d) = \bigoplus_{m\in \bN} I^{\lceil m/d\rceil} t^m.
\]
For $1\leq k\leq r$, let $I_{D_k}$ be the ideal sheaf of $D_k$. Let $R = \bigoplus_{m\in\bN}I_mt^m$ be the smallest Rees algebra that contains $(I_{D_k}, d_k)$ for all $1\leq k\leq r$. More explicitly, we have
\[
I_{m} = \sum_{m_1 + \ldots + m_r = m} I_{D_1}^{\lceil m_1/d_1\rceil} I_{D_2}^{\lceil m_2/d_2\rceil} \cdots I_{D_r}^{\lceil m_r/d_r\rceil}
\]
for every $m\in \bN$.

\begin{defn}\label{defn:weighted_blow_up_wrt_snc_divisors}
\emph{The stack-theoretic weighted blow-up of $X$ with respect to $\{D_i: 1\leq i\leq r\}$ and weights $\{d_i:1\leq i\leq r\}$} is
\[
\sX'  := \scrBl_{R} X = [(\Spec_X R - V(R_+))/\bG_m].
\]
Its coarse moduli space $X' = \Bl_{R} X$ is called \emph{the weighted blow-up of $X$ with respect to $\{D_i: 1\leq i\leq r\}$ and weights $\{d_i:1\leq i\leq r\}$}.
\end{defn}

Now we describe the local charts of $\sX' = \scrBl_{R}X$. Suppose for $1\leq i\leq r$, locally $D_i$ is cut out by a function $f_i$. Let $Z=\bigcap_{i=1}^r D_i$. Note that
\[
I_m = \bigoplus_{e_1d_1 + \cdots + e_rd_r \geq m} \cO_X\cdot f_1^{e_1} \cdots f_r^{e_r}
\]
and hence $R_+ \subseteq \sqrt{(f_it^{d_i}: 1\leq i\leq r)}$. By \eqref{eqn:local_charts_of_weighted_blow_ups}, the local charts of $\sX'$ are
\[
\sD_+(f_it^{d_i}) = [\Spec_X R[(f_it^{d_i})^{-1}]/\bG_m].
\]
On $\sD_+(f_it^{d_i})$, the exceptional divisor $\sE$ is given by $t^{-1} = 0$, where $t^{-1} =\frac{f_it^{d_i-1}}{f_it^{d_i}}\in R[(f_it^{d_i})^{-1}]$. As a result, the local charts of $\sE$ are given by
\[
\sE\cap \sD_+(f_it^{d_i}) = \left[\Spec \frac{R[(f_it^{d_i})^{-1}]}{t^{-1}}/\bG_m\right]\cong [\Spec \cO_Z[f_1t^{d_1},\ldots, f_rt^{d_r}, (f_it^{d_i})^{-1}]/\bG_m],
\]
where the isomorphism \[\frac{R[(f_it^{d_i})^{-1}]}{t^{-1}}\cong \cO_Z[f_1t^{d_1},\ldots, f_rt^{d_r}, (f_it^{d_i})^{-1}]\] can be deduced from the fact that $f_1^{e_1}\cdots f_r^{e_r}t^m = 0$ in $R[(f_it^{d_i})^{-1}]/t^{-1}$ unless $\sum_{i=1}^r e_id_i = m$. These are precisely the local charts of $\sP_Z(V_Z)$, where $V_Z = \bigoplus_{i=1}^r \cO_Z(-D_i)t^{d_i}$. Thus, we have proved the following lemma.

\begin{lem}\label{lem:exc_div_is_weighted_proj_stack}
Let $\sX'\to X$ be the stack-theoretic weighted blow-up of $X$ with respect to $\{D_i: 1\leq i\leq r\}$ and weights $\{d_i:1\leq i\leq r\}$.
Let $Z = \bigcap_{i=1}^r D_i$ and $V_Z = \bigoplus_{i=1}^r \cO_Z(-D_i)t^{d_i}$. Let ${\sE}$ be the exceptional divisor of $\sX'\to X$. Then we have an isomorphism
\[
{\sE} \cong \sP_Z(V_Z).
\]
It descends to an isomorphism 
\[
E\cong \bP_Z(V_Z)
\]
where $E$ is the exceptional divisor of $X'\to X$. Furthermore, we have
\[
\cO_{\sX'}(1) |_{\sE} \cong \cO_{\sE}(1),
\]
where $\cO_{\sE}(1)$ is the tautological line bundle of the weighted projective stack $\sE\to Z$.
\end{lem}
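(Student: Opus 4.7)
The plan is to work locally on $X$ and match the charts of $\sE$ with the charts of $\sP_Z(V_Z)$ from the previous subsection chart-by-chart, then glue the local identifications into a global one.

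First, choose an affine open $U\subseteq X$ on which each $D_i$ is cut out by a single function $f_i\in \cO_X(U)$. The computation done in the excerpt just above the lemma already shows
\[
\sE|_U \cap \sD_+(f_it^{d_i}) \;\cong\; \bigl[\Spec\,\cO_Z[f_1t^{d_1},\ldots,f_rt^{d_r},(f_it^{d_i})^{-1}]\,/\,\bG_m\bigr],
\]
obtained by setting $t^{-1}=0$ in $R[(f_it^{d_i})^{-1}]$ and observing that every monomial $f_1^{e_1}\cdots f_r^{e_r}t^m$ with $\sum e_jd_j\neq m$ lies in the ideal $(t^{-1})$. The right-hand side is, term-by-term, the coordinate ring of the chart $\sD_+(s_it^{d_i})$ of $\sP_Z(V_Z|_U)$, once one identifies the tautological generator $s_j$ of $\cO_Z(-D_j)|_U$ with the restriction of $f_j$ to $Z$. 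Since the $\bG_m$-action assigns weight $-d_j$ to $f_jt^{d_j}$ on both sides, this is a $\bG_m$-equivariant isomorphism of local charts.

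Next, I would glue. Under a change of trivialization $f_j \rightsquigarrow u_jf_j$ by a unit $u_j$, the generator $f_jt^{d_j}$ is multiplied by $u_j$; the resulting cocycle on overlaps is exactly the transition cocycle defining the line bundle $\cO_Z(-D_j)$ on $Z$, i.e.\ the data used to assemble $V_Z$ and hence $\sP_Z(V_Z)$. Consequently the local isomorphisms patch canonically into a global isomorphism $\sE \cong \sP_Z(V_Z)$, which descends to $E \cong \bP_Z(V_Z)$ on coarse moduli by functoriality of the Keel--Mori construction. For the final assertion, $\cO_{\sX'}(1)$ corresponds by definition to the degree-one shift of $R$, and quotienting by $(t^{-1})$ sends this shift to the degree-one shift of $\cO_Z[f_1t^{d_1},\ldots,f_rt^{d_r},(f_it^{d_i})^{-1}]$; under the identification of this ring with the chart of $\sP_Z(V_Z)$, this is exactly $\cO_\sE(1)$.

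The main (minor) obstacle is the gluing verification: one needs to check that the transition cocycle for $\cO_Z(-D_i)$ coming from changes $f_i \rightsquigarrow u_if_i$ coincides with the cocycle built into $V_Z$, and that the $\bG_m$-weights agree on overlaps of the charts $\sD_+(f_it^{d_i})\cap \sD_+(f_jt^{d_j})$. This is bookkeeping rather than substance; once it is carried out, the lemma follows from the explicit algebraic identification already recorded in the excerpt.
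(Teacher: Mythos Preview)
Your proposal is correct and follows essentially the same approach as the paper: the paper's proof is precisely the local-chart computation recorded in the paragraph preceding the lemma, ending with ``these are precisely the local charts of $\sP_Z(V_Z)$\ldots\ Thus, we have proved the following lemma.'' You reproduce this computation and then add an explicit check that the transition cocycles under $f_j\rightsquigarrow u_jf_j$ match those of $\cO_Z(-D_j)$, together with the line-bundle restriction argument; the paper leaves both of these implicit, so your version is in fact slightly more detailed than the paper's own treatment.
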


Finally, since the the weighted blow-up $X'\to X$ is an isomorphism away from $Z$, we can make the following definition.

\begin{defn}\label{defn:weighted_blow_up_along_one_irred_component}
Let $X$ be a normal variety and $D_1,\ldots, D_r$ be prime divisors on $X$. Let $Z$ be an irreducible component of $\bigcap_{i=1}^r D_i$. Assume $(X,D)$ is log smooth in a neighborhood $U$ of $Z$ and $U$ does not intersect any other irreducible component of $\bigcap_{i=1}^r D_i$. Let $d_1,\ldots,d_r$ be a sequence of positive integers. We define \emph{the weighted blow-up of $X$ along $Z$ with respect to $\{D_i: 1\leq i\leq r\}$ and weights $\{d_i:1\leq i\leq r\}$} to be the following scheme $X'$:
\begin{itemize}
\item Over $U$, $X'$ is the weighted blow-up of $X$ with respect to $\{D_i: 1\leq i\leq r\}$ and weights $\{d_i:1\leq i\leq r\}$ as in Definition~\ref{defn:weighted_blow_up_wrt_snc_divisors}.
\item Away from $Z$, $X'$ is isomorphic to $X$. In fact, if $E$ is the exceptional divisor of $X'\to X$, then $(X'- E)\cong (X- Z)$.
\end{itemize}
\end{defn}

\bibliographystyle{alpha}
\bibliography{main}

\end{document}